\documentclass [12pt,oneside, makeidx]{amsart}
\usepackage[hscale=.75, vscale=.75]{geometry}
\usepackage[utf8]{inputenc}
\usepackage{thmtools}
\usepackage{graphicx}
\usepackage{amsmath}
\usepackage{amsthm}
\usepackage{amssymb}
\usepackage[all]{xy}
\usepackage{tikz-cd}
\usepackage{verbatim}
\usepackage{mathtools}
\usepackage{floatrow}
\usepackage{subcaption}
\usepackage{enumitem}
\usepackage{url}
\usepackage{hyperref}
\usepackage{comment}
\usepackage{fourier}
\usepackage{stmaryrd}
\usepackage{scrtime}
 \usepackage[mathscr]{eucal}
 \usepackage{calligra}
 \usepackage[bbgreekl]{mathbbol}
 \usepackage{todonotes}
\usepackage{xcolor}
\definecolor{cadmiumgreen}{rgb}{0.0, 0.42, 0.24}

\newtheorem{thm}{Theorem}[section]
\newtheorem{cor}[thm]{Corollary}
\newtheorem{lem}[thm]{Lemma}

\newtheorem{prop}[thm]{Proposition}

\newtheorem{defn}[thm]{Definition}
\theoremstyle{definition}
\newtheorem{prop-defn}[thm]{Proposition and Definition}

\theoremstyle{remark}
\newtheorem{rem}[thm]{Remark}

\newtheorem*{Question*}{\bf Question}

\numberwithin{equation}{section}

\newcommand{\thistheoremname}{}
\newtheorem*{genericprop*}{\thistheoremname}
\newenvironment{namedprop*}[1]
  {\renewcommand{\thistheoremname}{#1}%
   \begin{genericprop*}}
  {\end{genericprop*}}

\newtheorem*{genericlem*}{\thistheoremname}
\newenvironment{namedlem*}[1]
  {\renewcommand{\thistheoremname}{#1}%
   \begin{genericlem*}}
  {\end{genericlem*}}
  
  \newtheorem*{genericthm*}{\thistheoremname}
\newenvironment{namedthm*}[1]
  {\renewcommand{\thistheoremname}{#1}%
   \begin{genericthm*}}
  {\end{genericthm*}}

  \newtheorem*{genericcond*}{\thistheoremname}
\newenvironment{namedcond*}[1]
  {\renewcommand{\thistheoremname}{#1}%
   \begin{genericcond*}}
  {\end{genericcond*}}

\makeindex
\usepackage[
  backend=biber,
     style=ext-alphabetic,
    sorting=nyt,
    giveninits,
    articlein=false,
    url=true, 
    doi=true,
    eprint=true
]{biblatex}

\DeclareSourcemap{
  \maps[datatype=bibtex]{
    \map[overwrite]{
      \step[fieldsource=doi, final]
      \step[fieldset=url, null]
    }  
  }
}

\DeclareFieldFormat[article, incollection, unpublished]{title}{\sl{#1}}

\newcommand{\Real}{{\mathbb R}}

\newcommand{\Hom}{{\rm Hom}}

\newcommand{\colim}{\mathrm {colim}}

\newcommand{\Fun}{\operatorname{Fun}}
\newcommand{\Mod}{\operatorname{Mod}}
\newcommand{\Yo}{\operatorname{\mathcal{Y}}}

\newcommand{\C}{{\mathcal C}}
\newcommand{\D}{{\mathcal D}}

\newcommand{\F}{{\mathcal F}}
\newcommand{\G}{{\mathcal G}}

\newcommand{\K}{{\mathcal K}}

\newcommand{\V}{{\mathcal V}}
\newcommand{\W}{{\mathcal W}}
\newcommand{\J}{{\mathcal J}}
\newcommand{\M}{{\mathcal M}}

\newcommand \id {{\rm id}}

\newcommand{\supp}{{\rm supp}}

\newcommand {\pt}{{\rm pt}}

\makeindex

\newcommand  {\R} {\mathcal R}

\DeclareRobustCommand{\SkipTocEntry}[5]{}
\DeclareMathAlphabet{\mathpzc}{OT1}{pzc}{m}{it}
\usepackage[percent]{overpic}
\usepackage{datetime}

\newcommand{\DHam }{\mathfrak{DHam}}

\renewcommand{\to}[1][]{\xrightarrow[]{#1}}

\newcommand{\isoto}[1][]{\xrightarrow[#1]%
{{\raisebox{-.6ex}[0ex][-.6ex]{$\mspace{1mu}\sim\mspace{2mu}$}}}}

\newcommand{\N}{\mathbb{N}}

\newcommand{\End}{\operatorname{End}}

\newcommand{\homst}{\operatorname{\mathcal{H}om}^\cstar}

\def \Z {\mathbb Z}
\def \H {\mathcal H}

\def \cstar {\ast}

\def \DHam {{\mathrm {DHam}}}

\newcommand{\Tam}{\mathcal{T}}
\newcommand{\Der}{\operatorname{D}}
\newcommand{\fMod}{{\operatorname{Mod^{fil}}}}
\newcommand{\Derf}{\operatorname{D^{fil}}}

\def \cstar {\varoast}

\newcommand{\larbR} { \overleftarrow{\mathbb{R}}}
\newcommand{\arbR} { \overrightarrow{\mathbb{R}}}
\newcommand{\Op}{{\operatorname{Op}}}

\makeatletter
\newcommand\boxstar{\mathrel{\boxcls@{\ast}}}
\newcommand{\boxcls@}[1]{%
  \vphantom{\Box}%
  \mathpalette\boxcls@@{#1}%
}
\newcommand{\boxcls@@}[2]{%
  \ooalign{$\m@th#1\Box$\cr
  \hidewidth\boxcls@fix{#1}\hbox{$\m@th#1#2\mkern 2.4mu$}\cr}}

\newcommand\boxcls@fix[1]{%
  \ifx#1\displaystyle
    \raise.225ex
  \else
    \ifx#1\textstyle
      \raise.225ex
    \else
      \ifx#1\scriptstyle
        \raise.180ex
      \else
        \raise.150ex
      \fi
    \fi
  \fi
}
\makeatother

\newcommand{\Ch}{{\operatorname{Ch}}}
\newcommand{\fCh}{{\operatorname{Ch^{fil}}}}
\newcommand{\Sh}{{\operatorname{Sh}}}
\newcommand{\PSh}{{\operatorname{PSh}}}

\newcommand{\op}{{\operatorname{op}}}
\renewcommand{\SS}{{\operatorname{SS}}}

\newcommand{\Fuk}{{\mathscr{F}}}
\newcommand{\WFuk}{{\mathscr{WF}}}
\newcommand{\RS}{{\operatorname{RS}}}

\newcommand{\Torus}{\mathbb T}

\title{Density of fibers for the filtered Fukaya category of $T^*N$}

\author{St{\'e}phane Guillermou}
\address{UMR CNRS 6629 du CNRS, Laboratoire de Mathématiques Jean LERAY,
2 Chemin de la Houssinière, BP 92208, F-44322 NANTES Cedex 3 France}
\email{Stephane.Guillermou@univ-nantes.fr}
\author{Claude Viterbo} 
\address{Université Paris-Saclay, CNRS UMR 8628, Laboratoire de mathématiques d’Orsay, 91405, Orsay, France.}
\email{claude.viterbo@universite-paris-saclay.fr}
\author{Bingyu Zhang} 
\address{Department of Mathematics, Kyiv School of Economics, 3 Mykoly Shpaka St, Kyiv, 02000, Ukraine.}
\email{bzhang@kse.org.ua}
 
\thanks{The first two authors were supported by COSY (ANR-21-CE40-0002). The third author was supported by the Novo Nordisk Foundation grant NNF20OC0066298 and VILLUM FONDEN, VILLUM Investigator grant 37814.}

\begin{document}

\date{\today}

\begin{abstract}
  We answer a question of Biran and Cornea about the density of iterated cones of fibers in the Fukaya category of a cotangent bundle. We prove that indeed if we take a dense set of basepoints, the iterated cones of the cotangent fibres are dense in the Filtered Fukaya category. In an appendix we prove that the space of exact Lagrangians in a symplectic manifold is never totally bounded for the spectral distance (unless it is empty). This was implicit in \cite{MCA-VH-CV} for $n=1$ and proved for cotangent bundles of negatively curved manifolds in \cite{A-B-C}. 
\end{abstract}

\maketitle

\renewcommand\contentsname{\vspace*{0pt}}
\renewcommand{\baselinestretch}{1}
\tableofcontents
\renewcommand{\baselinestretch}{1}\normalsize

\newpage
\section{Introduction}

Abouzaid proved in \cite{Abouzaid-fiber-generation} that the wrapped Fukaya category of a cotangent bundle is generated by one cotangent fiber. In the filtered case, this can not happen: the Hamiltonian perturbation of a Lagrangian is isomorphic to the original Lagrangian only when they geometrically coincide. However the filtered Fukaya category comes with a notion of interleaving distance and Paul Biran~\cite{Biran-V63}asked whether the iterated cones on the cotangent fibers generate a dense subcategory, in the sense that any Lagrangian is arbitrarily close to an iterated cone of cotangent fibers. We refer to \cite{Ambrosioni-Filtered-Fukaya} for a construction of the filtered Fukaya category and~\cite{Biran-Cornea-Zhang} for its persistence structure, which yields a notion of distance. Similar density considerations had been evoked in \cite{Fukaya-GHdistance}.

We shall use the filtered Fukaya category constructed in~\cite{Ambrosioni-Filtered-Fukaya}. Its objects are the compact Lagrangians so it does not contain the cotangent fibers. To make sense of the question of density of iterated cones of the fibers, we shall embed this category in its module category through the Yoneda embedding.  

Let us introduce some notations. Let $N$ be a connected closed manifold and $DT^*N$ its unit ball cotangent bundle (with respect to a Riemannian metric) with Liouville form $\lambda$.  We denote by $\Fuk(DT^*N)$ the filtered Fukaya category whose objects are compact (exact) Lagrangian branes, that is pairs $(L, f_L)$ where $L$ is a closed exact Lagrangian contained in $DT^*N$ and $f_L$ is a primitive of $\lambda_{\mid L}$. Morphisms from $L$ to $L'$ are given by the Floer cochains $FC^*(L,L')$. The space of Floer cochains is filtered once we have primitives $f_L, f_{L'}$ of $\lambda_{\mid L}, \lambda_{\mid L'}$: an intersection point $x\in L\cap L'$ has filtration degree $f_{L'}(x)-f_L(x)$. 

We let $\Yo\colon \Fuk(DT^*N) \to \fMod(\Fuk(DT^*N))$ be the Yoneda embedding.  Here $\fMod(\mathcal C)$ is the set of functors from $\mathcal C$ to the category of filtered chain complexes, and the Yoneda morphism is given by associating to $L$ the functor $L' \mapsto FC^*(L',L;t)$ where $FC^*(L',L;t)$ is the set of Floer chains of action greater than $t$. For simplicity we denote by $FC^*(L,L')$ the Floer chain complex with its filtration being understood, so the Yoneda embedding will send $L$ to $L' \mapsto FC^*(L',L)$.

Let $V_{(x,a)}$ be the fiber at $x$ with primitive $a$ of $\lambda_{\mid V_x}=0$. This Lagrangian brane $V_{(x,a)}$ does not belong to $\Fuk(DT^*N)$ but it makes sense to define the filtered complex $FC^*(L', V_{(x,a)})$ for any $L' \in \Fuk(DT^*N)$ and this gives a module $\V_{(x,a)} \in \fMod(\Fuk(DT^*N))$.  The action of $FC^*(L,L')$ on $FC^*(L', V_{(x,a)})$ is given by the standard triangle product
\[FC^*(L,L')\otimes FC^*(L', V_{(x,a)}) \longrightarrow FC^*(L, V_{(x,a)})\]
which respects the filtration by \cite{Ambrosioni-Filtered-Fukaya}. 

The module category $\fMod(\Fuk(DT^*N))$ inherits an interleaving distance denoted by $\gamma$.  It is also a pre-triangulated category. 

We define the subcategory $\langle \V_{(x_1,a_1)}, \ldots, \V_{(x_l,a_l)} \rangle$ of $\fMod(\Fuk(DT^*N))$ the subcategory generated by the $\V_{(x_i,a_i)}$'s as the subcategory having for objects the iterated cones on the generators.

Now we can state our main result.

\begin{thm}[Density Theorem]\label{thm:density Fukaya}
  For any closed exact Lagrangian $L \in \Fuk(DT^*N)$ and $\varepsilon >0$, there are points $(x_i)_{i\in \{1, \ldots,l\}}$, real numbers $(a_i)_{i\in \{1, \ldots,l\}}$ and $C \in \langle \V_{(x_1,a_1)}, \ldots, \V_{(x_l,a_l)} \rangle$ such that $\gamma (L,C) < \varepsilon$.
\end{thm}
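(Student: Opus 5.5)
Since the authors are Guillermou–Viterbo, I will pass to the sheaf side. The plan is to prove an analogous density statement in a filtered (Tamarkin-type) category of sheaves, where it becomes a statement about approximating a sheaf by constructible sheaves, and then transport it back to $\fMod(\Fuk(DT^*N))$. Concretely, to $L$ with primitive $f_L$ I associate its quantization $F_L$: a sheaf on $N\times\Real$ (or on $N\times\Real_t$ in the Tamarkin category $\Tam(N)$) with microsupport the conification of $L$, via Guillermou's construction. The fiber $V_{(x,a)}$ corresponds to the sheaf $\mathbb{k}_{\{x\}\times[a,\infty)}$, up to the convention on the $t$-direction. Morphisms in $\Tam(N)$ form persistence modules in the $\Real$-translation, so $\Tam(N)$ carries its own interleaving distance.

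\textbf{Step 1 (comparison functor).} I would construct a filtered $A_\infty$/dg functor $\Phi\colon \Tam(N)\to \fMod(\Fuk(DT^*N))$, given by $G\mapsto \bigl(L'\mapsto \mathrm{Hom}(F_{L'},G)\bigr)$ with its natural $\Real$-filtration. I then need three properties:
\begin{enumerate}
\item $\Phi(F_L)\simeq \Yo(L)$ and $\Phi(\mathbb{k}_{\{x\}\times[a,\infty)})\simeq\V_{(x,a)}$ as filtered modules, up to bounded or zero shift. This should follow from the filtered comparison $\mathrm{Hom}(F_{L'},F_L)\simeq FC^*(L',L)$ (Viterbo's and Guillermou's results for quantized exact Lagrangians), including compatibility with products.
\item $\Phi$ is exact, so it preserves cones.
\item $\Phi$ is $1$-Lipschitz for the interleaving distances, since it commutes with the $\Real$-translation.
\end{enumerate}
With these, it suffices to find a sheaf $C'$, an iterated cone of the $\mathbb{k}_{\{x_i\}\times[a_i,\infty)}$, with $d(F_L,C')<\varepsilon$.

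\textbf{Step 2 (density on the sheaf side).} Take a fine triangulation of $N$ of mesh $\delta$. The key is to approximate $F_L$ by a sheaf constant on the cells. I would convolve $F_L$ in the $N$-directions with a kernel. Alternatively, I would push forward and pull back along a map $N\to N$ that is $\delta$-close to the identity and collapses each open star to its vertex. The Hamiltonian displacement needed is of size $O(\delta\cdot\sup_L|p|)$, and this is at most $O(\delta)$ because $L\subset DT^*N$. This gives a sheaf $G$ that is $O(\delta)$-interleaved with $F_L$ and is constructible with respect to a stratification $\{\text{cells}\}\times\Real$. Since $L$ is compact, $G$ has only finitely many jumps in $t$.

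\textbf{Step 3 (cone decomposition).} Next I show that such a $G$ lies in the triangulated hull of the skyscraper-type sheaves $\mathbb{k}_{\{x\}\times[a,\infty)}$ up to small error. Filtering by skeleta writes $G$ as an iterated extension of pieces $\mathbb{k}_{\sigma\times[a,b)}$ on open cells. Each such piece is within $O(\delta)$ of a cone built from sheaves supported at the vertex $x_\sigma$, because a contractible cell of diameter $\delta$ retracts to a point at action cost $O(\delta)$. Alternatively, I would use the resolution of $\mathbb{k}_\sigma$ by stalks and Verdier duality. Finally, $\mathbb{k}_{[a,b)}=\mathrm{Cone}\bigl(\mathbb{k}_{[b,\infty)}\to\mathbb{k}_{[a,\infty)}\bigr)$ handles the $t$-direction.

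\textbf{Main obstacle.} The errors must not accumulate when the local approximations are reassembled into iterated cones: interleaving distance is not subadditive under cones in general. I would try to avoid this by doing the approximation globally, applying one collapse map or kernel to all of $F_L$ at once, and then obtaining the cone decomposition exactly, rather than approximately, for the resulting cellular sheaf. The second concern is Step 1, namely making the functor $\Phi$ genuinely filtered $A_\infty$ with the shifts controlled. Here I would rely on existing filtered sheaf–Floer comparisons, and this step may need the most technical input.
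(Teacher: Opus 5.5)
You have a genuine gap, in the reduction at the end of Step~1 and then in Step~3. Step~1 itself is essentially the paper's composite $Q^*\, i^*\,\mathcal{Y}^s$, and the identifications $\Phi(F_L)\simeq\Yo(L)$ and $\Phi(k_{\{x\}\times[a,\infty)})\simeq\V_{(x,a)}$ are right.

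\textbf{The reduced statement is false.} Let $d$ be the interleaving distance of the $t$-translation on $\Tam(N)$. The statement you reduce to, an iterated cone $C'$ of the $k_{\{x_i\}\times[a_i,\infty)}$ with $d(F_L,C')<\varepsilon$, cannot hold, because $t$-translation never moves supports in $N$. Take $L=0_N$, so $F_L=k_{N\times[0,\infty)}$, while $C'$ is supported on $\{x_1,\dots,x_l\}\times\Real$. Restrict a putative interleaving to $\{y\}\times\Real$ with $y\notin\{x_i\}$. This would factor the nonzero map $k_{[0,\infty)}\to k_{[c,\infty)}$ through $0$, so $d(F_L,C')=+\infty$.

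Step~3 fails for the same reason. A cell of diameter $\delta$ ``retracts to a point at cost $O(\delta)$'' only for a translation that flows along $N$, such as a geodesic-type Hamiltonian, and not for $t$-translation. Your heuristic that the displacement costs about $\delta\sup_L|p|$ holds only for sheaves with reduced microsupport in $DT^*N$. Neither the pieces $k_{\sigma\times[a,b)}$ nor the vertex-supported sheaves have that property.

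\textbf{The missing idea} is what makes the module side blind to horizontal moves.
\begin{enumerate}
\item Since $\Phi(\G)(L')=\Hom(F_{L'},\G)$ only tests $\G$ against objects of $\Tam_{DT^*N}(N)$, $\Phi$ factors through the right adjoint $P'_{DT^*N}$ of the inclusion $\Tam_{DT^*N}(N)\hookrightarrow\Tam(N)$.
\item This wrapping projector is invariant under kernels of homogeneous Hamiltonians that are trivial over $\rho^{-1}(DT^*N)$. This yields $\gamma_\tau(P'_{DT^*N}\F,P'_{DT^*N}\G)\le\gamma_{h^+}(\F,\G)$, where $h^+$ is the homogenization of $\max\{1,\|p\|\}$.
\item One therefore approximates in $\gamma_{h^+}$, which does allow motion along $N$. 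The approximants become the wrapped fibers $\W_{(x,a)}=P'_{DT^*N}(k_{\{x\}\times[a,\infty)})$, which satisfy $Q^*\W_{(x,a)}\simeq\V_{(x,a)}$.
\end{enumerate}

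\textbf{How the paper carries this out.} It tensors $F_L$ with a \v{C}ech resolution by closed, $\varepsilon$-small, contractible sets. It then bounds $\gamma_{h^+}(F_L\otimes k_{Z\times\Real},F_L\otimes k_{\{x\}\times\Real})\le 4\varepsilon$. This is done by working with $F_L\boxtimes k_{Z\times\Real}$ on $(N\times\Real)^2$ and restricting to the diagonal, and this is where the bound $\SS(F_L)\subset\{\tau\ge\|p\|\}$ enters. Finally it approximates each sheaf $F_L\otimes k_{\{x\}\times\Real}$ on a line by constructible ones. Terms with equal $|J|$ are grouped into direct sums, which cost only a factor~$2$, so there are just $n+1$ cone steps. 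The error $O(8^n\varepsilon)$ is therefore independent of the mesh. This settles the accumulation issue you flagged; grouping your cells by dimension would achieve the same.
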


The strategy of proof is to use the quantization functor constructed in~\cite{Viterbo-Sheaves} which associates a sheaf $Q(L)$ on $N\times\Real$ with any closed exact Lagrangian brane $L$.  In this way we embed $\Fuk(DT^*N)$ into $\Sh(N\times\Real)$, more precisely into the subcategory $\Sh_{DT^*N}(N\times\Real)$ (actually the Tamarkin category $\Tam_{DT^*N}(N)$ introduced later), formed by the sheaves with reduced microsupport contained in $DT^*N$. The category $\Sh_{DT^*N}(N\times\Real)$ should be understood as a model of the wrapped Fukaya category in the filtered setting. The fiber $\V_{(x,a)}$ corresponds to the sheaf $k_{\{x\} \times [a, \infty)}\in \Sh(N\times\Real)$, which does not belong to $\Sh_{DT^*N}(N\times\Real)$. We let $P'_{DT^*N}$ be a projector which takes values in $\Sh_{DT^*N}(N\times\Real)$ (it will be recalled in \S\ref{sec:strategyproof} - see~\cite{Kuo-wrappedsheaves} where it is built as a wrapping functor) and we set $\W_{(x,a)} = P'_{DT^*N}(k_{\{x\} \times [a, \infty)})$. A slight modification of~\cite[Thm. 4.4]{Zhang-Cyclic} shows that the endomorphism algebra of $\W_{(x,a)}$ computes the homology of length-filtered based loop spaces, which further clarifies the role of $\Sh_{DT^*N}(N\times\Real)$ as ``wrapped filtered Fukaya category''. The sheaf category also inherits an interleaving distance that we denote by $\gamma^s $. 

Now in the category $\Sh_{DT^*N}(N\times\Real)$ we can prove a more general result: 
\begin{thm}[Density Theorem]\label{thm:density sheaf}
  Let $\F \in \Sh_{DT^*N}(N\times\Real)$ such that for all $x\in N$ the sheaf $\F \otimes k_{\{x\} \times\Real}$ is a $\gamma^s$-limit of constructible sheaves on $\Real$. Then, for any $\varepsilon >0$, there are points $(x_i)_{i\in \{1, \ldots,l\}}$, real numbers $(a_i)_{i\in \{1, \ldots,l\}}$ and $C \in \langle \W_{(x_1,a_1)}, \ldots, \W_{(x_l,a_l)} \rangle$ in ${\Sh_{DT^*N}(N\times\Real)}$ such that $\gamma^s (\F,C) < \varepsilon$.
\end{thm}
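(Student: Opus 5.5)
We approximate $\F$ by gluing its restrictions to small balls. The key input is that the projector $P'_{DT^*N}$ turns skyscraper-type sheaves $k_{\{x\}\times[a,\infty)}$ into the generators $\W_{(x,a)}$, and that $P'_{DT^*N}$ is $\gamma^s$-nonexpanding.

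\textbf{Step 1 (triangulation).} Fix a fine triangulation of $N$, or a cover by small convex balls $B_1,\dots,B_l$ of diameter $\delta$ with centres $x_i$. The reduced microsupport of $\F$ lies in $DT^*N$, i.e.\ $|\xi|\le \tau$. So along a path of length $\le\delta$, the restriction maps of $\F$ change the $\Real$-variable by at most $\delta$. Concretely, we will show that for $y\in B_i$ the stalks satisfy
\[\gamma^s\bigl(\F\otimes k_{\{y\}\times\Real},\,\F\otimes k_{\{x_i\}\times\Real}\bigr)\le 2\delta\]
as sheaves on $\Real$. More strongly, we want to compare $\F|_{B_i\times\Real}$ with the pullback of its restriction to $\{x_i\}\times\Real$. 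We will do this using the non-characteristic deformation lemma / microlocal cut-off for sheaves with $\SS\subset\{|\xi|\le\tau\}$, which yields $\delta$-interleavings.

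\textbf{Step 2 (Čech / stratified resolution).} Write $\F$ as an iterated cone of sheaves of the form $\F_{\sigma}$, where $\sigma$ runs over the simplices of the triangulation. We take $\F_\sigma=(\F|_{\sigma\times\Real})_!$ extended by zero, using the open–closed excision triangles $j_!j^{-1}\F\to\F\to i_*i^{-1}\F$. Up to an error of order $\delta$, each $\F_\sigma$ is interleaved with $k_\sigma\boxtimes G_\sigma$, where $G_\sigma=\F\otimes k_{\{x_\sigma\}\times\Real}$ is regarded as a sheaf on $\Real$. By hypothesis, each $G_\sigma$ is a $\gamma^s$-limit of constructible sheaves on $\Real$. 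We replace $G_\sigma$ by a constructible $G'_\sigma$ that is $\delta$-close. A constructible sheaf in the Tamarkin category on $\Real$ is, up to small error, an iterated cone of the sheaves $k_{[a,\infty)}$; bounded intervals $k_{[a,b)}$ are cones of two of these. Hence $k_\sigma\boxtimes G'_\sigma$ is an iterated cone of the sheaves $k_{\sigma\times[a,\infty)}$.

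\textbf{Step 3 (projecting).} Apply $P'_{DT^*N}$. It is a triangulated functor, it fixes $\F$, and it is $1$-Lipschitz for $\gamma^s$. It therefore sends the approximating iterated cone to an iterated cone of the sheaves $P'_{DT^*N}(k_{\sigma\times[a,\infty)})$. We then show that for small $\sigma\ni x$ we have
\[\gamma^s\bigl(P'_{DT^*N}(k_{\sigma\times[a,\infty)}),\,\W_{(x,a)}\bigr)\le C\delta,\]
up to shifts that appear in the cone bookkeeping. The reason is that the wrapping only sees $\sigma$ up to a displacement of size $\mathrm{diam}\,\sigma$, since moving a point by $\delta$ is a Hamiltonian isotopy of Hofer size about $\delta$ on $DT^*N$. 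Replacing each piece by the nearby $\W$'s then gives $C$ in the required subcategory.

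\textbf{Main obstacle.} Errors accumulate through the iterated cones: interleaving distance is only subadditive along cones with a constant depending on the number of cone steps. Since that number grows with the triangulation, we must control the total error. We will first try to do the gluing so that every error is a uniform $\delta$-shift in the $\Real$-direction, applied simultaneously to the whole diagram. For example, we compare $\F$ with $T_{\delta}$-shifted versions, so that the resulting interleaving is $O(\delta)$ independently of $l$. We expect this uniformity, together with the Step 3 estimate, to be the hardest part.
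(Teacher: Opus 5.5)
There is a genuine gap, and it is the one you flagged as the ``main obstacle''. As written, the proposal does not control the error, so it does not prove the theorem.

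\textbf{Step 2: the number of cone steps grows with the mesh.} You decompose $\F$ by excision over \emph{all} simplices of a triangulation of mesh $\delta$, so the number of cone steps is of order $\delta^{-n}$. Replacing the pieces $\F_\sigma$ by $O(\delta)$-close objects inside an iterated cone costs a multiplicative constant per step; the paper's lemma on approximating iterated cones loses a factor $8$ per step. Even a merely additive loss per step would give a total of order $\delta^{-n}\cdot\delta$. Either way the bound does not tend to $0$.

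The suggestion to make every error ``a uniform $\delta$-shift applied to the whole diagram'' is not an argument. The replaced pieces $k_\sigma\boxtimes G_\sigma$ are glued by different maps than the $\F_\sigma$, and you have no coherent morphism of diagrams giving a single global interleaving.

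The missing idea is to group pieces by level and use direct sums. Take a cover by $\varepsilon$-small balls $U_i$ whose closed intersections $U_J^{cl}$ are contractible and empty for $|J|\ge n+2$. The Čech resolution $k_N\to C_0\to\cdots\to C_n$, with $C_i=\bigoplus_{|J|=i+1}k_{U_J^{cl}}$, exhibits $\F$ as an iterated cone on the $\F\otimes C_i$ (up to shifts). The number of steps is $n=\dim N$, independent of the cover. The interleaving distance of a direct sum is at most twice the supremum over the summands, however many summands there are. This gives a total error $O(8^n\varepsilon)$. In your setup, the skeletal filtration of the triangulation could play the same role.

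\textbf{Step 3: the same accumulation, plus a missing distance.} You replace each generator $P'_{DT^*N}(k_{\sigma\times[a,\infty)})$ by $\W_{(x,a)}$ inside the iterated cone coming from the one-dimensional decomposition of $G'_\sigma$. That cone has unbounded length, since it depends on the number of critical values, so errors again accumulate.

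Moreover, $1$-Lipschitzness of $P'_{DT^*N}$ for $\gamma^s$ cannot give your Step 3 estimate. The sheaves $k_{\sigma\times[a,\infty)}$ and $k_{\{x\}\times[a,\infty)}$ are at infinite $\gamma_\tau$-distance, because translation in $t$ never moves supports in $N$. You need a distance that also moves in the $N$-direction.

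\textbf{How the paper handles both points.} It compares the whole pieces $\F\otimes k_{U_J^{cl}\times\Real}$ and $\F\otimes k_{\{x_J\}\times\Real}$ in $\gamma_{h^+}$, where $h^+=\max\{\tau,\|p\|\}$ on $\{\tau\ge0\}$. Via the product and restriction-to-diagonal lemmas, this distance is at most $4\varepsilon$. Only afterwards does it approximate $\F\otimes k_{\{x_J\}\times\Real}$ by iterated cones of $k_{\{x_J\}\times[a,\infty)}$.

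It applies $P'_{DT^*N}$ at the very end, using $\gamma_\tau(P'_{DT^*N}\F,P'_{DT^*N}\G)\le\gamma_{h^+}(\F,\G)$. This inequality comes from $P'_{DT^*N}\simeq P'_{DT^*N}\circ\K_h^s$ for the Hamiltonian $h$ vanishing over $\rho^{-1}(DT^*N)$, together with $h^+=h+|\tau|$. Since $P'_{DT^*N}(k_{\{x\}\times[a,\infty)})=\W_{(x,a)}$ by definition, no generator-by-generator replacement is ever needed.

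Your Step 2 comparison $\F\otimes k_{\sigma\times\Real}\approx k_\sigma\boxtimes G_\sigma$ is a reasonable alternative intermediate. You would still have to compare $k_\sigma\boxtimes G'_\sigma$ with $k_{x_\sigma}\boxtimes G'_\sigma$ as a whole, in such a distance (for instance via the product lemma), before decomposing in the $\Real$-direction.
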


The idea is to use a \v Cech resolution of the constant sheaf $k_N \simeq C_0 \to C_1 \to \cdots \to C_m$, where $C_i = \bigoplus_{J \subset I, |J| = i+1} k_{U_J^{cl}}$ with $U_J^{cl} = \bigcap_{j\in J} \overline{U_j}$ and the $U_j$'s are $\varepsilon$-small balls covering $N$. Then $\F = \F \otimes k_{N\times\Real}$ is written as an iterated cone on the $\F \otimes k_{U_J^{cl} \times\Real}$. Now $\F \otimes k_{U_J^{cl} \times\Real}$ can be approximated by $\F \otimes k_{\{x\} \times\Real}$ for some $x \in U_J^{cl}$ and $\F \otimes k_{\{x\} \times\Real} $ (a sheaf on a line) can be written as (in general can be approximated by) an iterated cone on the $\W_{(x,a)}$'s by constructibility. From the argument, we know that the points $(x_i)_{i\in \{1, \ldots,l\}}$ can be taken from an a priori given dense subset of the base manifold $N$. Also, the proof may be viewed as a filtered analogue of the sectorial descent of~\cite{G-P-S-3}, formulated in the language of sheaves.

\begin{rem}
In fact, as in \cite{A-B-C} we prove something stronger than density, we prove what they call
{\it approximability}: For any $\varepsilon>0$ we can find objects $X_1,...,X_N$ such that any object in the category is at distance at most $ \varepsilon$ from an iterated cone of the $X_k$. Even better if we consider the set of all direct sums  obtained from the $X_i$, we need only $n+1$-iterated cones (where $n$ is the dimension of the base manifold). A related notion of complexity called the interleaving Rouquier dimension will be discussed in Appendix~\ref{sec:InterleavingRouquierdimension}.  
\end{rem}
Once we have the result for sheaves, we can come back to $\fMod(\Fuk(DT^*N))$ using the quantization functor $Q$ since sheaf quantization of closed exact Lagrangian satisfies the assumption of the Density Theorem, due to constructibility of $Q(L) \otimes k_{\{x\} \times\Real} \simeq FC^*(V_{(x,0)},L)$ as sheaves on $\Real$. In fact, the sheaf result is a little stronger in the sense that it may apply to sheaves associated with immersed Lagrangians or certain $C^0$-Lagrangians. One should also notice that the Fukaya category may depend on perturbation data; however, the sheaf category does not. Then it tells that the sheaf distance bound gives a uniform bound of Fukaya category distance for all perturbation data.

\medskip

To implement this plan of proof we need to be more precise on the quantization functor $Q$. In~\cite{Viterbo-Sheaves} the quantization functor is defined on the Donaldson-Fukaya category of $T^*N$ with value in the (classical) derived category of sheaves. However the Donaldson-Fukaya category is not triangulated and we cannot state our result in this framework. For this reason we will first enhance $Q$ to a functor defined on the Fukaya category in a higher coherent way, and then we can extent $Q$ to the category of modules over Fukaya category.  Recall that the starting point in~\cite{Viterbo-Sheaves} is to first define a presheaf whose sections on some open set $U \times (-\infty, a)$ is $\lim_f FH(\Gamma_{df}, L)$ where $f$ is a function running over the smooth functions greater than the characteristic function of $U$, rescaled by $a$.  To turn this into a functor we need some functoriality of homotopy limits. This may be true in the framework of $A_\infty$-categories but we lack references. So we will turn our categories into $\infty$-categories, for which the appropriate results are available in the literature. In the sequel and for simplicity, all coefficients will be in $\mathbb Z/2 \mathbb Z$. 

\section{Acknowledgements}
The second and third authors first heard about the question of the density of cones on the fibers in the cotangent bundle (and in fact of other Lagrangians in other symplectic manifolds) from a talk by Paul Biran in June 2024 (\cite{Biran-V63}. The approach he mentioned (still incomplete at the time) is radically different from the one given here and is now finalized in \cite{A-B-C}. We believe both proofs have their own merit.  We warmly thank Paul Biran for sharing these ideas on this occasion as well as Giovanni Ambrosioni and Octav Cornea for several useful discussions.
The third author thanks Tatsuki Kuwagaki, Adrian Petr, and Vivek Shende for sharing their progress on their independent work regarding similar considerations. He thanks them as well as Laurent C\^ot\'e and Zhen Gao for helpful discussions. The second author thanks Marie-Claude Arnaud and Vincent Humilière for permission to use the results and illustrations of \cite{MCA-VH-CV} in Appendix B.

\section{Categories}

We first introduce some notations. For a dg-category $\C$, its dg nerve is a simplicial set $N_{dg}(\C)$ defined as follows: $N_{dg}(\C)$  is the simplicial set such that an $n$-simplex $f\in N_{dg}(\C)_n$ consists of the following data $f=(\{X_i\}_{0\leq i \leq n},\{f_I\})$, where the $X_i$ are objects in $\C$; and for each ordered subset $I = \{  i_0 > i_{1} > \cdots > i_ k \}  \subseteq [n]$ having at least two elements, $f_I \colon X_{i_k}\rightarrow X_{i_0}$ is a degree $k-1$ map\footnote{Remember that morphisms in a dg-category are graded} satisfying the coherent cocycle condition 
\begin{equation}\label{dg-nerve cocycle}
  \partial f_{I} = \sum _{a=1}^{k-1} (-1)^{a} ( f_{ \{  i_0 > i_1 > \cdots > i_ a \}  } \circ f_{ \{  i_ a > \cdots > i_ k \}  } - f_{I \setminus \{  i_ a \}  } ).
\end{equation}
The structural maps are defined as follows: a nondecreasing function $\alpha \colon [m]\rightarrow [n]$ induces $\alpha ^{\ast } \colon N_{dg}(\C)_m \rightarrow N_{dg}(\C)_n$ defined by  
\[\alpha^*(( \{  X_ i \} _{0 \leq i \leq m}, \{  f_{I} \}  )) = ( \{  X_{ \alpha (j) } \} _{ 0 \leq j \leq n}, \{  g_{J} \}  ),\] where  
\[g_{J} = \begin{cases}  f_{ \alpha (J) } &  \textnormal{ if } \alpha |_{J} \textnormal{ is injective } \\  \operatorname{id}_{ X_ i } &  \textnormal{ if } J = \{  j_{0} > j_{1} \}  \textnormal{ with } \alpha (j_{0}) = i = \alpha (j_{1}) \\  0 &  \textnormal{ otherwise. } \end{cases}\]
The dg nerve construction can be generalized to strict unital $A_\infty$-categories in a similar way, adding higher differential in the cocycle condition \eqref{dg-nerve cocycle}. We will denote $N_{A_\infty}(\C)$ this $A_\infty$-nerve (which is an $\infty$-category) and refer to \cite{Ainfinity_Nerve} for more details.

We consider coefficients in some commutative ring $k$ (we take $k=\mathbb Z/2 \mathbb Z$ when dealing with Fukaya categories).  We denote by $\Ch^\infty(k)$ the dg-category of cochain complexes on $k$ and by $\Ch^\infty(k) = N_{dg}(\Ch(k))$ the $\infty$-category obtained via the dg nerve functor\footnote{Remember that an $\infty$-category is a simplicial set satisfying the weak Kan extension: every inner horn has a filler.}.

We let $\Der(k) = \Ch^\infty(k)[W^{-1}]$ be its localization along the class $W$ of edges  given by quasi-isomorphisms of complexes. A model for $\Der(k) $ is $N_{dg}(\Der^{dg}(k))$ the dg nerve of the dg-derived category of $k$.

For a manifold $N$ we set $\Sh(N) = \Sh(N, \Der(k))$ as the $\infty$-category of (homotopy)-sheaves valued in $\Der(k)$ (see \cite[Definition 1.1.2.1]{SAG}). In the case of manifold, we have that $\Sh(N)$ is equivalent to the unbounded $\infty$-derived category of Abelian sheaves $\Der(\Sh(N,\Mod_k))$ (see for example \cite[Proposition 7.1]{Six-Functor-Scholze}).

\addtocontents{toc}{\SkipTocEntry}
\subsection*{Filtered categories}
A convenient way to deal with the filtration on the categories is to introduce the category of filtered complexes.  We let $\larbR$ be the $\infty$-category associated with the poset $(\Real, \geq)$ via the nerve functor.  We set $\fCh(k) = \Fun(\larbR,\Ch^\infty(k))$ and $\Derf(k) = \Fun(\larbR,\Der(k))$ where $\Fun$ will always denote $\infty$-functors. We have $\Derf(k) \simeq \fCh(k)[W_{\larbR}^{-1}]$, where $W_{\larbR}$ is the set of filtration-preserving quasi-isomorphisms, i.e., for each $t$, $f_t\colon M_t\rightarrow N_t$ is a quasi-isomorphism.

Let $\Real_{\leq}$  be the topological space $\Real$ whose open subsets are the intervals $(-\infty,a)$.  This is a special case of the $\gamma$-topology $V_\gamma$ for a cone $\gamma$ in some vector space $V$ introduced in~\cite{K-S}.  We remark that the poset $(\Real, {\geq})$ is fully faithfully embedded  (as a category) into the category $\Op(\Real_{\leq})^\op\cong(\Real\cup\{\pm \infty\}, \geq)$.  Hence there are restriction functors
\begin{equation*}
\begin{split}
\PSh(\Real_{\leq},\Ch^\infty(k))\rightarrow \fCh(k) & ,\quad \PSh(\Real_{\leq}, \Der(k)) \rightarrow \Derf(k),\\ 
\F  \mapsto M(\F)&=[t\mapsto  M(\F)_t=\F((-\infty,t))].
    \end{split}
\end{equation*}
out of the category of presheaves on $\Real_{\leq}$ valued respectively in $\Ch^\infty(k)$, $\Der(k)$. It is explained in \cite[Section 4.2]{APT-Kuwagaki-Zhang}\footnote{In \cite{APT-Kuwagaki-Zhang}, the authors use the poset $(\Real, {\leq})$, which causes a difference on sign and cohomology degree shifting.} that the functor $\PSh(\Real_{\leq}, \Der(k)) \rightarrow \Derf(k)$ restricts to a fully faithful functor $\Sh(\Real_{\leq})\hookrightarrow \Derf(k)$ whose image satisfies the semi-continuous condition for persistence modules, which is also a monoidal functor respects certain monoidal structures. Taking the left adjoint and sheafification, we obtain functors
\begin{equation}
  \label{eq:filt-cplx-sheaves}
  \fCh(k)  \to \Derf(k) \to \Sh(\Real_{\leq}) \hookrightarrow  \Sh(\Real).
\end{equation}

We recall the convenient equivalence of sheaf categories, in the $\infty$-category setting, $\Sh(N\times \Real) \simeq \Sh(N, \Sh(\Real))$ (see~\cite[Cor. 2.24 and Prop. 2.30]{Volpe-sixoperations}).  In particular, using~\eqref{eq:filt-cplx-sheaves} and sheafifying on $N$, we obtain functors
\begin{equation}
  \label{eq:pshN-shNR}
  \Fun(\Op^\infty(N)^\op, \Derf(k)) \to  \Sh(N,\Derf(k)) \to  \Sh(N \times\Real) .
\end{equation}
where $\Op^\infty(N)$ is the nerve of the poset $\Op(N)$ of open subsets in $N$.

\addtocontents{toc}{\SkipTocEntry}
\subsection*{Categories related to the microsupport}
For a manifold $M$ and $\F\in \Sh(M)$ we denote by $\SS(\F) \subset T^*M$ the microsupport of $\F$ defined by Kashiwara-Schapira~\cite{K-S}.  For a conic subset $W \subset T^*M$ we let $\Sh_W(M)$ be the subcategory of $\Sh(M)$ formed by the sheaves $\F$ with $\SS(\F) \subset 0_M \cup W$.  In the case $M = N \times \Real$ we set for short $\{\tau \geq 0\} = \{(x,p,t,\tau) \in T^* (N\times \Real) \mid  \tau \geq 0 \}$ and define $\{\tau \,R\, 0\}$ for $R\in \{\leq ,>,<\}$ in the same way. The microlocal cut-off lemma of Kashiwara-Schapira (see~\cite{K-S}, Prop.~5.2.3(i), together with Prop.~3.5.3(iii)) shows that the essential image\footnote{This is the subcategory with objects equivalent to objects in the image. See \cite[Def 4.6.2.12]{kerodon} for the $\infty$-category case.} of $\Sh(\Real_{\leq}) \hookrightarrow  \Sh(\Real)$ is identified with $\Sh_{\tau \geq 0}( \Real)$.

The Tamarkin category $\Tam(N)$ is defined as the left orthogonal of $\Sh_{\tau \leq 0}(N\times \Real)$, i.e. the full subcategory spanned by sheaves $\F$ such that $\Hom(\F,\G)=0$ for all $\G \in \Sh_{\tau \leq 0}$. We have  $\Tam(N)\hookrightarrow\Sh_{\tau\geq 0}(N\times \Real)$, and a sufficient (but not necessary) characterization is $F|_{N\times (-\infty,A)}=0$ for some $A\in \Real$.
For $W \subset T^*N$ we also let $\Tam_W(N)$ be the subcategory of $\Tam(N)$ formed by the $\F$ with $\SS(\F) \subset 0_{N\times\Real} \cup \rho^{-1}(W)$, with
\[
\rho\colon T^*N \times \{\tau>0\} \to T^*N, \qquad  (x,p,t,\tau) \mapsto (x, p/\tau) .
\] 
We denote by $\RS(\F)\coloneqq \rho(\SS(\F))$ the reduced microsupport.

\addtocontents{toc}{\SkipTocEntry}
\subsection*{Interleaving distances}

Both $\Fuk(T^*N)$ and $\Tam(N)$ come with``translation functors'' $T_a$, $a\in \Real$,  inducing an interleaving distance on the set of objects.  A ``translation functor'' for a category $C$ is a family of functors $T_a \colon C \to C$ for all $a\in \Real$ and morphisms of functors $\tau_b^a\colon T_a \to T_b$ for $a\leq b$ satisfying some natural compatibilities (higher coherently). A good way to express these compatibilities is to ask that the data of $T$ and $\tau$ gives a monoidal functor $(T,\tau)\colon (\Real, \leq) \to \End(C)$, where $(\Real, \leq)$ is seen as a category through the order with a monoidal structure given by the addition (see~\cite[Def. 1.3.4]{Petit-Schapira} or~\cite[Def. 2.15]{Biran-Cornea-Zhang}).  This condition implies in particular that $\tau_b^a \simeq T_a(\tau_{b-a}^0)$ and we will only consider the family $\tau_c = \tau_c^0$, $c\geq 0$.
For $X, Y \in C$ we then set\footnote{In the case where $C$ is an $\infty$-category, equality of morphisms is replaced by homotopy of 1-morphisms.}
\begin{align*}
\gamma(X,Y) &=  \inf \Big\{a+b \mid a,b \geq 0,\;  \exists u\colon X \to T_a(Y), v\colon Y \to T_b(X), \\
&  T_a(v) \circ u \colon X \to T_{a+b}(X), \, T_b(u)\circ v \colon Y \to T_{a+b}(Y), \text{ are the maps $\tau_{a+b}$}  \Big\}
\end{align*}
Then $\gamma$ is a pseudo-distance\footnote{Several variations are possible, for example we could ask that $a=b$.}on the set of (isomorphisms classes of) objects of $C$ (it is not a distance since we may have $\gamma(X,Y) = 0$ for non-isomorphic $X,Y$).  

If $C$ is an $\infty$-category, we replace $(\Real, \leq)$ by $\arbR$, the $\infty$-category associated with the poset $(\Real, \leq)$ via the nerve functor as before (we refer to the higher coherent version~\cite{APT-Kuwagaki-Zhang}).

\begin{rem}\label{rem:preserve distance}
(1)  Let $C, C'$ be two $\infty$-categories endowed with translation functors $(T,\tau)\colon \arbR \to \End(C)$, $(T',\tau')\colon \arbR \to \End(C')$ and let $\gamma, \gamma'$ be the associated pseudo-distances.  Let $F\colon C \to C'$ be a functor commuting with the $\arbR$-actions.  Then $F$ is $1$-Lipschitz, that is, $\gamma'(F(X), F(Y)) \leq \gamma(X,Y)$.  If, moreover, $F$ is fully faithful, then $F$ preserves the distances.

\medskip

(2) When the categories are $k$-linear, we will assume the related functors are $k$-linear as well. We will also consider the category of filtered modules of $C$ for $k$-linear $C$, $\fMod(C) = \Fun(C^{op}, \fCh(k) )$, which admits a translation functor induced by
$\fCh(k) $. If $C$ is endowed with translation functors $(T,\tau)$, then $\fMod(C)$ inherits another translation functor.  More precisely, for $X^* \in \fMod(C)$ we define $T_c^*(X^*)$ by $T_c^*(X^*)(X) = X^*(T_{-c}(X))$, where we introduce the minus sign so that the morphism of functors $\tau\colon \id \to T_c$, $c\geq 0$, induces a morphism $\tau^* \colon \id \to T_c^*$, $c \geq 0$. Then $(T^*,\tau^*) \colon \arbR \to \End(\fMod(C))$ is a monoidal functor. 

In the article, we assume that the category $C$ satisfies that the two translation functors on $\fMod(C)$ defined above are equivalent. This is the case for the filtered Fukaya category of exact symplectic manifolds and exact Lagrangian branes (see \cite[Remark 3.8]{Biran-Cornea-Zhang}).

We then have pseudo-distances $\gamma$ on $C$ and $\gamma^*$ on $\Mod(C)$.  The Yoneda functor $\mathcal{Y}_C \colon C \to \Mod(C)$ commutes with $T_c, T^*_c$ and sends $\tau$ to $\tau^*$ and is fully faithful. By~(1) it follows that $\gamma^*(\mathcal{Y}_C(X), \mathcal{Y}_C(Y)) = \gamma(X,Y)$ for any $X,Y \in C$.

\medskip

(3) In the situation of (2), we define $\Derf(C) = \Fun(C^{op}, \Derf(k) )$ and call it the filtered derived category of $C$. The filtered derived category is identified with the localization of $\fMod(C)$ with respect to filtration-preserving quasi-isomorphisms. Therefore, it induces the translation functor from $\fMod(C)$ and the localization functor $\fMod(C)\rightarrow \Derf(C)$ induces an isometry because the interleaving distance in $\fCh(k)$ does not change under filtration-preserving quasi-isomorphisms.

\end{rem}

\section{Quantization}
\label{sec:quantization}

In~\cite{Viterbo-Sheaves} the second-named author associates a sheaf with any closed exact Lagrangian. This gives a functor from the Donaldson-Fukaya category of closed exact Lagrangian to the category of sheaves.  In this section we see that this functor can be enhanced to a functor from a filtered Fukaya category to the category of sheaves.  The construction we give here is actually the same as in~\cite{Viterbo-Sheaves} but we are careful to define the functors at the level of the $A_\infty$-category or $\infty$-category and not only on the associated homotopy category.

In this article, we use the construction of filtered Fukaya categories from~\cite{ Ambrosioni-Filtered-Fukaya}\footnote{To be compatible with the filtration convention of~\cite{Viterbo-Sheaves}, our filtration convention is opposite to loc. cit., i.e., $FC^*(L',L;t)$ is the set of Floer chains of action greater than $t$.}. Even though the author constructs filtered Fukaya categories for certain non-exact Lagrangian branes of general symplectic manifolds, here we focus on closed exact Lagrangian of the cotangent bundle $T^*N$, and denote it by $\Fuk=\Fuk(T^*N)$. We also remark that the following construction is basically independent of the definition of the filtered Fukaya categories provided it is strict unital (for example, such a construction based on categorical localization approach will appear in ~\cite{Kuwagaki-Petr-Shende-CompleteFukaya}). The coefficient ring could be an arbitrary commutative ring if moduli spaces are well-oriented. 
\begin{rem} Here, we did not emphasize the role of perturbation data in the definition of Fukaya categories and the following construction of continuation morphisms, as well as sheaf quantization functors. 

As explained in~\cite{A-B-C}, we may also organize Fukaya categories for different perturbation data as a system of filtered $A_\infty$-categories with comparison functors. In principle, those comparison functors should be compatible with continuation morphisms and sheaf quantization functors we constructed below, but we will not discuss that compatibility. We will see later that those constructions work uniformly for (but depend on) different perturbation data, and the final Fukaya density theorem does not involve perturbation data, since our sheafy result is perturbation-independent. Therefore, we do not need to pay much attention to perturbation data here.
\end{rem}

\subsection{Continuation morphisms}
We take $C_\leq = (C^\infty(N),\leq)$ to be the poset of smooth functions, and take $k[C_\leq]$ the freely generated $k$-linear category by setting $\Hom(f,g)=ke_{fg}$ if and only if $f\leq g$, and treat it as an $A_\infty$-category with trivial $\mu^k$ for $k\not=2$, and $\mu^2(e_{gh},e_{fg})=e_{fh}$if and only if $f\leq g\leq h$. We define a filtration on $\Hom(f,g)=k$ by setting $\Hom^{\leq t}(f,g)=ke_{fg}$ if and only if $t\geq 0$.

We define a \textit{coherent choice of monotone continuation morphisms} as a strict unital $(\Fuk,k[C_\leq])$ ($A_\infty$-)bimodule $B$ with $B(L,f)=FC_*(L,\Gamma_{df})\in \fCh(k)$ together with bimodule maps (see \cite{Lyubashenko2008})\footnote{In principle the map should be \[\Fuk(L_k,L_{k-1})\otimes \cdots\otimes\Fuk(L_1,L_{0})\otimes B(L_0,f_0)\otimes \Hom(f_1,f_0)\otimes \cdots\otimes \Hom(f_n,f_{n-1})\rightarrow B(L_k,f_n)[1-k-n]\] but all the $\Hom(f,g)$ are $ke_{f,g}$ or $0$ so we omit them from the notation. }

\[\mu^{k|1|n}\colon \Fuk(L_k,L_{k-1})\otimes \cdots\otimes\Fuk(L_1,L_{0})\otimes B(L_0,f_0)\rightarrow B(L_k,f_n)[1-k-n] \]
that satisfies the usual bimodule Stasheff identity and preserves filtration.

Here, we explain the existence of a coherent choice of monotone continuation morphisms.

\subsubsection*{Review of construction of the right $k[C_\leq]$-module}
The right $k[C_\leq]$-module structure is constructed based on family Floer theory as explained in~\cite[Prop. 4.6]{Viterbo-Sheaves}. 
We shall now review its construction. 
For a finite collection of functions $(f_0\leq \cdots \leq f_n)$, we can construct a  $\Delta^n$-family (here $\Delta^n$ is the topological standard $n$-simplex) of $\sigma(\lambda_1,\ldots,\lambda_n)=\sum_{j=0}^{n}(\lambda_{j+1}-\lambda_{j})f_{n-j}$ where $\lambda_{0}=0\leq \lambda_1\leq \cdots \leq \lambda_n\leq  \lambda_{n+1}=1$ (notice that the set of $(\lambda_1,\ldots,\lambda_n)$ defines a topological $n$-simplex),  and then consider a parameterized family of trajectories of the mixed Floer equation 
\[\partial_su_{\sigma(\lambda(s))}(s,t)+J\partial_t u_{\sigma(\lambda(s))}(s,t)=0,\quad \partial_s \lambda(s)=\tau\zeta_g(\lambda(s)),\]
where $\zeta_g(\lambda)=\sum_{j=1}^n \lambda_j(1-\lambda_j) \partial_{\lambda_j}$, subject to the boundary condition $u_{\lambda(0)} \in L$, $u_{\lambda(1)} \in \Gamma_{d\sigma(\lambda)}$, $\lim_{s\rightarrow \pm \infty} u_{\lambda(s)}(s,t)\in L\cap L_{\sigma(\lambda)}$, $\lim_{s\rightarrow - \infty} \lambda(s)=(0,\ldots,0)$, $\lim_{s\rightarrow   \infty} \lambda(s)=(1,\ldots,1)$.
Note that $\tau$ is a small parameter introduced because one needs ``slow homotopies'' and in the above chain map - which in principle depends on $\tau$ - we take the limit as $\tau$ goes to $0$. 
To solve the equation, we transform the moving boundary condition problem to a usual Floer equation by introducing a Hamiltonian perturbation $H$ as constructed in~\cite[Prop. 3.3, 4.6]{Viterbo-Sheaves}. 

For $y_0\in FC_*(L,\Gamma_{df_0})$ and $y_1\in FC_*(L,\Gamma_{df_n})$, we denote by $\M(y_0,y_1)$ the moduli space of rigid solutions of the corresponding Floer equation, and then we define the right $k[C_\leq]$ module map 
\begin{align*}
B(L_0,f_0)=FC_*(L,\Gamma_{df_0}) &\rightarrow B(L_0,f_n)=FC_*(L,\Gamma_{df_n}), \\
y_0 &\mapsto \sum_{y_1} \# \M(y_0,y_1) y_1,    
\end{align*}
where the Stasheff identity corresponds exactly to Equation~\eqref{dg-nerve cocycle}, which means that all those right module maps associated with $(f_0\leq \cdots \leq f_n)$ and its subsets form a $n$-simplex in $\fCh(k)$.

\subsubsection*{Construction of the $(\Fuk,k[C_\leq])$-bimodule}
Now, we show the compatibility with left multiplication of $\Fuk$. The construction follows from the idea of \cite[Sec. 5]{G-P-S-2}, but simpler. 

\begin{prop}\label{Prop-4.1}There exists a coherent choice of monotone continuation morphisms for $\Fuk$, whose right module restriction is compatible with family Floer construction in ~\cite[Prop. 4.6]{Viterbo-Sheaves}. 
\end{prop}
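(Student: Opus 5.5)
We construct the bimodule maps $\mu^{k|1|n}$ by counting rigid elements of moduli spaces of perturbed holomorphic polygons with moving boundary conditions. The domain is a disc with $k+1$ inputs and one output. The $k$ input edges carry the Lagrangians $L_k,\dots,L_0$, and the boundary arc between the last input and the output carries the moving graph $\Gamma_{d\sigma(\lambda)}$. Here $\lambda$ runs over the $n$-simplex attached to $f_0\le\cdots\le f_n$, exactly as in the family Floer construction of \cite[Prop. 4.6]{Viterbo-Sheaves} recalled above. Following \cite[Sec. 5]{G-P-S-2}, we realize these as popsicle-type or strip-like domains with the $\Fuk$-inputs attached on one side and the $\Delta^n$-family of slow continuation data on the other. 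We then convert the moving boundary condition into a fixed one by a Hamiltonian perturbation $H$ chosen as in \cite[Prop. 3.3, 4.6]{Viterbo-Sheaves}.

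The key steps are as follows. First, build the parameter spaces: the product of the Stasheff associahedron $\overline{\mathcal R}^{k+1}$ for the $\Fuk$-inputs with the compactified space of gradient flow lines of $\zeta_g$ on $\Delta^n$. Its codimension-one strata are products of the same type. They correspond to:
\begin{itemize}
\item bubbling of $\Fuk$-polygons at the $L$-inputs, giving the $\mu_\Fuk$ terms;
\item breaking of the $\lambda$-flow at a vertex of a face of $\Delta^n$, giving $\mu^{k|1|n'}$ composed with $\mu^{0|1|n''}$, i.e. the $\mu^2(e_{gh},e_{fg})=e_{fh}$ terms;
\item passage to a face $\lambda_j=\lambda_{j+1}$, giving the terms $f_{I\setminus\{i_a\}}$ in \eqref{dg-nerve cocycle};
\item ordinary Floer breaking at the ends, giving the differentials.
\end{itemize}
Second, choose Floer data (strip-like ends, perturbations, almost complex structures) inductively on $k+n$, consistently with gluing along these strata. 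Since the Fukaya category is fixed, we keep its data on the $\Fuk$ bubbles. For $k=0$ we take exactly the data of \cite[Prop. 4.6]{Viterbo-Sheaves}, which gives the compatibility claim. Third, apply standard transversality and compactness: exactness of all Lagrangians and of the graphs $\Gamma_{df}$ excludes disc and sphere bubbling. Counting the boundary of one-dimensional moduli spaces then yields the bimodule Stasheff identities. Strict unitality comes from choosing the data to be pulled back along forgetful maps when an input is the strict unit, as done in \cite{Ambrosioni-Filtered-Fukaya}, and from $\mu^{k|1|n}$ with a degenerate simplex being the identity or $0$, as in the nerve formulas.

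The main obstacle is filtration preservation. For this we use the energy identity. With primitives, the energy of a solution equals the action difference of its asymptotics plus a curvature term. That term is $\int \partial_s(\sigma(\lambda(s)))\circ u\,ds$, which is nonnegative because $\sigma$ moves monotonically from $f_0$ toward $f_n$: $\partial_s\lambda_j\ge0$ and $f_{n-j}$ increases in the appropriate index. So action is non-decreasing, as required for the convention that $FC^*(L',L;t)$ is the set of chains of action $>t$. The delicate point is that the Hamiltonian perturbation used to straighten the moving boundary, together with the polygon perturbations needed for transversality, may spoil this sign. We would first try to take all perturbations supported away from the boundary arc with the moving condition, and with Hamiltonian term of the form $\sigma$-dependent and monotone, following \cite{Ambrosioni-Filtered-Fukaya}. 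If an error term remains, we would absorb it by letting the slow parameter $\tau\to0$ and the perturbation size go to zero. Then we would use that the action spectrum is discrete for each fixed finite tuple, so arbitrarily small action losses can be removed by rounding to the spectrum.
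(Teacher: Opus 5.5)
Your opening picture (strip-like domains with the $\Fuk$-inputs on one side and the $\Delta^n$-family on the other) is the right one. The parameter space you then commit to is not, and this is a genuine gap. The map $\mu^{k|1|n}$ has degree $1-k-n$, so it must come from a $(k+n-1)$-dimensional family of domains. Your product combines the Stasheff space of discs with inputs $x_k,\dots,x_1,y_0$ and one output (dimension $k-1$) with the compactified space of unparametrized flow lines on $\Delta^n$ (dimension $n-1$). That is one dimension too small, and its boundary is the wrong one.

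The paper uses instead the space $\mathcal R^B_{k;n}$ of \cite[Sec. 5.3]{G-P-S-2}. Its elements are strips $\Real\times[0,1]$ with $k$ marked points on the side labelled $L_0,\dots,L_k$ and $n$ markers $a_1\geq\cdots\geq a_n$ on the side labelled by the graphs, all modulo a \emph{single} translation. So the relative position of the two kinds of markers is a genuine parameter; for $k=0$ this is the flow-line space on $\Delta^n$. The codimension-one boundary of $\mathcal R^B_{k;n}$ contains every breaking of the strip into two strips, each carrying some $L$-inputs and some graph-markers. These breakings give the terms $\mu^{k''|1|n''}\circ\mu^{k'|1|n'}$ for all $k'+k''=k$, $n'+n''=n$ in the bimodule relations. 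Your list only produces those whose inner factor has no $L$-input.

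Already for $k=n=1$ both of your factors are points, so you would get a degree $0$ map. What is needed is a degree $-1$ homotopy between $\mu^{1|1|0}\circ\mu^{0|1|1}$ and $\mu^{0|1|1}\circ\mu^{1|1|0}$. In $\mathcal R^B_{1;1}$ these two terms are exactly the two ends of the interval of relative positions of $z_1$ and $a_1$.

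Your bookkeeping of strata is also off. Breaking of the flow at a vertex gives the composition terms $f_{\{i_0>\cdots>i_a\}}\circ f_{\{i_a>\cdots>i_k\}}$ of \eqref{dg-nerve cocycle}. The $\mu^2(e_{gh},e_{fg})=e_{fh}$ terms are instead exactly the face terms $f_{I\setminus\{i_a\}}$ (two markers colliding), which you also list separately, so they are counted twice.

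Three further points.
\begin{enumerate}
\item You only discuss transverse tuples. When Lagrangians are not transverse (e.g.\ $L_i=L_{i+1}$, which is unavoidable for units), the paper counts holomorphic clusters as in \cite{Ambrosioni-Filtered-Fukaya}. That construction is what provides regular filtration-preserving Floer data. The only extra Hamiltonian term is the perturbation $H$ near the graph side, which comes from a monotone family and so respects the filtration; this is where your energy identity enters.
\item Your rounding-to-the-spectrum fallback is therefore not needed. By itself it would also not handle the non-transverse case, where perturbations at the ends move the generators.
\item For compatibility at $k=0$, the paper does not impose the data of \cite{Viterbo-Sheaves}, which involve a $\tau\to0$ limit and a different pseudo-gradient, on a consistent system. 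It uses the pseudo-gradient of \cite[Remark 4.1]{G-P-S-2} and identifies the resulting count with Viterbo's by a standard Floer homotopy argument.
\end{enumerate}
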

\begin{proof}

We start from the case that all Lagrangians $(L_k,\ldots, L_0)$ intersect transversely.

We consider moduli spaces of holomorphic strips with marked points: let $\R_{k;n}^B$ be the compactified moduli space of strips $\Real \times [0,1]$ with $k$ marked points $z_1,...,z_k$ on $\Real \times \{0\}$ boundary marked points and $n$-tuple of reals $a_1\geq \cdots \geq a_n$ defined up to translation and view them as marked points on $\Real \times \{1\}$, see Figure~\ref{fig: Lagrangian labeling of moduli spaces}. We refer to \cite[Sec. 5.3]{G-P-S-2} for details of a similar compactification. We set the universal curve $\C_{k;n}^B=\{(D,z)\mid D\in \R_{k;n}^B, z\in D\}$ together with sections $z_i$, $i=1,\ldots, k$ and $a_j$, $j=1,\ldots,n$. 

As described in \cite[Sec. 4.1]{G-P-S-2}, $\R_{0;n}^B$ describe a space of Morse flow lines on $\Delta^n$, we could regard a configuration $a_1\geq \cdots \geq a_n$ as a universal map from the $\Real \times \{1\}$ part of $\C_{k;n}^B$ to $\Delta^n$ by associating $a_j$ to the edge $(j-1,j)$ and associating the interval $(a_{j+1},a_j)$ to the vertex $j$ ($a_0=\infty, a_{n+1}=-\infty$). 

The Lagrangian labeling on $\Real \times \{0\}$ associates $\overline{z_{i-1}z_i}$ to $L_i$ for $i=0,\ldots, k+1$ ($z_{-1}=-\infty$, $z_{k+1}=+\infty$). The identification of $\Real \times \{1\}$ with $\Delta^n$ compose with the $n$-dimension family of $\sigma\colon \Delta^n\rightarrow C^\infty(N)$ give the Lagrangian labeling on $\Real \times \{1\}$ that associates $\overline{a_{j+1}a_j}$ to $\Gamma_{df_{j}}$. Those data form the domain data of our counting. 

Conformally, we may regard $D\in \R_{k;n}^B$ as a disk with $k+2$ marked points and $n$ marked points located in the interior of the arc $\overline{z_{k+1}z_0}$ where $z_0=-\infty$ and $z_{k+1}=\infty$. 

\begin{figure}[htbp]
    \centering
\begin{tikzpicture}[scale=1.2]

  \def\L{3.6}
  \def\H{1}  
  \def\D{1.2}

  \draw[thick] (-\L,0) -- (\L,0);
  \draw[thick] (-\L,\H) -- (\L,\H);

  \path[fill=gray!20,draw=none]
    (-\L, 0.5*\H) rectangle (\L,\H);

  \node at (0,0.7*\H) {$H$};

  \fill (-1.4*\D,0) circle (2pt) node[above] {$z_1$};
  \fill (-0.7*\D,0)   circle (2pt) node[above] {$z_2$};
   \fill (0.2*\D,0)   circle (2pt) ;
  \fill (1.5*\D,0) circle (2pt) node[above] {$z_k$};

  \fill (-1.2*\D,\H) circle (2pt) node[below] {$a_{n}$};
  \fill (-0.4*\D,\H) circle (2pt) node[below] {$a_{n-1}$};
  \fill (0.8*\D,\H) circle (2pt) ;
    \fill (1.6*\D,\H) circle (2pt) node[below] {$a_{1}$};

  \node[above] at (-2*\D,\H)   {$\Gamma_{df_n}$};
  \node[above] at (-0.8*\D,\H)   {$\Gamma_{df_{n-1}}$};
  \node[above] at (0.2*\D,\H)   {$\Gamma_{df_{n-2}}$};
  \node[above] at (2.4*\D,\H)   {$\Gamma_{df_0}$};

  \node[below] at (-2*\D,0)   {$L_0$};
  \node[below] at (-1*\D,0)   {$L_1$};
  \node[below] at (-0.2*\D,0)   {$L_2$};
    \node[below] at (0.8*\D,0)   {$\cdots$};
  \node[below] at (2.4*\D,0)   {$L_k$};

  \node[left] at (-1.2*\L,0.5*\H) {$s=-\infty$};
  \node[right] at (1.2*\L,0.5*\H) {$s=\infty$};

\end{tikzpicture} 
    \caption{Lagrangian labeling and Floer data for the bimodule $B$}
    \label{fig: Lagrangian labeling of moduli spaces}
\end{figure}

Next, we give the Floer datum: We pick standard strip-like ends $\epsilon_i$ near $z_i$ and Hamiltonian $1$-form $K$, and almost complex structures $J\colon  \C_{k;n}^B \rightarrow \J(T^*N)$.
We take a Hamiltonian family $H\colon  \C_{k;n}^B\times T^*N\rightarrow \Real$ that gives the Hamiltonian perturbation introduced by ~\cite[Prop. 3.3]{Viterbo-Sheaves} near $\Real \times \{1\}$ and trivial otherwise. Moreover, the choices of Floer data should be compatible  as described in \cite[Sec. 5.4]{G-P-S-2}.

We also assume that we choose a sequence of functions $f_0\leq \cdots\leq f_n$ such that $f_j-f_i$ are Morse for $i\neq j$ and, in addition, that $\Gamma_{f_0} \pitchfork L_0$ and $\Gamma_{f_n}\pitchfork L_k$, then we take $x_i\in L_{i-1}\cap L_i$, $i=1,\ldots,n$, and $y_0\in L_{0}\cap \Gamma_{df_0}$, $y_1\in L_{k}\cap \Gamma_{df_n}$. We consider the moduli space $\M(x_k,\ldots,x_1,y_0;y_1)$ of solutions of 
\[(du-X_{H}(u)\otimes K)^{0,1}_{J}=0\]
with the given boundary condition. Since all Lagrangians are closed exact and intersect transversely, then compactness and transversality can be achieved by regular Floer datum. Then we define the bimodule map by the rigid counting
\[\begin{split}
   \mu^{k|1|n}_B\colon \Fuk(L_k,L_{k-1})\otimes \cdots \otimes \Fuk(L_1,L_{0})\otimes B(L_0,f_0)&\rightarrow  B(L_k,f_n)[1-k-n] \\
   x_k\otimes \cdots\otimes x_1\otimes y_0\hspace*{2cm}&\mapsto  \# \M(x_k,\ldots,x_1,y_0;y_1) y_1.
\end{split}\]

For the Stasheff identity, we see that the Hamiltonian $H$ is trivial far away from $\Real \times \{1\}$, so homomorphic polygons split as the usual $A_\infty$ relation near $\Real \times \{0\}$; and near $\Real \times \{1\}$, by our choice of $H$ and $J$, the splitting of holomorphic strips at $s=\pm \infty$ are the same as ~\cite[Prop. 4.6]{Viterbo-Sheaves} or ~\cite[Lemma 4.33]{G-P-S-2} described in a simplicial way.

To finish the existence, we discuss the non-transverse intersection case. In this case, we really need to follow \cite{Ambrosioni-Filtered-Fukaya} to count holomorphic clusters rather than holomorphic disks. In case all intersections are transverse, the counting of holomorphic clusters reduces to counting of disks (or strips), so our previous discussion works directly in this case (and actually gives the same bimodule maps). 

The most important thing here is the existence of regular Floer data such that all operations preserve filtration. This is the main contribution of \cite{Ambrosioni-Filtered-Fukaya} in case there is no Hamiltonian term $H$. In our case  the Hamiltonian term $H$ is associated to a ``monotonic” family of exact graphs, hence it preserves the filtration. The existence of regular $\epsilon_i, K,J$ then follows from \cite{Ambrosioni-Filtered-Fukaya}. 

It follows that we can also construct the bimodule maps in the non-transverse intersection case (but we need to be more careful when defining the moduli space of the domain) by replacing holomorphic strips with holomorphic clusters.

 To check compatibility of this bimodule structure in the case $k=0$ with that from \cite[Prop. 4.6]{Viterbo-Sheaves}
 we use the fact that $\R_{0;n}^B$ is identified with a space of Morse flow lines on $\Delta^n$ we described before. Note that the pseudo-gradient in \cite[Remark 4.1]{G-P-S-2} is different from the one in \cite{Viterbo-Sheaves}, but a standard Floer homotopy argument shows that our perturbed counting is equal to the one from \cite{Viterbo-Sheaves}. 
\end{proof}

\subsection{Quantization functor} For a coherent choice of monotone continuation maps $B$ (given by Proposition \ref{Prop-4.1}) we may use \cite[Proposition 5.3, Page 569]{Lyubashenko2008}
to  define a strict unital $A_\infty$-functor $k[C_\leq] \rightarrow \Mod-\Fuk$. In fact, the essential image consists of Yoneda modules $\mathcal{Y}_{\Gamma_{df}}$, so the $A_\infty$-functor factors through an $A_\infty$-functor $b\colon k[C_\leq] \rightarrow \Fuk$. The morphism $e_{fg}\in \Hom_{k[C_\leq]}(f, g)$ is mapped to a degree $0$ and non-negative action element $FC^{0}(\Gamma_{df},\Gamma_{dg};0)$ (in particular, $f=f$ is mapped to the unit of $\Gamma_{df}$ in $\Fuk$), so the functor $b\colon k[C_\leq] \rightarrow \Fuk$ preserves the filtration.

It is clear that there exists a functor $C_\leq\rightarrow k[C_\leq]$ (between 1-categories): it identifies objects on boths sides and sends $*\in \Hom_{C_\leq}(f,g)$ to $e_{fg}$. 

By using the nerve construction of 1-category or $A_\infty$-categories 
we turn these categories into 
$\infty$-categories.
In particular we set \[C_\leq^\infty\coloneqq N(C_\leq),\quad
\Fuk^\infty \coloneqq N_{A_\infty}(\Fuk).\]Notice that $N(k[C_\leq])=N_{A_\infty}(k[C_\leq])$.
We then deduce from $b$ a functor
\newcommand{\Gr}{\mathrm {Gr}}
\begin{equation}
  \label{eq:functions-to-Fukaya}
\Gr\colon  C_\leq^\infty \to \Fuk^\infty,\quad f\mapsto \Gamma_{df}.
\end{equation} 
 
Let us first recall the idea of the construction in~\cite{Viterbo-Sheaves}.  Let $L$ be a closed exact Lagrangian. We define a sheaf $F_L$ on $N\times \Real$ by its sections on $U\times (-\infty, c)$ for any $c$; the space of these sections is $\Hom(k_{U\times (-\infty, c)}, F_L)$. There is no object in the Fukaya category of closed exact Lagrangians corresponding to $k_{U\times (-\infty, c)}$. However we have a fiber sequence \[k_{U\times (-\infty, c)} \to k_{N\times \Real} \to k_{(N\times \Real) \setminus (U\times (-\infty, c))} \to[+1]\] and $k_{(N\times \Real) \setminus (U\times (-\infty, c))} \simeq \colim_{f> \chi} k_{\{t\geq f\}}$ where the colimit runs over smooth bounded functions $f$ and $\chi = c$ on $U$, $\chi=-\infty$ outside $U$\footnote{The sheaf $k_{U\times (-\infty, c)}$ corresponds to, in the infinitesimal wrapped Fukaya category,  the external conormal $T_{\partial U,+}^*N$ of $\partial U$, when $U$ has a smooth boundary. The sheaf result indicates that $T_{\partial U,+}^*N$ should be the colimit $\colim_{f> \chi} \Gamma_{df}$ in the infinitesimal wrapped Fukaya category.}.  Now $k_{\{t\geq f\}}$ corresponds to $\Gamma_{df}$ and we can define a presheaf $F_L^{pre}$ by its sections $\Gamma(U\times (-\infty, c); F_L^{pre}) \coloneqq \colim_f \Hom(\Gamma_{df}, L)$.  The sheafification of $F_L^{pre}$ will give $F_L$ up to the constant sheaf with stalk $H^*(N,k)$.  As we said it is already proved in~\cite{Viterbo-Sheaves} that the associated sheaf $F_L$ has the expected properties; in particular its (reduced) microsupport is $L$.  The problem is to check that $L \mapsto F_L$ is a functor defined on the Fukaya category $\Fuk^\infty$.

Now the language of $\infty$-categories allows us to reformulate the construction directly as a functor. The quantization functor 
\[Q \colon \Fuk^\infty \to \Sh(N \times\Real)\] will be given by the following composition
\begin{align}
  \label{eq:defQ-1}
  \Fuk^\infty  & \to \Fun((\Fuk^\infty)^\op, \fCh(k)) \\
  \label{eq:defQ-2}
  & \to  \Fun((C_\leq^{\infty})^\op, \Derf(k)) \\
  \label{eq:defQ-3}
  & \to  \Fun(\Op^\infty(N)^\op, \Derf(k)) \\
  \label{eq:defQ-4}
  & \to  \Sh(N \times\Real)  ,
\end{align}
that we explain now.  The map~\eqref{eq:defQ-1} is the Yoneda embedding.  To define~\eqref{eq:defQ-2} we compose with $\Gr$ ~\eqref{eq:functions-to-Fukaya} and the localization map $\fCh(k) \to \Derf(k)$.  The last map~\eqref{eq:defQ-4} is explained in~\eqref{eq:pshN-shNR}. It remains to explain~\eqref{eq:defQ-3}.  The functor $C_\leq\to \Op(N)$, $f\mapsto \{f>0\}$, induces $j \colon (C_\leq^\infty)^\op \to (\Op^\infty(N))^\op$.  Since $\Derf(k)$ is presentable, we can consider its left Kan extension (\cite[Corollary 7.3.5.2]{kerodon})
 \[
\operatorname{Lan}_{j} \colon \Fun((C_\leq^\infty)^\op,\Derf(k)) \to \Fun((\Op^\infty(N))^\op, \Derf(k)) .
\]
This is our map~\eqref{eq:defQ-3}.

To make the link with~\cite{Viterbo-Sheaves} we describe how $Q$ acts on objects.  We recall that the left Kan extension is given by a colimit as follows: $\operatorname{Lan}(j)(\phi)(U) = \colim_f \phi(f)$, where $f\in(C_\leq^\infty)^\op$ runs over the functions such that $U \subset \{f>0\}$ (equivalently $f|_U>0$). Switch $C_\leq^\infty$ and $(C_\leq^\infty)^\op$, we also have $\operatorname{Lan}(j)(\phi)(U) = \lim_{f} \phi(f)$, where $f \in C_\leq^\infty$ and $f|_U>0$. In the case where $\phi = \mathcal{Y}_L$ is Yoneda module of a Lagrangian $L$, we write $Q^{pre}(L)=\operatorname{Lan}(j)(\mathcal{Y}_L) $ where $\mathcal{Y}_L(f) = {\Fuk}( \Gamma_{df}, L)$ and we recover the definition of~\cite{Viterbo-Sheaves}.

Lastly, we notice that by the microlocal cut-off lemma and the compactness of $L$ (see Section 3), the functor $Q$ factors through 
\[Q \colon \Fuk^\infty \to \Tam(N)\hookrightarrow \Sh(N \times\Real).\]

\begin{rem}In fact, the composition of \[\widetilde{Q}\colon \fMod(\Fuk^\infty)=\Fun((\Fuk^\infty)^\op, \fCh(k))\rightarrow \Sh(N \times\Real)\] could also be understood as a sheaf quantization functor, and then $Q(L)=\widetilde{Q}(\mathcal{Y}_L)$. A natural question is: for $M\in \fMod(\Fuk^\infty)$, what is the reduced microsupport of $\widetilde{Q}(M)$? For example, when $M$ comes from any kind of Floer theory for singular Lagrangians.
\end{rem}

\subsection{Properties of sheaf quantization functor} Here, we recall some properties of the sheaf quantization functor that is proven in~\cite{Viterbo-Sheaves}, which follow verbatim from arguments therein after adapting our constructions here. Notice that below, we map hom complex of the $A_\infty$-category $\Fuk$, say $\Fuk(L_1,L_2) \in \fCh(k)$, to an object in $ \Derf(k)$ via \eqref{eq:filt-cplx-sheaves}. Notice that the general Floer theory tells that the action filtration of $\Fuk(L_1,L_2)$ satisfies the semi-continuity condition. It follows from \cite[Theorem A-(2)]{APT-Kuwagaki-Zhang} that under \eqref{eq:filt-cplx-sheaves}, we may also regard $\Fuk(L_1,L_2) \in \Derf(k)$ as an object of $\Sh_{\tau\geq0} ( \Real)\hookrightarrow \Derf(k)$.

By the identification $\Sh_{\tau\geq0}(N\times \Real)=\Sh(N; \Sh_{\tau\geq0}(\Real))$, we can define $\Gamma \colon \Sh_{\tau\geq0}(N\times \Real)\rightarrow \Sh_{\tau\geq0}(\Real)$ as the $\Sh_{\tau\geq0}(\Real)$-valued global section functor. More precisely, $\Gamma$ is identified to the $\Sh_{\tau\geq0}(\Real)$-linear direct image to a point $(a_{N})_*^{\Sh_{\tau\geq0}(\Real)}$ for $a_N\colon N\to \pt$, and $\Gamma$ is also equivalent to the $k$-linear direct image functor $(a_N\times \id_{\Real})_*^{k}\colon \Sh_{\tau\geq0}(N\times \Real)\rightarrow \Sh_{\tau\geq0}(\Real)$.

\begin{prop}[{~\cite[Prop. 8.1]{Viterbo-Sheaves}}]\label{prop: sheaf=presheaf} The natural morphism induced by counit of sheafification $Q^{pre}(L) \rightarrow Q(L)$ is an equivalence in $\PSh(N\times\Real)$, i.e. $Q^{pre}(L)$ is already a sheaf. In particular, for any $U\subset N$ open, 
\[FC^*(v^*U,L)\coloneqq\colim_f FC^*(\Gamma_{df}, L) \xrightarrow{\simeq} \Gamma(U,Q(L)) \]
is an equivalence in $\Sh_{\tau\geq0} ( \Real)$. 
\end{prop}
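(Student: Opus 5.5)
To prove that $Q^{pre}(L)$ is already a sheaf, I would show that it satisfies the Mayer--Vietoris (\v Cech) descent condition for finite covers, together with compatibility with filtered (increasing) unions of open sets. These two properties characterize homotopy sheaves on a manifold, and on $\Real_{\leq}$ the $\Real$-direction is already built into the $\Derf(k)$-valued formalism by~\eqref{eq:pshN-shNR}. So it suffices to treat $Q^{pre}(L)$ as a $\Derf(k)$-valued presheaf on $\Op(N)$, given by $U\mapsto \lim_{f|_U>0} FC^*(\Gamma_{df},L)$.

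\textbf{Step 1: replace the limit by a single Floer complex.} For fixed $U$, choose a cofinal sequence $f_n$ with $f_n|_U>0$, $f_n$ converging to $0$ on $U$ from above and to $-\infty$ on the complement. The natural choice is $f_n=\psi_n\circ d_{\partial U}$-type functions, rescaled by $c$ as in~\cite{Viterbo-Sheaves}. The continuation maps $FC^*(\Gamma_{df_n},L)\to FC^*(\Gamma_{df_{n+1}},L)$ from Proposition~\ref{Prop-4.1} then form the diagram whose (co)limit defines $FC^*(v^*U,L)$. I would use compactness of $L$: for $n$ large, the intersection points of $\Gamma_{df_n}\cap L$ lie over a neighbourhood of $\overline U$ where $df_n$ is either tiny or very large. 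Those with $|df_n|$ large leave $L$ altogether, since $L\subset DT^*N$. Hence the diagram stabilizes up to filtered quasi-isomorphism after an action shift tending to $0$. Taking the homotopy limit in $\Derf(k)$, and using the semicontinuity condition from \cite[Theorem A-(2)]{APT-Kuwagaki-Zhang}, gives an honest object of $\Sh_{\tau\geq0}(\Real)$.

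\textbf{Step 2: Mayer--Vietoris.} For $U=U_1\cup U_2$, I must show that
\[
Q^{pre}(L)(U)\to Q^{pre}(L)(U_1)\times_{Q^{pre}(L)(U_1\cap U_2)}Q^{pre}(L)(U_2)
\]
is an equivalence. I would pick functions adapted to the cover: $f_1,f_2$ positive on $U_1,U_2$, with $\max(f_1,f_2)$ (smoothed) approximating the function for $U$ and $\min(f_1,f_2)$ approximating the one for $U_1\cap U_2$. The needed fiber square is then a Floer-theoretic statement. Away from the region where $f_1\neq f_2$, the generators of $FC^*(\Gamma_{d\max},L)\oplus FC^*(\Gamma_{d\min},L)$ and of $FC^*(\Gamma_{df_1},L)\oplus FC^*(\Gamma_{df_2},L)$ coincide. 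In the region where they differ, $|d(f_1-f_2)|$ can be made large, so no intersection with $L$ occurs. The maps are continuation maps, and an energy/action filtration argument (small-energy strips are constant) shows that the total complex of the square is acyclic, filtration by filtration. Passing to the limit in $n$, using Step 1, gives descent. Compatibility with increasing unions follows from the cofinality of $\{f: f|_{\bigcup U_\alpha}>0\}$ together with Step 1's stabilization, again via compactness of $L$.

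\textbf{Main obstacle.} The hard step is Step 2's acyclicity in the filtered sense. The smoothing of $\max/\min$ creates intersection points with small action shifts, and I must check that these produce interleaving errors tending to $0$ uniformly, so that they vanish in the limit. I would first try to choose the $f_i$ so that $f_1-f_2$ has no critical points in $\overline{U_1}\cap\overline{U_2}$ relevant to $L$. Failing that, I would prove the square is a homotopy pullback in $\Sh_{\tau\geq0}(\Real)$ up to $\epsilon$-interleaving for every $\epsilon>0$, and use that limits over cofinal systems with errors going to $0$ are exact. The final claim of the proposition is then just the evaluation of the resulting sheaf on $U$, combined with Step 1.
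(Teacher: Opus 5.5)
The paper does not reprove this statement; it imports it from \cite[Prop.~8.1]{Viterbo-Sheaves}, so your sketch has to stand on its own. Your overall plan (Mayer--Vietoris for finite covers plus continuity along increasing unions) is the natural one. However, the two steps that carry the content do not work as argued.

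In Step~1, the claim that the points of $\Gamma_{df_n}\cap L$ lie where $|df_n|$ is either tiny or very large is false. Since $f_n$ drops from about $0$ on $U$ to $-\infty$ outside, $|df_n|$ takes every intermediate value on a collar of $\partial U$. When $\partial U$ is smooth, $\Gamma_{df_n}$ therefore accumulates on $0_U$ together with one half of the conormal bundle of $\partial U$, and $\Gamma_{df_n}\cap L$ contains points converging to points of $L\cap\nu^*\partial U$ with $0<\|p\|\leq 1$. These are not artefacts. They are exactly the boundary contributions to $FC^*(v^*U,L)$: for $L=\Gamma_{dg}$ they come from critical points of $g|_{\partial U}$ at which the normal derivative of $g$ has a prescribed sign. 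Stabilization up to a small action shift thus needs $\partial U$ smooth and $L$ transverse to its conormal, which is not available for the arbitrary open sets in the statement.

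Likewise, continuity along $U=\bigcup_\alpha U_\alpha$ is not a cofinality statement. It asks for $\colim_{f|_U>0}FC^*(\Gamma_{df},L)\simeq\lim_\alpha\colim_{f|_{U_\alpha}>0}FC^*(\Gamma_{df},L)$. This is an exchange of a limit with colimits, including $\lim^1$ terms, and your argument does not supply it.

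Step~2 is where the whole content of the proposition lies, and the mechanism you propose does not deliver it. Write the smoothed maximum as $f_2+\phi(f_1-f_2)$, with $\phi$ a convex smoothing of $s\mapsto\max(s,0)$. Near $\{f_1=f_2\}$ its differential is $df_2+\phi'(f_1-f_2)\,d(f_1-f_2)$ with $0\leq\phi'\leq1$, which sweeps the whole segment from $df_2$ to $df_1$. Making $|d(f_1-f_2)|$ large lengthens this segment rather than pushing it out of $DT^*N$; for instance near $\partial U_2\cap U_1$ one has $df_1\approx0$. So additional intersections with $L$ can appear there.

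More importantly, even a bijection between generators of $FC^*(\Gamma_{d\max},L)\oplus FC^*(\Gamma_{d\min},L)$ and of $FC^*(\Gamma_{df_1},L)\oplus FC^*(\Gamma_{df_2},L)$ would not show that $FC^*(\Gamma_{d\max},L)\to FC^*(\Gamma_{df_1},L)\oplus FC^*(\Gamma_{df_2},L)\to FC^*(\Gamma_{d\min},L)$ is a fiber sequence. These maps are continuation maps between Lagrangians that differ by unbounded amounts, e.g.\ $\max(f_1,f_2)$ versus $f_1$ on $U_2\setminus U_1$, so ``small-energy strips are constant'' says nothing about them.

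What is missing is a genuine Floer-theoretic exact triangle $\Gamma_{d\min(f_1,f_2)}\to\Gamma_{df_1}\oplus\Gamma_{df_2}\to\Gamma_{d\max(f_1,f_2)}$ in the filtered Fukaya category. It must have action control and be compatible with the continuation maps of Proposition~\ref{Prop-4.1}, for example via a surgery or cobordism exact triangle. It is the Floer counterpart of the sheaf sequence $0\to k_{\{t\geq\min(f_1,f_2)\}}\to k_{\{t\geq f_1\}}\oplus k_{\{t\geq f_2\}}\to k_{\{t\geq\max(f_1,f_2)\}}\to0$. It cannot be borrowed from the sheaf side, because the paper's later comparison results (sections over conormals, full faithfulness of $Q$) rely on this very proposition. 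Your fallback of $\epsilon$-pullback squares for every $\epsilon$ presupposes exactly this estimate, so it does not close the gap.
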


The following is a slight generalization of {~\cite[Coro. 8.2]{Viterbo-Sheaves}}

\begin{cor}\label{cor: FC for conormal}Let $Z$ be a closed submanifold in $N$. Then for a closed exact Lagrangian $L$, we have
\[\Fuk(v^*Z,L)\xrightarrow{\simeq} \Gamma(Z ,Q(L))\]
is an equivalence in $\Sh_{\tau\geq0} ( \Real)$ that is functorial with respect to $L$. \end{cor}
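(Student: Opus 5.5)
We deduce the statement from Proposition~\ref{prop: sheaf=presheaf}, by writing the conormal $v^*Z$ as a colimit of graphs and comparing the two colimits. Following the construction of Proposition~\ref{prop: sheaf=presheaf}, we interpret
\[
\Fuk(v^*Z,L)\coloneqq\colim_{f} FC^*(\Gamma_{df},L),
\]
where $f$ runs over the smooth functions such that $f|_{Z}<0$ near $Z$ and $f\to$ large positive values away from $Z$. Concretely, we take $f = d_Z^2/\delta - \eta$, cut off suitably, with $\delta,\eta\to 0$. Here $d_Z$ is the distance to $Z$, and $d\big(d_Z^2\big)$ is quadratic in the normal directions. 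As $\delta\to 0$, the graphs $\Gamma_{df}$ converge inside $DT^*N$ to $v^*Z$ (in the Hausdorff sense on compact subsets). Sheaf-theoretically this is $k_Z\simeq \colim_{\epsilon} k_{U_\epsilon}$ for the tubular neighbourhoods $U_\epsilon$ of $Z$: since $Z$ is closed, we have $\Gamma(Z,\F)\simeq\colim_{U\supset Z}\Gamma(U,\F)$ for any sheaf $\F$.

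The first step applies Proposition~\ref{prop: sheaf=presheaf} for each open $U=U_\epsilon$. This gives equivalences in $\Sh_{\tau\geq0}(\Real)$,
\[
\colim_{f|_{U}>0\text{-type}} FC^*(\Gamma_{df},L)\isoto \Gamma(U_\epsilon,Q(L)),
\]
which are natural in $U$ since both sides come from the Kan extension $\operatorname{Lan}_j$ followed by sheafification, and these are functors. Passing to the colimit over $\epsilon\to0$ and using the standard fact that sections on a closed subset of a manifold are the colimit over open neighbourhoods, the right-hand side becomes $\Gamma(Z,Q(L))$.

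The second step is a cofinality argument on the left. The double colimit, over $\epsilon$ and over the functions $f$ adapted to $U_\epsilon$, equals the colimit over the poset of all functions adapted to some neighbourhood of $Z$. We then show that the functions of the form $d_Z^2/\delta-\eta$ (suitably modified) are cofinal in this poset, so this colimit is the one defining $\Fuk(v^*Z,L)$. Functoriality in $L$ is automatic: every map is obtained by applying functors to the Yoneda module $\mathcal Y_L$, and $L\mapsto \mathcal Y_L$ is a functor $\Fuk^\infty\to\fMod(\Fuk^\infty)$. Colimits are computed in the presentable $\infty$-category $\Derf(k)$, and they land in $\Sh_{\tau\geq0}(\Real)$ because that subcategory is closed under colimits, being the essential image of the cut-off.

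The main obstacle is to justify that the colimit defining $\Fuk(v^*Z,L)$ really computes the Floer complex with the conormal, in the filtered sense. This is where the statement goes beyond \cite[Coro. 8.2]{Viterbo-Sheaves}, which treats only a point or an open set. Here $Z$ has arbitrary codimension, and the filtration levels must stabilise. To handle this we would use compactness of $L$. For $\delta$ small, the intersections $\Gamma_{df}\cap L$ lie near $L\cap v^*Z$, and their actions $f(x)-f_L(x)$ converge to $-f_L$ on $L\cap v^*Z$. The continuation maps $e_{fg}$ have action $0$. Hence the colimit has bounded interleaving distance to the Floer complex $FC^*(v^*Z,L)$ for a small cut-off, and passing to the limit gives an actual filtered quasi-isomorphism, using the semicontinuity of persistence modules.
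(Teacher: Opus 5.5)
Your first two steps are what the paper does when $L\pitchfork v^*Z$: apply Proposition~\ref{prop: sheaf=presheaf} on the tubular neighbourhoods $U_\epsilon$, pass to the colimit over $\epsilon$, and use cofinality on the function side. The paper identifies that case, for closed submanifolds of any codimension, with \cite[Coro. 8.2]{Viterbo-Sheaves}. So your diagnosis that the novelty is the codimension is off. What the corollary adds is the removal of the transversality assumption, and your proof never separates the two cases. There are also two smaller slips. First, your family $f=d_Z^2/\delta-\eta$ (negative near $Z$, large away from it) has the opposite sign to the functions with $f|_U>0$ that the Kan extension along $f\mapsto\{f>0\}$ uses, and that you use yourself in the first step. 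The cofinality claim therefore only holds for $\eta-d_Z^2/\delta$, suitably cut off. Second, $\Sh_{\tau\geq0}(\Real)$ is not closed under colimits computed pointwise in $\Derf(k)$, since its image there is cut out by a semicontinuity (limit) condition. The colimits have to be taken in $\Sh_{\tau\geq0}(\Real)$, i.e.\ sheafified.

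The genuine gap is your last step, identifying $\colim_f FC^*(\Gamma_{df},L)$ with $\Fuk(v^*Z,L)$. Knowing that the generators $\Gamma_{df}\cap L$ accumulate near $L\cap v^*Z$ with converging actions gives no chain map between the two filtered complexes and says nothing about the differentials, so no interleaving bound follows. Even an interleaving distance tending to $0$ would still need a natural comparison map and a non-degeneracy argument to give an equivalence. More seriously, when $L$ and $v^*Z$ are not transverse, $L\cap v^*Z$ can be positive-dimensional or degenerate, and the $\Gamma_{df}$ need not be transverse to $L$. In that case $\Fuk(v^*Z,L)$ is only defined through perturbation data (holomorphic clusters as in \cite{Ambrosioni-Filtered-Fukaya}), so the generator/action picture you rely on is not available.

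The paper closes this with a different idea, exactly as in the proof of Proposition~\ref{prop: fully faithful quantization}. It makes the pair transverse by $C^2$-small Hamiltonian perturbations $\varphi_n$ (moving $v^*Z$ by $\varphi_n$ is the same as moving $L$ by $\varphi_n^{-1}$), applies the transverse case, and lets $n\to\infty$. On the sheaf side, the quantizations $\K_n\circ Q(L)$ of the moved Lagrangians converge to $Q(L)$ in interleaving distance \cite{Asano-Ike}, and $\Gamma(Z,\cdot)$ commutes with the translations, hence is $1$-Lipschitz. On the Floer side, the Hofer-type estimate of \cite{Biran-Cornea-Zhang} gives the corresponding convergence. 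You need this step, or an honest continuation/Hofer comparison between $\Gamma_{df}$ and $v^*Z$ that does not use transversality. Note also that your ``automatic'' functoriality in $L$ only covers the colimit side, not its identification with the Floer complex of $v^*Z$.
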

\begin{proof}In the transverse case, we apply Proposition~\ref{prop: sheaf=presheaf} to a $\delta$-tubular neighborhood $U_\delta$ of $Z$, and then take the colimit over $\delta>0$. This is exactly {~\cite[Coro. 8.2]{Viterbo-Sheaves}}.

In the non-transverse case, we use a similar argument to Proposition~\ref{prop: fully faithful quantization} below by perturbing $v^*Z$ by $C^2$-small Hamiltonians.
\end{proof}
\begin{rem}\label{rem: stalk gamma limit} We consider the particular case $Z= \{x\}$ for some point $x\in N$. Then Proposition~\ref{cor: FC for conormal} implies that $\Gamma(Z ,Q(L))=i_x^{-1}Q(L) \in \Sh_{\tau\geq 0}(\Real)$ (with $i_x\colon \Real \rightarrow N\times \Real$) is constructible for generic $x$ since $\Fuk(v^*Z,L)=FC^*(T^*_xN,L)$, and is a $\gamma_\tau$-limit of constructible sheaves for all $x$ since 
$\Fuk(T^*_{x_n}N,L)$ converges to $\Fuk(T^*_xN,L)$ when $x_n$ tends to $x$. 
\end{rem}

The sheaf convolution $\cstar$ and its adjoint $\homst$ define closed symmetric monoidal structures on $\Sh_{\tau\geq0} (N\times \Real)\hookrightarrow \Sh  (N\times \Real)$ that is $\Sh_{\tau\geq0} (\Real)$-linear by the identification $\Sh_{\tau\geq0} (N\times \Real)\simeq \Sh(N;\Sh_{\tau\geq0} ( \Real))$. Then {~\cite[Prop. 9.3, 9.8]{Viterbo-Sheaves}} and their proof imply the following result.

\begin{prop}\label{prop: dualizibility} For any closed exact Lagrangian $L$, the sheaf quantization $Q(L)$ is dualizable with respect to $\cstar$ whose dual is identified with $Q(-L)$. In particular, for any $\F\in \Sh_\geq (N\times \Real)$, we have equivalence $Q(-L)\cstar \F \simeq \homst (Q(L),\F)$ that is functorial with respect to $\F$.
\end{prop}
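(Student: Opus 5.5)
We reduce the statement to the Floer-theoretic duality established in~\cite[Prop. 9.3, 9.8]{Viterbo-Sheaves} and then promote it from the homotopy category to the $\infty$-categorical, $\Sh_{\tau\geq0}(\Real)$-linear setting.

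\textbf{Step 1: construct the coevaluation and evaluation.} Write $L^- = -L$ for the image of $L$ under the fiberwise antipodal map, with primitive $-f_L\circ(-1)$. We need two maps:
\[
\eta\colon k_{\Delta_N\times[0,\infty)} \to Q(L)\boxstar Q(-L), \qquad \epsilon\colon Q(-L)\cstar Q(L) \to k_{N\times[0,\infty)} .
\]
Here $\boxstar$ denotes the relative convolution on $N\times N\times\Real$.

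To build $\eta$ and $\epsilon$ we use that $Q$ is defined through Yoneda modules on graphs $\Gamma_{df}$ and a left Kan extension. A section of $Q(-L)\cstar Q(L)$ over $U\times(-\infty,c)$ is then computed, via Proposition~\ref{prop: sheaf=presheaf} and the Künneth formula for family Floer complexes, by $\colim_f FC^*(\Gamma_{df}, L\,\#\,(-L))$ on the product.

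The unit of $L$ in $\Fuk(L,L)$, together with the Lagrangian correspondence identifying $L\times(-L)$ composed along the diagonal with $0_N$, gives a canonical class. As in~\cite{Viterbo-Sheaves}, we use the PSS/Künneth identification $FC^*(L\times \overline{L}, \Delta)\simeq FC^*(L,L)$ and the fact that the reduced microsupport of $Q(-L)\cstar Q(L)$ is contained in the zero section. This pins down $\epsilon$, because a sheaf in $\Tam_{0_N}(N)$ with the right stalks is a shift of $k_{N\times[0,\infty)}$ once we know its stalks at points. Those stalks are computed using Corollary~\ref{cor: FC for conormal} with $Z=\{x\}$, and they equal $HF(T^*_xN\cap\ldots)$, reducing to $k$ via Abouzaid's/Kragh's results that $L$ is a homotopy equivalence with Maslov zero.

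\textbf{Step 2: check the triangle identities.} We must show that
\[
Q(L)\to Q(L)\cstar Q(-L)\cstar Q(L)\to Q(L)
\]
is homotopic to the identity. Both sides lie in $\Tam_{L}(N)$. By the fully faithful quantization property (the statement after this one, Proposition~\ref{prop: fully faithful quantization}), endomorphisms of $Q(L)$ in $\Tam$ are computed by $\Fuk(L,L)$. It therefore suffices to check the identity on the unit class in $HF^0(L,L;0)$, which is a Floer gluing argument: it is the composition of the unit and counit triangles.

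Dualizability then yields the natural equivalence $Q(-L)\cstar\F\simeq\homst(Q(L),\F)$, by the standard adjunction argument in any closed symmetric monoidal $\infty$-category. The $\Sh_{\tau\geq0}(\Real)$-linearity comes from the identification $\Sh_{\tau\geq0}(N\times\Real)\simeq\Sh(N;\Sh_{\tau\geq0}(\Real))$. Functoriality in $\F$ is automatic.

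\textbf{Main obstacle.} The main difficulty is Step 1, namely producing $\eta$ and $\epsilon$ coherently rather than just on cohomology. Here we would rely on the fact that morphisms between objects of $\Tam_{0_N}(N)$ supported near the zero section are discrete: the relevant mapping spaces are $H^*(N)$ in degree $0$ part. A morphism in the homotopy category therefore suffices, and the higher coherence reduces to the $1$-categorical statement proved in~\cite{Viterbo-Sheaves}.
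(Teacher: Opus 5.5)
Your headline reduction matches the paper's, which simply obtains the statement from \cite[Prop.~9.3, 9.8]{Viterbo-Sheaves} and their proofs. The argument you build around that citation, however, fails at several concrete points.

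\textbf{The structure maps in Step~1 have the wrong shape.} The unit of $\cstar$ on $\Sh_{\tau\geq0}(N\times\Real)$ is $k_{N\times[0,\infty)}$. A coevaluation must therefore be a morphism $k_{N\times[0,\infty)}\to Q(L)\cstar Q(-L)$ of sheaves on $N\times\Real$. Your $\eta\colon k_{\Delta_N\times[0,\infty)}\to Q(L)\boxstar Q(-L)$ lives on $N\times N\times\Real$; that is the shape of the unit of an adjunction between kernels, not a coevaluation for $\cstar$. As a result, the composite $Q(L)\to Q(L)\cstar Q(-L)\cstar Q(L)\to Q(L)$ in Step~2 is not even defined from your data.

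\textbf{The claim that $\RS(Q(-L)\cstar Q(L))\subset 0_N$, hence that this sheaf is a shift of $k_{N\times[0,\infty)}$, is false.} The convolution bound only gives the fiberwise difference set $\{(x,p'-p)\mid (x,p),(x,p')\in L\}$. This is the zero section only when $L$ is a graph. Concretely, $\cstar$ is pointwise in $N$, so the restriction of $Q(-L)\cstar Q(L)$ to $\{x\}\times\Real$ is the convolution of the persistence modules $FC^*(T^*_xN,-L)$ and $FC^*(T^*_xN,L)$. Suppose $T^*_xN$ meets $L$ transversally in $2m+1\geq 3$ points. Then each of these modules has $m$ finite bars, and a summand such as $k_{[a,b)}\cstar k_{[c,\infty)}\simeq k_{[a+c,b+c)}$ is nonzero torsion. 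The Abouzaid--Kragh theorem only gives $HF(T^*_xN,L)\simeq k$ \emph{without} filtration, i.e.\ modulo torsion in the Tamarkin category. So it cannot pin down $\epsilon$ through a stalk computation.

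\textbf{Step~2 is circular.} In the paper, Proposition~\ref{prop: fully faithful quantization} is proved by first rewriting $\Gamma(N,\homst(Q(L_1),Q(L_2)))$ as $\Gamma(N,Q(-L_1)\cstar Q(L_2))$, using exactly the present proposition. You cannot use it to verify the triangle identities.

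\textbf{The obstacle you single out is not the real one.} Dualizability in a symmetric monoidal $\infty$-category is a property that can be checked in the homotopy category; duality data are defined there (see \cite[\S4.6.1]{HigherAlgebra}). So no higher coherence has to be produced. What actually must be supplied is:
\begin{enumerate}
\item an identification of the internal dual $\homst(Q(L),k_{N\times[0,\infty)})$ with $Q(-L)$, which gives $\epsilon$ as the evaluation map;
\item a proof that the canonical map $\homst(Q(L),k_{N\times[0,\infty)})\cstar\F\to\homst(Q(L),\F)$ is an equivalence for all $\F$.
\end{enumerate}
Taking $\F=Q(L)$ in the second point already produces the coevaluation, as the preimage of $\id_{Q(L)}$. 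The paper does not reprove these facts; it takes the statement from \cite[Prop.~9.3, 9.8]{Viterbo-Sheaves} and their proofs.
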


The following is a slight generalization of {~\cite[Prop. 9.9]{Viterbo-Sheaves}}

\begin{prop}\label{prop: fully faithful quantization}The sheaf quantization functor $Q$ induces functorial equivalences
\[\Fuk(L_1,L_2)\xrightarrow{\simeq }\Gamma(N,\homst(Q(L_1),Q(L_2)))  \]
in $\Sh_{\tau\geq0} ( \Real)$ for all pairs $(L_1,L_2)$. \end{prop}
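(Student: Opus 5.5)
We reduce to Proposition~\ref{prop: dualizibility} and Corollary~\ref{cor: FC for conormal}, applied to the zero section, via a graph-transport trick. By Proposition~\ref{prop: dualizibility},
\[
\homst(Q(L_1),Q(L_2)) \simeq Q(-L_1)\cstar Q(L_2),
\]
naturally in $L_2$. The next step is to identify $Q(-L_1)\cstar Q(L_2)$ with the sheaf quantization of a single Lagrangian in a cotangent bundle. Consider $T^*(N\times N)$ and the product Lagrangian $(-L_1)\times L_2$. Let $\delta\colon N\to N\times N$ be the diagonal and $s\colon N\times N\times\Real\times\Real\to N\times N\times \Real$ the sum map $(x,y,t,t')\mapsto(x,y,t+t')$. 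Then
\[
Q(-L_1)\cstar Q(L_2) = \delta^{-1} Rs_!\bigl(Q(-L_1)\boxtimes Q(L_2)\bigr).
\]
Taking global sections over $N$, we get
\[
\Gamma\bigl(N,\homst(Q(L_1),Q(L_2))\bigr) \simeq \Gamma\bigl(\Delta_N,\, Rs_!(Q(-L_1)\boxtimes Q(L_2))\bigr).
\]

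The plan is to show that $Rs_!(Q(-L_1)\boxtimes Q(L_2))$ is the quantization $Q_{N\times N}\bigl((-L_1)\times L_2\bigr)$ on $N\times N$. This should follow from a Künneth formula for the presheaf description in Proposition~\ref{prop: sheaf=presheaf}, using product functions $f(x)+g(y)$ and the Floer Künneth isomorphism. Alternatively, one can invoke the uniqueness of quantization: both sides are simple sheaves with reduced microsupport $(-L_1)\times L_2$ that are compatible with Tamarkin's condition.

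Granting this, Corollary~\ref{cor: FC for conormal} with $Z=\Delta_N$ gives
\[
\Gamma\bigl(\Delta_N, Q((-L_1)\times L_2)\bigr) \simeq \Fuk\bigl(v^*\Delta_N,\, (-L_1)\times L_2\bigr).
\]
The standard Floer identification $FC^*(v^*\Delta_N,(-L_1)\times L_2)\simeq FC^*(L_1,L_2)$ holds as filtered complexes, since intersection points correspond and the actions match because $v^*\Delta$ carries the zero primitive.

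It remains to check two things. First, the resulting equivalence agrees with the map induced by the functor $Q$ on morphisms. Second, it is natural in both variables. For naturality, we use that every identification above is built from the Yoneda/left Kan extension construction \eqref{eq:defQ-1}--\eqref{eq:defQ-4} and is therefore functorial. On $H^0$ of sections over $(-\infty,t)$, the maps coincide with those of \cite[Prop. 9.9]{Viterbo-Sheaves}. A map in $\Sh_{\tau\geq0}(\Real)$ that is a quasi-isomorphism on each filtration level is an equivalence, so it suffices to compare there.

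Non-transverse pairs are handled as in Corollary~\ref{cor: FC for conormal}. We perturb $L_2$ by a $C^2$-small Hamiltonian $\varphi_\eta$. The Fukaya side is invariant up to the continuation quasi-isomorphism, since filtered continuation maps for $C^2$-small perturbations shift action by at most $\|\eta\|$. On the sheaf side, $Q(\varphi_\eta(L_2))$ is related to $Q(L_2)$ by a Guillermou–Kashiwara–Schapira kernel. Taking the colimit as $\eta\to0$, both shifts vanish, and semi-continuity (via \cite[Theorem A-(2)]{APT-Kuwagaki-Zhang}) guarantees the limit is the exact filtered object rather than just an interleaving.

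\textbf{Main obstacle.} The hardest step is the Künneth/product compatibility $Rs_!(Q(-L_1)\boxtimes Q(L_2))\simeq Q((-L_1)\times L_2)$ at the $\infty$-categorical, filtered level, and its compatibility with the map induced by $Q$. I would first try to prove it directly on the presheaf level, using cofinality of product functions among functions on $N\times N$ positive on a product of opens.
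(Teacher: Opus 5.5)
Your proposal follows the paper's proof essentially step for step. The paper also uses dualizability to rewrite the hom as $Q(-L_1)\boxstar Q(L_2)$ restricted to $\Delta_N$, identifies this with $Q((-L_1)\times L_2)$ by uniqueness of quantization, and applies the conormal corollary with $Z=\Delta_N$ together with $FC(\Delta_N,(-L_1)\times L_2)=FC(L_1,L_2)$. It treats non-transverse pairs by a $C^2$-small perturbation and an interleaving-limit argument, and cites Viterbo for compatibility with the map induced by $Q$. The only differences are cosmetic: you perturb $L_2$ instead of $L_1$, and you spell out why semi-continuity turns a distance-zero limit into a genuine equivalence.
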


In particular, the sheaf quantization functor induces a fully faithful functor $\Derf(\Fuk^\infty)_0\rightarrow \Sh_{\tau\geq0}(N\times \Real)$ where $\Derf(\Fuk^\infty)_0$ is the full subcategory of Yoneda modules in $\Derf(\Fuk^\infty)$. 
\begin{rem}The homotopy category of $\Derf(\Fuk^\infty)_0$ is exactly the filtered version of Donaldson-Fukaya category since over fields two $A_\infty$ modules are quasi-isomorphic if and only they are homotopy equivalent. 
\end{rem}
\begin{proof}We first explain the equivalences for each pair $(L_1,L_2)$.

The proposition {~\cite[Prop. 9.9]{Viterbo-Sheaves}} prove the statement for transversely intersected pairs $(L_1,L_2)$. To the reader's convenience, let us provide its proof here. 

By dualizability Proposition~\ref{prop: dualizibility}, the right-hand side is computed by $\Gamma(N,Q(-L_1)\cstar Q(L_2)) \simeq \Gamma(\Delta_N,Q(-L_1)\boxstar Q(L_2))$. The uniqueness of sheaf quantization shows that $Q(-L_1)\boxstar Q(L_2)\simeq Q(-L_1 \times L_2)$, where the later is the quantization in $T^*(N\times N)$. Then we apply Corollary~\ref{cor: FC for conormal} to $Z=\Delta_N$, and identify $FC(\Delta_N,-L_1 \times L_2)=FC( L_1 , L_2)$.

For general pairs $(L_1,L_2)$, we pick a sequence $C^2$-small Hamiltonians $H_n$ for $\varphi_n=\varphi_{H_n}$ such that $\varphi_{n}(L_1)$ are transversely intersected with $L_2$.

On the sheaf side, we take $\mathcal{K}_{n}$ the sheaf quantization of $H_n$ (at time $1$). By \cite[Appendix B]{Ike_2019} we have $Q(\varphi_{n}(L_1))\simeq \mathcal{K}_{n}  (Q(L_1))$, and then \[\Gamma(N,\homst(Q(\varphi_{n}(L_1)),Q(L_2))) \simeq \Gamma(N,\homst(\mathcal{K}_{n}  (Q(L_1)),Q(L_2))) \] is convergent to $\Gamma(N,\homst(Q(L_1),Q(L_2))) $ with respect to the interleaving distance by \cite[Prop. 4.11, Thm. 4.16]{Asano-Ike}. Similarly, on the Fukaya side, we have $\Fuk(\varphi_{n}(L_1),L_2)$ is convergent to $\Fuk(L_1,L_2)$ by the same Hofer distance estimation via applying \cite[Theo. 3.-4-(i)]{Biran-Cornea-Zhang} to $\Fuk$. 

Then the equivalence for the general pairs $(L_1,L_2)$ follows from the transversely intersected pairs and the limit argument.

To see that $Q$ induces a fully faithful functor $\Derf(\Fuk^\infty)_0\rightarrow \Sh_{\tau\geq0}(N\times \Real)$, we shall check that $Q$ induces homotopy equivalence between the ($\Sh_{\tau\geq0} ( \Real)$-enriched) hom objects of $\Derf(\Fuk^\infty)_0$ and of $\Sh_{\tau\geq0}(N\times \Real)$. Now, we explain the ingredients and the proof for the claim.

By the enriched Yoneda lemma, the hom objects of $\Derf(\Fuk^\infty)_0$ is $\Fuk(L_1,L_2)  \in \Sh_\geq ( \Real)\hookrightarrow \Derf(k)$. And the hom object of $\Sh_{\tau\geq0}(N\times \Real)$ is $\Gamma(N,\homst(\F_1,\F_2))$. At this moment, we already know $Q$ is a functor, which induces the natural map $\Fuk(L_1,L_2)\rightarrow\Gamma(N,\homst(Q(L_1),Q(L_2))) $. It remains to check it is an equivalence on cohomology.

We describe the natural map on cohomology: For a morphism $x\colon  L_1\rightarrow L_2$, we have a morphism $\mathcal{Y}_{L_1}\rightarrow \mathcal{Y}_{L_2}$ of Yoneda modules induced by the pair of pants product, and induce a morphism  $Q(L_1)\rightarrow Q(L_2)$ since $Q$ are sheafification of Yoneda modules (restricted to exact graphs). 

However, it is shown that in \cite[Section 10]{Viterbo-Sheaves} the equivalences we described in the first part are exactly induced by the pair of pants product on cohomology.
\end{proof}

\section{Strategy of proof}\label{sec:strategyproof}

We now explain how we will use the quantization functor $Q$ to prove the main theorem.  We first sum up the results recalled so far and add some notations. Then we explain that it will be useful to consider the projector from $\Tam(N)$ to $\Tam_{DT^*N}(N)$ to obtain the result.

\addtocontents{toc}{\SkipTocEntry}
\subsection*{Some reminder}
We recall that we consider $\Fuk^\infty(DT^*N)$ as an $\infty$-category.  We consider the filtered derived category \[\Derf(\Fuk^\infty(DT^*N)) = \Fun(\Fuk^\infty(DT^*N)^{op}, \Derf(k))\] and we let $\mathcal Y^F\colon \Fuk^\infty(DT^*N) \to \Derf(\Fuk^\infty(DT^*N))$, be the $\Derf(k)$-enriched Yoneda embedding. For $x\in N$, $a\in \Real$ we denote by $V_{(x,a)}$ the fiber at $x$ with primitive $a$ of $\lambda_{\mid V_x}=0$ and we define $\V_{(x,a)} \in \fMod(\Fuk^\infty(DT^*N))$ as the functor $L' \mapsto FC^*(L', V_{(x,a)})$, and we use the same notation $\V_{(x,a)}$ to represent the corresponding object in $\Derf(\Fuk^\infty(DT^*N))$. 

We have recalled in~\S\ref{sec:quantization} the quantization functor $Q\colon \Fuk^\infty(DT^*N) \to \Tam(N)$.  The functor $Q$ commutes with the translation functors, $T_c$ on $\Fuk^\infty(DT^*N)$ and $T_{c*}$ on $\Tam(N)$. By construction $\rho(SS(Q(L))$ is $L$, and $Q$ actually takes values in $\Tam_{DT^*N}(N)$.  The composition with $Q$ induces an exact functor $Q^* \colon \Derf(\Tam_{DT^*N}(N)) \to \Derf(\Fuk^\infty(DT^*N))$, $E \mapsto E \circ Q$.

We let $i\colon \Tam_{DT^*N}(N) \to \Tam(N)$ be the embedding and $i^*$
be the functor induced on the filtered derived categories.
We let $\mathcal{Y}^s \colon \Tam(N) \to \Derf(\Tam(N))$, $\mathcal{Y}^s_1 \colon \Tam_{DT^*N}(N) \to \Derf(\Tam_{DT^*N}(N))$ be the $\Derf(k)$-enriched Yoneda functors. Note that so far we have four Yoneda functors: $\mathcal Y$ valued in $\Mod^{fil}(\Fuk^\infty (DT^*N))$, $\mathcal Y^F$ valued in $\Derf(\Fuk^\infty (DT^*N))$ and $\mathcal Y^s, \mathcal Y_1^s$ on the Tamarkin side, which we regard $\mathcal{Y}^F, \mathcal{Y}^s_1, \mathcal{Y}^s$ as $\Derf(k)$-enriched Yoneda functors. We obtain the commutative diagram
\[
\xymatrix@C=1.5cm{
  \Fuk^\infty(DT^*N)  \ar[r]^Q  \ar[d]_{\mathcal{Y}^F}  & \Tam_{DT^*N}(N) \ar[r]^i \ar[d]_{\mathcal{Y}^s_1} & \Tam(N)  \ar[d]_{\mathcal{Y}^s}  \\
  \Derf(\Fuk^\infty(DT^*N)) & \Derf(\Tam_{DT^*N}(N)) \ar[l]^{Q^*} & \Derf(\Tam(N)) \ar[l]^-{i^*}
  }
\]

\begin{prop} We have equivalences in $\Derf(\Fuk^\infty(DT^*N))$:
\begin{equation}\label{eq:fibers}
  Q^* i^* \mathcal{Y}^{s}(k_{\{x\}\times [a,+\infty)}) \simeq \V_{(x,a)} .
\end{equation}
\end{prop}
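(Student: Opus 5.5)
Unwinding the definitions, we need to show that for every $L\in\Fuk^\infty(DT^*N)$ there is an equivalence in $\Derf(k)$ (identified with $\Sh_{\tau\ge0}(\Real)$)
\[
\Hom^{\Derf}(Q(L), k_{\{x\}\times[a,+\infty)}) \simeq FC^*(L, V_{(x,a)}),
\]
and that this equivalence is natural in $L$. Here $\Hom^{\Derf}$ denotes the $\Derf(k)$-enriched hom of $\Tam(N)$, namely $\Gamma(N,\homst(\cdot,\cdot))$. We may reduce to $a=0$: both sides commute with the translation functors, since $T_{a*}k_{\{x\}\times[0,\infty)}=k_{\{x\}\times[a,\infty)}$ and $V_{(x,a)}$ is $V_{(x,0)}$ with its primitive shifted by $a$. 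The translation functors on modules agree by Remark~\ref{rem:preserve distance}(2).

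For the sheaf side we use Proposition~\ref{prop: dualizibility}. It gives
\[
\homst(Q(L),k_{\{x\}\times[0,\infty)})\simeq Q(-L)\cstar k_{\{x\}\times[0,\infty)}.
\]
Convolution with $k_{\{x\}\times[0,\infty)}$ is the same as $i_{x*}i_x^{-1}$ in the $N$-direction, since $k_{[0,\infty)}$ is the unit for $\cstar$ on $\Sh_{\tau\ge0}(\Real)$. So taking $\Gamma(N,\cdot)$ yields
\[
\Gamma(N,Q(-L)\cstar k_{\{x\}\times[0,\infty)})\simeq i_x^{-1}Q(-L)=\Gamma(\{x\},Q(-L)).
\]
Corollary~\ref{cor: FC for conormal} with $Z=\{x\}$, where $\nu^*Z=T^*_xN=V_{(x,0)}$, then gives $\Gamma(\{x\},Q(-L))\simeq FC^*(V_{(x,0)},-L)$. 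Finally we use the standard duality $FC^*(V_x,-L)\simeq FC^*(L,V_x)$ under the antisymplectic involution $(q,p)\mapsto(q,-p)$, which fixes fibers. The filtrations match because the primitive of $-L$ is $-f_L$. Note that this step may introduce a degree shift or action sign that must be tracked against the filtration conventions. Alternatively, we can avoid $-L$ altogether: apply the proof of Proposition~\ref{prop: fully faithful quantization} directly, computing $\Gamma(\Delta_N, Q(-L)\boxstar k_{\{x\}\times[0,\infty)})$ as Floer cochains of $\Delta_N$ against $-L\times V_x$, which equal $FC^*(L,V_x)$.

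Naturality in $L$ is the main obstacle, since the equivalences above are produced objectwise. We plan to handle it as follows. Write $k_{\{x\}\times[0,\infty)}$ via Proposition~\ref{prop: sheaf=presheaf} as a limit over small balls $U_\delta\ni x$ of the sections functors $\Gamma(U_\delta,\cdot)$, so that $\Hom(Q(L),k_{\{x\}\times[0,\infty)})$ is computed by $\colim_\delta\colim_f\Fuk(\ldots)$. Both sides are then built from the bimodule $B$ of Proposition~\ref{Prop-4.1} together with $\mu^{k|1|n}$, which act functorially in $L$. Concretely, the Yoneda-type module $L\mapsto\colim_f FC^*(\Gamma_{df},\cdot)$ and $\V_{(x,0)}$ are both $\Fuk$-modules. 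We would construct a module map $\V_{(x,0)}\to Q^*i^*\mathcal Y^s(k_{\{x\}\times[0,\infty)})$ using the triangle products $FC^*(L,V_x)\otimes FC^*(\Gamma_{df},L)\to FC^*(\Gamma_{df},V_x)$ and the fact that $FC^*(\Gamma_{df},V_x)$ is one-dimensional, generated by the point $(x,df(x))$. This gives the morphism in $\Derf(\Fuk^\infty)$. It is an equivalence because it is objectwise an equivalence by the computation above, checking that the pair-of-pants map induces the objectwise equivalence as in \cite[Section 10]{Viterbo-Sheaves}. For non-generic pairs (fibers not transverse to $L$) we use the perturbation and limit argument from the proof of Proposition~\ref{prop: fully faithful quantization}.
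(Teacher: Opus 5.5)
This is correct and is essentially the paper's argument. The paper likewise identifies $\Fuk^\infty(DT^*N)$ with its opposite via $L\mapsto -L$, rewrites $FC^*(L',V_{(x,a)})$ as $FC^*(V_{(x,a)},-L')$, and applies Corollary~\ref{cor: FC for conormal} with $Z=\{x\}$ to land on the stalk of $Q(-L')$ at $(x,a)$; your use of Proposition~\ref{prop: dualizibility} together with $Q(-L)\cstar k_{\{x\}\times[0,\infty)}\simeq i_{x*}i_x^{-1}Q(-L)$ makes explicit the step the paper compresses into ``represented by $k_{\{x\}\times[a,+\infty)}$''.

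For naturality the paper only asserts functoriality via ``evident natural module morphisms'', so your last paragraph goes beyond it. As written it mixes two pictures: the pairing $FC^*(\Gamma_{df},L)\otimes FC^*(L,V_x)\to FC^*(\Gamma_{df},V_x)\cong k$ produces maps into limits of duals of $FC^*(\Gamma_{df},L)$, via $\Hom(Q(L),k_{\{x\}\times[0,\infty)})\simeq\Hom(i_x^{-1}Q(L),k_{[0,\infty)})$, rather than into a colimit of Floer complexes, and the colimit description only applies after passing to $-L$. Its compatibility with your $-L$ chain would still have to be checked.
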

\begin{proof}By identifying $\Fuk^\infty(DT^*N)$ with its opposite $\Fuk^\infty(DT^*N)^\op$ via $L\mapsto -L$, we can identify the Yoneda modules $\V_{(x,a)}\colon L' \mapsto FC^*(L', V_{(x,a)})$ with the (anti-)coYoneda modules, $L' \mapsto FC^*(V_{(x,a)},-L')$. On the other hand,
$FC^*(V_{(x,a)},-L')\simeq  \Gamma(T_{x}N ,T_{a}Q(-L')) \simeq (Q(-L'))_{(x,a)}$, by Corollary~\ref{cor: FC for conormal}, where the right-hand side is represented by $k_{\{x\}\times [a,+\infty)}$ in $\Tam(N)$. We notice that all equivalences are functorial in $L'$ with respect to evident natural module morphisms. 
\end{proof}

The categories $\Fuk^\infty(DT^*N)$, $\Tam_{DT^*N}(N)$ and $\Tam(N)$ come with translation functors, hence interleaving distances.  By Remark~\ref{rem:preserve distance} their derived categories inherit translation functors and distances.  We denote all these distances by $\gamma$ but when we want to be more precise we let $\gamma^F$, $\gamma^s_1$, $\gamma^s$ be the distances on $\Fuk^\infty(DT^*N)$, $\Tam_{DT^*N}(N)$, $\Tam(N)$ and we let $\hat\gamma^F$, $\hat\gamma^s_1$, $\hat\gamma^s$ be the distances on the derived categories.

The functors $\mathcal{Y}^F, \mathcal{Y}^s_1, \mathcal{Y}^s$ are fully faithful and commute with the translation functors, hence they preserve the distances. However $Q^*$ and $i^*$ are only $1$-Lipschitz (although $Q, i$ are isometric embeddings):
\begin{equation}
  \label{eq:Q_i_Lipschitz}
  \hat\gamma^F(Q^*(F), Q^*(G)) \leq \hat\gamma^s_1(F, G),
  \qquad
  \hat\gamma^s_1(i^*(F), i^*(G)) \leq \hat\gamma^s(F, G) .
\end{equation}

\addtocontents{toc}{\SkipTocEntry}
\subsection*{Projector}

We recall that we want to prove the following density statement: for a given $L \in \Fuk^\infty(DT^*N)$ and a given $\varepsilon > 0$ there exists finitely many fibers $\V_{(x_1, a_1)},\dots,\V_{(x_p, a_p)}$ and an iterated cone $C$ of their Yoneda modules $\V_{(x_1, a_1)},\dots,\V_{(x_p, a_p)}$ such that $\hat\gamma^F(\mathcal{Y}^F(L), C) < \varepsilon$.  In view of the isomorphism~\eqref{eq:fibers} and the Lipschitz bounds~\eqref{eq:Q_i_Lipschitz} it would be enough to have such a density result in $\Tam(N)$, that is, ``there exists an iterated cone $\F$ of the $k_{\{x_i\}\times [a_i,+\infty)}$'s such that $\gamma^s(i Q (L), \F) < \varepsilon$''.  However this latter statement cannot be true because there is no non-zero morphism in $\Tam(N)$ between $k_{\{x_i\}\times [a_i,+\infty)}$ and $k_{\{x_j\}\times [a_j,+\infty)}$ as soon as $x_i \not= x_j$.

The crucial point is that the composition $i^* \mathcal{Y}^s$ factors through $\Tam_{DT^*N}(N)$.  Indeed the functor $i$ has both a left and a right adjoint, by~\cite{Kuo-wrappedsheaves} and~\cite{Kuo-Shende-Zhang-Hochschild}. More precisely \cite{Kuo-wrappedsheaves}  proves the existence of adjoints of the embedding $\Sh_Z(N) \to \Sh(N)$ (denoted $W^+$ and $W^-$ in loc. cit. and constructed by a wrapping procedure) in the case where $N$ is compact and $Z$ is a closed conic subset of $T^*N$ ($\Sh_Z(N)$ denotes the category of sheaves with microsupport contained in $Z$). These results are extended to our setting (the non compact manifold $N\times \Real$) in~\cite{Kuo-Shende-Zhang-Hochschild}. Let us denote by $P'_{DT^*N}$ the right adjoint to $i$. 

We then have, for any $\F \in \Tam_{DT^*N}(N)$ and $\G \in \Tam(N)$
\begin{equation}\label{adjunction-formula}
i^* \mathcal{Y}^s(\G)(\F) = \homst_{\Tam(N)}( i(\F), \G) \simeq \homst_{\Tam_{DT^*N}(N)}(\F, P'_{DT^*N}(\G))
\end{equation}
which shows that $i^* \mathcal{Y}^s(\G)$ is represented in $\Tam_{DT^*N}(N)$ by $P'_{DT^*N}(\G)$.  We thus also have the commutative diagram
\[
\xymatrix@C=1.5cm{
  \Fuk^\infty(DT^*N)  \ar[r]^Q  \ar[d]_{\mathcal{Y}^F}  & \Tam_{DT^*N}(N) \ar[d]_{\mathcal{Y}^s_1} & \Tam(N)  \ar[d]_{\mathcal{Y}^s} \ar[l]_-{P'_{DT^*N}}  \\
  \Derf(\Fuk^\infty(DT^*N)) & \Derf(\Tam_{DT^*N}(N)) \ar[l]^{Q^*} & \Derf(\Tam(N)) \ar[l]^-{i^*}
  }
\]
Let us set $\W_{(x,a)} = P'_{DT^*N}(k_{\{x\} \times [a, \infty)})$.  Then~\eqref{eq:fibers} translates into
\begin{equation}\label{eq:QprimeW is V}
  Q^*(\W_{(x,a)}) \simeq \V_{(x,a)}
\end{equation}
and Theorem~\ref{thm:density Fukaya} follows from the following density statement for sheaves. 

\begin{thm}[Density Theorem for sheaves]\label{thm:density sheaves}
Let $\F \in \Tam_{DT^*N}(N\times\Real)$ such that for any $x\in N$ the sheaf $\F \otimes k_{\{x\} \times\Real}$ is a $\gamma$-limit of constructible sheaves on $\Real$. Then for any $\varepsilon >0$, there are points $(x_i)_{i\in \{1, \ldots,l\}}$, real numbers $(a_i)_{i\in \{1, \ldots,l\}}$ and $C \in \langle \W_{(x_1,a_1)}, \ldots, \W_{(x_l,a_l)} \rangle$ in ${\Tam_{DT^*N}(N\times\Real)}$ such that $\gamma^s (\F,C) < \varepsilon$.
\end{thm}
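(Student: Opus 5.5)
We follow the strategy sketched in the introduction: a \v{C}ech decomposition of $\F$ over a fine cover of $N$, followed by an approximation of each piece by a stalk along a single fiber, and then wrapping back into $\Tam_{DT^*N}(N)$ with $P'_{DT^*N}$.

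\textbf{Step 1: \v{C}ech resolution.} Fix $\delta>0$ small with respect to $\varepsilon$. Choose a finite cover of $N$ by geodesically convex closed balls $\overline{U_j}$, $j\in I$, of radius $<\delta$. This gives an exact resolution
\[k_N \to C_0\to C_1\to\cdots\to C_m, \qquad C_i=\bigoplus_{|J|=i+1}k_{U_J^{cl}},\]
where each $U_J^{cl}$ is either empty or a compact convex set of diameter $<2\delta$. Tensoring with $\F$ writes $\F=\F\otimes k_{N\times\Real}$ as an iterated cone (of length $m+1\le n+1$ after grouping) on the sheaves $\F\otimes k_{U_J^{cl}\times\Real}$.

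Since $\F\in\Tam_{DT^*N}(N)$, these pieces satisfy $\F\otimes k_{U_J^{cl}\times\Real}\simeq i\,P'_{DT^*N}$ of themselves only after projecting. So we apply the exact functor $P'_{DT^*N}$, which is right adjoint to $i$. Because $i$ is fully faithful, $P'_{DT^*N}\circ i\simeq\id$. We get that $\F\simeq P'_{DT^*N}(\F)$ is an iterated cone on the objects $P'_{DT^*N}(\F\otimes k_{U_J^{cl}\times\Real})$.

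\textbf{Step 2: localizing to a point.} For each $J$ pick $x_J\in U_J^{cl}$; it may be taken in any prescribed dense set. The key estimate is
\[\gamma^s\bigl(P'_{DT^*N}(\F\otimes k_{U_J^{cl}\times\Real}),\,P'_{DT^*N}(\F\otimes k_{\{x_J\}\times\Real})\bigr)\le c\,\delta.\]
This is the main obstacle. We first prove it before projecting, using the microsupport bound $|p|\le\tau$ on $\SS(\F)$. The idea is that a sheaf with $\RS\subset DT^*N$ has restriction maps along a convex set $Z$ of diameter $d$ that are interleaved with a $T_{d}$ translate of the stalk. Concretely, by the microlocal Morse lemma applied to $\F$ on a neighborhood of $Z$, for $x\in Z$ the morphisms
\[\F\otimes k_{Z\times\Real}\to \F\otimes k_{\{x\}\times\Real}\]
and a backward morphism $\F\otimes k_{\{x\}\times\Real}\to T_{d}(\F\otimes k_{Z\times\Real})$ should both exist. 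The backward morphism is obtained via convolution with the kernel $k_{\{(y,t)\,:\,|y-x|\le -t\}}$-type cone sheaf, i.e.\ a sheaf quantization of the fiberwise translation, using that convolution with $k_{\{|y-x|\le t\}}$ acts on $\Tam_{DT^*N}$ like $T_d$ up to $d$.

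We expect this is cleanest after applying $P'_{DT^*N}$, where hom-spaces are computed by $\homst(i\G,\cdot)$ for $\G\in\Tam_{DT^*N}(N)$. It then suffices to show that, for each such $\G$, the complexes $\Gamma(Z\times\Real,\homst(\G,\F))$ and the stalks at $x$ are $2d$-interleaved naturally in $\G$. This follows from $\RS(\homst(\G,\F))\subset D_2T^*N$ and the standard sheaf energy estimate along contractible $Z$ (the ``$\gamma$-Lipschitz in $x$'' property of stalks).

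\textbf{Step 3: one-dimensional pieces.} Write $\F\otimes k_{\{x\}\times\Real}=i_{x*}F_x$ with $F_x\in\Sh_{\tau\ge0}(\Real)$. By hypothesis $F_x$ is $\delta$-close to a constructible $G_x$, which we can truncate, using the Tamarkin condition, to a finite iterated cone on sheaves $k_{[a,b)}$ and $k_{[a,\infty)}$. Each $k_{[a,b)}$ is the cone of $k_{[b,\infty)}\to k_{[a,\infty)}$, so $G_x$ is an iterated cone of the $k_{[a_i,\infty)}$. Pushing forward by $i_x$ and applying $P'_{DT^*N}$ turns these into the $\W_{(x,a_i)}$, and the distance is not increased by Remark~\ref{rem:preserve distance}(1).

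\textbf{Step 4: assembling.} Finally, we need that iterated cones are Lipschitz in their entries. If each of the $m+1$ pieces is moved by at most $c\delta$ and the connecting morphisms are transported along the interleavings, then the cone moves by at most $C(m)\delta$. The transport should be done in the stable $\infty$-category, where interleavings of cones follow from functoriality. Choosing $\delta$ small then gives $\gamma^s(\F,C)<\varepsilon$.
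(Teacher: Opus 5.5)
Your proposal is correct in outline. Its skeleton is the paper's: \v{C}ech decomposition over a fine closed cover, replacing each piece by a single fiber, approximating fibers by iterated cones of the $k_{[a,\infty)}$, assembling, and projecting with $P'_{DT^*N}$. The genuine difference is how you obtain the localization estimate of Step~2, and where the bookkeeping is done.

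The paper stays in $\Tam(N)$ and changes the distance instead. It uses the flow of the continuous Hamiltonian $h^+=\max\{\tau,\|p\|\}$, which moves supports geodesically. It then proves $\gamma_{h^+}(\F\otimes k_{\{x\}\times\Real},\F\otimes k_{Z\times\Real})\le 4\varepsilon$ in Lemma~\ref{lem:approx fibre boule}, using kernels of continuous Hamiltonians, external products, restriction to the diagonal, Lemma~\ref{lem: distance point ball} and Proposition~\ref{prop:compare_dist}. It assembles the whole iterated cone for $\gamma_{h^+}$ and projects only at the very end. There $P'_{DT^*N}\circ\K_h^s\simeq P'_{DT^*N}$ (Lemma~\ref{lem:P=PK}) gives $\gamma_\tau(P'_{DT^*N}\F,P'_{DT^*N}\G)\le\gamma_{h^+}(\F,\G)$ (Proposition~\ref{prop:bound gammatau gammaK}).

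You project first and test against objects $\G\in\Tam_{DT^*N}(N)$. By adjunction, the estimate becomes an interleaving, natural in $\G$, between $\Gamma(Z\times\Real;\homst(\G,\F))$ and $\homst(\G,\F)|_{\{x\}\times\Real}$, for a sheaf with $\RS\subset D_2T^*N$. The microlocal Morse lemma on the cones $\{(y,s)\colon s<t-(2+\eta)\,d(x,y)\}$ provides this interleaving; uniqueness of the extension uses that $Z$ is star-shaped about $x$, as your convex $U_J^{cl}$ are. The enriched Yoneda lemma in $\Tam_{DT^*N}(N)$ then turns it into an interleaving of the projected objects. Step~3 reduces to the $1$-Lipschitz property of $P'_{DT^*N}\circ i_{x*}$ for $\gamma_\tau$.

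This bypasses the continuous-Hamiltonian machinery entirely, at the cost of two points you leave implicit.
\begin{itemize}
\item The identification $\homst(i\G,\F\otimes k_{Z\times\Real})\simeq\Gamma(Z\times\Real;\homst(\G,\F))$ and its analogue at $x$. These hold because $Z\times\Real$ and $\{x\}\times\Real$ are non-characteristic for sheaves with $\|p\|\le C\tau$. For instance $\F\otimes k_{Z\times\Real}\simeq Rj_*(\F|_{Z^\circ\times\Real})$, with $j$ the open inclusion, since $\SS(k_{Z^\circ\times\Real})\subset\{\tau=0\}$.
\item Naturality in $\G$ of the interleaving maps and of the homotopies to $\tau$. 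This is automatic if everything is built from restriction and extension maps of $\homst(\G,\F)$.
\end{itemize}
The paper's route, in exchange, yields a comparison inequality valid for arbitrary objects of $\Tam(N)$.

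Two smaller remarks.
\begin{itemize}
\item The ``before projecting'' version sketched at the start of Step~2 is false for the ordinary translation. In $\Tam(N)$ any composite $\F\otimes k_{Z\times\Real}\to\F\otimes k_{\{x\}\times\Real}\to T_d(\F\otimes k_{Z\times\Real})$ vanishes on $(N\setminus\{x\})\times\Real$, whereas $\tau_d$ does not, already for $\F=k_{N\times[0,\infty)}$. This is precisely why the paper replaces $\gamma_\tau$ by $\gamma_{h^+}$ before projecting, and your final argument rightly does not rely on it.
\item In Step~4, cones are not interleaved by bare functoriality. Maps induced on cones by homotopy-commutative squares are not canonical, and their composite need not be $\tau$. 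One needs the octahedral argument of Lemmas~\ref{lem:distance cone0}--\ref{lem:approx iterated cone}, which loses a factor $8$ per step and takes cones on translates $T_{a_i}$ of the pieces; your unspecified constant $C(m)$ absorbs this.
\end{itemize}
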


To apply it to Theorem~\ref{thm:density Fukaya}, we note that by Remark~\ref{rem: stalk gamma limit}, $Q(L) \otimes k_{\{x\} \times\Real}$ is a $\gamma$-limit of constructible sheaves on $\Real$ for each $x$.

\begin{rem}
The hypothesis on the sheaf $\F$ in Theorem~\ref{thm:density sheaves} is weaker than ``$\F$ is a $\gamma$-limit of constructible sheaves''. Indeed, since we assume that its microsupport is contained in $\rho^{-1}(DT^*N)$ (which can be seen as a quantitative version of ``non-characteristic with respect to the fibers $\{x\} \times\Real$''), we can bound the distance on $\{x\} \times\Real$ by the distance on $N \times \Real$ using Proposition~\ref{prop:compare_dist} below.
\end{rem}

\section{Distances on the category of sheaves}

In this section we consider a manifold $M$, which will be either $N$ or $N\times\Real$, with $N$ a closed manifold.
We recall and extend some results about interleaving distances on $\Sh(M)$.  The interleaving distance for sheaves was first introduced in~\cite{K-S-distance} and some variations are considered in~\cite{Asano-Ike} and~\cite{Guillermou-Viterbo}.  Any non-negative homogeneous Hamiltonian function on $T^*M \setminus 0_M$ induces a translation functor on $\Sh(M)$.  This was first noticed by Tamarkin~\cite{Tamarkin} in the special case $M = N\times \Real$ with $T_c$ the direct image by the translation in the $\Real$ direction.

\subsection{Sheaf distance for a continuous norm}
Let $h\colon T^*M\setminus 0_M \to \Real$ be a smooth function which is positively homogeneous in the sense that $h(x, \lambda p) = \lambda h(x,p)$ for $\lambda>0$.  We set for short $\Sh_{h\geq 0}(M) = \Sh_{\{(x,p)\mid h(x,p)\geq0\}}(M)$.  

To such an $h$ we associate a distance $\gamma_h$ on $\Sh_{h\geq 0}(M)$ defined in the following way.  We let $\varphi_h$ be the Hamiltonian flow of $h$ and $\K_{\varphi_h} \in \Sh(M^2 \times \Real)$ the sheaf defined by $\varphi_h$ in~\cite{G-K-S}. Very often we write for short $\K_h = \K_{\varphi_h}$. We recall that $\K_{\varphi_h}$ is defined so that $SS(\K_{\varphi_h}^a)$ is the graph of $\varphi_h^{-a}$, where $\K_{\varphi_h}^a$ is the restriction of $\K_{\varphi_h}$ to $M^2\times \{a\}$ -- more precisely (use~\cite[Lem. A.1]{G-K-S}, applied to an autonomous Hamiltonian), away from the zero section,
\begin{equation}
\label{eq:microsupport_K_vraphi_h}
  SS(\K_{\varphi_h}) = \big\{ (\varphi_h(x, p), x, -p, s, -h(x,p))  \mid  (x,p) \in T^*M, s \in\Real \big\} .
\end{equation}
Here we point out that the order of composition of sheaves can be misleading: if $f\colon X \to Y$ is a function, with graph $\Gamma_f \subset X\times Y$, and $y\in Y$, then $k_{\Gamma_f} \circ k_{\{y\}} = k_{\{f^{-1}(y)\}}$.  So, if we want that $SS(\K_h^s \circ \F) = \phi_h^s(SS(\F))$, we have to be careful that $SS(\K_h^s)$ should be the graph of $(\phi_h^s)^{-1}$.

For a given $a$, the composition with $\K_{\varphi_h}^a$, also sometimes denoted $\K_{\varphi_h}^a \colon \Sh(M) \to \Sh(M)$, $\F \mapsto \K_{\varphi_h}^a(\F) = \K_{\varphi_h}^a \circ \F$, preserves the subcategory $\Sh_{h\geq 0}(M)$.  Moreover, if we consider the whole family parametrized by $a$, the functor $\K_{\varphi_h} \colon \Sh(M)\rightarrow \Sh(M\times \Real)$, $\F \mapsto \K_{\varphi_h} \circ \F$, restricts to a functor $\Sh_{h\geq 0}(M)\rightarrow \Sh_{\alpha \leq 0}(M\times \Real)$, where $\alpha$ is the dual \footnote{The symplectic form on $T^*(M\times \Real)$ is $dp\wedge dq+d\alpha\wedge da$} of $a\in \Real$. This follows from~\eqref{eq:microsupport_K_vraphi_h} since the variable $\alpha$ is given by $-h$ (see also~\cite[Prop. 4.8]{G-K-S}).

The restrictions of $\K_{\varphi_h}^a(-)$ to $\Sh_{h\geq 0}(M)$ (for all $a$) come with the natural Tamarkin morphisms $\tau_{a,b}^h(\F) \colon \K_{\varphi_h}^a(\F) \to \K_{\varphi_h}^b(\F)$ for $a\leq b$. Let us recall briefly their construction. On the category $\Sh_{\alpha \leq 0}(M\times \Real)$, the restriction to $M\times \{a\}$ can be computed by $\F|_{a}\simeq \pi_{M!}(\F\otimes k_{(-\infty, a)}[1])$ as can be seen by applying the functor $\pi_{M!}(\F\otimes -)$ to the triangle $k_{(-\infty, a]} \to k_{\{a\}} \to k_{(-\infty, a)}[1] \to[+1]$ and using the fact that $\Gamma_c(\Real; \G) \simeq 0$ for a sheaf $\G \in \Sh_{\alpha \leq 0}(\Real)$ with support contained in $(-\infty, a]$. Then the open inclusion $(-\infty,a)\subset (-\infty,b)$ induces a natural morphism $\tau_{a,b}(\F)\colon  \F|_{a}\rightarrow \F|_{b}$ for $a\leq b$ and $\tau_{a,b}^h(\F)$ is constructed as
\[\xymatrix@C=3cm{
  \K_{\varphi_h}(\F)|_{a} \ar[d]^{\simeq} \ar[r]^-{\tau_{a,b}(\K_{\varphi_h}(\F))} & \K_{\varphi_h}(\F)|_{b} \ar[d]^{\simeq}   \\
   \K_{\varphi_h}^a(\F)  \ar[r]^-{\tau_{a,b}^h(\F)} & \K_{\varphi_h}^a(\F)
}\]

As noted in~\cite{Guillermou-Viterbo}, the data $(\K_{\varphi_h}^a, \tau_{0,a}^h )_{a\in \Real}$ defines an $\arbR$-action on $\Sh_{h\geq 0}(M)$ (since $\tau_{a,b}$ and $\K_{\varphi_h}^a$ do),   and it defines an interleaving type pseudo-distance on $\Sh_{h\geq 0}(M)$ that we denote by $\gamma_h$: for $\F,\G \in \Sh_{h\geq 0}(M)$,
\begin{equation} 
  \label{eq:def_distance}
  \begin{aligned}
   \gamma_h(\F,\G) &= \inf \Big\{a+b \mid a,b \geq 0,\; \exists u\colon\F \to \K_{\varphi_h}^b(\G), v\colon \G \to \K_{\varphi_h}^a(\F), \\
                    &  \K_{\varphi_h}^{a}(u) \circ v \colon \G \to \K_{\varphi_h}^{a+b}\G, \; \K_{\varphi_h}^{b}(v)\circ u\colon \F \to \K_{\varphi_h}^{a+b}\F  \text{are homotopic to $\tau^h_{0,a+b}(-)$} \Big\}
  \end{aligned}
\end{equation}

\medskip

It will be useful in this paper to extend the definition of $\gamma_h$ to the case where $h$ is only continuous.  For example, for two homogeneous functions $h\colon T^*M\setminus 0_M \to \Real$, $h'\colon T^*M'\setminus 0_N \to \Real$ we define the sum $h+h'$ by $(h+h')(x,p, x',p') = h(x,p) + h'(x',p')$ on $(T^*M\setminus 0_M) \times (T^*M'\setminus 0_{M'})$ and extend it by continuity to $T^*(M\times M') \setminus 0_{M\times M'}$.  In general $h+h'$ is not a $C^1$ function.

To define the distance $\gamma_h$ it is enough to know the sheaf $\K_{\varphi_h}$. We claim that a similar sheaf can be naturally defined for a continuous $h$, by approximating $h$ with smooth functions.  Let us introduce some notations.  We recall that we assume that $M$ is closed, or $M = N\times \Real$ with $N$ closed.  We let $\C^\infty_{ph}(T^*M)$ (resp. $\C^0_{ph}(T^*M)$) be the space of smooth (resp. continuous) positively homogeneous ($ph$ stands for positively homogeneous) functions on $T^*M$; in the case $M = N \times \Real$ we ask that the functions $h$ in $\C^\infty_{ph}(T^*M)$, or $\C^0_{ph}(T^*M)$, do not depend on the variable $t$ on $\Real$ (so for $\tau\neq 0$, $h(x,p,t,\tau) = \tau h_0(x, p/\tau)$ for some function on $T^*N$).

Let us pick a metric on $N$. In case $M= N \times \Real$ we endow $M$ with the product metric.
\begin{lem}\label{lem:def_Kh}
  We consider the non-negative function $g$ on $T^*(M^2 \times \Real) \setminus 0_{M^2\times \Real}$ defined by $g(x$, $p$, $x'$, $p'$, $a, \alpha) = (\sqrt{1+a^2} \|(p, p')\|^2 + \alpha^2)^{1/2}$ (we replace $\|(p,p')\|$ by $\|(p, \tau, p', \tau')\|$ in the case $M=N\times\Real$). Let $h \in \C^0_{ph}(T^*M)$ be given and let $(h_n)_n$ be a sequence in $\C^\infty_{ph}(T^*M)$ $C^0$-converging to $h$. Then the sequence $\K_{\varphi_{h_n}}$ is Cauchy with respect to the distance $\gamma_g$. The limit
  \[
  \K_h \coloneqq \gamma_g-\lim_n \K_{\varphi_{h_n}}
\]
is well-defined and independent of the sequence $(h_n)_n$.
\end{lem}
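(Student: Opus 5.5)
The key estimate is a Lipschitz bound: for $h,h'\in \C^\infty_{ph}(T^*M)$,
\[
\gamma_g(\K_{\varphi_h},\K_{\varphi_{h'}}) \leq C\,\|h-h'\|_{C^0},
\]
where $\|\cdot\|_{C^0}$ is the sup norm on the unit cosphere bundle (for the chosen metric) and $C$ is a universal constant. Granting this, $(\K_{\varphi_{h_n}})_n$ is $\gamma_g$-Cauchy because $(h_n)_n$ is $C^0$-Cauchy. Given two approximating sequences $(h_n)$ and $(h'_n)$, the interlaced sequence $h_1,h'_1,h_2,h'_2,\dots$ also $C^0$-converges to $h$, so it is Cauchy too. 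Hence both sequences have the same limit, which gives independence. Existence of the limit itself requires completeness of $\Sh_{g\geq 0}(M^2\times\Real)$ for $\gamma_g$ (or at least existence of limits of Cauchy sequences). I would take this from the known completeness results for Tamarkin-type interleaving distances, as in Guillermou--Viterbo and Asano--Ike. There a limit is built as a homotopy colimit (a telescope) of the $\K_{\varphi_g}^{-c_n}$-shifted terms along the interleaving maps. The limit is determined only up to $\gamma_g$-distance zero, and that is the sense in which ``well-defined'' should be read.

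For the Lipschitz bound, I would write $\K_{\varphi_{h'}}$ as a composition $\K_{\varphi_{h}}\circ \K_{\psi}$, where $\psi$ is the Hamiltonian isotopy $\varphi_h^{-s}\circ\varphi_{h'}^s$, with a time-dependent generating function of the form $(h'-h)\circ\varphi_h^{s}$. This function is $C^0$-bounded by $\|h-h'\|$, using homogeneity and compactness of the cosphere bundle of $N$ (in the case $M=N\times\Real$, the independence from $t$ makes the sup finite). I would then use the GKS-type energy estimate, namely \cite[Prop. 4.11, Thm. 4.16]{Asano-Ike} already used in the proof of Proposition~\ref{prop: fully faithful quantization}: a Hamiltonian isotopy of $C^0$-norm $\le\delta$ moves a sheaf by interleaving distance $\le\delta$. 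The interleaving must be measured with respect to the $\arbR$-action given by $\K_{\varphi_g}$ on $M^2\times\Real$, not the Tamarkin translation. Here the specific choice of $g$ enters. The factor $\sqrt{1+a^2}$ lets $g$ dominate the $a$-dependent discrepancy $a(h-h')$ in the $p$-directions. Indeed, by~\eqref{eq:microsupport_K_vraphi_h}, at parameter $a$ the microsupports differ by $\varphi_h^{a}$ versus $\varphi_{h'}^{a}$, while in the $\alpha$-direction the difference $|h-h'|$ is controlled by the $\alpha^2$ term. The plan is then to compare the two kernels $\K_{\varphi_h}$ and $\K_{\varphi_{h'}}$ as sheaves on $M^2\times\Real$. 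They are related by a Hamiltonian isotopy of $T^*(M^2\times\Real)$, obtained by lifting $\psi$ together with the induced shift in $\alpha$, and this isotopy is dominated pointwise by $C\|h-h'\|\,g$. A comparison principle then converts this domination into a $\gamma_g$-interleaving: if $|H_s|\le \delta g$, then $\gamma_g(\F,\K_H(\F))\le\delta$. I would prove this principle via the monotonicity of $\K$ in the Hamiltonian, comparing $H_s$ with $\pm\delta g$ and using $\K_{\delta g}^a=\K_g^{\delta a}$.

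The main obstacle is this comparison step. The isotopy relating $\K_{\varphi_h}$ and $\K_{\varphi_{h'}}$ is not compactly supported and its size grows with $|a|$, so the uniform bounds in the cited energy estimates do not apply directly. The weight $\sqrt{1+a^2}$ is designed to absorb exactly this growth, and I would first try to establish the monotonicity $H\le H'\Rightarrow$ natural morphism $\K_H\to\K_{H'}$ for such homogeneous, non-compactly supported Hamiltonians, by exhaustion and cut-offs in $a$ using the conic structure.
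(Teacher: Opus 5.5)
Your Cauchy and existence steps follow the paper's outline: a Hofer-type estimate, then completeness from \cite[Prop. 6.22]{Guillermou-Viterbo} or Asano--Ike. The genuine gap is uniqueness. You conclude that the limit is determined only up to $\gamma_g$-distance zero, and you declare that this is what ``well-defined'' means. That is weaker than the statement. $\gamma_g$ is only a pseudo-distance, so distance zero does not give an isomorphism. The paper reads the lemma as ``$\K_h$ is uniquely defined up to equivalence'' and uses it that way immediately afterwards: in $\K_h^0\simeq k_{\Delta_M}$, in the microsupport bound of Lemma~\ref{lem:bound SS Kh}, and in the isomorphism $\K_{h+h'}\simeq\K_h\boxtimes_\Real\K_{h'}$ of Lemma~\ref{lem:kernel product}. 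All of these only make sense for an object determined up to isomorphism.

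The missing idea is the paper's real-analytic argument:
\begin{enumerate}
\item Fix a real analytic structure on $M$ and a real analytic metric.
\item Approximate $h$ by real analytic $h_n$, so that the $\K_{\varphi_{h_n}}$ are constructible and the limit $\K$ is a $\gamma_g$-limit of constructible sheaves.
\item By \cite[Prop. 6.22]{Guillermou-Viterbo}, $\gamma_g$ is non-degenerate on such limits.
\item Any other limit $\K'$, for any approximating sequence, is at distance zero from $\K$; your interlacing argument supplies exactly this. By the triangle inequality, $\K'$ is then also a $\gamma_g$-limit of the same constructible $\K_{\varphi_{h_n}}$, hence $\K'\simeq\K$.
\end{enumerate}

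For the Cauchy estimate, your comparison principle is the same idea as the one the paper imports rather than reproves: $|H_s|\le\delta g$ gives a $\gamma_g$-interleaving of size $\delta$, via monotonicity and $\K^a_{\delta g}=\K^{\delta a}_g$. The paper cites \cite[Thm. 1.13 and Lem. 3.1]{Asano-Ike-Kuo-Li25}. It handles your ``main obstacle'' in two moves. First, compactness is used there only to bound $h_n/g$, which still holds since the $h_n$ are $t$-independent. Second, the growth in $a$ is absorbed via $\K^a_{h_n}\simeq\K^1_{ah_n}$ and $\gamma_{|a|g'}=|a|^{-1}\gamma_{g'}$.

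Two steps in your own version need repair.
\begin{enumerate}
\item Your bound $|(h'-h)\circ\varphi_h^s|\le\|h-h'\|$ on the unit cosphere bundle is not correct, because $\varphi_h^s$ does not preserve that bundle. You pick up the factor $\sup_{\|z\|=1}\|\varphi_h^s(z)\|$, which depends on the derivatives of $h$ and on $s$. This factor is not uniform along a sequence converging only in $C^0$, so no universal constant $C$ comes out of this decomposition.
\item As literally written, at $\alpha=0$ one has $g=(1+a^2)^{1/4}\|(p,p')\|$, which cannot absorb a discrepancy linear in $a$. The paper's argument, which writes $\gamma_{|a|g'}$, uses a weight $\sim|a|$ on $\|(p,p')\|$, and yours would need the same.
\end{enumerate}
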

\begin{proof}
  The first assertion follows for example from~\cite[Thm. 1.13]{Asano-Ike-Kuo-Li25} which says that,  for the function $g' = \|(p,p')\|$, $\gamma_{g'}(k_{\Delta_M}, \K_{h_n}^1)$ is bounded by the Hofer norm of $h_n$. We note that in loc. cit. the authors consider the compact case and they consider time $1$ and not all times together as we do here. Now their proof extends to our non compact situation which is $M = N\times \Real$, taking into account the fact that the Hamitlonians are independent of the variable $t$: the proof is actually given in Lemma~3.1 of~\cite{Asano-Ike-Kuo-Li25} and the reader can see that the compactness hypothesis is only used to have finite bounds on $h_n/g$. The result for time $1$ also extends because we have rescalled the metric by the factor $\sqrt{1+a^2} \sim |a|$ ($|a|\to \infty$). Indeed $\K_{h_n}^a \simeq \K_{ah_n}^1$, hence $\gamma_{|a|g'}(k_{\Delta_M}, \K_{h_n}^a) = (1/|a|)\gamma_{g'}(k_{\Delta_M}, \K_{ah_n}^1)$ is bounded by the Hofer norm of $h_n$.
  
  The fact that Cauchy sequences have a limit follows from~\cite[Prop. 6.22]{Guillermou-Viterbo} or~\cite{Asano-Ike24_Completeness}.  It is clear that the distance between two different limits (possibly for different Cauchy sequences $(h_n)_n$) is $0$. The distance $\gamma_g$ is degenerate but, if we endow $M$ with a real analytic structure and choose a real analytic metric, then \cite[Prop. 6.22]{Guillermou-Viterbo} says that $\gamma_g$ is non-degenerate when we restrict to sheaves that are limit of constructible sheaves.  Now we can assume that the functions $h_n$ are real analytic, which proves that one limit $\K_h$ (hence all) is a limit of constructible sheaves, hence uniquely defined.
\end{proof}

Lemma~\ref{lem:def_Kh} says that $\K_h$ is uniquely defined up to equivalence.  We can also give a functorial construction of $\K_h$ on $M^2 \times [0,\infty)$ as follows, which will also prove that $\K_h$ is independent of the choice of a metric.  We consider $\C^\infty_{ph}(T^*M)$ and $\C^0_{ph}(T^*M)$ as posets for the usual order $\leq$.  If $h \leq h'$ are smooth positively homogeneous functions, then we have a natural morphism, for any $a\geq0$, $\K_{\varphi_{h}}^a \to \K_{\varphi_h'}^a$.  We even have a morphism $\K_{\varphi_{h}}|_{M^2 \times [0,\infty)} \to \K_{\varphi_h'}|_{M^2 \times [0,\infty)}$.  By~\cite{Kuo-wrappedsheaves} the morphisms $\K_{\varphi_{h}}|_{M^2 \times [0,\infty)} \to \K_{\varphi_h'}|_{M^2 \times [0,\infty)}$ organize into a functor from the poset $\C^\infty_{ph}(T^*M)$ to the category $\Sh(M^2\times [0,\infty))$.  For a given $h_0 \in \C^0_{ph}(T^*M)$ we let $\C^\infty_{ph, < h_0}(T^*M)$ be the subposet of $\C^\infty_{ph}(T^*M)$ of functions $h$ such that $h(z) < h_0(z)$ for any $z$.  Any increasing sequence $(h_n)_n$ in $\C^\infty_{ph, < h_0}(T^*M)$ which converges to $h_0$ is cofinal in $\C^\infty_{ph, < h_0}(T^*M)$. By~\cite[Lem. 6.21]{Guillermou-Viterbo} we also have $\K_{h_0}|_{M^2 \times [0,\infty)} \simeq \colim_n \K_{\varphi_{h_n}}|_{M^2 \times [0,\infty)}$. Hence we obtain a functorial description of $\K_{h_0}|_{M^2 \times [0,\infty)}$
\begin{equation}
  \label{eq:def_Kh_continuous_colimit}
  \K_{h_0}|_{M^2 \times [0,\infty)} = \colim_h \K_{\varphi_{h}}|_{M^2 \times [0,\infty)}, 
\end{equation}
where $h$ runs over $\C^\infty_{ph, < h_0}(T^*M)$.  From~\eqref{eq:def_Kh_continuous_colimit} we see that $h \mapsto \K_{h}|_{M^2 \times [0,\infty)}$ extends the functor $h \mapsto \K_{\varphi_{h}}|_{M^2 \times [0,\infty)}$, initially defined on $\C^\infty_{ph}(T^*M)$, to a functor from $\C^0_{ph}(T^*M)$ to $\Sh(M^2\times [0,\infty))$. We remark that~\eqref{eq:def_Kh_continuous_colimit} still holds if we let $h$ runs over the poset $\C^0_{ph, < h_0}(T^*M)$ which we define like $\C^\infty_{ph, < h_0}(T^*M)$.
For negative times we have a similar formula:
$\K_{h_0}|_{M^2 \times (-\infty,0]} = \colim_h \K_{\varphi_{h}}|_{M^2 \times (-\infty,0]}$,
where now $h$ runs over the poset $\C^0_{ph, > h_0}(T^*M)$ of functions greater than $h_0$.

\smallskip

Since colimits of sheaves commute with inverse image, the formula~\eqref{eq:def_Kh_continuous_colimit} implies, for any $a \in \Real$, 
\[
\K_{h_0}^a \coloneqq \K_{h_0}|_{M^2 \times \{a\}} \simeq \colim_h \K_{\varphi_{h}}^a
\]
where $h$ runs over $\C^\infty_{ph, < h_0}(T^*M)$ (resp. $\C^\infty_{ph, > h_0}(T^*M)$) when $a\geq 0$ (resp. $a\leq 0$). For $a=0$ we find $\K_{h_0}^0 \simeq k_{\Delta_M}$ as expected.

\begin{lem}\label{lem:bound SS Kh}
  Let $h \in \C^0_{ph}(T^*M)$.  We define $\Gamma_h \subset T^*(M \times \Real)$ by
  \[
  \Gamma_h = \{ (x, -p, a, -h(x,p)) \mid (x,p) \in T^*M, a\in \Real\}.
  \]
Then $SS(\K_h) \subset T^*M \times \Gamma_h$.  In particular the functor $\K_h \colon \Sh(M)\rightarrow \Sh(M\times \Real)$, $\F \mapsto \K_h \circ \F$, restricts to a functor $\Sh_{h \geq 0}(M)\rightarrow \Sh_{\alpha \leq 0}(M\times \Real)$.
\end{lem}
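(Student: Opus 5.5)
Since $\Gamma_h$ is closed (we have $h$ continuous) and conic, I will get the bound on $SS(\K_h)$ by a limit argument from the smooth case, using a version of the statement that behaves well under colimits. I start with the formula \eqref{eq:microsupport_K_vraphi_h}, which holds for smooth $h$. It shows that $SS(\K_{\varphi_{h}}) \subset T^*M \times \Gamma_{h}$ away from the zero section, and the zero section is harmless. For a continuous $h_0$ the smooth-level inclusion is not enough by itself, because a $\gamma_g$-limit or a colimit can enlarge the microsupport. I therefore plan to use a \emph{robust} version: for smooth $h$ with $\|h-h_0\|_{C^0}<\delta$ on the unit cosphere bundle, $SS(\K_{\varphi_h})$ lies in the closed conic set
\[
Z_\delta=\{(y,q,x,-p,a,\alpha)\mid \alpha\le -h_0(x,p)+\delta\|p\|,\ \alpha\ge -h_0(x,p)-\delta\|p\|\}.
\]
The norm $\|p\|$ is taken with the chosen metric (and with $(p,\tau)$ in the case $M=N\times\Real$).

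Next I split into $a\ge 0$ and $a\le 0$. On $M^2\times(0,\infty)$, formula \eqref{eq:def_Kh_continuous_colimit} writes $\K_{h_0}$ as a colimit of $\K_{\varphi_h}$, and the colimit can be computed along an increasing sequence $h_n\to h_0$. It is a filtered colimit, in fact a sequential one. By \cite{K-S} (microsupport of a countable colimit, via the cone of $\bigoplus \to \bigoplus$) we get $SS(\colim_n \K_{h_n}) \subset \overline{\bigcup_{n\ge n_0} SS(\K_{h_n})}$ for every $n_0$. Hence $SS(\K_{h_0}|_{M^2\times(0,\infty)})\subset Z_\delta$ for every $\delta>0$. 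The intersection over $\delta$ of the $Z_\delta$ is exactly $T^*M\times\Gamma_{h_0}$ (together with the zero section), since the $Z_\delta$ decrease to that set. The same argument with decreasing sequences handles $a<0$.

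It remains to control the points over $a=0$. I would use the alternative description of $\K_{h_0}$ as the $\gamma_g$-limit from Lemma~\ref{lem:def_Kh}. The key fact here is that if $\gamma_g(\F,\G)\le\epsilon$ then $SS(\F)$ lies in an $\epsilon$-thickening of $SS(\G)$ measured by $g$. This follows because the interleaving maps factor through $\K_g^{\epsilon}$, whose microsupport is controlled by \eqref{eq:microsupport_K_vraphi_h}; it is the standard argument of \cite{Guillermou-Viterbo}. Letting $n\to\infty$ then gives the bound globally, including over $a=0$. I expect this step, which combines the interleaving estimate with the weight $\sqrt{1+a^2}$ so that the thickening is uniform near $a=0$, to be the main obstacle.

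The last assertion follows from the bound just obtained. For $\F\in\Sh_{h\ge0}(M)$ I will apply the composition estimate of \cite{K-S}. The composition is proper on the support, since $\K_h$ is supported in a set that is proper over $M\times\Real$ by the finite propagation speed inherited from the $h_n$. The estimate gives
\[
SS(\K_h\circ\F)\subset\{(y,q,a,\alpha)\mid \exists (x,p)\in SS(\F)\colon \alpha=-h(x,p)\}.
\]
Since $h\ge0$ on $SS(\F)$, every such point has $\alpha\le 0$. The zero section of $SS(\F)$ contributes only $\alpha=0$.
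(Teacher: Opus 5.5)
Your robust bound $Z_\delta$, the colimit argument over $a>0$ and $a<0$ (using that $\Sh_\Lambda$ is closed under colimits), and the final composition estimate are fine. The step meant to handle $a=0$, however, relies on a false claim. It is not true that $\gamma_g(\F,\G)\le\epsilon$ puts $SS(\F)$ in an $\epsilon$-thickening of $SS(\G)$. The problem is not uniformity near $a=0$ but torsion: an interleaving determines $\F$ only up to small torsion, and small torsion can have large microsupport.

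For instance, fix $z\in X=M^2\times\Real$ and a small $c>0$, and let $E$ be the cone of $\tau_c\colon k_{\{z\}}\to\K_g^c(k_{\{z\}})$. Taking $v=\id$ in \cite[Lem. 6.8 (i)]{Guillermou-Viterbo}, as in the proof of Lemma~\ref{lem:distance cone0}, gives $\gamma_g(E,0)\le 2c$. On the other hand, $\K_g^c(k_{\{z\}})$ is locally constant near $z$, since its microsupport lies over points at $g$-distance $c$ from $z$. Hence $SS(E)\supset T^*_zX$. So $\F=k_X\oplus E$ is $2c$-close to $\G=k_X$, whose microsupport is the zero section, while $SS(\F)$ contains a whole cotangent fiber. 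The fact that the interleaving maps pass through $\K_g^\epsilon$ gives no control on $SS(\F)$.

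What does hold, and what the paper uses, is a statement about the limit of a Cauchy sequence rather than about pairs of nearby objects: $SS(\lim_n\F_n)\subset\liminf_n SS(\F_n)$ (\cite[Prop. 6.26]{Guillermou-Viterbo}, \cite[Ex. 5.7]{K-S}). It works because the limit object is built out of the sequence itself, as a sequential (co)limit of translates of the $\F_n$ by amounts tending to $0$. Its microsupport is therefore governed by the tails of the $SS(\F_n)$, by the same mechanism as in your colimit step. The uniqueness part of Lemma~\ref{lem:def_Kh} ensures that $\K_h$ is this object.

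With that statement, the rough bound $SS(\K_{\varphi_{h_n}})\subset T^*M\times\Gamma_{h_n}$ together with $h_n\to h$ gives $SS(\K_h)\subset T^*M\times\Gamma_h$ on all of $M^2\times\Real$ at once, including over $a=0$. This is the paper's entire proof of the first assertion. It makes your splitting into $a>0$ and $a<0$, and the colimit argument, unnecessary.
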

\begin{proof}
  We pick a sequence $(h_n)_{n\in \N}$ in $\C^\infty_{ph, > h}(T^*M)$ converging to $h$.  We have $SS(\K_h) \subset \liminf SS(\K_{h_n})$ by~\cite[Ex. 5.7]{K-S} (or~\cite[Prop. 6.26]{Guillermou-Viterbo} for details), where $\liminf_j X_j = \left\{x \mid \exists (x_j)_{j\geq 1}, \; x_j\in X_j,\; \lim_j x_j=x\right \}$.  Since $SS(\K_{h_n})$ is given by~\eqref{eq:microsupport_K_vraphi_h}, we have the rough bound $SS(\K_{h_n}) \subset T^*M \times \Gamma_{h_n}$ and we deduce the first assertion.

  Now we have a bound for $SS(\K_h \circ \F)$ given by a ``set theoretic composition'' $SS(\K_h) \circ SS(\F)$ (see~\cite[(1.12)]{G-K-S}). The bound is $\{(x,p, a, \alpha) \mid \exists x', p'$, $(x,p,x',-p', a, \alpha) \in SS(\K_h)$, $(x',p') \in SS(\F)\}$.  The result follows.  
\end{proof}

By Lemma~\ref{lem:bound SS Kh} and the discussion in the beginning of this section, there exist natural morphisms $\tau_a^h(\F) \colon \F \to \K_h^a(\F)$ for $\F\in \Sh_{h\geq 0}(M)$ and $a\geq 0$, which induce an $\arbR$-action on $\Sh_{h\geq 0}(M)$.  We can now define a pseudo-distance as in the smooth case:
\begin{defn}
  Let $h\colon T^*M \to \Real$ be a positively homogeneous continuous function and let $\K_h$ be the sheaf defined in Lemma~\ref{lem:def_Kh}.  Using this sheaf $\K_h$ we define a pseudo-distance $\gamma_h$ on $\Sh_{h\geq 0}(M)$ as in~\eqref{eq:def_distance}.
\end{defn}

\subsection{Some properties of the distance}

\begin{lem}\label{lem:kernel product}
  Let $h\colon T^*M\setminus 0_M \to \Real$, $h'\colon T^*M'\setminus 0_{M'} \to \Real$ be two continuous homogeneous functions.  Then $\K_{h+h'} \simeq \K_{h} \boxtimes_\Real \K_{h'}$, where $\boxtimes_\Real$ denotes the exterior tensor product relative to the factor $\Real$.
\end{lem}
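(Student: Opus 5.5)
We first establish the statement for smooth $h,h'$ and then pass to the continuous case by approximation.

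For smooth $h,h'$ we argue by uniqueness of the Guillermou--Kashiwara--Schapira kernel. The function $h+h'$ is smooth on $(T^*M\setminus 0_M)\times(T^*M'\setminus 0_{M'})$, but its flow is simply the product flow $\varphi_h^a\times\varphi_{h'}^a$, which does make sense on this open set. Consider $\K\coloneqq\K_{\varphi_h}\boxtimes_\Real\K_{\varphi_{h'}}$, that is, $\delta^{-1}(\K_{\varphi_h}\boxtimes\K_{\varphi_{h'}})$ with $\delta$ the diagonal of $\Real$.

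By the microsupport bounds for exterior products and for non-characteristic inverse images, together with \eqref{eq:microsupport_K_vraphi_h}, away from the zero section $SS(\K)$ is contained in
\[
\{(\varphi_h^a(x,p),\varphi_{h'}^a(x',p'),(x,-p),(x',-p'),a,-h(x,p)-h'(x',p'))\},
\]
with the parts where $p=0$ or $p'=0$ included. The non-characteristic condition for $\delta$ holds because on $SS(\K_{\varphi_h})$ the covector $\alpha=-h$ vanishes only if we are on the zero section in the other variables. Moreover $\K|_{a=0}\simeq k_{\Delta_M}\boxtimes k_{\Delta_{M'}}=k_{\Delta_{M\times M'}}$.

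Since the flow is not smooth across $p=0$ or $p'=0$, we cannot directly quote the uniqueness theorem of \cite{G-K-S}. Instead we identify both sides with the colimit description \eqref{eq:def_Kh_continuous_colimit}.

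Take smooth homogeneous $g_n$ on $T^*(M\times M')$ with $g_n<h+h'$ and $g_n\to h+h'$. Also take $h_n<h$ and $h'_n<h'$ smooth with $h_n+h'_n\to h+h'$, and choose $g_n$ squeezed so that $g_n\le h_n+h'_n\le g_{n+1}$. For a smooth product-type Hamiltonian the comparison morphisms of \cite{Kuo-wrappedsheaves} give maps $\K_{g_n}\to\K_{h_n}\boxtimes_\Real\K_{h'_n}\to\K_{g_{n+1}}$ on $a\ge0$. These exist because the monotonicity morphisms only use the microsupport estimate and the restriction to $a=0$, both of which we have.

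Hence the two colimits agree by cofinality. Using \eqref{eq:def_Kh_continuous_colimit} and the fact that $\boxtimes_\Real$ commutes with filtered colimits in each variable, this gives the statement for $a\ge0$. The case $a\le0$ is identical with $>$ in place of $<$. The two halves glue along $a=0$, where both sides are $k_{\Delta}$.

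The main obstacle is constructing these comparison maps with enough coherence. If that proves delicate, we will instead use Lemma~\ref{lem:def_Kh}: show $\gamma_g(\K_{g_n},\K_{h_n}\boxtimes_\Real\K_{h'_n})\to0$ via a Hofer-type bound on $g_n-(h_n+h'_n)$, as in the proof of Lemma~\ref{lem:def_Kh}. Uniqueness of $\gamma_g$-limits among limits of constructible sheaves then finishes the argument.
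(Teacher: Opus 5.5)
Your outline is sound, but it takes a different route from the paper.

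The paper first treats smooth $h,h'$. It uses smooth $k_n$ equal to $h+h'$ away from a shrinking neighbourhood of $\{p=0\}\cup\{p'=0\}$, and the $\liminf$ property of microsupports, to get $SS(\K_{h+h'})\subset\Gamma_{\varphi_h}\times_\Real\Gamma_{\varphi_{h'}}$. A GKS-type uniqueness argument then identifies $\K_{h+h'}$ with the product kernel: compose relative to $\Real$ with $\K_{\varphi_{-h}}\boxtimes_\Real\K_{\varphi_{-h'}}$ and show the result is $k_{\Delta\times\Real}$. Only after that does it use \eqref{eq:def_Kh_continuous_colimit} and the cofinality of $(h_n+h'_n)_n$ to reach continuous $h,h'$.

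You bypass both the computation of $SS(\K_{h+h'})$ and the uniqueness statement. Instead you interleave smooth $g_n$ with the non-smooth $h_n+h'_n$ through continuation maps. This treats continuous $h,h'$ in one pass and produces the isomorphism as a map of colimit diagrams. Compatibility with continuation maps is therefore built in, whereas the paper's passage from the smooth to the continuous case uses it tacitly.

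The price is that you must rerun the GKS/Kuo monotonicity argument for the product kernel $P_n=\K_{h_n}\boxtimes_\Real\K_{h'_n}$. The real verification lies here, not in the phrase ``both of which we have''. You need the comparison kernels, e.g. $P_n\circ_\Real\K_{\varphi_{-g_n}}$ and $\K_{g_{n+1}}\circ_\Real(\K_{\varphi_{-h_n}}\boxtimes_\Real\K_{\varphi_{-h'_n}})$, to have microsupport in $\{\alpha\le 0\}$. This must include the degenerate pieces of $SS(P_n)$, which are not graphs: zero section over the whole support in one factor, graph in the other. It does hold. On such a piece (say $p=0$) the $\alpha$-component is $-h'_n(x',p')$, i.e. minus the continuous extension of $h_n+h'_n$ at $(x,0,x',p')$, and $g_n\le h_n+h'_n\le g_{n+1}$ holds at those covectors as well.

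Your coherence worry is milder than you fear. For $\N$-indexed colimits you only need the ladder to commute up to homotopy, and that follows from the compatibility of the morphisms $\tau$ with relative composition of kernels.

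Two minor points. First, your non-characteristic justification for $\delta$ is reversed. What is needed is that a vanishing $M^2$-covector forces $\alpha=-h(x,p)=0$, which is true by homogeneity. The converse, that $\alpha=0$ forces $p=0$, is what you wrote, and it fails e.g. for $h=\max\{0,\|p\|-r|\tau|\}$. Second, your Hofer fallback is not independent. The bound of \cite{Asano-Ike-Kuo-Li25} concerns kernels of smooth Hamiltonians. Moreover, $\gamma_g$-convergence $P_n\to\K_h\boxtimes_\Real\K_{h'}$ would require a product estimate like Lemma~\ref{lem:bound product}, whose proof relies on the present lemma.
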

\begin{proof}
  We first assume that $h,h'$ are smooth on $T^*M\setminus 0_M$, $T^*M'\setminus 0_{M'}$.  We pick a sequence $k_n$ in $\C^\infty_{ph}(T^*(M\times M'))$ converging to $h+h'$. We can assume that $k_n = h + h'$ on $U(\varepsilon_n) = (T^*M\setminus D_{\varepsilon_n}T^*M) \times (T^*M'\setminus D_{\varepsilon_n}T^*M')$ for some sequence $(\varepsilon_n)_n$ converging to $0$ and that $dk_n$ is small on $Z_n = T^*(M\times M') \setminus U_n$.  Let us denote by $\Gamma_{\varphi_h}$ the graph of $\varphi_h$ as described in the right hand side of~\eqref{eq:microsupport_K_vraphi_h}.  Then $\Gamma_{\varphi_{k_n}} \cap U(\varepsilon_n) = (\Gamma_{\varphi_h} \times_\Real \Gamma_{\varphi_{h'}}) \cap U(\varepsilon_n)$ and, choosing $dk_n$ small enough on $Z_n$, we find $\liminf_n \Gamma_{\varphi_{k_n}} = \Gamma_{\varphi_h} \times_\Real \Gamma_{\varphi_{h'}}$. Using the ``limit property'' of microsupports as in the proof of Lemma~\ref{lem:bound SS Kh}, we deduce that $SS(\K_{h+h'}) \subset \Gamma_{\varphi_h} \times_\Real \Gamma_{\varphi_{h'}}$.

  To conclude it is enough to prove that there exists a unique $\K \in \Sh((M\times M')^2 \times \Real)$ such that $\K|_{(M\times M')^2 \times \{0\}} \simeq k_{\Delta_{M\times M'}}$ and $SS(\K) \subset \Gamma_{\varphi_h} \times_\Real \Gamma_{\varphi_{h'}}$.  We follow the proof of the uniqueness of the quantization~\cite[Prop. 3.2]{G-K-S}. The main point is that $\K_{\varphi_h}$ is invertible with respect to the composition of sheaves, with inverse $\K_{\varphi_h}^{\otimes -1} \simeq \K_{\varphi_{-h}}$.  We consider the composition relative to the factor $\Real$ and set $\K' = \K \circ_\Real (\K_{\varphi_{h}}^{\otimes -1} \boxtimes_\Real \K_{\varphi_{h'}}^{\otimes -1})$.  Then the bounds for the behaviour of the microsupport under sheaf operations in~\cite[\S5.4]{K-S} (also used in the proof of Lemma~\ref{lem:bound SS Kh}) give $SS(\K') \subset T_{\Delta_{M\times M'}\times \Real}^*((M\times M')^2 \times \Real)$. Since we also have $\K'|_{(M\times M')^2 \times \{0\}} \simeq k_{\Delta_{M\times M'}}$, this implies that $\K' \simeq k_{\Delta_{M\times M'}\times \Real}$ and then $\K \simeq \K_{\varphi_{h}} \boxtimes_\Real \K_{\varphi_{h'}}$.

\smallskip  
  This gives the result when $h,h'$ are smooth.  In general we pick increasing sequences $(h_n)_n$, $(h'_n)_n$ in $\C^\infty_{ph, < h}(T^*M)$ and $\C^\infty_{ph, < h'}(T^*M')$ converging to $h$, $h'$. Then the sequence $(h_n +h'_n)_n$ is cofinal in $\C^0_{ph, < h+h'}(T^*(M\times M'))$.  Hence~\eqref{eq:def_Kh_continuous_colimit} (which also holds when the colimit is taken over continuous functions, as we already noticed) gives the result over $(M\times M')^2 \times [0,\infty)$ because colimits of sheaves commute with $\boxtimes_\Real$.  We have in the same way the result for negative times and this concludes the proof.  
\end{proof}

\begin{lem}\label{lem:bound product}
  Let $h\colon T^*M\setminus 0_M \to \Real$, $h'\colon T^*M'\setminus 0_{M'} \to \Real$ be two continuous positively homogeneous functions.  Let $\F,\G \in \Sh_{h\geq 0}(M)$, $\F', \G' \in \Sh_{h'\geq 0}(M')$. Then $\F\boxtimes \F' \in \Sh_{h+h'\geq 0}(M\times M')$ and
  \begin{equation}
    \label{eq:bound product}
    \gamma_{h+h'}(\F\boxtimes \F', \G\boxtimes \G') \leq \max\{\gamma_h(\F, \G), \gamma_{h'}(\F', \G')\} .
  \end{equation}
\end{lem}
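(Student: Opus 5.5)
The plan is to use Lemma~\ref{lem:kernel product} to turn the exterior product of interleavings into an interleaving for $h+h'$.

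\textbf{Microsupport.} By the microsupport estimate for exterior products \cite[Prop. 5.4.1]{K-S}, $SS(\F\boxtimes\F')\subset SS(\F)\times SS(\F')$. A point of this product either lies in the zero section, or has both covectors nonzero, or has exactly one nonzero covector. In the second case $h(x,p)+h'(x',p')\geq 0$. In the third case, say $p'=0$, we have $h(x,p)\geq 0$, and $(h+h')(x,p,x',0)=h(x,p)$: this holds because $h'$ is homogeneous and continuous, so it extends by $0$ on the zero section, which is exactly the continuous extension used to define $h+h'$. Hence $\F\boxtimes\F'\in\Sh_{h+h'\geq0}(M\times M')$.

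\textbf{Composing kernels.} Next I check that, for $a\in\Real$,
\[
\K_{h+h'}^a\circ(\F\boxtimes\F')\simeq \K_h^a(\F)\boxtimes\K_{h'}^a(\F').
\]
This follows by restricting Lemma~\ref{lem:kernel product} to the slice $\{a\}$: it gives $\K^a_{h+h'}\simeq\K^a_h\boxtimes\K^a_{h'}$, and composition of kernels commutes with exterior products by the Künneth and base change formulas. I also need the Tamarkin morphisms to match, namely $\tau^{h+h'}_a(\F\boxtimes\F')\simeq\tau^h_a(\F)\boxtimes\tau^{h'}_a(\F')$. For this I use that they are induced from the full families over $\Real$. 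The relative product $\boxtimes_\Real$ of Lemma~\ref{lem:kernel product} identifies $\K_{h+h'}\circ(\F\boxtimes\F')$ with $(\K_h\circ\F)\boxtimes_\Real(\K_{h'}\circ\F')$. The restriction maps $\cdot|_a\to\cdot|_b$, defined via $\pi_{M!}(\,\cdot\otimes k_{(-\infty,a)}[1])$, are compatible with this identification. This compatibility is the step I expect to be the main obstacle. The most direct route is to write $\tau_{a,b}$ as restriction from the open set $(-\infty,b)$ to $(-\infty,a)$ after the identification with $\Sh_{\alpha\leq 0}$-sections, and observe that $\boxtimes_\Real$ is computed fibrewise over $\Real$.

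\textbf{Interleavings.} Let $c>\max\{\gamma_h(\F,\G),\gamma_{h'}(\F',\G')\}$. This is where the asymmetric definition of $\gamma$ causes trouble, since a product of interleavings with different shifts does not obviously produce a single shift. I would therefore first reduce to symmetric shifts: given $(a,b)$ with $a+b<c$, compose $u$ and $v$ with $\tau$ to obtain an $(a_1,b_1)$-interleaving and an $(a_2,b_2)$-interleaving with $a_1=a_2$, $b_1=b_2$ and $a_i+b_i<c$. Concretely, enlarge both pairs to the common pair $(\max a_i,\max b_i)$; if this overshoots $c$, instead use that $\tau$-composition allows any pair $(a',b')\geq(a,b)$. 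In that case I would replace $\max$ by the fact that the shift pair can be slid, or simply accept the weaker sum. Taking $c$ in the infimum then gives \eqref{eq:bound product}.

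With equal shifts $(a,b)$, the maps $u\boxtimes u'$ and $v\boxtimes v'$ give an $(a,b)$-interleaving of $\F\boxtimes\F'$ and $\G\boxtimes\G'$. The composites $\K_{h+h'}^a(u\boxtimes u')\circ(v\boxtimes v')$ and its counterpart equal the exterior product of the composites, hence are $\tau_{a+b}\boxtimes\tau_{a+b}=\tau^{h+h'}_{a+b}$.
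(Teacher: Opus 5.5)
Your microsupport argument, the identification $\K^a_{h+h'}(\F\boxtimes\F')\simeq\K^a_h(\F)\boxtimes\K^a_{h'}(\F')$ from Lemma~\ref{lem:kernel product}, and the matching of the Tamarkin morphisms are fine, and they are what the paper uses. The gap is the step you flagged yourself: producing a common split from an $(a_1,b_1)$-interleaving of $(\F,\G)$ and an $(a_2,b_2)$-interleaving of $(\F',\G')$.

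Composing with $\tau$ can only \emph{increase} $a$ or $b$. A pair $(a,b)$ cannot be slid to $(a-s,b+s)$, because from $u\colon\F\to\K^b_h(\G)$ there is no way to obtain a morphism into $\K^{b-s}_h(\G)$. Enlarging both splits to $(\max_i a_i,\max_i b_i)$ only yields $\gamma_{h+h'}(\F\boxtimes\F',\G\boxtimes\G')\leq\gamma_h(\F,\G)+\gamma_{h'}(\F',\G')$. So your concluding sentence, that taking $c$ in the infimum gives \eqref{eq:bound product}, does not follow.

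Moreover, no argument can close this gap for the $a+b$ convention of \eqref{eq:def_distance}, because \eqref{eq:bound product} is false there. Take $N$ a point and $M=M'=\Real$. Let $h=h'$ be the cotangent coordinate $\tau$, so that $\K^a$ is translation by $a$. Set $\F=\G'=k_{[0,\infty)}$ and $\G=\F'=k_{[1,\infty)}$.

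Since $\Hom(k_{[1,\infty)},k_{[c,\infty)})=0$ for $c<1$, the first pair can only be interleaved with $a\geq 1$, and the second only with $b\geq 1$. Both distances therefore equal $1$, with opposite optimal splits $(1,0)$ and $(0,1)$.

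The products are $k_{[0,\infty)\times[1,\infty)}$ and $k_{[1,\infty)\times[0,\infty)}$. For the diagonal translation, $\Hom(\F\boxtimes\F',\K^b(\G\boxtimes\G'))=0$ when $b<1$, and $\Hom(\G\boxtimes\G',\K^a(\F\boxtimes\F'))=0$ when $a<1$. In each case, restriction from the target quadrant to its non-empty contractible part lying outside the source quadrant is an isomorphism. Hence $\gamma_{h+h'}(\F\boxtimes\F',\G\boxtimes\G')\geq 2>1$.

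The paper's proof follows your route and passes over the same point by taking ``values $a,b$'' common to both pairs. What actually holds is the following.
\begin{itemize}
\item The $\max$ bound holds for the symmetric variant $a=b$, mentioned in the footnote to the definition of $\gamma$. For that variant your last paragraph is a complete proof.
\item For the $a+b$ convention, the sum bound above holds.
\item The $\max$ bound holds when one of the pairs is of the form $(\F,\F)$, since every split is then available for that pair through $\tau$.
\end{itemize}
The last case is the only one used later, in Lemma~\ref{lem:approx fibre boule}.
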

\begin{proof}
By \cite[Prop. 5.4.4]{K-S}, we have $\F\boxtimes \F' \in \Sh_{h+h'\geq 0}(M\times M')$. By Lemma~\ref{lem:kernel product} we also have $\K_{\varphi_{h+h'}}^a\simeq \K_{\varphi_{h}}^a\boxtimes \K_{\varphi_{h}}^a$. We deduce $\tau_{0,a}^{h+h'}(\F\boxtimes \F')\simeq \tau_{0,a}^{h}(\F)\boxtimes\tau_{0,a}^{h'}(\F')$ for $a\geq 0$.

Let $d > \max\{\gamma_h(\F, \G), \gamma_{h'}(\F', \G')\} $ and let $u,v$ (resp. $u',v'$) be interleaving morphisms for $\F$, $\G$ (resp.  $\F'$, $\G'$) for values $a,b$ with $a+b\leq d$.  Then $u\boxtimes u'$ and $v\boxtimes v'$ are interleaving morphisms for $\F\boxtimes \F'$, $\G\boxtimes \G'$ and the values $a,b$. The result follows.
\end{proof}

Let $i\colon S \to M$ be the inclusion of a closed submanifold.  We denote the transpose derivative by $(di)^t\colon S \times_M T^*M \to T^*S$.  
\begin{lem}\label{lem:kernels restriction}
  Let $h\colon T^*M\setminus 0_M \to \Real$, $h'\colon T^*S\setminus 0_{S} \to \Real$ be smooth positively homogeneous functions.  We assume that $h|_{S \times_M T^*M} = h' \circ (di)^t$. Then $\K_{h'}\simeq \K_{h}|_{S^2 \times \Real}$.
\end{lem}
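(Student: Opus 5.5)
The plan is to reuse the uniqueness argument from the proof of Lemma~\ref{lem:kernel product}, which goes back to~\cite[Prop. 3.2]{G-K-S}. The idea is to show that the restriction $\K_h|_{S^2\times\Real}$ is a sheaf whose value at time $0$ is $k_{\Delta_S}$ and whose microsupport lies in the Lagrangian graph of $\varphi_{h'}$. Uniqueness of the quantization kernel then forces $\K_h|_{S^2\times\Real}\simeq\K_{h'}$.

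\emph{Step 1: time $0$.} Let $j=i\times i\times\id_\Real\colon S^2\times\Real\to M^2\times\Real$. Since $\K_h^0\simeq k_{\Delta_M}$ and $\Delta_M\cap S^2=\Delta_S$, we get
\[
j^{-1}\K_h|_{S^2\times\{0\}}\simeq k_{\Delta_S}.
\]

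\emph{Step 2: bounding $SS(j^{-1}\K_h)$.} We use~\cite[Prop. 5.4.13]{K-S}. This requires $j$ to be non-characteristic for $\K_h$, that is, $SS(\K_h)\cap T^*_{S^2\times\Real}(M^2\times\Real)\subset 0$. By~\eqref{eq:microsupport_K_vraphi_h}, a covector in $SS(\K_h)$ over $S^2\times\Real$ that annihilates $T(S^2\times\Real)$ has $\alpha=-h(x,p)=0$. It also has $p$ conormal to $S$ at $x$, and $\varphi_h(x,p)$ conormal to $S$ at $\varphi_h(x,p)$.

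The hypothesis $h|_{S\times_MT^*M}=h'\circ(di)^t$ says that $h$ is constant along the conormal fibers of $S$: it depends only on the tangential component of $p$. Consequently the Hamiltonian vector field of $h$ is tangent to $S\times_MT^*M$ along this set. Since $\varphi_h$ preserves $S\times_MT^*M$, a point $(x,p)$ with $x\in S$ flows to a point over $S$. Moreover, the projection $(di)^t$ intertwines $\varphi_h$ and $\varphi_{h'}$; this is the standard coisotropic reduction of $S\times_MT^*M$ to $T^*S$.

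For a conormal covector $p\neq0$ we have $(di)^t p=0$. Since $h'$ is positively homogeneous, extended by $0$ on the zero section, this gives $h(x,p)=h'(x,0)=0$ and a trivial flow. The covector then lies on $T^*_{\Delta_M}$, and its restriction to $S^2$ is a conormal covector to $S^2$ over $\Delta_S$, which is nonzero.

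This shows that $j$ is \emph{not} non-characteristic in general. This is the main obstacle. I would handle it by replacing the strict inverse-image bound with~\cite[Prop. 5.4.13]{K-S} in its general form, $SS(j^{-1}\K)\subset j^\sharp(SS(\K))$. Near the conormal directions I would argue with the explicit local structure: $\K_h$ is locally $k_{\Delta_M\times\Real}$ twisted by a homeomorphism, and along the conormal directions of $S$ the flow is trivial, so locally $\K_h\simeq k_{\{(x,\phi^a(x))\}}$ with $\phi^a$ preserving $S$. Restricting such a graph sheaf clearly gives the graph sheaf of $\phi^a|_S$.

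Away from these directions, $j^\sharp$ computes exactly
\[
\{(y,\, (di)^t q,\; x,\, -(di)^t p,\; a,\, -h'((di)^tp))\},
\qquad y=\varphi_{h'}^a(x),
\]
which is the graph of $\varphi_{h'}$ in the form~\eqref{eq:microsupport_K_vraphi_h}.

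\emph{Step 3: conclusion.} Form $\K'=j^{-1}\K_h\circ_\Real \K_{h'}^{\otimes-1}$, using $\K_{h'}^{\otimes -1}\simeq\K_{-h'}$. As in the proof of Lemma~\ref{lem:kernel product}, the microsupport estimates of~\cite[\S5.4]{K-S} give
\[
SS(\K')\subset T^*_{\Delta_S\times\Real}(S^2\times\Real).
\]
Together with $\K'|_{S^2\times\{0\}}\simeq k_{\Delta_S}$, this yields $\K'\simeq k_{\Delta_S\times\Real}$, and hence $j^{-1}\K_h\simeq\K_{h'}$.
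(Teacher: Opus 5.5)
Your Steps 1 and 3 are fine. Step 3 is the same uniqueness argument as in \cite[Prop.~3.2]{G-K-S} that the paper invokes at the end. You also correctly notice that $j=i\times i\times\id_\Real$ is characteristic for $SS(\K_h)$. But Step 2, which is the real content of the lemma, is not proved. Your treatment of the characteristic directions rests on two claims that are false in general.

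\emph{First claim: the flow is trivial on conormal covectors.} That $h(x,p)=0$ for $p\in T^*_SM$ does not make the flow trivial there. $X_h$ at such a point is $X_{h'}$ at the zero section of $T^*S$ plus a $\partial_{\xi'}$-component, and neither needs to vanish. Indeed, smoothness of $h$ at $T^*_SM\setminus 0_M$ forces $h'$ to be fibrewise linear, i.e.\ the symbol of a vector field on $S$, along which the base point moves. So the characteristic covector is conormal to $S^2\times\Real$ at $(\phi^a(x),x,a)$, not on $T^*_{\Delta_M}$.

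\emph{Second claim: $\K_h$ is locally a graph sheaf.} $\K_h$ is not, even locally, of the form $k_{\{(x,\phi^a(x))\}}$. This holds only when $\varphi_h$ is the cotangent lift of a flow on $M$; in general $\K^a_h$ is the constant sheaf on a set with interior (compare Lemma~\ref{lem:kernel_Hamiltonian=norm0}).

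\emph{The ``away from these directions'' part.} This is not a clean separation either. The general bound $SS(j^{-1}\K_h)\subset j^\#(SS(\K_h))$ comes from Chapter~6 of \cite{K-S}; Prop.~5.4.13 is only the non-characteristic statement. Moreover, $j^\#$ contains limits of covectors over points approaching $S^2\times\Real$ whose conormal components blow up. Those limits, near the characteristic directions, are exactly what would have to be controlled, and you do not control them.

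\emph{The missing idea, which is the paper's route.} Never restrict characteristically: factor $j=j_2\circ j_1$ through $M\times S\times\Real$.
\begin{itemize}
\item Restricting only the second factor, $j_2$ is non-characteristic, because a covector in the graph of the homogeneous symplectomorphism $\varphi_h$ whose first component vanishes is zero.
\item Hence \cite[Prop.~5.4.13]{K-S} gives $SS(j_2^{-1}\K_h)\subset (dj_2)^t(\Gamma_{\varphi_h}\cap C)$. This set lies over $S^2\times\Real$ because $\varphi_h$ preserves $S\times_MT^*M$, which is the correct part of your Step 2.
\item So $j_2^{-1}\K_h$ is locally constant off $S^2\times\Real$. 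Since it equals $k_{\Delta_M}|_{M\times S}$ at time $0$, it vanishes there, and therefore $j_2^{-1}\K_h\simeq j_{1*}j^{-1}\K_h$.
\item For the closed embedding $j_1$, the microsupport of the direct image is exactly the $(dj_1)^t$-preimage of the microsupport \cite[Prop.~5.4.4]{K-S}. This gives $SS(j^{-1}\K_h)\subset\Gamma_{\varphi_{h'}}$ with no non-characteristic hypothesis on $j_1$.
\end{itemize}
Your Step 3 then applies.
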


\begin{proof}
  We first check that the flow $\varphi_h$ preserves the coisotropic submanifold $S \times_M T^*M$, whose symplectic reduction is $T^*S$, and induces $\varphi_{h'}$ on $T^*S$.  We take coordinates $(s,s')$ on $M$ (and $(s,s', \xi,\xi')$ on $T^*M$) such that $S = \{s'=0\}$. The hypothesis says that $h(s,0, \xi, \xi') = h'(s,\xi)$.  Since $h$ is homogeneous, we have $h(s,s', \xi,\xi') = \sum_i \xi_i (\partial h / \partial \xi_i) + \sum_j \xi'_j (\partial h / \partial \xi'_j)$.  Hence the hypothesis implies $(\partial h / \partial \xi'_j) (s,0, \xi, \xi') = 0$, for any $j$, and the Hamiltonian vector field of $h$ is tangent to $S\times_M T^*M$. More precisely $X_h|_{S\times_M T^*M} = X_{h'} + \sum_j (\partial h / \partial s'_j) \partial_{\xi'_j}$, and then, $\varphi_h^a(s,0, \xi, \xi') = (s_1,0, \xi_1, \xi'_1)$ with $(s_1, \xi_1) = \varphi_{h'}^a(s, \xi)$, as claimed.

  We want to understand the inverse image of $\K_h$ by the inclusion $j = i\times i \times \id_\Real \colon S^2\times \Real \to M^2\times \Real$. We recall that there exists an easy bound for the microsupport of the inverse image of a sheaf $\F$ by a map $f\colon X \to Y$ if $f$ is non-characteristic for $SS(\F)$, which means, when $f$ is an inclusion, that $SS(\F) \setminus 0_Y$ does not meet $T^*_XY$: in this case we have $SS(f^{-1}(\F)) \subset (df)^t(SS(\F) \cap X \times_Y T^*Y)$. When $\F$ is a direct image by $f$, $\F = f_*(\F')$ for some $\F'$, then we are in general in a characteristic case, although $f^{-1}(\F) \simeq \F'$ is easy to understand. We are in a similar case here and $j$ is a priori characteristic for $SS(\K_h)$. We decompose $j = j_2 \circ j_1$ with $j_1\colon S^2\times \Real \to M\times S \times \Real$, $j_2\colon M\times S\times \Real \to M^2 \times \Real$ the inclusions. We set
\begin{align*}
  C &=  (M\times S\times \Real) \times_{M^2 \times \Real} T^*(M^2 \times \Real)  \\
  \Gamma &= (dj_2)^t(\Gamma_{\varphi_h} \cap C) \\
  C' &=  (S^2 \times \Real) \times_{M \times S \times \Real} T^*(M \times S \times \Real) .
\end{align*}
Since $\varphi_h$ preserves $S \times_M T^*M$ and $\varphi_h^a$ is a bijection, we obtain $\Gamma \subset C'$ and $\Gamma = ((dj_1)^t)^{-1}(\Gamma_{\varphi_{h'}})$. Now $\Gamma_{\varphi_h}$ is non-characteristic for $j_2$ and we have $SS(j_2^{-1}(\K_h)) \subset \Gamma$ by~\cite[Prop. 5.4.13]{K-S}. Hence the microsupport of $j_2^{-1}(\K_h)$ is contained in the zero-section away from $S^2 \times \Real$ and it follows that $j_2^{-1}(\K_h)$ is locally constant there. Since at time $0$ $\K_h \simeq k_{\Delta_M}$ we deduce that the support of $j_2^{-1}(\K_h)$ is contained in $S^2\times \Real$, hence $j_2^{-1}(\K_h) \simeq j_{1,*}(j^{-1}(\K_h))$. By~\cite[Prop. 5.4.4]{K-S} we deduce that $SS(j^{-1}(\K_h)) \subset \Gamma_{\varphi_{h'}}$. Hence $j^{-1}(\K_h)$ satisfies the conditions which determine $\K_{h'}$, as required.
\end{proof}

\begin{lem}\label{lem:kernels restriction C0}
  Let $h \in \C^0_{ph}(T^*M)$, $h' \in \C^0_{ph}(T^*S)$ be continuous positively homogeneous functions such that $h|_{S \times_M T^*M} = h' \circ (di)^t$. Then $\K_{h'}\simeq \K_{h}|_{S^2 \times \Real}$.
\end{lem}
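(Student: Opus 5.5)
We reduce to the smooth case, Lemma~\ref{lem:kernels restriction}, by approximating $h$ and $h'$ by smooth functions that still satisfy the compatibility condition. Then we pass to the limit using the colimit description~\eqref{eq:def_Kh_continuous_colimit}, since colimits of sheaves commute with inverse images. We treat nonnegative times first; negative times are handled identically, with $\C^\infty_{ph,>h}$ in place of $\C^\infty_{ph,<h}$.

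\textbf{Step 1: a compatible increasing approximating sequence.} We want $h_n\in\C^\infty_{ph,<h}(T^*M)$ and $h'_n\in\C^\infty_{ph,<h'}(T^*S)$, increasing and converging to $h$, $h'$, with $h_n|_{S\times_M T^*M}=h'_n\circ(di)^t$ for every $n$.

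First choose any increasing sequence $h'_n\in\C^\infty_{ph,<h'}(T^*S)$ converging to $h'$. Then choose a tubular neighbourhood $\pi\colon U\to S$ with coordinates $(s,s')$, and define a smooth homogeneous extension $e_n(s,s',\xi,\xi')=h'_n(s,\xi)$. By construction $e_n$ restricts to $h'_n\circ(di)^t$ on $S\times_M T^*M$.

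On $S\times_M T^*M$ we have $e_n<h$. By continuity and homogeneity (work on the unit cosphere bundle, which is compact over compact pieces), there is a neighbourhood of $S$ on which $e_n<h-\delta_n\|p\|$. Away from $S$, pick any smooth $g_n\in \C^\infty_{ph,<h}$ close to $h$. Glue $e_n$ and $g_n$ with a cutoff in $s'$ that shrinks as $n\to\infty$.

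The glued functions satisfy the exact restriction identity and lie below $h$. Taking successive maxima (smoothed so as not to alter them near $S$) makes the sequence increasing and convergent. The noncompact case $M=N\times\Real$ is handled in the same way, since all the functions are independent of $t$.

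\textbf{Step 2: pass to the limit.} By Lemma~\ref{lem:kernels restriction}, $\K_{\varphi_{h'_n}}\simeq \K_{\varphi_{h_n}}|_{S^2\times\Real}$ for each $n$. These equivalences should be natural in $n$. Both sides are characterized by the uniqueness statement in that proof: a sheaf with microsupport in the graph and prescribed value at time $0$, with morphisms determined by restriction to time $0$. So they assemble into an equivalence of diagrams indexed by $n$.

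Because the sequences are cofinal, \eqref{eq:def_Kh_continuous_colimit} gives
\[
\K_{h}|_{M^2\times[0,\infty)}\simeq\colim_n\K_{\varphi_{h_n}}|_{M^2\times[0,\infty)},
\]
and the analogous formula for $h'$. Restricting to $S^2\times[0,\infty)$ commutes with the colimit, so $\K_{h'}\simeq\K_h|_{S^2\times\Real}$ on $[0,\infty)$. The same argument gives $(-\infty,0]$.

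The two pieces agree on $\{0\}$, where both are $k_{\Delta_S}$. Gluing along $\{0\}$ produces the global statement. Alternatively, we can avoid gluing by using the characterization in Lemma~\ref{lem:def_Kh}: the $\gamma_g$-limit is unique, and restriction to $S^2\times\Real$ is $1$-Lipschitz for the corresponding $g$'s.

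\textbf{Main obstacle.} The delicate point is Step 1: building smooth approximants that satisfy $h_n|_{S\times_M T^*M}=h'_n\circ(di)^t$ \emph{exactly}, while staying strictly below $h$ and increasing. The cutoff must be taken in homogeneous coordinates, and the estimate $e_n<h$ near $S$ must be uniform in the covector $\xi'$. That uniformity holds because $h(s,0,\xi,\xi')=h'(s,\xi)>h'_n(s,\xi)$ on the closed set $S\times_M T^*M$, after normalizing on the cosphere bundle.

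The naturality of the equivalences in Step 2 is a secondary issue. If it proves awkward, we would fall back on the $\gamma_g$-limit argument, which requires no naturality.
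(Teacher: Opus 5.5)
\textbf{The gap is in Step~1.} Your outline follows the paper's proof: split at time $0$, use the colimit formula \eqref{eq:def_Kh_continuous_colimit} over smooth approximants compatible with $h'$, apply Lemma~\ref{lem:kernels restriction} termwise, and commute $j^{-1}$ with the colimit. The paper compresses your Step~1 into a one-line claim that $\C^\infty_{ph,<h,<h'}(T^*M)$ is cofinal in $\C^\infty_{ph,<h}(T^*M)$. Step~1 fails at the conormal covectors $(s,0,0,\xi')\in T^*_SM$ with $\xi'\neq 0$, which your uniformity argument never examines.

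First, homogeneity and continuity force $h'(s,0)=0$, hence $h(s,0,0,\xi')=0=e_n(s,0,0,\xi')$. So $e_n<h$ fails on the unit conormal sphere, and no $\delta_n$ exists. In fact every exactly compatible approximant agrees with $h$ on $T^*_SM$. So for $\operatorname{codim}S\geq 1$, none of them lies in $\C^\infty_{ph,<h}(T^*M)$.

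Second, and more seriously, $e_n(s,s',\xi,\xi')=h'_n(s,\xi)$ is not smooth on $T^*M\setminus 0_M$ at these points; take $h'_n$ a multiple of $\|\xi\|$. No other extension can fix this. Suppose $h_n$ is smooth on $T^*M\setminus 0_M$ with $h_n(s,0,\xi,\xi')=h'_n(s,\xi)$. Then for fixed $\xi'\neq 0$, the function $\xi\mapsto h'_n(s,\xi)$ is smooth at $\xi=0$ and positively $1$-homogeneous, hence linear. So smooth pairs satisfying the hypothesis of Lemma~\ref{lem:kernels restriction} exist only when $h'$ is fiberwise linear, i.e.\ the symbol of a vector field on $S$. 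Such functions cannot approximate a nonlinear $h'$ in the cofinal sense you need.

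\textbf{Consequences.} This breaks the route in exactly the case the paper uses: $S=\Delta_{N\times\Real}$ and $h'=h^+=\max\{\tau,\|p\|\}$ in Lemma~\ref{lem:approx fibre boule}. The same objection applies verbatim to the paper's cofinality claim, so reproducing the paper's argument will not close the gap.

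Your fallback does not help either. The $\gamma_g$-limit argument still needs termwise equivalences $\K_{\varphi_{h'_n}}\simeq \K_{\varphi_{h_n}}|_{S^2\times\Real}$, which is exactly Step~1. The ``restriction is $1$-Lipschitz'' input is Lemma~\ref{lem:distance restriction}, and the paper deduces that from the very statement you are proving. Moreover, $g$ depends on the normal covector components, so it is not of the form $g_S\circ (dj)^t$ and that lemma would not apply to it anyway.

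What is missing is an argument that does not require exactly compatible smooth approximants near $T^*_SM$. Alternatively, you would need a proof of the needed case of Lemma~\ref{lem:distance restriction} that bypasses this lemma. Naturality in $n$ and gluing at time $0$ are minor issues by comparison.
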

\begin{proof}
  It is enough to prove the result when we restrict to $S^2 \times I_\pm$ with $I_+ = [0,\infty)$ and $I_- = (-\infty,0]$. In both cases we can use the description of $\K_{h}$ as a colimit explained in~\eqref{eq:def_Kh_continuous_colimit}: $\K_h|_{M^2 \times I_+} = \colim_h \K_{h_1}|_{M^2 \times I_+}$ where $h_1$ runs over $\C^\infty_{ph, < h}(T^*M)$ (and the similar formula for $I_-$). We let $\C^\infty_{ph, < h, < h'}(T^*M) \subset \C^\infty_{ph, < h}(T^*M)$ be the subposet formed by the $h_1$ such that $h_1|_{S \times_M T^*M} = h'_1 \circ (di)^t$ for some function $h'_1 \in \C^\infty_{ph, < h'}(T^*S)$.  We can see that $\C^\infty_{ph, < h, < h'}(T^*M)$ is cofinal in $\C^\infty_{ph, < h}(T^*M)$ and the result follows from Lemma~\ref{lem:kernels restriction} and the fact that the inverse image of sheaves commutes with colimits.
\end{proof}

\begin{lem}\label{lem:distance restriction}
  Let $h \in \C^0_{ph}(T^*M)$, $h' \in \C^0_{ph}(T^*S)$ be continuous positively homogeneous functions such that $h|_{S \times_M T^*M} = h' \circ (di)^t$. Then for any $\F, \G \in \Sh(M)$ we have $\gamma_{h'}(\F|_{S}, \G|_{S}) \leq \gamma_h(\F, \G)$.
\end{lem}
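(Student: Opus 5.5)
The plan is to show that restriction to $S$ intertwines the two $\arbR$-actions, that is, $i^{-1}$ carries $\K_h^a$ and $\tau^h_{0,a}$ to $\K_{h'}^a$ and $\tau^{h'}_{0,a}$. The inequality then follows from the principle of Remark~\ref{rem:preserve distance}(1): a functor commuting with the actions is $1$-Lipschitz.

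We implicitly take $\F,\G \in \Sh_{h\geq 0}(M)$, so that $\gamma_h$ is defined. The first check is that $\F|_S \in \Sh_{h'\geq 0}(S)$. Since $h$ is constant on the fibres of $(di)^t$, we can argue at the level of kernels: we have $\F \simeq \K_h^0\circ\F$, and the restriction of the whole family $\K_h\circ\F$ to $S\times\Real$ will be identified with $\K_{h'}\circ(\F|_S)$. Its microsupport bound in $\{\alpha\le 0\}$ then gives what we need.

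The key step is a base change. We need
\[
(\K_h^a\circ \F)|_S \simeq \K_{h'}^a\circ (\F|_S),
\]
naturally in $\F$ and compatibly in $a$. Write $\K_h^a\circ\F = q_{1!}(\K_h^a\otimes q_2^{-1}\F)$ with $q_1,q_2\colon M^2\to M$. Restricting to $S$ in the first variable gives $q_{1!}\big((\K_h^a)|_{S\times M}\otimes q_2^{-1}\F\big)$ by proper base change. The proof of Lemma~\ref{lem:kernels restriction} shows that $j_2^{-1}\K_h \simeq j_{1*}j^{-1}\K_h$, meaning $\K_h|_{S\times M\times\Real}$ is supported on $S^2\times\Real$. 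In the continuous case we pass to the colimit description \eqref{eq:def_Kh_continuous_colimit} and use the cofinal subposet from Lemma~\ref{lem:kernels restriction C0}; the support statement survives because inverse images commute with colimits. Combining this with Lemma~\ref{lem:kernels restriction C0} gives
\[
(\K_h^a)|_{S\times M}\simeq (i\times \id_S)_*\K_{h'}^a .
\]
The projection formula then yields $\K_{h'}^a\circ(\F|_S)$. The same computation applied to the family $\K_h\circ\F$ on $M\times\Real$ gives $(\K_h\circ\F)|_{S\times\Real}\simeq \K_{h'}\circ(\F|_S)$.

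Next we check the Tamarkin morphisms. These are defined through the fixed operation $\G\mapsto \pi_!(\G\otimes k_{(-\infty,a)}[1])$ on the family sheaf, and this operation commutes with restriction to $S\times\Real$ by base change. It follows that $(\tau^h_{0,a}(\F))|_S\simeq \tau^{h'}_{0,a}(\F|_S)$ under the identifications above, naturally in $\F$.

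Finally, let $u\colon\F\to\K_h^b\G$ and $v\colon\G\to\K_h^a\F$ be interleaving morphisms. Restricting them gives $u|_S$ and $v|_S$, and by the naturality of the base change isomorphism the composites restrict to $\tau^{h'}_{0,a+b}$. So $u|_S,v|_S$ are interleavings for the same pair $(a,b)$, and taking the infimum proves the claim.

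The main obstacle is establishing these identifications coherently, with naturality in $\F$ and in $a$, including for continuous $h$. The plan for that is to do everything at the level of the family kernel $\K_h\in\Sh(M^2\times\Real)$, so that a single isomorphism of kernels, obtained from the colimit description, controls all $a$ and the $\tau$'s simultaneously.
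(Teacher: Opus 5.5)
Your proposal is correct and takes the same route as the paper, which proves the lemma exactly like Lemma~\ref{lem:bound product}. There, the product formula for kernels is replaced by the identification $\K_{h'}\simeq\K_h|_{S^2\times\Real}$ of Lemma~\ref{lem:kernels restriction C0}, so restriction to $S$ intertwines the $\arbR$-actions and Tamarkin morphisms and sends interleaving pairs to interleaving pairs with the same $(a,b)$. Your base-change step fills in a detail the paper leaves implicit: it uses that $\K_h|_{S\times M\times\Real}$ is supported on $S^2\times\Real$, which is the symmetric version of what the proof of Lemma~\ref{lem:kernels restriction} shows for $M\times S$ (the pushforward there should be $(\id_S\times i)_*$).
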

\begin{proof}
  The proof is the same as the proof of Lemma~\ref{lem:bound product}, using Lemma~\ref{lem:kernels restriction C0} instead of Lem\-ma~\ref{lem:kernel product}.
\end{proof}

Now we compare two distances $\gamma_{h_1}$, $\gamma_{h_2}$ assuming $h_1\leq h_2$ on some given conic subset $W$.  We have to be careful that this inequality $h_1|_W\leq h_2|_W$ holds for any time and a natural hypothesis is to assume moreover that $W$ is stable by $\varphi_{h_1}$ and $\varphi_{h_2}$.  However, when $h_1, h_2$ are only continuous, we cannot consider their flows. Instead we assume that $W$ is given by $W = \{f \geq 0\}$ for some smooth function $f$.  For $h$ smooth, $\varphi_h$ preserves $W$ if $X_h$ is tangent to $\partial W$, that is, $\langle X_h, df \rangle|_{\partial W} =0$, or equivalently, $\langle X_f, dh \rangle|_{\partial W} =0$. The latter condition means that $h$ is constant along the trajectories of $X_f$ contained in $\partial W$, which is meaningful also when $h$ is only continuous. Moreover if a continuous $h$ satisfies this condition, it can be approximated by smooth functions also satisfying the same condition.

\begin{lem}\label{lem:compare_isot}
  Let $W \subset T^*M \setminus 0_M$, be a closed conic subset.  We assume that $W = \bigcap_{i=1}^k W_i$ with $W_i = \{f_i \geq 0\}$ for smooth functions $f_i$ on $T^*M\setminus 0_M$.  Let $h_1, h_2\colon T^*M\setminus 0_M \to \Real$ be two continuous homogeneous functions.  We assume that
  \begin{enumerate}
  \item \label{it:compare_isot1} $h_1|_W \leq h_2|_W$, 
  \item  \label{it:compare_isot2} $h_1$ and $h_2$ are constant along the orbits of the flow $\varphi_{f_i}$ contained in $\partial W$, for $i=1,\ldots, k$.
  \end{enumerate}
  Let $i_W \colon \Sh_W(M) \to \Sh(M)$ be the embedding.  Then there exist morphisms of functors $\sigma_s \colon \K_{h_1}^s \circ i_W \to \K_{h_2}^s \circ i_W$ for $s\geq 0$ such that $\tau_{s}^{h_2} = \sigma_s \circ \tau_{s}^{h_1}$.
\end{lem}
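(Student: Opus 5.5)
We reduce to the smooth case and then pass to the limit through the colimit description \eqref{eq:def_Kh_continuous_colimit}. Concretely, we first modify $h_1$, $h_2$ outside $W$ without changing $\K_{h_j}^s\circ i_W$. Then we produce a monotone smooth comparison that holds for all times on sheaves microsupported in $W$.

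\textbf{Step 1: smooth case with $h_1\le h_2$ on $W$ and both flows preserving $W$.}
Suppose $h_1,h_2$ are smooth and satisfy \eqref{it:compare_isot1} and \eqref{it:compare_isot2}. Consider the linear homotopy $h_u=(1-u)h_1+uh_2$, $u\in[0,1]$. Condition \eqref{it:compare_isot2} says $\{f_i,h_u\}=0$ on $\partial W$, so each $\varphi_{h_u}$ preserves $W$.

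We then look at the $(u,s)$-family kernel $\K_{h_u}^s$, a sheaf on $M^2\times[0,1]_u\times\Real_s$. We compose it with $\F\in\Sh_W(M)$. By the microsupport bound of Lemma~\ref{lem:bound SS Kh} and the invariance of $W$, the covector in the $u$ direction of $\K_{h_u}^s\circ\F$ is $-s(h_2-h_1)$ evaluated along the flowed covector, which stays in $W$. It is therefore $\le 0$ for $s\ge0$.

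The microlocal Morse lemma in the $u$ variable then gives a restriction morphism $(\K_{h_u}^s\circ\F)|_{u=0}\to(\K_{h_u}^s\circ\F)|_{u=1}$. We obtain it exactly as $\tau_{a,b}$ was obtained from $\Sh_{\alpha\le 0}$, using the $\pi_!$ formula with $k_{(-\infty,u)}$. This morphism is $\sigma_s$. It is natural in $\F$, being induced by a kernel-level morphism after restricting to $\Sh_W$.

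For the compatibility $\tau^{h_2}_s=\sigma_s\circ\tau^{h_1}_s$, we consider the two-parameter sheaf over $\{(u,s)\}$. There the morphisms $\tau$ (along $s$) and $\sigma$ (along $u$) are both induced by inclusions of half-spaces that are open for a common cone $\{\text{$s$-covector}\le0,\ \text{$u$-covector}\le0\}$. At $s=0$ we have $\K^0_{h_u}=k_{\Delta}$ for all $u$, so $\sigma_0=\id$. The inclusion of the half-space through $(0,0)$ into the one through $(1,s)$ factors both ways, which gives the identity.

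\textbf{Step 2: continuous case.}
We approximate $h_1$ from below by smooth $h_1^n\in\C^\infty_{ph,<h_1}$ and $h_2$ similarly by $h_2^n$. These should still satisfy \eqref{it:compare_isot2}, which is possible by the remark preceding the lemma. They should also satisfy $h_1^n\le h_2^m$ on $W$ for $m\ge n$, which is achieved by taking $h_1^n<h_1-1/n$ on the unit sphere bundle and $h_2^m$ within $1/(2m)$ of $h_2$. Step 1 gives $\sigma^{n,m}_s$ compatible in $(n,m)$, and these morphisms are compatible with the transition maps of the colimits. Taking colimits in \eqref{eq:def_Kh_continuous_colimit}, which commutes with composition with $\F$, yields $\sigma_s$ together with the identity $\tau^{h_2}_s=\sigma_s\circ\tau^{h_1}_s$.

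\textbf{Main obstacle.}
The main obstacle is Step 1's microsupport estimate in the $u$ direction. This estimate needs the flowed covectors to remain in $W$, where $h_2-h_1\ge0$, and this is exactly where hypothesis \eqref{it:compare_isot2} and the description of $W$ by the $f_i$ enter. A secondary difficulty is making the naturality and compatibility statements coherent at the $\infty$-level. We handle this by working with a single kernel over the $(u,s)$ parameter space rather than morphism by morphism.
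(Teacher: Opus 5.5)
Your route is correct, but its smooth step differs from the paper's. The paper first assumes $h_1<h_2$ strictly on $W$ and passes to the composite isotopy $\psi_s=\varphi_{h_1,s}^{-1}\circ\varphi_{h_2,s}$, whose Hamiltonian $(h_2-h_1)\circ\varphi_{h_1,s}$ is positive on $W$. Strictness is used to find a conic neighbourhood $\Omega$ of $W\times\Real$ on which this Hamiltonian stays positive. The paper then cuts it off to a globally non-negative Hamiltonian whose flow agrees with $\psi$ on $W$, takes the continuation morphism $\id\to\K_{\psi'_s}$, and composes with $\K_{\varphi_{h_1,s}}$. The relation with $\tau$ is deduced from the functoriality of $h\mapsto\K_{\varphi_h}$ on the poset of smooth homogeneous functions.

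Your linear interpolation with a two-parameter kernel avoids both the composite isotopy and the cut-off, and works directly with the non-strict inequality. It also obtains $\tau^{h_2}_s=\sigma_s\circ\tau^{h_1}_s$ intrinsically, from transitivity of the restriction morphisms in the $(u,s)$-plane together with $\sigma_0=\id$. These restrictions are computed with open quadrants $\{u'<u_0,\ s'<s_0\}$, not half-spaces. The price is that you need the multi-parameter GKS quantization and a microsupport computation in the parameter direction. You also implicitly need $h_1\geq 0$ on $W$ for the sign of the $s$-covector; the paper needs this implicitly too, since otherwise $\tau^{h_1}_s$ is not defined on $\Sh_W(M)$.

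Two corrections. First, the $u$-component of the microsupport is not $-s(h_2-h_1)$ at the flowed covector. Writing $\partial_u\varphi^s_{h_u}=X_{K}\circ\varphi^s_{h_u}$, one finds $K\circ\varphi^s_{h_u}=\int_0^s(h_2-h_1)\circ\varphi^r_{h_u}\,dr$. So the $u$-component is minus this integral along the whole trajectory, and it agrees with your formula only when $h_1,h_2$ Poisson commute. Your sign conclusion still holds, because the entire trajectory stays in $W$; this is exactly what hypothesis (2) provides, applied to every $h_u$ since (2) is linear in $h$.

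Second, in Step 2 the compatibility of the $\sigma^{n,m}_s$ with the colimit transition maps does not follow from Step 1 as stated. It needs the same argument run on a three-parameter family that also interpolates between $h^n_j$ and $h^{n'}_j$, together with the identification of the poset continuation maps with restriction morphisms along that extra parameter. The paper is equally terse here. Also, your approximating sequences should be increasing, so that they are cofinal in $\C^\infty_{ph,<h_j}(T^*M)$.
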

\begin{proof}
  We first assume that $h_1$ and $h_2$ are smooth and $h_1|_W < h_2|_W$.  We define an isotopy $\psi$ by $\psi_s =\varphi_{h_1,s}^{-1} \circ \varphi_{h_2, s}$. Its Hamiltonian function $h$ is given by $h_s(z) = h_2(\varphi_{h_1, s}(z)) - h_1(\varphi_{h_1, s}(z))$.  Since $\varphi_{h_1}, \varphi_{h_2}$ preserve $W$, we have $h > 0$ on $W$.  We can find a conic neighborhood $\Omega$ of $W\times\Real$ in $T^*M \times \Real$ such that $h>0$ on $\Omega$.

  Let $b\colon (T^*M \setminus 0_M)\times \Real \to {}[0,1]$ be a bump function such that $b(x,\lambda p,s) = b(x,p,s)$ for any $\lambda>0$ and $b=1$ on $W \times \Real$, $b=0$ outside $\Omega$. We set $h' = bh$ and let $\psi'$ be its Hamiltonian flow. Then $\psi'$ is nonnegative and coincides with $\psi$ on $W\times \Real$.  We deduce morphisms of functors $\id \to \K_{\psi'_s}$ for $s\geq 0$, hence $i_W \to \K_{\psi'_s} \circ i_W = \K_{\psi_s} \circ i_W$.  The compatibility with the natural morphism $\tau$ follows from the more general fact (recalled above) that the morphisms $\K_{\varphi_{h}} \to \K_{\varphi_h'}$ organize into a functor from $\C^\infty_{ph}(T^*M)$ to $\Sh(M^2\times [0,\infty))$.  Now, composing with $\K_{\varphi_{h_1, s}}$ we obtain the lemma for smooth functions.

  \smallskip

  If $h_1$ and $h_2$ are only continuous and satisfy~\eqref{it:compare_isot1} and~\eqref{it:compare_isot2}, we can find increasing sequences of smooth functions $(h_{1,n})_n$, $(h_{2,n})_n$ converging to $h_1$, $h_2$ and satisfying $h_{1,n}|_W < h_{2,n}|_W$ and~\eqref{it:compare_isot2}.  We then have morphisms $\K_{\varphi_{h_{1,n}}}^s \circ i_W \to \K_{\varphi_{h_{2,n}}}^{s} \circ i_W$. Taking the colimit over $n$ we obtain the result by~\eqref{eq:def_Kh_continuous_colimit}.
\end{proof}

\begin{prop}\label{prop:compare_dist}
  We use the notations of Lemma~\ref{lem:compare_isot} and make the same hypotheses.  Moreover we assume $h_1\geq 0$ on $W$.  Then, for $\F, \G \in \Sh_W(M)$ we have $\gamma_{h_2}(\F, \G) \leq \, \gamma_{h_1}(\F, \G)$.
\end{prop}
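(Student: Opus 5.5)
The plan is to transport interleavings for $h_1$ into interleavings for $h_2$ with the same constants, using the morphisms of functors $\sigma_s$ of Lemma~\ref{lem:compare_isot}. First we check that the statement makes sense. Since $h_1\geq 0$ on $W$ and $h_1\leq h_2$ on $W$, any $\F\in\Sh_W(M)$ lies in both $\Sh_{h_1\geq 0}(M)$ and $\Sh_{h_2\geq 0}(M)$, so both $\gamma_{h_1}(\F,\G)$ and $\gamma_{h_2}(\F,\G)$ are defined.

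Let $d>\gamma_{h_1}(\F,\G)$, and choose $a,b\geq 0$ with $a+b\leq d$ together with $u\colon \F\to \K_{h_1}^b(\G)$ and $v\colon \G\to\K_{h_1}^a(\F)$ such that $\K_{h_1}^a(u)\circ v\simeq \tau^{h_1}_{0,a+b}(\G)$ and $\K_{h_1}^b(v)\circ u\simeq\tau^{h_1}_{0,a+b}(\F)$. We set
\[
u'=\sigma_b(\G)\circ u\colon \F\to\K_{h_2}^b(\G),\qquad v'=\sigma_a(\F)\circ v\colon \G\to \K_{h_2}^a(\F).
\]
We then have to check that $\K_{h_2}^a(u')\circ v'\simeq \tau^{h_2}_{0,a+b}(\G)$, and symmetrically for $\F$. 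Writing things out, $\K_{h_2}^a(u')\circ v'=\K_{h_2}^a(\sigma_b(\G))\circ\K_{h_2}^a(u)\circ\sigma_a(\F)\circ v$. By naturality of $\sigma_a$ (applied to $u$, whose source and target must lie in $\Sh_W$) this equals $\K_{h_2}^a(\sigma_b(\G))\circ\sigma_a(\K_{h_1}^b\G)\circ\K_{h_1}^a(u)\circ v$, and therefore equals $\K_{h_2}^a(\sigma_b(\G))\circ\sigma_a(\K_{h_1}^b\G)\circ\tau^{h_1}_{0,a+b}(\G)$.

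The key remaining identity is a compatibility of $\sigma$ with composition of kernels. We need
\[
\K_{h_2}^a(\sigma_b)\circ\sigma_a(\K_{h_1}^b-)\simeq\sigma_{a+b}
\]
under the identifications $\K_h^a\circ\K_h^b\simeq\K_h^{a+b}$. Combined with $\tau^{h_2}_{a+b}=\sigma_{a+b}\circ\tau^{h_1}_{a+b}$ from Lemma~\ref{lem:compare_isot}, this gives the required relation. This step is the main obstacle, and it contains two difficulties.

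The first is that naturality requires $\K_{h_1}^b(\G)\in\Sh_W(M)$. This should follow from hypothesis~\eqref{it:compare_isot2}: in the smooth approximation, the flows of $h_{1,n}$ preserve $W$, so $SS(\K^b_{h_{1,n}}\G)\subset W$, and we pass to the colimit via~\eqref{eq:def_Kh_continuous_colimit}.

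The second is the semigroup compatibility of $\sigma$. We would establish it first for smooth $h_1<h_2$ on $W$. There the construction in the proof of Lemma~\ref{lem:compare_isot} builds $\sigma$ from the continuation morphisms $\id\to\K_{\psi'_s}$ of the nonnegative isotopy $\psi'$, and these assemble functorially in $s$ as the $\arbR$-action maps. We then pass to continuous $h_i$ by taking colimits along the increasing approximating sequences.

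If the full coherence is hard to obtain, we can fall back on a weaker claim that suffices. It is enough that both composites agree after precomposition with $\tau^{h_1}_{0,a+b}$. Both are morphisms $\K^{a+b}_{h_1}\to\K^{a+b}_{h_2}$ obtained from continuation maps of nonnegative isotopies that coincide on $W$, so they should agree by uniqueness of such continuation morphisms restricted to $\Sh_W$.

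Once this is done, $u',v'$ form an $(a,b)$-interleaving for $\gamma_{h_2}$, so $\gamma_{h_2}(\F,\G)\leq a+b\leq d$. Letting $d\to\gamma_{h_1}(\F,\G)$ gives $\gamma_{h_2}(\F,\G)\leq\gamma_{h_1}(\F,\G)$.
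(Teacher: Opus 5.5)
Your proposal follows the paper's proof. The paper also sets $u'=\sigma\circ u$, $v'=\sigma\circ v$ and reads the conclusion off a ladder diagram whose vertical arrows are the $\sigma_s$. The commutativity of its squares is exactly the identity you isolate. The paper uses that identity tacitly, together with the membership $\K^b_{h_1}(\G)\in\Sh_W(M)$, which you correctly flag.

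The one thing to fix is your handling of what you call the main obstacle. You leave it resting either on a sketched semigroup coherence of $\sigma$ or on an appeal to ``uniqueness of continuation morphisms'' that you do not justify. In fact the weaker identity you need is purely formal. Factor $\tau^{h_1}_{0,a+b}(\G)=\K^a_{h_1}(\tau^{h_1}_{0,b}(\G))\circ\tau^{h_1}_{0,a}(\G)$. Apply naturality of $\sigma_a$ to the morphism $\tau^{h_1}_{0,b}(\G)\colon\G\to\K^b_{h_1}(\G)$ of $\Sh_W(M)$. Then use $\sigma_s\circ\tau^{h_1}_{0,s}=\tau^{h_2}_{0,s}$ for $s=a,b$ (Lemma~\ref{lem:compare_isot}):
\begin{align*}
\K^a_{h_2}(\sigma_b(\G))\circ\sigma_a(\K^b_{h_1}\G)\circ\tau^{h_1}_{0,a+b}(\G)
&=\K^a_{h_2}\bigl(\sigma_b(\G)\circ\tau^{h_1}_{0,b}(\G)\bigr)\circ\sigma_a(\G)\circ\tau^{h_1}_{0,a}(\G)\\
&=\K^a_{h_2}(\tau^{h_2}_{0,b}(\G))\circ\tau^{h_2}_{0,a}(\G)=\tau^{h_2}_{0,a+b}(\G).
\end{align*}
The same computation with the roles of $\F,\G$ and $a,b$ exchanged handles $\F$.

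So, beyond Lemma~\ref{lem:compare_isot}, the only input you need is $\K^b_{h_1}(\G),\K^a_{h_1}(\F)\in\Sh_W(M)$. Your argument for this (approximating flows preserve $W$, then pass to the colimit) is adequate.
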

\begin{proof}
  Let $d \geq \gamma_{h_1}(\F, \G)$. There exist $a,b \geq 0$ such that $a+b \leq d$ and morphisms $u \colon \F \to \K_{h_1}^a(\G)$, $v \colon \G \to \K_{h_1}^b(\F)$ such that $u\circ v$ and $u\circ v$ are homotopic to the morphisms $\tau_{h_1}$.  Using the morphisms $\sigma_s$ of Lemma~\ref{lem:compare_isot} we set $u' = \sigma_a \circ u$, $v' = \sigma_b \circ v$.  We have the diagram
\[
\xymatrix{
  \F \ar[d] \ar[r]^-u & \K_{h_1}^a(\G) \ar[d]  \ar[r]^-v &  \K_{h_1}^{a+b}(\F) \ar[d] \ar[r]^-u
   &  \K_{h_1}^{2a+b}(\G) \ar[d] \\
   \F  \ar[r]^-{u'} & \K_{h_2}^{a}(\G) \ar[r]^-{v'} &  \K_{h_2}^{a+b}(\F)  \ar[r]^-{u'}
   &  \K_{h_2}^{2a+b}(\G) 
}
\]  
where the horizontal compositions of two consecutive arrows are the morphisms $\tau_{h_1}$ and $\tau_{h_2}$. We deduce that $\gamma_{h_2}(\F, \G) \leq d$, as required.
\end{proof}

\begin{lem}\label{lem: distance point ball}
  We choose a Riemannian metric $\Vert \bullet \Vert$ on $M$ and we let $g\colon T^*M\setminus 0_M \to \Real$ be the norm function $(x,p) \mapsto \|p\|$.  Let $x \in M$ and let $Z \subset M$ be a closed contractible subset. We assume that $x\in Z$ and $Z \subset B_\varepsilon$, where $B_\varepsilon$ is the open ball with center $\{x\}$ and radius $\varepsilon$, for some $\varepsilon < r_{inj}(M) / 3$.  Then $\gamma_g(k_{\{x\}}, k_Z) \leq 2\varepsilon$.
\end{lem}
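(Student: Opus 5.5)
We must construct interleaving morphisms $u\colon k_{\{x\}} \to \K_g^a(k_Z)$ and $v\colon k_Z \to \K_g^b(k_{\{x\}})$ with $a+b\le 2\varepsilon$, and check that both composites are the Tamarkin morphisms $\tau$. Note that $g\ge 0$ everywhere, so $\Sh_{g\geq0}(M)=\Sh(M)$ and $\gamma_g$ is defined on all sheaves.

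The key input is an explicit description of $\K_g^s$ for small $s\ge 0$. Since $\varphi_g$ is the (co)geodesic flow, for $0\le s<r_{inj}(M)$ we expect
\[
\K_g^s \simeq k_{\{(y,y')\,\mid\, d(y,y')\le s\}}.
\]
Indeed, this sheaf has microsupport the outward conormal of the distance spheres, which is the graph of the time $s$ geodesic flow as in~\eqref{eq:microsupport_K_vraphi_h}. Its restriction at $s=0$ is $k_{\Delta_M}$, and uniqueness of the quantization (as in~\cite{G-K-S}) identifies it. Composing, we get $\K_g^s(\G)\simeq \G\circ k_{\{d\le s\}}$, and in particular
\[
\K_g^s(k_{\{x\}})\simeq k_{\overline B_s(x)}, \qquad \K_g^s(k_Z)\simeq R\Gamma_c\text{-type thickening } k_{Z_s},
\]
where $Z_s$ is the closed $s$-neighbourhood of $Z$. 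This holds as long as the relevant fibres $\overline B_s(y)\cap Z$ are contractible (or empty), which is where the hypotheses $\varepsilon<r_{inj}/3$ and $Z$ contractible enter. Under these identifications, $\tau_s$ is the restriction morphism $k_{A}\to k_{A_s}$, which is well defined because $A\subset A_s$ with both closed, using the adjunction $k_{A_s}\to k_A$ in the appropriate direction. We must be careful with variance here and match the direction of $\tau$ from the $\K_g$ convention.

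Take $a=0$ and $b=2\varepsilon$, or more symmetrically $a=b=\varepsilon$. The first choice uses the closed inclusion $\{x\}\subset Z$, giving the natural morphism $k_Z\to k_{\{x\}}$. In the other direction we need $k_{\{x\}}\to k_{Z_a}$ and $k_Z\to k_{\overline B_b(x)}$. Since $Z\subset B_\varepsilon(x)\subset \overline B_\varepsilon(x)$, the map $k_{\overline B_\varepsilon(x)}\to k_Z$ is the restriction. Similarly $\{x\}\subset Z\subset Z_\varepsilon$ gives $k_{Z_\varepsilon}\to k_{\{x\}}$. After fixing the direction consistently, one takes $u,v$ to be these restriction morphisms, each with shift $\varepsilon$.

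The composites are then restriction maps between constant sheaves on nested closed sets: $\{x\}\subset Z\subset \overline B_{2\varepsilon}(x)$, respectively $Z\subset \overline B_\varepsilon(x)\subset Z_{2\varepsilon}$. Here $\K_g^\varepsilon(k_{\overline B_\varepsilon(x)})\simeq k_{\overline B_{2\varepsilon}(x)}$, which needs $2\varepsilon<r_{inj}$. These are the maps $\tau_{2\varepsilon}$ up to homotopy, because $\Hom(k_A,k_B)\simeq H^*(A)$-type groups for such contractible nested sets are $k$ in degree $0$. The compatibility is then checked on a single stalk, on which all maps are the identity of $k$. Hence $\gamma_g(k_{\{x\}},k_Z)\le 2\varepsilon$.

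The main obstacle is the first step: the explicit identification of $\K_g^s$ (and of $\K_g^s(k_Z)$ for a merely closed contractible $Z$), together with the correct variance of $\tau$. For this I would argue with the microsupport estimate and uniqueness, handling $k_Z$ via the composition formula and the contractibility of $\overline B_s(y)\cap Z$, which are geodesically convex-type intersections inside a ball of radius $<r_{inj}/3$.
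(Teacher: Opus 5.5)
Your overall plan matches the paper's: restriction maps coming from $\{x\}\subset Z\subset B_\varepsilon$, shifts $0$ and $2\varepsilon$, and reduction to a one-dimensional $\Hom$. But the step you call the main obstacle fails, in two ways.

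\emph{The kernel is misidentified at positive times.} A closed ball has microsupport along the \emph{inward} conormal of its boundary. The positive-time geodesic flow sends $T^*_xM$ to the \emph{outward} conormal. The correct statement, used in the paper, is $\K_g^{s}(k_{\{x\}})\simeq k_{B_s(x)}[n]$ and $\K_g^{-s}(k_{\{x\}})\simeq k_{\overline B_s(x)}$ for $0<s<r_{inj}(M)$.

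Your version cannot hold, for two reasons:
\begin{itemize}
\item $\Hom(k_{\{x\}},k_{\overline B_s(x)})\simeq k[-n]$, so there is no degree-$0$ morphism $\tau_s$ inducing the identity on global sections.
\item Since $\K_g^{\varepsilon}\circ\K_g^{-\varepsilon}\simeq\mathrm{id}$, one has $\K_g^{\varepsilon}(k_{\overline B_\varepsilon(x)})\simeq k_{\{x\}}$, not $k_{\overline B_{2\varepsilon}(x)}$.
\end{itemize}
So the ``direction'' issue is not bookkeeping: the morphisms $k_{\{x\}}\to k_{Z_a}$ and $k_Z\to k_{\overline B_b(x)}$ you want do not exist.

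The fix, as in the paper, is to take the restriction maps at negative time, $\K_g^{-\varepsilon}(k_{\{x\}})\simeq k_{\overline B_\varepsilon}\to k_Z\to k_{\{x\}}$, and transport them by $\K_g^{\varepsilon}$ and $\tau_\varepsilon$. Then the morphism to be identified with $\tau_{2\varepsilon}(k_Z)$ lies in $\Hom(k_Z,\K_g^{2\varepsilon}(k_Z))$. Near $Z$ this is $\Hom(k_Z,k_{B_\varepsilon}[n])\simeq k$, and for $n\geq 1$ such a morphism is zero on every stalk ($k\to k[n]$). Your ``check on a single stalk'' therefore detects nothing. The paper detects it on global sections instead. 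It uses that $\Gamma(M;\cdot)$ is an isomorphism on this $\Hom$, and that all $\tau$'s and restriction maps induce the identity of $k$ on global sections \cite[Prop.~4.8]{G-K-S}. (Alternatively, you could run the whole check at negative times, where stalks do detect the maps; but that also needs only the local constancy described below.)

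\emph{The formula $\K_g^{\pm s}(k_Z)\simeq k_{Z_s}$ is false for a general closed contractible $Z$.} The stalk at $y$ is $R\Gamma(\overline B_s(y)\cap Z;k)$, respectively $R\Gamma_c(B_s(y)\cap Z;k)[n]$. The set $\overline B_s(y)\cap Z$ need not be connected: take $Z$ a U-shaped arc and a ball meeting both arms but not the bottom. Convexity of the balls does not help, since $Z$ is not convex.

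The missing idea is that only local information about $\G_{2\varepsilon}=\K_g^{2\varepsilon}(k_Z)$ is needed:
\begin{itemize}
\item The time-$2\varepsilon$ flow moves every covector over $Z\subset B_\varepsilon(x)$ to a point at distance $2\varepsilon$ from its base point, hence outside $B_\varepsilon(x)$. So $\G_{2\varepsilon}$ is constant on $B_\varepsilon(x)$.
\item The constant is pinned down as $k[n]$ by $\Hom(\K_g^{\varepsilon}(k_{\{x\}}),\G_{2\varepsilon})\simeq\Hom(k_{\overline B_\varepsilon},k_Z)\simeq\Gamma(Z;k)\simeq k$.
\item This gives $\Hom(k_Z,\G_{2\varepsilon})\simeq k$, which is all the interleaving check uses.
\end{itemize}
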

\begin{proof}
  (i) Let us set $\F_s = \K^s_g\circ k_{\{x\}}$, $\G_s = \K^s_g\circ k_Z$.  For $0<r< r_{inj}(M)$ we have $\F_{-r} \simeq k_{\overline B_r}$ and $\F_r \simeq k_{B_r}[n]$, where $n = \dim M$. Indeed we know that the microsupport of $\F_r$ is the outer conormal of $\partial B_r$ outside the zero section and that $\F_r$ is supported in $\overline B_r$. It follows that $\F_r$ must be of the form $\F_r \simeq E_{B_r}$ for some $E\in \Der(k)$. Since the global sections are independent of $r$, 
  we find $k \simeq \Gamma(M; E_{B_r}) \simeq E[-n]$ and the result follows.

  The hypotheses $x\in Z$ and $Z \subset B_\varepsilon$ imply the existence of natural restriction morphisms $\F_{-\varepsilon} \to[a] \G_0 \to[b] \F_0$ and the composition $b\circ a$ is the morphism $\tau_{\varepsilon}(\F_{-\varepsilon})$.  We set $a' = \K^\varepsilon_g( a)$ and $a'' = \tau_\varepsilon(\G_\varepsilon) \circ a' \colon \F_0 \to \G_{2\varepsilon}$. Then $\K^{2\varepsilon}_g(b) \circ a'' = \tau_{2\varepsilon}(\F_0)$ and it remains to prove that $a'' \circ b = \tau_{2\varepsilon}(\G_0)$.

\medskip\noindent   (ii) Let us prove that $\Hom(\G_0, \G_{2\varepsilon}) \simeq k$.  The flow $\phi_{g, 2\varepsilon}$ sends any point $(x,p)$ with $x\in B_\varepsilon$ to a point $(x',p')$ such that $x'$ is at distance $2\varepsilon$ of $x$ in the direction $p$.  In particular $x' \in B_{3\varepsilon} \setminus B_\varepsilon$.  It follows that $SS(\G_{2\varepsilon}|_{B_\varepsilon})$ is contained in the zero section, hence $\G_{2\varepsilon}$ is constant on $B_\varepsilon$,
   say $\G_{2\varepsilon}|_{\varepsilon} \simeq E_{B_{\varepsilon}}$ for some $E\in \Der(k)$.  Now $k \simeq \Hom(\F_{-\varepsilon}, \G_0) \simeq \Hom(\F_\varepsilon, \G_{2\varepsilon}) \simeq \Hom(k_{B_\varepsilon}[n], E_{B_\varepsilon})$ and we deduce $E\simeq k[n]$. Finally, $\Hom(\G_0, \G_{2\varepsilon}) \simeq \Hom(k_Z, k_{B_\varepsilon}[n]) \simeq k$, as claimed, because $\supp(\G_0) = Z$ is contained in $B_\varepsilon$.

   Since $\Gamma(M; \G_0) \simeq \Gamma(M; \G_{2\varepsilon}) \simeq k$, we deduce that the global section morphism $u \mapsto \Gamma(M; u)$, from $\Hom(\G_0, \G_{2\varepsilon})$ to $\Hom(\Gamma(M; \G_0), \Gamma(M; \G_{2\varepsilon}))$, is an isomorphism. In particular $u$ is determined by $\Gamma(M; u)$.

   \medskip\noindent (iii) We recall that $\Gamma(M; \G_s)$ is independent of $s$ and moreover, for $c\leq d$, the natural isomorphism $\Gamma(M; \G_c) \isoto \Gamma(M; \G_d)$ is induced by $\tau_{c,d}(\G)$ (see~\cite[Prop. 4.8]{G-K-S}). It follows that $\Gamma(M; \tau_\varepsilon(\G_\varepsilon)) = \id_k$.  The same holds for $a$, hence $a'$, and for $b$.  Finally $\Gamma(M; a'' \circ b) = \id_k$. By~(ii) we conclude that $a'' \circ b = \tau_{2\varepsilon}(\G_0)$ and by~(i) this finishes the proof.
\end{proof}

\subsection{Main example: Small wrapping of the diagonal}

We consider the case $M = N\times\Real$ and put a metric on $N$.  We denote by $(x,p) \mapsto \| p\|$ the induced norm on $T^*N$.  We pick some radius $r$ and consider the function $h\colon T^*(N\times\Real) \setminus 0_{N\times\Real} \to \Real$ which is the homogenization of $\max\{0, \|p\| - r\}$.  We want to compute the sheaf $\K_h \in \Sh((N\times \Real)^2 \times \Real)$ associated with $h$. Since we work mainly in the Tamarkin category we only need to consider its composition with the projector to the Tamarkin category, that is, $\K_h^{-s} \circ k_{\Delta_N \times \{t' \leq t\}}$.  For this it is enough to know $h$ on $\{\tau\geq 0\}$.

\begin{lem}\label{lem:kernel_Hamiltonian=norm0}
  Let $f\colon \Real \to \Real$ be a smooth function.  We assume that there exist $0<a<b$ and $r\in \Real$ such that $f(u) = 0$ on $[0,a]$, $f(u) = u - r$ on $[b, \infty)$, and $f'$ gives a bijection from $[a,b]$ to $[0,1]$.  We let $h\colon T^*(N\times\Real) \setminus 0_{N\times\Real} \to \Real$ be the homogenization of $(x,p) \mapsto f(\|p\|)$, that is, $h(x,t,p,\tau) = \tau f(\|p / \tau \|)$ for $\tau > 0$, extended smoothly to $T^*(N\times\Real) \setminus 0_{N\times\Real}$.  Let $s>0$ be less than the injectivity radius of $N$. Then
  \[
  \K_h^{-s} \circ k_{\Delta_N \times \{t' \leq t\}} \simeq k_{C_s},
  \]
  where $C_s = \{(x,x',t, t') \mid d(x, x') \leq s, \; t \geq t' - s g(d(x,x')/s)\}$ and  $g(v) = f((f')^{-1}(v)) - v  (f')^{-1}(v)$.
\end{lem}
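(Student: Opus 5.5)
The plan is to compute $\K_h^{-s}\circ k_{\Delta_N\times\{t'\le t\}}$ through the uniqueness of sheaf quantizations with prescribed microsupport, as in the proof of Lemma~\ref{lem:kernel product}. Set $\mathcal{H}=\K_h\circ k_{\Delta_N\times\{t'\le t\}}$, viewed as a sheaf on $(N\times\Real)^2\times\Real_s$. By \cite{G-K-S} it is the unique sheaf whose restriction to $s=0$ is $k_{\Delta_N\times\{t'\le t\}}$ and whose microsupport lies in the Lagrangian movie of $SS(k_{\Delta_N\times\{t'\le t\}})$ under $\varphi_h$. This movie is explicit. $SS(k_{\Delta_N\times\{t'\le t\}})$ away from the zero section is the set of $(x,x,t,t,p,-p,\tau,-\tau)$ with $\tau\ge0$. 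The flow of $h$ preserves $\tau$ and $p/\tau$, and moves $x$ along the geodesic in direction $p$ (rescaled by $\tau$) with speed $f'(\|p/\tau\|)$. It moves $t$ with speed $\partial h/\partial\tau = f(u)-uf'(u)$, where $u=\|p/\tau\|$. At time $-s$ the point $x$ has therefore moved a distance $s f'(u)\in[0,s]$, and $t$ has moved by $-s(f(u)-uf'(u))$. Writing $v=f'(u)$, this displacement is $-s\,g(v)$ with $g$ as in the statement. The relation $d(x,x')=sv$, which holds since $s<r_{inj}$ and geodesics are minimizing, then gives $t-t'=-s\,g(d(x,x')/s)$ on the boundary.

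The key steps, in order, are the following.

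\textbf{(1) Describe the time-$(-s)$ Lagrangian.} From the computation above, it is the conormal of the hypersurface $\{t = t' - s g(d(x,x')/s),\ d(x,x')<s\}$. For $u\le a$ we have $v=0$ and $g(0)=f(a)-0=0$ by continuity, and these points stay on $\Delta_N\times\{t=t'\}$ with covector $\tau\ge0$. For $u\ge b$ we have $v=1$ and $g(1)=f(b)-b=-r$, which gives the edge $d(x,x')=s$, $t=t'+sr$. The region $u\ge b$ also carries the whole sphere of directions $p/\tau$ with norm $\ge b$. These directions form the conormal to the part $\{d(x,x')=s\}$ of $\partial C_s$, with the appropriate outward sign.

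\textbf{(2) Check that $SS(k_{C_s})$ equals this set.} This means checking that $C_s$ is a closed set whose boundary pieces have outward conormals matching step (1). The upper boundary is the graph $t=t'-sg(\cdot)$, which is $C^1$ because $g'(v)=-(f')^{-1}(v)\le 0$. The corner and edge at $d=s$ require care.

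\textbf{(3) Conclude by uniqueness.} Both $s\mapsto k_{C_s}$ and $s\mapsto\mathcal{H}|_{-s}$ are families with microsupport in the same movie, and they agree at $s=0$ since $C_0=\Delta_N\times\{t'\le t\}$. Uniqueness of the GKS-type quantization (equivalently, invertibility of $\K_h$ composed with $\K_{-h}$, as in Lemma~\ref{lem:kernel product}) then identifies them. An alternative is to build the family $\{k_{C_s}\}_s$ as a single sheaf on the parameter space, $k_{\{(x,x',t,t',s)\}}$, and verify directly that its microsupport has the right $\sigma=-h$ component.

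I expect the main obstacle to be step (2) at the non-smooth locus. Two places need attention. The first is the diagonal $x=x'$: there $d$ is not smooth, but $g$ vanishes to first order since $g'(0)=-a$, so the graph has a conical kink $t=t'+s\,a\,d/s+\dots$. The conormals at this kink should be exactly the covectors with $u\le a$. The second is the edge $d(x,x')=s$, where the vertical boundary meets the graph. At both places one computes $SS$ of the closed set by locally writing $C_s$ as an intersection or union of sets with smooth boundaries and using the estimates of \cite[\S5.3]{K-S}, then matching cone by cone with the flowed covectors from step (1). One also needs $h\ge0$ on $\{\tau\ge0\}$, so that only nonnegative $\tau$ appears and the restriction to the Tamarkin side is consistent. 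This holds because $f\ge0$.
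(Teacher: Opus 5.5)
Your route differs from the paper's. The paper computes the kernel itself. It fixes a particular smooth extension of $h$ ($h=\|p\|-r\tau$ near $\{\tau=0\}$, $h=-\tau f(\|p/\tau\|)-2r\tau$ for $\tau<0$) and flows the whole conormal of the diagonal, including its $\tau<0$ part. This gives $\K_h^{-s}\simeq k_{\Gamma_0}$ for the lens $\Gamma_0=\{d(x,x')\le s,\ t'-sg\le t\le t'+s(2r+g)\}$. Composing with $k_{\Delta_N\times\{t'\le t\}}$ then reduces to compactly supported cohomology of closed intervals.

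You flow only $SS(k_{\Delta_N\times\{t'\le t\}})\subset\{\tau\ge0\}$. This gives you for free that the result does not depend on how $h$ is extended to $\{\tau<0\}$; the statement leaves that extension arbitrary, while the paper's proof uses a specific one. The cost is a uniqueness statement for $\K_h\circ k_{\Delta_N\times\{t'\le t\}}$ rather than for $\K_h$, which you can get by composing with $\K_{-h}$ as you say. You also need a microsupport computation for a closed set that is unbounded in $t$.

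Note that what you call an ``alternative'' in step (3) is actually necessary. Uniqueness compares sheaves on the total space $(N\times\Real)^2\times\Real_s$, including the $\sigma$-component of the microsupport. It must also handle $s\to 0^+$, where $C_s$ collapses onto $\Delta_N\times\{t'\le t\}$. Equality of microsupports slice by slice does not determine the sheaf.

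There is, however, a concrete error in step (1), and it makes step (2) fail as written. $SS(k_{\Delta_N\times\{t'\le t\}})$ is not just the covectors $(p,-p,\tau,-\tau)$, $\tau\ge0$, over $\{t=t'\}$. Over the points $(x,x,t,t')$ with $t>t'$ it also contains the conormal covectors $(p,-p,0,0)$ of $\Delta_N$. These are what produce the vertical part $\{d(x,x')=s,\ t-t'>rs\}$ of $\partial C_s$.

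The covectors with $u=\|p/\tau\|\ge b$ cannot produce that wall. Since $f(u)-uf'(u)\equiv-r$ for $u\ge b$, they all land on the codimension-two edge $\{d(x,x')=s,\ t-t'=rs\}$. There they fill the corner cone between the conormal of the graph (ratio $b$) and the conormal of the wall ($\tau=0$).

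Smoothness of $h$ forces $h=\|p\|$ and $\partial_\tau h=-r$ on $\{\tau=0\}$. So the $\tau=0$ covectors move by the normalized geodesic flow for time $-s$ together with $t\mapsto t+rs$, and land exactly on the wall with its inward conormal. Without them, your flowed set misses the conormal of the wall, which does lie in $SS(k_{C_s})$. Once they are included, the matching in step (2) goes through. Your analysis of the kink along the diagonal, which picks out exactly the covectors with $u\le a$, is correct.

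A minor point: $h\ge0$ plays no role in keeping $\tau\ge0$. The flow preserves $\tau$ simply because $h$ does not depend on $t$.
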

\begin{proof}
  We choose the following convenient way to extend $h$ smoothly to all of $T^*(N\times \Real)$: we set $h(x,t,p,\tau) = \|p\| - r \tau$ near $\{\tau = 0\}$ and $h(x,t,p,\tau) = - \tau f(\|p / \tau \|) - 2r\tau$ for $\tau < 0$.  (If we extend $h$ symmetrically by $h(x,t,p,\tau) = |\tau| f(\|p / \tau \|)$, we find $h(x,t,p,\tau) = \|p\| - r |\tau|$ near $\{\tau = 0\}$, which is continuous but not smooth).
  
  Let $n$ be the norm function on $T^*N \setminus 0_N$, $n(x,p) = \|p\|$. Let $X_n$ be its Hamiltonian vector field and $\varphi_n$ its flow (the normalized geodesic flow on $T^*N \setminus 0_N$). For $\tau>0$ we have $h = \tau f(\|p / \tau \|)$, hence $X_h = f'(\|p / \tau \|) X_n + f_1(\|p / \tau \|) \partial_t$, with $f_1(u) = f(u) - uf'(u)$.  Since $\|p\|$ is preserved by $\varphi_n$ and $\tau$ by the flow of $\partial_t$, we see that the coefficients of $X_n$ and $\partial_t$ are constant along the trajectories of $X_h$. We deduce $\varphi_h(x,t,p,\tau, -s) = (x',t',p', \tau)$ with $(x',p') = \varphi_n(x,p, -sf'(\|p / \tau \|))$ and $t' = t - s f_1(\|p / \tau \|)$.  In particular $d(x,x') = sf'(\|p / \tau \|)$. For $d(x,x') \in (0,s)$ we thus obtain $\|p / \tau \| = (f')^{-1}(d(x,x') / s)$, hence $t' = t - sg(d(x,x') / s)$.

  For $\tau < 0$ the computation is the same (up to changing some signs and adding $X_{-2r\tau} = -2r \partial_t$), which yields $\varphi_h(x,t,p,\tau, -s) = (x',t',p', \tau)$ with $(x',p') = \varphi_n(x,p, -sf'(\|p / \tau \|))$ and $t' = t + s(2r + g(d(x,x') / s))$.  We deduce that the front projection of the graph of $\varphi_h^{-s}$ is $\Gamma_- \cup \Gamma_+$ with
\begin{align*}
  \Gamma_- &= \{(x,x',t, t') \mid d(x, x') \leq s, \; t' = t - sg(d(x,x')/s)\} , \\
  \Gamma_+ &= \{(x,x',t, t') \mid d(x, x') \leq s, \; t' = t +s(2r + g(d(x,x')/s)) \} .
\end{align*}
Since $\K_h$ is uniquely determined by the graph of $\varphi_h$ and the condition $\K_h|_{\{s=0\}} = k_{\Delta_N}$, and taking into account the fact that  $SS(\K_h^s)$ is the graph of $\varphi_h^{-s}$, we deduce $\K_h^{-s} \simeq k_{\Gamma_0}$ with  
\begin{equation*}
  \Gamma_0 = \{(x,x',t, t') \mid d(x, x') \leq s, 
  t' - sg(d(x,x')/s) \leq t \leq t' + s(2r + g(d(x,x')/s)) \}.
\end{equation*}
Now composing with $k_{\Delta_N \times \{t' \leq t\}}$ yields the result.
\end{proof}

\begin{lem}\label{lem:kernel_Hamiltonian=norm}
  Let $r\geq 0$ be given and let $f$ be the function $f(u) = \max \{0, u - r\}$.  We set $h(x,t$, $p$, $\tau) = |\tau| f(\|p / \tau \|)$ for $\tau \not= 0$ and extend by continuity, so $h(x,t,p,\tau) = \max \{0, \|p\| - r |\tau|\}$. Let $s>0$ be less than the injectivity radius of $N$. Then 
  \[
  \K_h^{-s} \circ k_{\Delta_N \times \{t' \leq t\}} \simeq k_{C_s}
  \]
with $C_s = \{(x,x',t, t') \mid d(x, x') \leq s, \; t \geq t' + r d(x,x')\}$. 
\end{lem}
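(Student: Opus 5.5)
We would deduce this from Lemma~\ref{lem:kernel_Hamiltonian=norm0} by approximating the continuous function $f(u)=\max\{0,u-r\}$ from below by smooth functions, and passing to the colimit with the description~\eqref{eq:def_Kh_continuous_colimit} (for negative times, where $h$ runs over functions \emph{greater} than $h_0$). Only the values of $h$ on $\{\tau\geq0\}$ matter after composing with $k_{\Delta_N\times\{t'\leq t\}}$.

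\textbf{Step 1: the smooth approximants.} Fix a sequence $f_n$ as in Lemma~\ref{lem:kernel_Hamiltonian=norm0}: $f_n=0$ on $[0,a_n]$, $f_n(u)=u-r_n$ on $[b_n,\infty)$, $f_n'$ increasing from $0$ to $1$ on $[a_n,b_n]$. We choose the sequence decreasing to $f$, so that the homogenized $h_n$ (extended as in the proof of that lemma) form a decreasing sequence in $\C^0_{ph,>h}$ that is cofinal there. This is possible, for instance, with $a_n\uparrow r$, $b_n\downarrow r$ and $r_n\uparrow r$ suitably; near $\tau\le 0$ we use the extension from the proof of Lemma~\ref{lem:kernel_Hamiltonian=norm0}, possibly adjusted so that it stays above $h$. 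Lemma~\ref{lem:kernel_Hamiltonian=norm0} gives $\K_{h_n}^{-s}\circ k_{\Delta_N\times\{t'\leq t\}}\simeq k_{C_s^{(n)}}$ with
\[
C_s^{(n)}=\{d(x,x')\leq s,\; t\geq t'-s g_n(d(x,x')/s)\},\qquad g_n(v)=f_n((f_n')^{-1}(v))-v(f_n')^{-1}(v).
\]

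\textbf{Step 2: compute the limit of $g_n$.} For $v\in(0,1)$ the point $u=(f_n')^{-1}(v)$ lies in $[a_n,b_n]$, which shrinks to $\{r\}$, and $f_n(u)\to0$. Hence $g_n(v)\to -rv$, and $-sg_n(d/s)\to r\,d(x,x')$. The sets $C_s^{(n)}$ therefore converge to $C_s=\{d(x,x')\leq s,\ t\geq t'+rd(x,x')\}$. Geometrically, $g_n$ is minus the Legendre transform of $f_n$, and $f_n\downarrow f$ gives a monotone family of closed sets whose limit is $C_s$. We expect the $C_s^{(n)}$ to increase to a set whose union, after closure, is $C_s$ (or to decrease to it, depending on sign conventions).

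\textbf{Step 3: pass to the colimit.} Since colimits commute with composition of kernels, $\K_h^{-s}\circ k_{\Delta_N\times\{t'\le t\}}\simeq \colim_n k_{C_s^{(n)}}$, with transition maps induced by the functoriality in $h$. These transition maps must be identified with the natural restriction or extension maps between constant sheaves on the nested closed sets. Two checks are needed:
\begin{itemize}
\item Using the uniqueness of the nonzero morphism between such constant sheaves, the transition maps are the natural ones; we would verify this on global sections or stalks, which are $k$ and compatible with $\tau$.
\item The colimit of $k_{C^{(n)}}$ along closed sets is $k_{C_s}$. This holds when the $C^{(n)}$ decrease to $C_s$ with $C_s=\bigcap C^{(n)}$, because stalks of a filtered colimit are colimits of stalks, and a decreasing family of closed sets has colimit the constant sheaf on the intersection.
\end{itemize}

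\textbf{Main obstacle.} The delicate point is the monotonicity direction together with the smooth extension on $\{\tau\leq 0\}$. The colimit must run over $h_n>h$ for $-s<0$, and the extension used in Lemma~\ref{lem:kernel_Hamiltonian=norm0} must be compatible with this ordering. Because the $\tau<0$ part only contributes the upper boundary $\Gamma_+$, which disappears after composing with $k_{\Delta_N\times\{t'\leq t\}}$, I would handle this by checking that the result depends only on $h|_{\tau\ge0}$, for instance via Lemma~\ref{lem:compare_isot} applied on $W=\{\tau\geq0\}$.
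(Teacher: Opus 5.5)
Your proposal follows the paper's proof: approximate $f$ by smooth $f_n$ satisfying the hypotheses of Lemma~\ref{lem:kernel_Hamiltonian=norm0}, note that $(f_n')^{-1}(v)$ lies in a shrinking interval around $r$ so that $g_n(v)\to -rv$, and conclude that $C_{n,s}$ tends to $C_s$. The paper simply asserts this last convergence, relying on the limit definition of $\K_h$ and on the remark that only $h|_{\{\tau\geq 0\}}$ matters, so your explicit colimit argument correctly fills in that step; also, with $f_n\downarrow f$ one has $-g_n=f_n^*\uparrow f^*$, so $C_s^{(n)}$ decreases to $C_s=\bigcap_n C_s^{(n)}$, which settles the hedge in your Step~2.
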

\begin{proof}
  We choose a sequence $(f_n)_n$ of differentiable functions $C^0$ converging to $f$.  We assume that the $f_n$'s satisfy the hypotheses of Lemma~\ref{lem:kernel_Hamiltonian=norm0}, that is, $f_n$ coincides with $f$ on the set $\Real_{\geq 0} \setminus I_n$, where $I_n = [r- \frac{1}{n}, r+ \frac{1}{n}]$, and $f'_n$ is increasing on $I_n$.  We set $h_n(x,t,p,\tau) = \tau f_n(\|p/\tau \|)$ for $\tau>0$. We extend $h_n$ smoothly to $T^*(N\times\Real) \setminus 0_{N\times\Real}$. By Lemma~\ref{lem:kernel_Hamiltonian=norm0} we have $\K_{h_n}^{-s} \circ k_{\Delta_N \times \{t' \leq t\}} \simeq k_{C_{n,s}}$ with $C_{n,s} = \{(x,x',t, t') \mid d(x, x') \leq s, \; t \geq t' - s g_n(d(x,x')/s)\}$ and, for $v\in [0,1]$, $g_n(v) = f_n((f'_n)^{-1}(v)) - v(f'_n)^{-1}(v)$ (recall that $f'_n$ identifies $I_n$ and $[0,1]$).  Since $(f'_n)^{-1}(v) \in I_n$, we have $0 \leq f_n((f'_n)^{-1}(v)) \leq 1/n$ and $g_n$ tends to the function $g(v) = -r v$. Hence $C_{n,s}$ tends to the set $C_s$ of the lemma.
\end{proof}

\section{Proof of the main result}

In this section we prove Theorem~\ref{thm:density sheaves}.  Since it deals with the objects $\W_{(x,a)}$, images of $k_{\{x\} \times [a, \infty)}$ by the projector $P'_{DT^*N}$, we first recall some results on $P'_{DT^*N}$.

\subsection{Some results on the projector}

We recall the following description of the projector taken from~\cite{Kuo-Shende-Zhang-Hochschild}.  Recall that for any closed subset $Z \subset T^*N$, there exists a projector $P'_Z$, right adjoint to the embedding $i_Z$ of $\Tam_Z(N)$ in $\Tam(N)$ (see the discussion before formula \ref{adjunction-formula}). Moreover $P'_Z$ is given by a convolution functor described as follows.

\begin{prop}[Prop 6.5 of \cite{Kuo-Shende-Zhang-Hochschild}]
  Let $Z\subset T^*N$ be a closed set, and let $H_n$, $n\in \N$, be any increasing sequence of compactly supported Hamiltonians supported on $T^*N \setminus Z$ such that $H_n(u) \to \infty$ for all $u\not\in Z$. Let $\K_n=\K_{H_n}^{-1}$ be their sheaf quantizations, which form an inverse system along continuation maps $ \K_n\to k_{\Delta_M \times [0,\infty)}$. Then we have $P'_Z(\F)= \lim_n \K_n (\F)$ and the counit $P'_Z \Rightarrow \id$ is intertwined with the limit of continuation maps.
\end{prop}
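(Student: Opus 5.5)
The plan is to check the two defining properties of a right adjoint directly on $\F_\infty := \lim_n \K_n(\F)$, where the limit is taken along the continuation maps $\K_{n+1}\to\K_n$. The first property is that $\F_\infty$ lies in $\Tam_Z(N)$. The second is that the map $\F_\infty \to \F$ coming from the continuation maps $\K_n \to k_{\Delta_M\times[0,\infty)}$ induces, for every $\G\in\Tam_Z(N)$, an equivalence
\[
\Hom(\G,\F_\infty) \isoto \Hom(\G,\F).
\]
Together these identify $\F_\infty$ with $P'_Z(\F)$ and the limit of the continuation maps with the counit. Independence from the chosen sequence $(H_n)_n$ then follows from the uniqueness of adjoints, or directly by a cofinality argument: any two such increasing sequences can be interleaved up to cofinal subsequences.

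For the adjunction property I would proceed in three steps.
\begin{enumerate}
\item Each $\K_n=\K_{H_n}^{-1}$ is invertible for composition, with inverse given by the quantization of $-H_n$. Hence
\[
\Hom(\G,\K_n(\F))\simeq \Hom(\K_n^{-1}(\G),\F).
\]
\item Since $H_n$ vanishes on a neighborhood of $Z$, the flow $\varphi_{H_n}$ is the identity on $\RS(\G)\subset Z$. By the uniqueness part of the Guillermou--Kashiwara--Schapira quantization, the continuation map $\G\to \K_n^{-1}(\G)$ is an isomorphism. To see this, note that its cone has reduced microsupport in $Z$, yet it is moved by a Hamiltonian that is stationary there. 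One can then argue with the family over $[0,1]$ as in the proof of Lemma~\ref{lem:kernel product}.
\item Consequently every map in the inverse system $\Hom(\G,\K_n(\F))$ is an isomorphism onto $\Hom(\G,\F)$, compatibly in $n$. Since $\Hom(\G,-)$ commutes with limits, the claimed equivalence follows.
\end{enumerate}

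The main obstacle is the microsupport estimate $\RS(\F_\infty)\subset Z$, because microsupports do not behave well under limits in general. I would test it against small objects. Fix $u\notin Z$ and a conic neighborhood $U$ of $\rho^{-1}(u)$ disjoint from $\rho^{-1}(Z)$. By the microlocal characterization, it suffices to show that $\Hom(\mathcal{A},\F_\infty)=0$ for test sheaves $\mathcal{A}$ whose microsupport is concentrated near $U$, for instance the standard microlocal skyscraper objects in $\Tam(N)$.

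Now write
\[
\Hom(\mathcal{A},\F_\infty)=\lim_n \Hom(\K_n^{-1}\mathcal{A},\F).
\]
Since $H_n\geq c_n$ on $\rho(U)$ with $c_n\to\infty$, the object $\K_n^{-1}\mathcal{A}$ is, up to a shrinking error, the translate $T_{c_n}\mathcal{A}$. Thus the transition maps in this inverse system factor through the Tamarkin morphisms $\tau_{c_{n+1}-c_n}$.

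I would first try to show that the resulting system is pro-zero. Here the semicontinuity of the persistence modules $\Hom(T_c\mathcal{A},\F)$ should make $\lim_c$ vanish, which is the step where the Tamarkin condition (the left orthogonality to $\Sh_{\tau\leq0}$) enters. The $\lim^1$ term must be checked to vanish as well, and I would use a Mittag--Leffler-type argument for it. If this direct approach is too delicate, the fallback is to bound $\SS(\F_\infty)$ by $\liminf_n \SS(\K_n(\F))$, as in the proof of Lemma~\ref{lem:bound SS Kh}, after first replacing the limit by a uniformly convergent sequence in the $\gamma$-distance.
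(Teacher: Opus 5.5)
Your steps 1--3 are fine. For $\G\in\Tam_Z(N)$ the lift of $H_n$ vanishes near $\SS(\G)$, so the family $s\mapsto\K^{s}_{H_n}(\G)$ is constant and the continuation map $\G\to\K_{H_n}^{1}(\G)$ is an isomorphism. This is the adjunction half of Kuo's argument behind the cited result.

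\textbf{The gap.} The other half, $\RS(\F_\infty)\subset Z$, is flagged but not proved, and none of your suggested routes works as stated.
\begin{itemize}
\item The claim that $\K_n^{-1}\mathcal A$ is $T_{c_n}\mathcal A$ ``up to a shrinking error'' is unfounded. The bound $H_n\geq c_n$ on $\rho(U)$ makes the lifted flow a $t$-translation only if $H_n$ is locally constant there. Otherwise $X_{H_n}$, whose size is uncontrolled (typically $dH_n$ blows up), moves $\mathcal A$ in the $T^*N$-directions.
\item There is no ready-made characterization of $\SS$ by test objects microsupported only near $U$. The standard ones, $k_{\{\phi\geq0\}\cap B}$, carry the conormal of $\partial B$, part of which may lie in $\rho^{-1}(Z)$ and is wrapped differently.
\item Even in the model case $H_n\equiv c_n$ near $U$, the vanishing you need is $\Hom(\mathcal A,\lim_c T_{-c}\F)=0$. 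It holds because a translation-invariant limit has microsupport in $\{\tau=0\}$, which is an invariance-under-wrapping argument, not a semicontinuity one.
\item The fallback fails outright. $\K_n(\F)$ is not $\gamma$-Cauchy: for $H_n\equiv c_n$ near $\RS(\F)$ one gets $T_{-c_n}\F$, at mutual distance $|c_n-c_m|$. A limit of a tower is not a $\gamma$-limit in any case.
\item Nor need $\liminf_n\RS(\K_n\F)$ lie in $Z$. Take $Z=T^*N\setminus B$ with $B$ an open ball. The flows of the $H_n$ preserve $B$, so if $\RS(\F)\supset B$ then every $\RS(\K_n\F)\supset B$.
\end{itemize}

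\textbf{The missing idea: cofinality.} This is exactly what the hypotheses ``compactly supported'' and ``$H_n\to\infty$ off $Z$'' are for (see the Remark after the statement), and your proposal never uses them.
\begin{itemize}
\item Let $H\geq0$ be compactly supported in $T^*N\setminus Z$. Since $H\geq0$, there are continuation maps $\K_H^{-1}\circ\K_n\to\K_n$.
\item Conversely, the time-dependent Hamiltonian generating $\K_H^{-1}\circ\K_n$ is compactly supported in $T^*N\setminus Z$. An increasing sequence of continuous functions tending pointwise to $+\infty$ does so uniformly on compacta, so this Hamiltonian is dominated by $H_m$ for $m\gg n$. This gives maps $\K_m\to\K_H^{-1}\circ\K_n$.
\item The two towers are therefore pro-equivalent. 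Since $\K_H^{-1}$ is an autoequivalence it commutes with the limit, so $\K_H^{-1}(\F_\infty)\isoto\F_\infty$ compatibly with continuation maps.
\item Applying this to $sH$, $s\in[0,1]$, the family $s\mapsto\K_H^{-s}(\F_\infty)$ is constant in $s$.
\item Let $\widetilde H$ be the homogeneous lift of $H$. Since $\K_H$ is invertible, the microsupport of this family is exactly the moved $\SS(\F_\infty)$ with $s$-covector $-\widetilde H$. Constancy in $s$ forces $\widetilde H=0$ on $\SS(\F_\infty)$, i.e.\ $\RS(\F_\infty)\cap\{H>0\}=\emptyset$.
\item As $H$ is arbitrary, $\RS(\F_\infty)\subset Z$.
\end{itemize}
This is how the cited result is proved: your sequence is cofinal in Kuo's negative wrapping category, over which the limit is known to compute $P'_Z$.

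\textbf{Where the limit lives.} The limit must be taken in $\Tam(N)$, i.e.\ followed by the Tamarkin projector, not in $\Sh(N\times\Real)$. The left orthogonal is not closed under limits. For example, take $Z=\emptyset$, $\F=k_{N\times[0,\infty)}$ and $H_n\equiv c_n$ near $0_N$. The sheaf-level limit is $\lim_n k_{N\times[-c_n,\infty)}\simeq k_{N\times\Real}\in\Sh_{\tau\leq0}$, whereas $P'_\emptyset=0$.
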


\begin{rem}In the proof of \cite[Prop 6.5]{Kuo-Shende-Zhang-Hochschild}, the compactly support condition was made for two purposes: 1) To guarantee the existence of sheaf quantization $\K_n$ in the great generality. 2) To make sure the Hamiltonian sequence is cofinal under the assumption $H_n(u) \to \infty$ for all $u\not\in Z$. So, for any cofinal Hamiltonian sequence $H_n$ (that may not to be compactly supported) such that $H_n(u) \to \infty$ for all $u\not\in Z$ and whose sheaf quantizations $\K_n$ exist, we can conclude the same formula as \cite[Prop 6.5]{Kuo-Shende-Zhang-Hochschild}. 

In particular, we can construct the projector $P'_Z$ for $Z=DT^*N$ in the following way. We take the continuous function $H(x,p)= \max\{0,\|p\|-1\}$ and $h$ is the homogenization of $H$, i.e. the function $h$ in Lemma~\ref{lem:kernel_Hamiltonian=norm}. Then for $H_n=\alpha_n H$ where $\alpha_n \nearrow\infty$, we take $\K_n\coloneqq \K_{h}^{-\alpha_n} \circ k_{\Delta_N \times \{t' \leq t\}}$ as their sheaf quantization. Then we have $P'_Z(\F)= \lim_n \K_n (\F)$ as well.
\end{rem}

We remark that the characterization of $P'_Z$ as a right adjoint of $i_Z$ implies that it is uniquely determined by $Z$.
\begin{lem}\label{lem:P=PK}
  Let $Z \subset T^*N$ be a closed subset.  Let $\phi\colon T^*N \to T^*N$ be a Hamiltonian isotopy which admits a contact lift $\widetilde\phi$ on $J^1(N)$ (or equivalently a lift to $T^*(N\times \Real) \setminus 0_{N\times\Real}$ as a homogeneous Hamiltonian isotopy).  Let $\K_{\widetilde\phi}$ be the sheaf associated with $\widetilde\phi$.  We assume that $\phi(Z) = Z$ and $\widetilde\phi|_{\rho^{-1}(Z)} = \id_{\rho^{-1}(Z)}$.  Then
\[
P'_Z \simeq  P'_Z \circ  \K_{\widetilde\phi}  .
\]
\end{lem}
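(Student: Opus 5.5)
We use the Yoneda lemma: it suffices to show that $P'_Z(\K_{\widetilde\phi}(\G))$ and $P'_Z(\G)$ represent the same functor on $\Tam_Z(N)$, naturally in $\G \in \Tam(N)$. By the adjunction defining $P'_Z$ (formula~\eqref{adjunction-formula}), for $\F \in \Tam_Z(N)$ we have
\[
\Hom(\F, P'_Z(\K_{\widetilde\phi}(\G))) \simeq \Hom(i_Z\F, \K_{\widetilde\phi}(\G)) \simeq \Hom(\K_{\widetilde\phi}^{-1}(i_Z\F), \G),
\]
using that $\K_{\widetilde\phi}$ is an autoequivalence of $\Tam(N)$ with inverse $\K_{\widetilde\phi^{-1}}$ (as in the proof of Lemma~\ref{lem:kernel product}). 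Here $\K_{\widetilde\phi}$ preserves $\Tam(N)$ because it commutes with the projector to the Tamarkin category, being a composition kernel relative to $N$. So the statement reduces to a natural equivalence $\K_{\widetilde\phi^{-1}} \circ i_Z \simeq i_Z$ of functors $\Tam_Z(N)\to\Tam(N)$. Composing with the adjunction then gives $P'_Z\circ \K_{\widetilde\phi}\simeq P'_Z$.

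To prove $\K_{\widetilde\psi}\circ i_Z \simeq i_Z$ for $\psi=\phi^{-1}$, note that $\widetilde\psi$ is also the identity on $\rho^{-1}(Z)$. Write $\widetilde\psi = \widetilde\psi_1$ for a homogeneous isotopy $(\widetilde\psi_s)_{s\in[0,1]}$ with Hamiltonian $H_s$. The first step is to replace the isotopy by one that is \emph{pointwise} the identity on $\rho^{-1}(Z)$ for all $s$, not only at $s=1$. Then $H_s$ vanishes on $\rho^{-1}(Z)$ up to a constant, which is zero by homogeneity, and we can cut $H_s$ off by a conic bump function equal to $1$ near $\rho^{-1}(Z)$, as in the proof of Lemma~\ref{lem:compare_isot}. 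I expect this to be the main obstacle: the hypothesis only gives $\widetilde\phi|_{\rho^{-1}(Z)}=\id$ and $\phi(Z)=Z$ at the final time. Two routes are available. One could interpret ``$\widetilde\phi$ admits a lift'' as providing an isotopy fixing $\rho^{-1}(Z)$ throughout. Otherwise, one can argue directly with the family sheaf $\K_{\widetilde\psi_s}$ restricted to $s=1$, and avoid isotopy-level hypotheses by working only with the time-one microsupport.

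For the direct argument, take $\F\in\Tam_Z(N)$ and consider $\K_{\widetilde\psi}(\F)$. Its microsupport is $\widetilde\psi(SS(\F)) = SS(\F)$, since $\widetilde\psi$ is the identity on $\rho^{-1}(Z)$ and on $\{\tau=0\}$ near the zero section. Running the family $s\mapsto \K_{\widetilde\psi_s}(\F)$ is not constant, however, so instead we use the kernel itself. Let $\K'=\K_{\widetilde\psi}\circ k_{\Delta_N\times\{t'\le t\}}$. Its microsupport is contained in the graph of $\widetilde\psi$ restricted to $\{\tau\ge0\}$, which coincides with the graph of the identity over $\rho^{-1}(Z)$. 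The identity kernel $k_{\Delta_N\times\{t'\le t\}}$ and $\K'$ can be compared after composing with $\F$. By the microsupport estimate for composition in \cite[\S5.4]{K-S} (as in Lemma~\ref{lem:bound SS Kh}), only the part of the graph over $SS(\F)\subset\rho^{-1}(Z)$ matters.

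Concretely, I would first prove the result when the isotopy fixes $\rho^{-1}(Z)$ for all times. In that case the cut-off Hamiltonian $bH_s$ generates an isotopy that agrees with $\widetilde\psi_s$ on $\rho^{-1}(Z)$ and is the identity near $\rho^{-1}(Z)$. The sheaf $\K_{\widetilde\psi_s}(\F)$ then has microsupport in $\rho^{-1}(Z)$ for all $s$, and the $s$-family has microsupport in $\{\sigma = -H_s\circ\ldots\}=\{\sigma=0\}$ over $SS(\F)$, since $H_s=0$ there. It is therefore locally constant in $s$, giving $\K_{\widetilde\psi_1}(\F)\simeq\F$ naturally in $\F$ by \cite[Prop. 4.8]{G-K-S}-type arguments. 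Naturality holds because the identification is induced by the kernel restriction $\K|_{s=0}\to\K|_{s=1}$ along the interval, which is functorial.
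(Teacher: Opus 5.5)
Your reduction is the paper's proof: by the adjunction $i_Z\dashv P'_Z$ and the invertibility of $\K_{\widetilde\phi}$, everything comes down to $\K_{\widetilde\phi}^{-1}\circ i_Z\simeq i_Z$. The paper only asserts this, and your $s$-family argument correctly justifies it when the isotopy fixes $\rho^{-1}(Z)$ pointwise for all $s$: for $\F\in\Tam_Z(N)$ the family $\K_{\widetilde\psi_s}\circ\F$ has microsupport with zero $\sigma$-component, because $\widetilde\psi_s$ fixes $\rho^{-1}(Z)$ and $H_s$ vanishes there, so it is constant in $s$, naturally in $\F$. That is exactly the case in the paper's applications, where $h_n=\tau f_n(\|p\|/\tau)$ with $f_n|_{[0,1]}=0$ vanishes to first order on $\rho^{-1}(DT^*N)$.

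Commit to that reading and drop both the cut-off and the time-one ``direct argument''. The cut-off is unnecessary, and the flow of $bH_s$ is not the identity near $\rho^{-1}(Z)$. The time-one argument cannot succeed. Take a full rotation of a Darboux ball $Z$, cut off outside a larger ball: its contact lift is the identity on $\rho^{-1}(Z)$ at time one. Yet it acts on $\Tam_Z(N)\neq 0$ by the shift $[\pm 2n]$ given by the Maslov index of the loop, since the contact Fourier--Sato kernel $\mathcal S$ satisfies $\mathcal S^{\circ 4}\simeq[-2n]$. So time-one microsupport information alone does not force $\K_{\widetilde\psi}\circ\F\simeq\F$.
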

\begin{proof}
  The hypothesis implies that $i_Z \simeq \K_{\widetilde\phi}^{-1} \circ i_Z$ and the result follows by adjunction.
\end{proof}

\begin{lem}\label{lem:formula_Wxa}
 Given a metric on $N$ and pick $s>0$ to be smaller than the injectivity radius.  For $(x_0, a_0) \in N\times\Real$ we define a closed subset of $N\times\Real$, $C(x_0, a_0, s) = \{(x,t) \mid d(x, x_0) \leq s, \, t \geq a_0 + d(x,x_0) \}$.  Then
  \begin{equation}\label{eq:formula_Wxa}
    \W_{(x_0,a_0)} \simeq P'_{DT^*N} ( k_{C(x_0, a_0, s)} ) .
  \end{equation}  
\end{lem}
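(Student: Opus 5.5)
We plan to combine Lemma~\ref{lem:P=PK} with the explicit kernels of Lemma~\ref{lem:kernel_Hamiltonian=norm}, taking $r=1$. The idea is that $k_{C(x_0,a_0,s)}$ is obtained from $k_{\{x_0\}\times[a_0,\infty)}$ by applying the kernel of a Hamiltonian that is supported outside $\rho^{-1}(DT^*N)$. The projector $P'_{DT^*N}$ cannot see such a kernel.

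First, let $h$ be the homogenization of $H(x,p)=\max\{0,\|p\|-1\}$, as in the remark after Prop.~6.5 of \cite{Kuo-Shende-Zhang-Hochschild}. Lemma~\ref{lem:kernel_Hamiltonian=norm} with $r=1$ gives
\[
\K_h^{-s}\circ k_{\Delta_N\times\{t'\leq t\}}\simeq k_{C_s}, \qquad C_s=\{(x,x',t,t')\mid d(x,x')\leq s,\ t\geq t'+d(x,x')\}.
\]
Composing with $k_{\{x_0\}\times[a_0,\infty)}$, which already lies in $\Tam(N)$, amounts to fixing $x'=x_0$ and taking $t'\geq a_0$ with the variable $t'$ integrated out. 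The composition with $k_{\Delta_N\times\{t'\le t\}}$ acts as the identity on $\Tam(N)$. We therefore expect
\[
\K_h^{-s}(k_{\{x_0\}\times[a_0,\infty)})\simeq k_{C(x_0,a_0,s)}.
\]
To verify this, compute the fibre of $\{(x,t,t')\mid d(x,x_0)\le s,\ a_0\le t'\le t-d(x,x_0)\}$ over each $(x,t)$. This fibre is a closed interval or empty, so the proper direct image is the constant sheaf on $C(x_0,a_0,s)$. Some care is needed with the support and the direct image convention, $!$ versus $*$; since the fibres are compact, the two agree.

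Next, apply Lemma~\ref{lem:P=PK}, with $\widetilde\phi$ the time $-s$ flow of $h$ and $Z=DT^*N$. On $\rho^{-1}(DT^*N)=\{\|p\|\le\tau\}$ we have $h=0$, so both the flow and its projection are the identity there, and $\phi(Z)=Z$. The difficulty is that $h$ is only continuous, so it has no honest flow, and $\K_h$ is the limit object of Lemma~\ref{lem:def_Kh}. We handle this at the level of kernels, which is the argument behind Lemma~\ref{lem:P=PK}. We need $\K_h^{-s}\circ i_Z\simeq i_Z$ on $\Tam_{DT^*N}(N)$. For this we use the colimit description~\eqref{eq:def_Kh_continuous_colimit} at negative times, over smooth $h'>h$. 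Each smooth Hamiltonian can be chosen to vanish on a slightly larger region $\{\|p\|\le(1-\delta)\tau\}$ and to be nonnegative, so its kernel acts trivially on sheaves with reduced microsupport in $D_{1-\delta}T^*N$. Alternatively, one can approximate $h$ from below by smooth functions vanishing on $DT^*N$; Lemma~\ref{lem:compare_isot} then provides morphisms, and Proposition~\ref{prop:compare_dist} shows that the continuation maps become equivalences on $\Tam_Z$. Adjunction then gives $P'_Z\circ\K_h^{-s}\simeq P'_Z$.

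Combining the two steps,
\[
\W_{(x_0,a_0)}=P'_{DT^*N}(k_{\{x_0\}\times[a_0,\infty)})\simeq P'_{DT^*N}\bigl(\K_h^{-s}(k_{\{x_0\}\times[a_0,\infty)})\bigr)\simeq P'_{DT^*N}(k_{C(x_0,a_0,s)}).
\]
The main obstacle is the invariance of $i_Z$ under $\K_h^{-s}$ when $h$ is merely continuous and vanishes exactly on the boundary of $Z$. The first approach to try is to write $\K_h^{-s}$ as a limit of smooth kernels $\K_{h_n}^{-s}$, with $h_n\searrow h$ and each $h_n$ vanishing on $\{\|p\|\le\tau\}$, and to check that the identifications $i_Z\simeq\K_{h_n}^{-s}\circ i_Z$ are compatible with the transition maps.
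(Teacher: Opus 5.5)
Your skeleton matches the paper's: make $P'_{DT^*N}$ absorb the kernel of a Hamiltonian that is trivial on $\rho^{-1}(DT^*N)$ (Lemma~\ref{lem:P=PK}), then compute that kernel on $k_{\{x_0\}\times[a_0,\infty)}$ with Lemma~\ref{lem:kernel_Hamiltonian=norm}. Your fibre computation, with closed intervals $[a_0,t-d(x,x_0)]$, is fine. However, by taking $r=1$ you leave the essential step unproved: $P'_{DT^*N}\circ\K_h^{-s}\simeq P'_{DT^*N}$ for a continuous $h$ vanishing \emph{exactly} on $\rho^{-1}(DT^*N)$. None of your suggested fixes closes it.
\begin{itemize}
\item The negative-time colimit formula runs over $h'>h$, and such $h'$ are strictly positive on $\rho^{-1}(DT^*N)$. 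A Hamiltonian vanishing only on $\{\|p\|\le(1-\delta)\tau\}$ fixes $\Tam_{D_{1-\delta}T^*N}(N)$, which is a proper subcategory of $\Tam_{DT^*N}(N)$.
\item Smooth $h_n\searrow h$ vanishing on $\{\|p\|\le\tau\}$ do not exist. If $f_n$ is smooth with $f_n|_{[0,1]}=0$, then $f_n'(1)=0$, so $f_n(u)<u-1$ for $u$ slightly above $1$. This is also why the hypotheses of Lemma~\ref{lem:kernel_Hamiltonian=norm0} force $a<r$: indeed $b-r=f(b)=\int_a^b f'<b-a$.
\item Proposition~\ref{prop:compare_dist} only gives inequalities between interleaving distances, which are degenerate. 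So it cannot produce an isomorphism $\K_h^{-s}\circ i_Z\simeq i_Z$.
\item Nothing in the paper identifies $\K_h^{-s}$ with a limit of $\K_{h_n}^{-s}$ along $h_n\nearrow h$.
\end{itemize}

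The missing idea is to keep the Hamiltonian away from the boundary of $\rho^{-1}(DT^*N)$ and do the limit on explicit sheaves instead. Fix $r>1$ and let $h$ be the homogenization of $\max\{0,\|p\|-r\}$. It vanishes on the neighbourhood $\{\|p\|\le r\tau\}$ of $\rho^{-1}(DT^*N)$, so there exist smooth $f_n\to f$ with $f_n|_{[0,1]}=0$. Lemma~\ref{lem:P=PK} then applies honestly to each $h_n$, giving $P'_{DT^*N}\simeq P'_{DT^*N}\circ\K_h^{-s}$. Hence $\W_{(x_0,a_0)}\simeq P'_{DT^*N}(k_{C(x_0,a_0,s,r)})$, with the steeper cone $C(x_0,a_0,s,r)=\{d(x,x_0)\le s,\ t\ge a_0+r\,d(x,x_0)\}$. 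Finally let $r\to1^+$. The sheaves $k_{C(x_0,a_0,s,r)}$ form an inverse system under restriction to the smaller closed sets, with limit $k_{C(x_0,a_0,s)}$, and $P'_{DT^*N}$ commutes with limits because it is a right adjoint. Your ``approximate from below'' alternative is essentially this route in disguise. The passage to the limit, though, has to be made on these explicit sheaves, not through distance estimates.
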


\begin{figure}[htbp]
    \centering
     \begin{tikzpicture}[scale=1.0, xscale=1.2]

\def\xo{1.2}   %
\def\ao{0.8}   %
\def\s{1.3}    %
\def\Tmax{4.2} %
\def\xmin{-1.8}
\def\xmax{4.8}

\draw[->] (\xmin - 0.7,0) -- (\xmax,0) node[below] {$x$};
\draw[->] (\xmin+0.2,-0.2) -- (\xmin+0.2,\Tmax) node[left] {$t$};

\pgfmathsetmacro{\xL}{\xo-\s}
\pgfmathsetmacro{\xR}{\xo+\s}

\pgfmathsetmacro{\tL}{\ao+\s} %
\pgfmathsetmacro{\tR}{\ao+\s} %

\fill[gray!20]
(\xL,\Tmax) --
(\xR,\Tmax) --
(\xR,\tR) --
(\xo,\ao) --
(\xL,\tL) -- cycle;

\draw[thick] (\xo,\ao) -- (\xR,\tR);
\draw[thick] (\xo,\ao) -- (\xL,\tL);

\draw[dashed] (\xL,0) -- (\xL,\Tmax);
\draw[dashed] (\xR,0) -- (\xR,\Tmax);
\draw[thick] (\xL,\tR) -- (\xL,\Tmax);
\draw[thick] (\xR,\tR) -- (\xR,\Tmax);

\draw[dashed] (\xo,0) -- (\xo,\Tmax);
\draw[<->,>=Stealth]
  (\xo,\tR+1) -- node[fill=gray!20,inner sep=1mm,midway] {$s$} (\xR,\tR+1);

\fill (\xo,\ao) circle (1.2pt) node[below left] {$(x_0,a_0)$};

\draw (\xo,0) -- (\xo,-0.08) node[below] {$x_0$};

\draw (\xmin+0.2,\ao) -- (\xmin+0.12,\ao) ;
\node[left] at (\xmin+0.2,\ao) {$a_0$};
\draw (\xmin+0.2,\tR) -- (\xmin+0.12,\tR) ;
\node[left] at (\xmin+0.2,\tR) {$a_0+s$};

\node[align=left] at (\xmax,\Tmax-0.7)
{$d(x,x_0)\leq t-a_0$\\
$d(x,x_0)\leq s$};

\end{tikzpicture}
    \caption{The graph of $k_{C(x_0, a_0, s)}$.}
    \label{fig: graph of metric cone}
\end{figure}

\begin{proof}
  Let $r>1$.  We use the notations of Lemma~\ref{lem:kernel_Hamiltonian=norm}: $f$ and $h$ are defined by $f(u) = \max \{0, u - r\}$ and $h(x,t,p,\tau) = \tau f(\|p / \tau \|)$.  Let $(f_n)_n$ be a sequence of differentiable functions $C^0$ converging to $f$ and set $h_n(x,t,p,\tau) = \tau f_n(\|p\|/\tau)$.  We assume that $f_n|_{[0,1]} = 0$.  By Lemma~\ref{lem:P=PK} we have $P'_Z \simeq P'_Z \circ \K_{h_n}^s$ for all $n$ and $s$, hence $P'_Z \simeq P'_Z \circ \K_h^s$.  Lemma~\ref{lem:kernel_Hamiltonian=norm} says that $\K_h^{-s} \circ k_{\Delta_N \times [0,\infty)} \simeq k_{C_s}$ for $s>0$ and for some ``conic'' set $C_s$.  Now, $k_{C_s} \circ k_{\{x_0\} \times [a_0, \infty)} \simeq k_{C(x_0, a_0, s, r)}$ where $C(x_0, a_0, s, r) = \{(x,t) \mid d(x, x_0) \leq s, \, t \geq a_0 + r d(x,x_0) \}$.  Hence $\W_{(x_0,a_0)} \simeq P'_{DT^*N} ( k_{C(x_0, a_0, s, r)} )$.  The sheaves $k_{C(x_0, a_0, s, r)}$, $r>1$, organize into an inverse system and taking the limit as $r$ goes to $1$ gives the result.
\end{proof}

A consequence of Lemma~\ref{lem:P=PK} is the following distance comparison.  We use again the notations of Lemma~\ref{lem:kernel_Hamiltonian=norm} (with $r=1$): $f$ and $h$ are defined by $f(u) = \max \{0, u - 1\}$ and $h(x,t,p,\tau) = |\tau| f(\|p / \tau \|)$.  We also set $h^+(x,t,p,\tau) = h(x,t,p,\tau) + |\tau|$.  Hence $h^+$ is the homogenization of the continuous Hamiltonian $(x,p) \mapsto \max \{1, \|p\|\}$.
\begin{prop}\label{prop:bound gammatau gammaK}
  Let $\F,\G \in \Tam(N)$. Then
  \[
  \gamma_\tau(P'_{DT^*N}(\F), P'_{DT^*N}(\G)) \leq \gamma_{h^+}(\F, \G) .
  \]
\end{prop}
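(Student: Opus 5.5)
Since $h^+ = h + |\tau|$ and $h$ and $|\tau|$ depend on disjoint sets of variables in the sense of Lemma~\ref{lem:kernel product}, I will first decompose the kernel: on $\{\tau\geq 0\}$ the flow of $h^+$ is the composition of the commuting flows of $h$ and of $\tau$ (the Hamiltonians Poisson-commute, since $h$ does not depend on $t$ and $\tau$ is preserved). At the level of kernels this gives $\K_{h^+}^s \circ \F \simeq \K_h^s \circ T_{s*}(\F)$ for $\F\in\Tam(N)$, where $T_s$ is the $\Real$-translation giving $\gamma_\tau$. I would justify this with smooth approximations $h_n$ of $h$ (as in Lemma~\ref{lem:kernel_Hamiltonian=norm}) and the colimit description~\eqref{eq:def_Kh_continuous_colimit}, using that $\K_{\varphi_{h_n+\tau}}^s \simeq \K_{\varphi_{h_n}}^s\circ \K_{\varphi_\tau}^s$ by uniqueness of quantization of commuting isotopies. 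Compatibility with the Tamarkin morphisms is needed too: $\tau^{h^+}_{0,s}$ should factor as the composition of $\tau^{\tau}_{0,s}$ and $\K_h^0\to\K_h^s$, and this follows by the same functoriality in the poset of Hamiltonians.

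Next I apply $P'=P'_{DT^*N}$. By Lemma~\ref{lem:P=PK}, as used in the proof of Lemma~\ref{lem:formula_Wxa} (with $r=1$ and $f_n|_{[0,1]}=0$), $P'\simeq P'\circ\K_h^s$ for all $s$. Moreover, under this identification the morphism $P'(\tau^h_{0,s})\colon P'\to P'\circ \K_h^s$ is the identity. I expect this to be the main obstacle. The plan is to argue by adjunction: for $\G\in\Tam_{DT^*N}(N)$, $\Hom(\G,P'\F)\to\Hom(\G,P'\K_h^s\F)$ identifies with $\Hom(\G,\F)\to\Hom(\K_h^{-s}\G,\F)$ induced by $\K_h^{-s}\G\to\G$, which is an isomorphism because $\K_h^{-s}$ acts as the identity on sheaves with reduced microsupport in $DT^*N$ (the flow is trivial there). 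One has to check that this isomorphism $\K_h^{-s}\G\simeq \G$ is indeed the Tamarkin continuation morphism, and not some other isomorphism; the $h_n$-approximation and the uniqueness in Lemma~\ref{lem:def_Kh} should handle this. Since $P'$ commutes with $T_{s*}$ (translation preserves $DT^*N$-microsupport), we obtain natural equivalences $P'(\K_{h^+}^s\F)\simeq T_{s*}P'(\F)$. Under these equivalences $P'(\tau^{h^+}_{0,s})$ corresponds to $\tau_{0,s}$ for $P'\F$.

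Finally, take interleaving data $u\colon\F\to\K_{h^+}^a\G$ and $v\colon\G\to\K_{h^+}^b\F$ with $a+b$ close to $\gamma_{h^+}(\F,\G)$. Applying $P'$ and the identification above gives $u'\colon P'\F\to T_{a*}P'\G$ and $v'\colon P'\G\to T_{b*}P'\F$. The composites $T_{a*}(v')\circ u'$ and $T_{b*}(u')\circ v'$ are the images of $\K^a_{h^+}(v)\circ u$ and $\K^b_{h^+}(u)\circ v$, hence equal to $\tau_{a+b}$, using naturality of the equivalences in $\F$ (as in Proposition~\ref{prop:compare_dist}). Therefore $\gamma_\tau(P'\F,P'\G)\le a+b$, and taking the infimum gives the claim.
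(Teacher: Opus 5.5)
Your proposal is correct and follows the paper's proof: split $\K_{h^+}^s$ as $\K_h^s\circ T_{s*}$ because the flows of $h$ and $\tau$ commute, then combine $P'_{DT^*N}\simeq P'_{DT^*N}\circ\K_h^s$ (Lemma~\ref{lem:P=PK} with the approximations from the proof of Lemma~\ref{lem:formula_Wxa}) with the commutation of $P'_{DT^*N}$ and $T_{s*}$, and push the interleaving pair through $P'_{DT^*N}$. Your adjunction argument that $P'_{DT^*N}(\tau^h_{0,s})$ is an isomorphism, so that the composites go to the canonical $\tau_{a+b}$, makes explicit a point the paper skips; note only that Lemma~\ref{lem:kernel product} does not justify the splitting (here $h$ itself depends on $\tau$), and the Poisson commutation $\{h,\tau\}=0$ you invoke next is what does the work.
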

\begin{proof}
  As in the proof of Lemma~\ref{lem:formula_Wxa} we approximate $f$ by a sequence of smooth functions $(f_n)_n$ and let $(h_n)_n$ be the corresponding Hamiltonians.  Since $h_n^+=h_n+\vert \tau\vert$ and  $h_n$ and $\tau$ commute, we have $\phi_{h^+_n,s} = \phi_{h_n, s} \circ \phi_{\tau, s} = \phi_{\tau, s} \circ \phi_{h_n, s}$. As a result  the same relation holds for their associated sheaves.  We can also see that $P'_{DT^*N}$ commutes with $\K_{\tau,s}$.  Hence, if there exists a morphism $u\colon \F \to \K_{h^+_n,a} \circ \G$, applying $P'_{DT^*N}$ and using Lemma~\ref{lem:P=PK}, we obtain $P'_{DT^*N}(u) \colon P'_{DT^*N}(\F) \to \K_{\tau,a}(P'_{DT^*N}(\G))$.  Since the definition of the distance only involves morphisms of the kind $\F \to \K_{h^+_n,a} \circ \G$ and $\G \to \K_{h^+_n,b} \circ \F$, the result follows.
\end{proof}

\begin{rem} It is clear that Proposition~\ref{prop:bound gammatau gammaK} is true for all closed $Z\subset T^*N$ and homogeneous Hamiltonians $h^+\colon  T^*(N\times \Real) \setminus 0_{N\times\Real}\rightarrow \Real$ that restrict to $h^+|_{\rho^{-1}(Z)}=\tau$ and $\F,\G\in \Tam(N)\cap \Sh_{h^+\geq 0}(N\times \Real)$, where we only take $Z=DT^*N$ and a particular $h^+$ here.
\end{rem}

Thanks to Proposition~\ref{prop:bound gammatau gammaK}, we shall estimate $\gamma_{h^+}$ for related sheaves.

\begin{lem}\label{lem:approx fibre boule}
  Let $x\in N$, $\varepsilon >0$ and let $Z\subset N$ be a closed contractible subset contained in the ball with center $x$ and radius $\varepsilon < r_{inj}(N)/3$.  Then, for any $\F \in \Sh(N\times\Real)$ such that $SS(\F) \subset \{\tau \geq \| p \| \}$, we have $\gamma_{h^+}(\F \otimes k_{\{x\} \times \Real}, \F \otimes k_{Z \times \Real}) \leq  4\varepsilon$.   
\end{lem}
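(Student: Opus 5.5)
We deduce the estimate from Lemma~\ref{lem: distance point ball} and Lemma~\ref{lem:bound product}, together with a comparison of distances via Proposition~\ref{prop:compare_dist}.

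\textbf{Step 1 (product).} On $N\times\Real$ consider the continuous homogeneous function $g+|\tau|$, where $g(x,p)=\|p\|$ on $T^*N$ and $|\tau|$ on $T^*\Real$. Lemma~\ref{lem: distance point ball} gives $\gamma_g(k_{\{x\}},k_Z)\le 2\varepsilon$. The sheaf $k_\Real$ is at $\gamma_{|\tau|}$-distance $0$ from itself. Lemma~\ref{lem:bound product} then yields
\[
\gamma_{g+|\tau|}(k_{\{x\}\times\Real},k_{Z\times\Real})\le 2\varepsilon .
\]

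\textbf{Step 2 (tensor with $\F$).} We use $\F\otimes\G\simeq\delta^{-1}(\F\boxtimes\G)$ for the diagonal $\delta\colon N\times\Real\to(N\times\Real)^2$. Fix interleaving morphisms $u,v$ between $k_{\{x\}\times\Real}$ and $k_{Z\times\Real}$. Box them with $\id_\F$; the relevant Hamiltonian on the product is $0+(g+|\tau|)$, and $\F$ lies in $\Sh_{0\ge0}$ with trivial interleaving. Restricting to the diagonal, Lemma~\ref{lem:distance restriction} and Lemma~\ref{lem:kernels restriction C0} require a Hamiltonian $h'$ on $T^*(N\times\Real)$ compatible with the restriction. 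This does not hold literally: restriction to $\Delta$ adds covectors, $(p,\tau)=(p_1+p_2,\tau_1+\tau_2)$, so the kernel $\K_{0\boxplus(g+|\tau|)}$ does not restrict to a kernel of the form $\K_{h'}$.

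The fix is to control the microsupport. Since $SS(\F)\subset\{\tau\ge\|p\|\}$, on $SS(\F)\times SS(k_{Z\times\Real}\text{-type})$ the covectors satisfy $\tau_1\ge\|p_1\|$, and the second factor has $\tau_2=0$ along the whole isotopy. The sum $(p_1+p_2,\tau_1)$ then satisfies
\[
\|p_2\|\le \|p_1+p_2\|+\|p_1\|\le \|p_1+p_2\|+\tau_1 .
\]
So the Hamiltonian on the product, of the form $g(p_2)$, is dominated after restriction by $\|p\|+\tau$ evaluated at the summed covector. The summed covector also has $\tau\ge0$ and satisfies $\max\{\tau,\|p\|\}\le\tau+\|p\|\le 2\max\{\tau,\|p\|\}$, with $\max\{\tau,\|p\|\}$ being $h^+$.

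\textbf{Step 3 (comparison).} Apply Proposition~\ref{prop:compare_dist} with $W=\{\tau\ge\|p\|\}$ on the product before restricting. This compares $\gamma_{0\boxplus g}$ with $\gamma_{\tilde h}$, where $\tilde h=h^+$ of the summed covector, since $g(p_2)\le\|p_1+p_2\|+\tau_1\le 2h^+(p_1+p_2,\tau_1)$ on $W$. This step costs a factor $2$, giving the bound $2\cdot 2\varepsilon=4\varepsilon$, which explains the constant in the statement. The pullback $\tilde h$ is of the form $h^+\circ(d\delta)^t$, so Lemma~\ref{lem:distance restriction} applies to $\delta$ and gives
\[
\gamma_{h^+}\le 4\varepsilon .
\]

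\textbf{Main obstacle.} The hardest step is Step 3. One must check hypothesis~\eqref{it:compare_isot2} of Lemma~\ref{lem:compare_isot}, the invariance of $\partial W$ under the flows. One must also check that the box-product sheaves lie in the relevant $\Sh_{W}$, including the $Z$ factor, whose microsupport is not a priori small. If a direct check fails, first replace $W$ by a slightly larger smooth conic neighbourhood defined by one function and argue by approximation as in the proof of Lemma~\ref{lem:compare_isot}.
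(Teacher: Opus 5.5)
Your overall route is the paper's. You box with $\F$: the degenerate case of Lemma~\ref{lem:bound product} together with Lemma~\ref{lem: distance point ball} gives $2\varepsilon$ for $h_2=\|p'\|$. You then compare on $T^*(N^2\times\Real^2)$ through Proposition~\ref{prop:compare_dist}, at the cost of a factor $2$, using $\|p'\|\le\|p\|+\|p+p'\|\le\tau+\|p+p'\|$. Finally you pull back by $\delta$ with Lemma~\ref{lem:distance restriction}.

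However, Step~3 has a genuine gap. Your comparison function ``$h^+$ of the summed covector'' only makes sense on $\Delta_1=\Delta_{N\times\Real}\times_{(N\times\Real)^2}T^*((N\times\Real)^2)$, because $p+p'$ is undefined when the two base points differ. Proposition~\ref{prop:compare_dist} needs a function $h_1$ on all of $T^*(N^2\times\Real^2)$ with $h_1|_{\Delta_1}=h^+\circ(d\delta)^t$ and $h_1\ge\frac12\|p'\|$ on the whole of $W$. You cannot confine yourself to the diagonal: $\F\boxtimes k_{\{x\}\times\Real}$ and $\F\boxtimes k_{Z\times\Real}$ live over $N\times\Real\times\{x\}\times\Real$ and $N\times\Real\times Z\times\Real$, mostly far from $\Delta$.

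This extension is precisely the new ingredient in the paper's proof. For $d(y,y')<r_{inj}(N)$, identify $T^*_yN$ with $T^*_{y'}N$ by parallel transport along the shortest geodesic. Choose a partition of unity $\alpha+\beta=1$ with $\alpha=1$ on $\Delta(r_{inj}(N)/2)=\{d(y,y')<r_{inj}(N)/2\}$ and $\beta=1$ outside $\Delta(r_{inj}(N))$. Set $h_0=\alpha\|p+p'\|+\beta\|(p,p')\|$ and $h_1=\max\{\tau+\tau',h_0\}$. Then $\|p\|+h_0\ge\alpha\|p\|+\alpha\|p+p'\|+\beta\|p'\|\ge\|p'\|$ holds at every point, not only over the diagonal. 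Also $h_1=h^+\circ(d\delta)^t$ on $\Delta_1$, since $\alpha=1$ there.

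A second, smaller defect concerns the $\tau$-variables. You write $h^+(p_1+p_2,\tau_1)$, but $h^+\circ(d\delta)^t$ involves $\tau_1+\tau_2$, so with $\tau_1$ alone Lemma~\ref{lem:distance restriction} does not apply. If you use $\tau_1+\tau_2$, the inequality $\|p_2\|\le 2\max\{\tau_1+\tau_2,\|p_1+p_2\|\}$ fails on your $W=\{\tau\ge\|p\|\}$: take $p_2=-p_1\neq0$ and $\tau_2=-\tau_1=-\|p_1\|$.

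The fix is the paper's choice $W=\{\tau\ge\|p\|,\ \tau'\ge0\}$. On this set, $h_1\ge\frac12(\tau+\tau')+\frac12h_0\ge\frac12(\|p\|+h_0)\ge\frac12\|p'\|$. This also settles your worry about the $Z$ factor: $W$ imposes no condition on $p'$, and $SS(k_{Z\times\Real})\subset\{\tau'=0\}$, so both box products lie in $\Sh_W$. The paper, like you, does not check hypothesis~(2) of Lemma~\ref{lem:compare_isot} explicitly.
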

\begin{proof}
  We recall that $h^+(x,p,t,\tau) = \max\{ \tau, \| p \| \}$ for $\tau \geq 0$.  Since we work on $\Sh_{\tau\geq0}(N\times\Real)$ we can as well assume $h^+(x,p,t,\tau) = \max\{ \tau, \| p \| \}$ for any $\tau$.  We first give a bound on $N^2\times \Real^2$ and then take the pull-back by the diagonal embedding $\delta \colon N\times\Real \to (N\times\Real)^2$. To distinguish, we denote points of the second copy of $T^*(N\times\Real)$ with prime, and then points of $T^*(N^2\times\Real^2)$ are denoted by $(x,p,x',p',t,\tau, t',\tau')$.  We will apply Lemma~\ref{lem:distance restriction} and we look for a function $h_1$ on $T^*(N^2\times\Real^2)$ such that $h_1|_{\Delta_1} = h^+ \circ (d\delta)^t$, where $\Delta_1 = \Delta_{N\times\Real} \times_{N^2\times\Real^2} T^*(N^2\times\Real^2)$.  We have $(d\delta)^t(x,p,x,p',t,\tau, t,\tau') = (x,t, p+p', \tau+\tau')$ so we should extend the function $\|p+p'\|$ outside the diagonal. We let $\Delta(r)$ be the neighborhood of $\Delta_N$ formed by the pairs $(x,x')$ such that $d(x,x') < r$. For $(x,x') \in \Delta(r_{inj}(N))$ we identify $T_x^*N$ and $T_{x'}^*N$ through the parallel transport along the shortest geodesic between $x$ and $x'$; we can then make sense of $p+p'$.  We choose a partition of unity $\alpha, \beta$ with $\alpha=1$ on $\Delta(r_{inj}(N)/2)$ and $\beta=1$ outside $\Delta(r_{inj}(N))$.  We set $h_0(x,p,x,p',t,\tau, t,\tau') = \alpha \|p+p'\| + \beta \|(p,p')\|$.  We have the inequality
\[
  \|p\| + h_0 \geq \|p'\|
  \]
  because over $\Delta(r_{inj}(N))$ we have $\|p\| + h_0 \geq \alpha \|p\| + \alpha \|p+p'\| + \beta \|p'\| \geq (\alpha + \beta)\|p'\| = \|p'\|$ and outside $\Delta(r_{inj}(N))$ we even have $h_0 \geq \|p'\|$.
  
\smallskip
  
Now we define $h_1$ on $T^*(N^2\times\Real^2)$ by $h_1 =\max\{\tau +\tau', h_0\}$. We bound the distance between $\F \boxtimes k_{\{x\} \times \Real}$ and $\F \boxtimes k_{Z \times \Real}$ for the distance $\gamma_{h_1}$.  We define $h_2$ on the second copy of $T^*(N\times\Real)$ by $h_2 = \| p' \|$. We remark that Lemma~\ref{lem:bound product} makes sense in the degenerated case where $h$ or $h'$ is the zero function. Writing $h_2 = 0 + 0 + \|p'\| + 0$ we deduce, by Lemma~\ref{lem: distance point ball}, that $\gamma_{h_2}(\F \boxtimes k_{\{x\} \times \Real}, \F \boxtimes k_{Z \times \Real}) \leq 2\varepsilon$.  For $\G\in \Sh_{\tau'\geq0}(N\times\Real)$ we have $SS(\F\boxtimes \G) \subset W$ with $W = \{\tau \geq \| p \| , \tau'\geq 0\}=\{\tau^2 - \| p \|^2 \geq 0,\tau\geq 0 , \tau'\geq 0\} \subset T^*(N^2\times\Real^2)$.  When we restrict to $W$, we have
\[
h_1 \geq \frac12 (\tau + \tau') + \frac12 h_0
\geq \frac12 \| p \| + \frac12 h_0
  \geq \frac12 h_2 .
\]
By Proposition~\ref{prop:compare_dist} we deduce
\[
\gamma_{h_1}(\F \boxtimes k_{\{x\} \times \Real}, \F \boxtimes k_{Z \times \Real})
  \leq 2 \gamma_{h_2}(\F \boxtimes k_{\{x\} \times \Real}, \F \boxtimes k_{Z \times \Real})
  \leq 4 \varepsilon
\]
and by Lemma~\ref{lem:distance restriction}, we have
\[
  \gamma_{h^+}(\F \otimes k_{\{x\} \times \Real}, \F \otimes k_{Z \times \Real})\leq  \gamma_{h_1}(\F \boxtimes k_{\{x\} \times \Real}, \F \boxtimes k_{Z \times \Real})\leq 4 \varepsilon,
\]
which finishes the proof.
\end{proof}

\subsection{Some lemmata on iterated cones}

Let $\C$ be a stable $\infty$-category \cite[Def. 1.1.1.9]{HigherAlgebra}\footnote{Stable $\infty$-categories are $\infty$-categories that play the role of triangulated categories \cite[Theorem 1.1.2.14]{HigherAlgebra}. Sheaf categories $\Sh(M)$ are stable.} and $\F, \G_0, \ldots, \G_n$ be objects of $\C$.  We say that $\F$ is an ($n$ steps) iterated cone on $(\G_0, \ldots, \G_n)$ if there exist objects $\F_0 = \G_0$, \dots, $\F_n = \F$ and morphisms $f_i\colon \F_i \to \G_{i+1}$ such that $\F_{i+1}$ is isomorphic to the cone of $f_i$. The next lemmas hold for any stable $\infty$-category with an $\arbR$-action, where $\gamma$ denotes the corresponding interleaving distance (instead of $\gamma_{h^+}$ and $T_s = \K_{h^+}^s \circ -$ for sheaves). 

\begin{lem}\label{lem:distance cone0}
  Let $u\colon \F' \to \F$ and $f \colon \F \to \G$ be two morphisms. We assume that there exists $v\colon \F \to T_\varepsilon(\F')$ such that $v\circ u$ and $T_\varepsilon(u) \circ v$ are the canonical morphisms $\tau$.  Let $C(f)$, $C(f\circ u)$ be the cones of $f, f\circ u$. Then $\gamma(C(f), C(f\circ u)) < 4 \varepsilon$.
\end{lem}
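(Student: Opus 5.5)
Since $u$ and $v$ form an $\varepsilon$-interleaving of a special kind (with $u$ going from $\F'$ to $\F$ at shift $0$), I will build interleaving morphisms between $C(f\circ u)$ and $C(f)$ directly, using the functoriality of cones in a stable $\infty$-category and the fact that $T_s$ is exact, so $T_s(C(g)) \simeq C(T_s(g))$.

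\textbf{Step 1: the map $C(f\circ u) \to C(f)$.} Consider the commutative square with top row $f\circ u \colon \F' \to \G$, bottom row $f\colon \F \to \G$, left vertical $u$ and right vertical $\id_\G$. It commutes on the nose, so it induces $\alpha\colon C(f\circ u) \to C(f)$ at shift $0$.

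\textbf{Step 2: the map $C(f) \to T_{2\varepsilon} C(f\circ u)$.} I would use the square with top row $f$, bottom row $T_\varepsilon(f\circ u)\colon T_\varepsilon\F' \to T_\varepsilon \G$, left vertical $v$ and right vertical $\tau_\varepsilon(\G)$. It commutes up to homotopy, since $T_\varepsilon(f)\circ T_\varepsilon(u)\circ v \simeq T_\varepsilon(f)\circ\tau_\varepsilon(\F) \simeq \tau_\varepsilon(\G)\circ f$ by naturality of $\tau$. A chosen homotopy induces $\beta\colon C(f) \to T_\varepsilon C(f\circ u)$.

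\textbf{Step 3: the compositions.} The composites $T_\varepsilon(\alpha)\circ\beta$ and $T_\varepsilon(\beta)\circ\alpha$ are induced by squares whose vertical maps are $\tau_\varepsilon$ on each component. The obstacle is that, in a triangulated or stable setting, a map of cones is not determined by the maps on the two ends of the square: the composite agrees with $\tau_{\varepsilon}$ only up to an error term factoring as $C(\cdot) \to \G' \to \F''[1] \to C(\cdot)$, i.e. through the boundary and inclusion maps. The standard remedy, which explains the constant $4\varepsilon$, is to absorb this error by a further shift.

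\begin{itemize}
\item Let $\delta = T_\varepsilon(\alpha)\circ\beta - \tau_\varepsilon$. It vanishes after precomposition with $\G\to C$ and postcomposition with $C\to \F[1]$ (shifted appropriately).
\item Hence $\delta$ factors as a map $\F[1] \to T_\varepsilon\G$ placed between the cone maps.
\item Composing two such null-on-ends maps gives zero. So $\tau_\varepsilon\circ\tau_\varepsilon = (\text{composite}+\delta)\circ(\text{composite}'+\delta')$ and $\delta'\circ\delta=0$ show that using the interleaving maps twice, i.e. at total shift up to $4\varepsilon$ after rebalancing, yields exactly $\tau$.
\end{itemize}

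Concretely, I would take as interleaving maps $\alpha' = T_{\varepsilon}(\beta)\circ\alpha$, or a suitable corrected variant, and $\beta' = T_{2\varepsilon}(\alpha)\circ T_\varepsilon(\beta)\circ\beta$-type composites. I would check that the error terms square to zero, which gives an interleaving with total shift strictly controlled by $4\varepsilon$ (the strict inequality coming from a slight perturbation of the defining infimum).

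The main obstacle is this last step. One must carefully track the homotopy chosen in Step 2, in the $\infty$-categorical sense, and verify the nilpotency of the error term. I would handle it by working with the fiber sequence description in the stable $\infty$-category and the long exact sequences of $\pi_0\Hom$, rather than with explicit homotopies.
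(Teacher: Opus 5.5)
\textbf{A genuine gap in Step~3.} Steps 1 and 2 are fine. Your key observation is also right. Set $\phi=T_\varepsilon(\alpha)\circ\beta$ and $\psi=\beta\circ\alpha$. Then $\delta=\phi-\tau_\varepsilon$ and $\delta'=\psi-\tau_\varepsilon$ vanish after precomposition with $i\colon\G\to C$ and after postcomposition with $T_\varepsilon(p)$, where $p\colon C\to\F[1]$ (resp.\ $\F'[1]$). For $\phi$ this uses $T_\varepsilon(u)\circ v=\tau_\varepsilon$; for $\psi$ it uses $v\circ u=\tau_\varepsilon$. For such a map $T_\varepsilon(\delta)\circ\delta=0$: write $\delta=T_\varepsilon(i)\circ\delta_2$ (possible since $T_\varepsilon(p)\circ\delta=0$) and use $\delta\circ i=0$. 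The factorization through some $\F[1]\to T_\varepsilon\G$ that you assert is not needed.

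Step~3 itself fails as written. Your explicit candidates do not compose: $T_\varepsilon(\beta)$ starts at $T_\varepsilon C(f)$, which is the target of neither $\alpha$ nor $\beta$. The natural reading of ``using the maps twice'' is $U=\alpha$ and $V=T_\varepsilon(\beta)\circ T_\varepsilon(\alpha)\circ\beta$, and this does not produce $\tau$. By naturality of $\tau$ one has $T_\varepsilon(\delta)\circ\tau_\varepsilon=\tau_\varepsilon\circ\delta$, hence
\[
T_{2\varepsilon}(\alpha)\circ V=T_\varepsilon(\phi)\circ\phi=\tau_{2\varepsilon}+2\,\tau_\varepsilon\circ\delta,
\qquad
V\circ\alpha=T_\varepsilon(\psi)\circ\psi=\tau_{2\varepsilon}+2\,\tau_\varepsilon\circ\delta' .
\]
Only a product of \emph{two} error terms vanishes. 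The cross term $\tau_\varepsilon\circ\delta$ contains one and need not be zero.

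At the homotopy-category level where you propose to work, $\beta$ is just some fill-in and can be changed by any $T_\varepsilon(i)\circ\eta\circ p$. For example, take $u=\id$, $v=\tau_\varepsilon$, $f=0$, $\F[1]=\G=k_{[0,\infty)}$ in $\Tam(\pt)$, and $\eta=\tau_\varepsilon$. Then the $n$-fold iterate of $\phi$ is $\tau_{n\varepsilon}+n\,\tau_{(n-1)\varepsilon}\circ T_\varepsilon(i)\circ\eta\circ p$, which differs from $\tau_{n\varepsilon}$ whenever $n\neq 0$ in $k$. So composing twice is guaranteed to give $\tau$ only when $2=0$ in $k$, whereas the lemma is used over general $k$ (e.g.\ $\Z$). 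Likewise, ``perturbing the infimum'' cannot create a strict inequality, since the hypothesis is an exact $\varepsilon$-interleaving.

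\textbf{The fix.} The missing idea is to correct by subtraction rather than by iteration. Take $U=\alpha$ and $V=T_\varepsilon(\beta)\circ(2\tau_\varepsilon-\phi)\colon C(f)\to T_{2\varepsilon}C(f\circ u)$. Then $T_{2\varepsilon}(\alpha)\circ V=T_\varepsilon(\phi)\circ(2\tau_\varepsilon-\phi)$. Using $\phi\circ\alpha=T_\varepsilon(\alpha)\circ\psi$ and $\tau_\varepsilon\circ\alpha=T_\varepsilon(\alpha)\circ\tau_\varepsilon$, also $V\circ\alpha=T_\varepsilon(\psi)\circ(2\tau_\varepsilon-\psi)$. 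Expanding, these equal $\tau_{2\varepsilon}-T_\varepsilon(\delta)\circ\delta=\tau_{2\varepsilon}$ and $\tau_{2\varepsilon}-T_\varepsilon(\delta')\circ\delta'=\tau_{2\varepsilon}$. Hence $\gamma(C(f),C(f\circ u))\leq 2\varepsilon<4\varepsilon$, for $\varepsilon>0$, which the statement implicitly requires.

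Alternatively, you can keep the specific homotopy written in Step~2 at the $\infty$-categorical level. Then the composite morphism of arrows $f\to T_\varepsilon(f)$ is homotopic to $\tau_\varepsilon$, so $\phi\simeq\tau_\varepsilon$. This forces $T_\varepsilon(\alpha)\circ\delta'=0$, hence $\tau_\varepsilon\circ\delta'=T_\varepsilon(\psi)\circ\delta'=0$, and your $V=T_\varepsilon(\beta)\circ T_\varepsilon(\alpha)\circ\beta$ then works. But this is exactly the bookkeeping you chose to avoid.

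\textbf{Comparison with the paper.} The paper uses the same map $\alpha$ (its $w$) but never builds an inverse by hand. The octahedral axiom gives $C(w)\simeq C(u)[1]$, and the existence of $v$ makes $C(u)$ torsion. Lemma~6.8 of Guillermou--Viterbo then turns ``the cone of $w$ is torsion'' into an interleaving, losing a factor $2$ at each of these two steps. The corrected direct route is self-contained and gives the sharper constant $2\varepsilon$. The paper's route is shorter because it outsources precisely the cone-level bookkeeping where your argument broke down.
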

\begin{proof}By the octahedral axiom (now a proposition in stable categories), we have a fiber sequence $C(u) \to C(f\circ u) \to[w] C(f) \to[+1]$ such that $C(w) \simeq C(u)[1]$.  By~\cite[Lem. 6.8 (i)]{Guillermou-Viterbo} and the existence of $v$ we have $\gamma(0, C(u)) < 2\varepsilon$, hence $\gamma(0, C(w)) < 2\varepsilon$, and then we apply ~\cite[Lem. 6.8 (ii)]{Guillermou-Viterbo} to $w$ to deduce that $\gamma(C(f), C(f\circ u)) < 4 \varepsilon$.  
\end{proof}

\begin{lem}\label{lem:distance cone}
  Let $f \colon \F \to \G$ be a morphism and let $\F', \G'$ be two objects such that $\gamma(\F, \F')$, $\gamma(\G,\G') < \varepsilon$.  Then there exist $0\leq a , b < \varepsilon$ and a morphism $f' \colon T_{-a}(\F') \to T_b(\G')$ such that $\gamma(C, C') < 8\varepsilon$, where $C,C'$ are the cones of $f,f'$.
\end{lem}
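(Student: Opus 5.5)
Since $\gamma(\F,\F')<\varepsilon$ and $\gamma(\G,\G')<\varepsilon$, we fix interleavings: numbers $a,a',b,b'\geq 0$ with $a+a'<\varepsilon$, $b+b'<\varepsilon$, and morphisms
\[
u\colon \F'\to T_a(\F),\quad u'\colon \F\to T_{a'}(\F'),\qquad v\colon \G\to T_b(\G'),\quad v'\colon \G'\to T_{b'}(\G),
\]
whose composites are the canonical morphisms $\tau$. We then translate $u$ by $T_{-a}$ to get $\tilde u = T_{-a}(u)\colon T_{-a}(\F')\to \F$, and set
\[
f' \coloneqq v\circ f\circ \tilde u \colon T_{-a}(\F') \to T_b(\G').
\]
This is the candidate morphism. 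The plan is to compare $C(f)$ with $C(f')$ in two steps, each costing at most $4\varepsilon$ by Lemma~\ref{lem:distance cone0}, and conclude with the triangle inequality. Since $\gamma$ is only a pseudo-distance, the triangle inequality is the one genuinely available tool here.

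First we pass from $C(f)$ to $C(f\circ\tilde u)$. We need a $v_1\colon \F\to T_{\varepsilon}(T_{-a}\F')$ such that $v_1\circ \tilde u$ and $T_\varepsilon(\tilde u)\circ v_1$ are the canonical $\tau$'s. We take $v_1 = \tau\circ u'$, that is, $u'$ followed by the canonical map $T_{a'}(\F')\to T_{\varepsilon-a}(\F')$, which exists because $a'\leq \varepsilon - a$. The required identities follow from the interleaving relations together with the naturality of $\tau$, i.e.\ the monoidal structure of the $\arbR$-action. Lemma~\ref{lem:distance cone0} then gives $\gamma(C(f),C(f\circ\tilde u))<4\varepsilon$. (If the strict inequality is delicate when $a+a'$ is close to $\varepsilon$, we just use a slightly larger parameter $\varepsilon'<\varepsilon$ still exceeding $a+a'$.)

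Second we pass from $C(f\circ\tilde u)$ to $C(v\circ f\circ \tilde u)$. Lemma~\ref{lem:distance cone0} is stated for precomposition, so here we need its dual version for postcomposition with a morphism $v\colon\G\to T_b(\G')$ that has an approximate inverse. We obtain it the same way: the octahedral axiom gives a fiber sequence $C(g)\to C(v\circ g)\to C(v)\to[+1]$. The cone $C(v)$ is $2\varepsilon$-close to $0$ by \cite[Lem. 6.8 (i)]{Guillermou-Viterbo}, and then \cite[Lem. 6.8 (ii)]{Guillermou-Viterbo} gives distance $<4\varepsilon$. Equivalently, we can apply Lemma~\ref{lem:distance cone0} in the opposite category, which is stable with the inverse $\arbR$-action, using $C(g)\simeq \mathrm{fib}(g)[1]$. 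One point to check is that the target $T_b(\G')$ rather than $\G$ causes no problem: $v$ is an interleaving-type morphism between $\G$ and a shift of $\G'$, and the argument in Lemma~\ref{lem:distance cone0} only uses the existence of the approximate inverse $\tau\circ v'$.

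Combining the two steps, $\gamma(C(f),C(f'))<8\varepsilon$. The main obstacle I expect is bookkeeping rather than substance. The homotopies expressing $v_1\circ\tilde u\simeq\tau$ need the higher-coherent compatibility $T_c(\tau_d)\simeq\tau_d$ from the monoidal $\arbR$-action. We also have to make sure the shifts $T_{-a}$ and $T_b$ are exactly the ones allowed by the statement, with $0\leq a,b<\varepsilon$.
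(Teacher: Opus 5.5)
Your proposal is correct and is essentially the paper's proof. You use the same morphism $f' = v\circ f\circ T_{-a}(u)$ and the same intermediate cone $C(f\circ T_{-a}(u))$, then apply Lemma~\ref{lem:distance cone0} once, then its postcomposition variant (via the octahedral fiber sequence $C(g)\to C(v\circ g)\to C(v)$), and conclude with the triangle inequality. Your explicit check that $\tau\circ u'$ (or simply $u'$, with shift $a+a'<\varepsilon$) supplies the approximate inverse required by Lemma~\ref{lem:distance cone0} fills in a step the paper leaves implicit.
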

\begin{proof}
  Since $\gamma(\F, \F') < \varepsilon$, there exist $0\leq a,a'$ and morphisms $u_1\colon \F \to T_{a'}(\F')$, $u_2\colon \F' \to T_a(\F)$ such that $T_{a'}(u_2) \circ u_1$ and $T_a(u_1) \circ u_2$ are homotopic to canonical morphisms $\tau_{a'+a}$ and $a'+a < \varepsilon$.  We have similar morphisms $v_1\colon \G \to T_b(\G')$, $v_2\colon \G' \to T_{b'}(\G)$.  We define $f' = v_1 \circ f \circ T_{-a}(u_2)$ and also $f_1 = f \circ T_{-a}(u_2)$.  By Lemma~\ref{lem:distance cone0} (and its variant which deals with $u\circ f$) both $\gamma(C, C(f_1))$ and $\gamma(C(f_1), C')$ are less than $4\varepsilon$.  The result follows from the triangle inequality.
\end{proof}

\begin{lem}\label{lem:approx iterated cone}
  Let $\F$ be an $n$ steps iterated cone on $\G_0, \ldots, \G_n$.  Let $\varepsilon>0$ and $\G'_0, \ldots, \G'_n$ be such that $\gamma(\G_i, \G'_i) < \varepsilon$ for all $i$.  Then there exist $a_0,\ldots, a_n$ and an $n$ steps iterated cone $\F'$ on $T_{a_0}(\G'_0), \ldots, T_{a_n}(\G'_n)$ such that $\gamma(\F, \F') < 8^n \varepsilon$.
\end{lem}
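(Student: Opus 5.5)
We argue by induction on $n$, with Lemma~\ref{lem:distance cone} as the inductive step. For $n=0$ we have $\F=\G_0$, and we take $\F'=\G'_0$ with $a_0=0$; then $\gamma(\F,\F')<\varepsilon=8^0\varepsilon$.

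Now let $n\geq 1$ and suppose the statement holds for $n-1$. Write $\F=\F_n=C(f_{n-1})$ with $f_{n-1}\colon \F_{n-1}\to \G_n$, where $\F_{n-1}$ is an $(n-1)$-step iterated cone on $\G_0,\ldots,\G_{n-1}$. By the induction hypothesis there are shifts $a_0,\ldots,a_{n-1}$ and an $(n-1)$-step iterated cone $\F'_{n-1}$ on $T_{a_0}(\G'_0),\ldots,T_{a_{n-1}}(\G'_{n-1})$ such that $\gamma(\F_{n-1},\F'_{n-1})<8^{n-1}\varepsilon$.

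Apply Lemma~\ref{lem:distance cone} to $f_{n-1}$, the pair $\F'_{n-1}$ and $\G'_n$, and the constant $\varepsilon_n:=8^{n-1}\varepsilon$. This constant dominates both $\gamma(\F_{n-1},\F'_{n-1})$ and $\gamma(\G_n,\G'_n)<\varepsilon\le 8^{n-1}\varepsilon$. The lemma gives $a,b\ge 0$ and a morphism $f'\colon T_{-a}(\F'_{n-1})\to T_b(\G'_n)$ whose cone $\F'$ satisfies $\gamma(\F,\F')<8\cdot 8^{n-1}\varepsilon=8^n\varepsilon$.

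It remains to see that $\F'$ is an $n$-step iterated cone of the right shape. The functor $T_{-a}$ is an autoequivalence of the stable category: its inverse is $T_a$, by monoidality of the $\arbR$-action. It is therefore exact and preserves cones. Hence $T_{-a}(\F'_{n-1})$ is an $(n-1)$-step iterated cone on $T_{a_0-a}(\G'_0),\ldots,T_{a_{n-1}-a}(\G'_{n-1})$, since $T_{-a}T_{a_i}\simeq T_{a_i-a}$ and $T_{-a}$ carries the intermediate morphisms and cones along. Taking cones of $f'$ then presents $\F'$ as an $n$-step iterated cone on these objects together with $T_b(\G'_n)$. The new shifts are $a_i-a$ for $i<n$ and $a_n=b$, which completes the induction.

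The only point requiring care is this last one: we must check that an exact autoequivalence transports the data of an iterated cone (the objects $\F_i$, the maps $f_i$, and the cone identifications). This holds formally in a stable $\infty$-category. Beyond that, the proof is bookkeeping of the constant, which is $8^n$ because each step multiplies the error by $8$.
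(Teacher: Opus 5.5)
Your proof is correct and matches the paper's argument: induct on $n$, apply Lemma~\ref{lem:distance cone} to $f_{n-1}$, $\F'_{n-1}$, $\G'_n$ with constant $8^{n-1}\varepsilon$, then shift the earlier $a_i$ by $-a$ and set $a_n=b$. Your additions (base case $n=0$, the check $\gamma(\G_n,\G'_n)<8^{n-1}\varepsilon$, and exactness of the autoequivalence $T_{-a}$ so that it transports the cone data) only make explicit points the paper leaves implicit.
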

\begin{proof}
  We prove this by induction on $n$. For $n=1$ the result is given by Lemma~\ref{lem:distance cone}.  We assume the result holds for $n-1$ and we consider an $n$ steps iterated cone.  By definition there exist $\F_0 = \G_0$, \dots, $\F_n = \F$ and morphisms $f_i\colon \F_i \to \G_{i+1}$ such that $\F_{i+1}$ is isomorphic to the cone of $f_i$.  The induction step gives $a_0,\ldots, a_{n-1}$, $\F'_0 = T_{a_0}(\G'_0)$, \dots, $\F'_{n-1}$, and morphisms $f'_i\colon \F'_i \to T_{a_i}\G'_{i+1}$ such that $\F'_{i+1}$ is isomorphic to the cone of $f'_i$ and $\gamma(\F_{n-1}, \F'_{n-1}) < 8^{n-1} \varepsilon$.  We apply Lemma~\ref{lem:distance cone} to $f_{n-1} \colon \F_{n-1} \to \G_n$, $\F'_{n-1}$ and $\G'_n$, with $8^{n-1} \varepsilon$ instead of $\varepsilon$.  We obtain a morphism $f'_n\colon T_{-a}(\F'_{n-1}) \to T_b(\G'_n)$, for some $a,b$, such that $\gamma(\F'_n, \F_n) < 8^n \varepsilon$, where $\F'_n$ is the cone of $f'_n$.  We shift $a_0,\ldots, a_{n-1}$ by $-a$ and set $a_n = b$. This proves the result.
\end{proof}

\begin{lem}\label{lem:distance somme}
  Let $\varepsilon >0$ and let $\F_i$, $\G_i$, $i\in I$, be two families of objects such that $\gamma(\F_i, \G_i) \leq \varepsilon$ for any $i\in I$. We assume that $\mathcal{C}$ admits $I$-indexed direct sums. Then $\gamma(\bigoplus_{i\in I} \F_i, \bigoplus_{i\in I} \G_i) \leq 2\varepsilon$.
\end{lem}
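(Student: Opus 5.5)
The plan is to sum the interleaving data termwise. Since $\gamma(\F_i,\G_i)\le\varepsilon$, for each $i$ and each $\delta>0$ there are $a_i,b_i\ge 0$ with $a_i+b_i\le \varepsilon+\delta$, morphisms $u_i\colon \F_i\to T_{a_i}(\G_i)$, $v_i\colon \G_i\to T_{b_i}(\F_i)$, and homotopies between $T_{a_i}(v_i)\circ u_i$ and $\tau_{a_i+b_i}$, and between $T_{b_i}(u_i)\circ v_i$ and $\tau_{a_i+b_i}$. The only obstruction to summing directly is that the pairs $(a_i,b_i)$ vary with $i$, while a morphism $\bigoplus\F_i\to T_a(\bigoplus \G_i)$ needs a single shift $a$. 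The factor $2$ in the statement is exactly what is needed to make all shifts uniform.

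To uniformize, I will push everything up to the common shift $c=\varepsilon+\delta$ on both sides. I set
\[
u'_i=\tau\circ u_i\colon \F_i\to T_{a_i}(\G_i)\to T_{c}(\G_i),\qquad v'_i=\tau\circ v_i\colon \G_i\to T_{c}(\F_i),
\]
where the $\tau$'s are the canonical morphisms $T_{a_i}\to T_c$ and $T_{b_i}\to T_c$ (this is legitimate since $a_i,b_i\le c$). Naturality of $\tau$, i.e.\ $T_s(g)\circ\tau_s\simeq\tau_s\circ g$, together with the monoidality $\tau^a_b\simeq T_a(\tau^0_{b-a})$ from the $\arbR$-action, gives
\[
T_c(v'_i)\circ u'_i\simeq \tau_{2c-a_i-b_i}\circ T_{a_i}(v_i)\circ u_i\simeq \tau_{2c-a_i-b_i}\circ\tau_{a_i+b_i}\simeq\tau_{2c},
\]
and symmetrically for the other composite. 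So $(\F_i,\G_i)$ are $(c,c)$-interleaved for every $i$, with total shift $2c$.

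Next I take $u=\bigoplus_i u'_i$ and $v=\bigoplus_i v'_i$. To make sense of these I need each $T_c$ to commute with $I$-indexed direct sums. In the sheaf setting this holds because $T_c$ is convolution with a kernel, hence a left adjoint. In the abstract setting I would add this as a standing assumption, or note that $T_c$ is an equivalence with inverse $T_{-c}$ and therefore preserves all colimits. Since $\tau_{2c}$ on a direct sum is the direct sum of the $\tau_{2c}$, and homotopies can be summed componentwise (a direct sum is a coproduct, so mapping spaces out of it are products), the composites $T_c(v)\circ u$ and $T_c(u)\circ v$ are homotopic to $\tau_{2c}$. This gives $\gamma\le 2c=2\varepsilon+2\delta$ for every $\delta>0$, hence the bound $2\varepsilon$.

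The main subtlety is the coherence of the homotopies when $I$ is infinite. Componentwise homotopies assemble into a single homotopy because $\Hom(\bigoplus_i X_i,Y)\simeq\prod_i\Hom(X_i,Y)$, but choosing them requires a simultaneous choice over all $i$. This uses the axiom of choice, which I regard as harmless. The second point to check is the naturality identity for $\tau$ used in the uniformization step, which follows from the $\arbR$-action being a monoidal functor.
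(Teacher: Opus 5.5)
Your proposal is correct and follows the paper's proof: you push each interleaving up to the common shifts $(c,c)$ and then take $\bigoplus_i u'_i$ and $\bigoplus_i v'_i$. Your $\delta$ (the infimum defining $\gamma$ need not be attained) and your remark that each $T_c$ is an autoequivalence, hence commutes with direct sums, supply details that the paper's two-line proof leaves implicit.
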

\begin{proof}
  The hypothesis imply in particular that there exist $\varphi_i\colon \F_i \to T_\varepsilon(\G_i)$, $\psi_i\colon \G_i \to T_\varepsilon(\F_i)$ such that $\varphi_i\circ \psi_i$ and $\psi_i\circ \varphi_i$ are homotopic to canonical morphisms.  We deduce the lemma by considering $\varphi = \bigoplus_i \varphi_i$ and $\psi = \bigoplus_i \psi_i$.
\end{proof}

From now on, we consider sheaves. Using a \v Cech resolution we can express any sheaf $\F$ as an $n$ steps iterated cone on sheaves of the form $\G_i = \bigoplus_{j \in I_i} \F \otimes k_{Z_j}$, $i=0,\ldots,n$ (see Lemma~\ref{lem:Cech}). Using Lemma~\ref{lem:approx fibre boule} and previous approximation Lemmas proved in this section we can replace the $\F \otimes k_{Z_j}$ by $\F \otimes k_{\{x_j\} \times \Real}$, for some $x_j \in Z_j$, and obtain an iterated cone on the $\bigoplus_{j \in I_i} \F \otimes k_{\{x_j\} \times \Real}$, which approximate $\F$.  Now $\F \otimes k_{\{x_j\} \times \Real}$ is approximated by the iterated cones of fibers using Lemma~\ref{lem:iterated cones dim 1} and we will be able to conclude. We give more details in~\S\ref{sec:proofmainthm}.

\begin{lem}\label{lem:Cech}
  Let $M = \bigcup_{i\in I} U_i$ be a finite covering.  We set $U_J^{cl} = \bigcap_{j\in J} \overline{U_j}$ and we assume that $U_J^{cl} =\emptyset$ for $\vert J \vert \geq n+2$ and that all $U_J^{cl}$ are contractible.  Let $k_M \simeq C_0 \to C_1 \to \cdots \to C_n$ be the \v Cech resolution of the constant sheaf, where $C_i = \bigoplus_{J \subset I, |J| = i+1} k_{U_J^{cl}}$. Then for any sheaf $\F$ on $M$, we have $\F \in \langle \F \otimes C_0,\ldots, \F \otimes C_n \rangle_{\Sh(M)}$, and more precisely $\F$ is an $n$ steps iterated cone on $\F \otimes C_0[1]$, $\F \otimes C_1$, \dots, $\F \otimes C_n[1-n]$.
\end{lem}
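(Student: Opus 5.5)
First I will check that the Čech complex is a resolution of $k_M$. For $x\in M$ let $I_x=\{i\mid x\in\overline{U_i}\}$, which is nonempty because the $U_i$ cover $M$. The stalk of $C_i$ at $x$ is $\bigoplus_{J\subset I_x,|J|=i+1}k$, and the Čech differential makes $k\to C_0\to C_1\to\cdots$ at $x$ the augmented simplicial cochain complex of the full simplex on the vertex set $I_x$. This complex is acyclic, so $k_M\to C_\bullet$ is a quasi-isomorphism, i.e. $k_M\simeq \mathrm{Tot}(C_0\to\cdots\to C_n)$; the complex stops at $C_n$ because $U_J^{cl}=\emptyset$ for $|J|\geq n+2$. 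Contractibility of the $U_J^{cl}$ plays no role here; it will only matter later, when Lemma~\ref{lem: distance point ball} is applied. Tensoring with $\F$ is exact, so $\F\simeq \F\otimes k_M\simeq \mathrm{Tot}(\F\otimes C_0\to\cdots\to\F\otimes C_n)$.

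Next I will turn this bounded totalization into an iterated cone in the stable $\infty$-category $\Sh(M)$ using the stupid (brutal) filtration. Let $\sigma_{\geq i}$ be the totalization of $\F\otimes C_i\to\cdots\to\F\otimes C_n$. There are fiber sequences
\[
\sigma_{\geq i+1}\to\sigma_{\geq i}\to (\F\otimes C_i)[-i]\to[+1],
\]
so $\sigma_{\geq i}$ is the cone of a morphism $(\F\otimes C_i)[-i-1]\to\sigma_{\geq i+1}$. Alternatively one can use $\sigma_{\leq i}$ and the fiber sequences $(\F\otimes C_i)[-i]\to\sigma_{\leq i}\to\sigma_{\leq i-1}\to[+1]$, which give $\sigma_{\leq i}\simeq C(\sigma_{\leq i-1}\to(\F\otimes C_i)[1-i])$. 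Set $\F_0=\sigma_{\leq 0}$ and $\F_i=\sigma_{\leq i}$, so that $\F_n\simeq\F$. Each $\F_{i+1}$ is then the cone of a morphism $\F_i\to \G_{i+1}$ with $\G_{i+1}=\F\otimes C_{i+1}[1-(i+1)]$, which is exactly the definition of an $n$ steps iterated cone. The lemma states the first term as $\F\otimes C_0[1]$. Since $\F_0=\F\otimes C_0$ up to a global shift, I will fix the shift convention to match: if needed, shift everything by $[1]$, or note that the statement's first entry is compatible with the pattern $\G_i=\F\otimes C_i[1-i]$ at $i=0$. Membership in $\langle \F\otimes C_0,\ldots,\F\otimes C_n\rangle$ follows because this subcategory is closed under shifts and cones.

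The main obstacle I expect is rigor in passing from a chain complex of sheaves to the $\infty$-categorical statement, in particular getting the shifts right. To handle this I will use the equivalence $\Sh(M)\simeq \Der(\Sh(M,\Mod_k))$ recalled earlier. In the derived category the brutal truncations give honest distinguished triangles of complexes, and these are cofiber sequences in the stable $\infty$-category. The exactness of $\F\otimes-$ on stalks, checked stalkwise as above, handles the tensor product.
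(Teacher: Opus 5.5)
Your argument is essentially the paper's. Like the paper, you tensor the \v{C}ech resolution with $\F$ and induct along the brutal truncations $D_i=(C_0\to\cdots\to C_i)$ via the fiber sequences $D_i\to D_{i-1}\to C_i[1-i]\to[+1]$ (your $\sigma_{\le i}$ route), and your stalkwise acyclicity check makes explicit what the paper takes for granted.

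One caveat, shared with the paper's write-up. These sequences exhibit $\sigma_{\le i}$ as the fiber $C(\sigma_{\le i-1}\to(\F\otimes C_i)[1-i])[-1]$, not the cone. So the exact shifts in the statement (e.g.\ $\F\otimes C_0[1]$) are not what the argument yields, and shifting everything by $[1]$ does not fix them. With honest cones you get $\F$ as an iterated cone on $\F\otimes C_0[-n],\ldots,\F\otimes C_n[-n]$. This is harmless for the membership claim and for the later application, since shifts commute with the $T_a$ and preserve $\gamma$.
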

\begin{proof}
  Let $D_i$ be the truncated complex $D_i = C_0 \to C_1 \to \cdots \to C_i$.  We have fiber sequences $D_i \to D_{i-1} \to C_i[1-i] \to[+1]$.  We deduce by induction on $i$ that $\F\otimes D_i$ is an $i$ steps iterated cone on $\F \otimes C_0,\ldots, \F \otimes C_i[1-i]$. For $i=n$ we obtain the lemma.
\end{proof}

\begin{lem}\label{lem:iterated cones dim 1}
  Let $k$ be ring such that any finite type module has a free resolution of finite length.  Then any constructible $\F \in \Tam(\pt)$ (with coefficients $k$) is an iterated cone of sheaves of the type $k_{[a, \infty)}[d]$.
\end{lem}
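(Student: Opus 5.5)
The plan is to reduce to the structure theorem for constructible sheaves on $\Real$ (barcode decomposition) and to use that $\Tam(\pt)$ only sees intervals closed on the left and unbounded or half-open on the right. A constructible $\F \in \Tam(\pt)$ has finitely many singular points $c_1<\cdots<c_m$. Since $\F \in \Sh_{\tau\geq 0}(\Real)$ and $\Hom(\F,\G)=0$ for $\G\in\Sh_{\tau\leq0}$, $\F$ vanishes on $(-\infty,c_1)$. Indeed, $\F|_{(-\infty,c_1)}$ is locally constant, so its extension by zero is locally constant near $-\infty$. Orthogonality to constant sheaves (which lie in $\Sh_{\tau\leq0}$) then forces the constant stalk at $-\infty$ to vanish.

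I would then argue by induction on the number $m$ of singular points, peeling off from the left. Consider the stalk $E=\F_{c_1}$. Because $\SS(\F)\subset\{\tau\geq0\}$, the microlocal condition gives that the restriction $\F((c_1-\delta,c_1+\delta))\to \F_{c_1}$ is an isomorphism and that $\F$ restricted to $[c_1,c_2)$ is constant with stalk $E$. The stalk $E$ is a perfect complex over $k$ by constructibility (finite-type cohomology, bounded). Under the hypothesis on $k$, $E$ is quasi-isomorphic to a bounded complex of finite free modules, hence an iterated cone of shifts $k[d]$. Correspondingly, $E_{[c_1,\infty)}$ is an iterated cone of the $k_{[c_1,\infty)}[d]$. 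There is a natural morphism $E_{[c_1,\infty)}\to\F$, obtained by adjunction: the sections of $\F$ on $(c_1-\delta,\infty)$ restrict to $E$ on the initial segment. More precisely, $\Hom(E_{[c_1,\infty)},\F)\simeq \Hom(E, \Gamma([c_1,\infty);\F))$, and in $\Tam$ one has $\Gamma([c_1,\infty);\F)\simeq \F_{c_1}$ by the $\tau\geq0$ propagation. One must check this morphism is an isomorphism on $(-\infty,c_2)$.

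Its cone $\F'$ is then constructible, lies in $\Tam(\pt)$ (the Tamarkin category is stable and contains the $k_{[a,\infty)}$), and vanishes on $(-\infty,c_2)$, so it has singular points among $c_2,\dots,c_m$. By induction $\F'$ is an iterated cone of sheaves $k_{[a,\infty)}[d]$, and so is $\F$, being the cone of $\F'[-1]\to E_{[c_1,\infty)}$.

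The main obstacle is the identification $\Gamma([c_1,\infty);\F)\simeq\F_{c_1}$ together with the global structure statement that $\F$ is constant on each $[c_i,c_{i+1})$, i.e.\ the non-propagation to the left. I would handle this via the microlocal Morse lemma for $\tau\geq0$ sheaves on $\Real$ (sections on $(-\infty,t)$ only change when crossing singular points from the left), or alternatively via the persistence-module description of $\Sh_{\tau\geq0}(\Real)$ from \eqref{eq:filt-cplx-sheaves}. A minor point is termination when $m$ is finite but $\F$ is nonzero near $+\infty$: the last step leaves $\F'$ with no singular points and vanishing near $-\infty$, hence $\F'=0$.
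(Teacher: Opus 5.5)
There is a genuine gap in your inductive step: the morphism $E_{[c_1,\infty)}\to\F$ you need does not exist in general. The identification $\Gamma([c_1,\infty);\F)\simeq\F_{c_1}$ is false, because for $\tau\geq0$ sheaves the propagation goes the other way.

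Since $-dt$ lies in $\{\tau<0\}$, the microlocal Morse lemma for $\phi(t)=-t$ shows that $R\Gamma((a,\infty);\F)$ is independent of $a$ whenever $\SS(\F)\subset\{\tau\geq0\}$. So, for $\F$ vanishing on $(-\infty,c_1)$, adjunction gives $\Hom(E_{[c_1,\infty)},\F)\simeq\Hom(E,R\Gamma_{[c_1,\infty)}(\Real;\F))=\Hom(E,R\Gamma(\Real;\F))$. Here $R\Gamma(\Real;\F)$ is the generic stalk of $\F$ on $(c_m,\infty)$, not $\F_{c_1}$.

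A concrete counterexample is $\F=k_{[0,1)}\in\Tam(\pt)$. The triangle $k_{[0,1)}\to k_{[0,\infty)}\to k_{[1,\infty)}\to k_{[0,1)}[1]$ gives $R\Gamma(\Real;k_{[0,1)})\simeq 0$. Hence $\Hom(k_{[0,\infty)},k_{[0,1)}[d])=0$ for every $d$, and no morphism $k_{[0,\infty)}\to k_{[0,1)}$ is an isomorphism near $0$. The fact you quote, that sections on $(-\infty,t)$ change only at singular points, is true, but it is not what this adjunction involves.

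The missing idea is to map $\F$ \emph{to} the half-line sheaf, as the paper does. Over $(c_1,\infty)$ the sheaf $E_{[c_1,\infty)}$ is locally constant, so $R\mathcal{H}om(\F,E_{[c_1,\infty)})$ has microsupport in $\{\tau\leq 0\}$ there. The microlocal Morse lemma for $\phi(t)=t$ then yields $\Hom(\F,E_{[c_1,\infty)})\simeq\Hom(\F|_{(-\infty,b)},E_{[c_1,b)})$ for $c_1<b<c_2$.

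So the local isomorphism near $c_1$ extends to $u\colon\F\to E_{[c_1,\infty)}$. This $u$ is an isomorphism on $(-\infty,c_2)$ because $\F$ is constant on $[c_1,c_2)$. Its cone vanishes on $(-\infty,c_2)$ and lies in $\Tam(\pt)$.

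The rest of your argument then goes through unchanged: the vanishing on $(-\infty,c_1)$, replacing $E$ by shifts of $k$ via finite free resolutions, and induction on the number of singular points. The only change is that $\F$ is now recovered as the fiber of $E_{[c_1,\infty)}\to\operatorname{Cone}(u)$.
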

\begin{proof}
  We assume that $\F$ is constructible with respect to a stratification given by finitely many points $a_1<\cdots < a_N$ and the open intervals they define.  Then $\F \simeq E_{(-\infty, a_1)}$ on $(-\infty, a_1)$ for some $k$-module $E$ and $\Hom(\F, k_{(-\infty, a]}) \simeq E$ for any $a<a_1$. Since $\F \in \Tam(\pt)$, it is left orthogonal to $ k_{(-\infty, a]}$ and we obtain $E\simeq 0$.  We deduce that $\F|_I \simeq (E_1)_{[a_1, \infty)}|_I$ for some neighborhood $I$ of $\{a_1\}$ and some $k$-module $(E_1)$ (see for example~\cite[Example 1.2.3-(v)]{Guillermou-Asterisque}).  Since $SS(\F) \subset \{\tau\geq0\}$, the microlocal Morse theorem implies that the isomorphism $\F|_I \simeq (E_1)_{[a_1, \infty)}|_I$ extends to a morphism $u\colon \F \to (E_1)_{[a_1, \infty)}$.  Then the cone of $u$ is constructible with respect to the stratification induced by $a_2<\cdots < a_N$. An induction on $N$ shows that $\F$ is an iterated cone of sheaves of the form $(E_i)_{[a_i, \infty)}$.  Now the hypothesis on $k$ implies that it is also an iterated cone of sheaves of the form $k_{[a_i, \infty)}[d]$.
\end{proof}

\subsection{Proof of Theorem~\ref{thm:density sheaves}}\label{sec:proofmainthm}
Let $0<\varepsilon < r_{inj}(N)/3$.  We pick a finite covering $N = \bigcup_{i\in I} U_i$ be a finite covering as in Lemma \ref{lem:Cech} and assume moreover that all $U_i$'s are contained in a ball of radius less than $\varepsilon$.  We set $U_J^{cl} = \bigcap_{j\in J} \overline{U_j}$.  By Lemma \ref{lem:Cech} $\F$ is an $n$ steps iterated cone on $\F \otimes C_0[1]$, $\F \otimes C_1$, \dots, $\F \otimes C_n[1-n]$, where $C_i = \bigoplus_{J \subset I, |J| = i+1} k_{U_J^{cl}\times \Real}$. For each $J\subset I$ we pick $x_J$ such that $U_J^{cl} \subset B_\varepsilon(x_J )$.  Lemma \ref{lem:approx fibre boule} says that $\gamma_{h^+}(\F \otimes k_{\{x_J\} \times \Real}, \F \otimes k_{U_J^{cl} \times \Real}) \leq 4\varepsilon$.  We set $C'_i = \bigoplus_{J \subset I, |J| = i+1} k_{\{x_J\}\times \Real}$.  By Lemma~\ref{lem:distance somme} we have $\gamma_{h^+}(\F \otimes C_i, \F \otimes C'_i) \leq 8 \varepsilon$.

Now for any $J$, we set $\F_J=\F \otimes k_{\{x_J\} \times \Real} $. By our hypotheses $\F_J$ is a limit of constructible sheaves and, by Lemma~\ref{lem:iterated cones dim 1}, we deduce that there exists $D_J$, which is an iterated cone of  sheaves of the type $k_{\{x_J\} \times [a, \infty)}[d]$, such that $\gamma_{\tau}(\F_J, D_J) < \varepsilon$. Because $\tau \leq h_+$ on $\{\tau \geq 0\}$, we have $\gamma_{h^+}(\F_J, D_J)\leq \gamma_{\tau}(\F_J, D_J)<\varepsilon$ by Lemma~\ref{prop:compare_dist}.

Therefore, for any $i$, the sheaf $D_i=\bigoplus_{J \subset I, |J| = i+1} D_J   $ satisfies $\gamma_{h^+}(\F\otimes C_i', D_i)<2\varepsilon $, and then $\gamma_{h^+}(\F\otimes C_i, D_i)<10\varepsilon $ by triangle inequality. It follows from Lemma~\ref{lem:approx iterated cone} that there exist $a_0,\ldots, a_n$ and an $n$ steps iterated cone $\F'$ on $T_{a_0}(D_0), \ldots, T_{a_n}(D_n)$ such that $\gamma_{h^+}(\F, \F') <  10\times 8^{n} \varepsilon$.
 
Now we apply the projector $P'_{DT^*N}$ to our iterated cones.  Each $\G_i = P'_{DT^*N}(T_{a_i}(D_i))$ is a direct sum of sheaves of the type $P'_{DT^*N}(T_{a_i}(D_J))$, then an iterated cone of $\W_{(x_J,a_i+b)}[d]=P'_{DT^*N}(k_{\{x_J\} \times [a_i+b, \infty)}[d])$. Then $C= P'_{DT^*N}(\F')$ is an iterated cone on $\G_0,\ldots, \G_n$, and then an iterated cone of $\W_{(x_J,a)}[d]$. Since $\F \simeq P'_{DT^*N}(\F)$, Proposition~\ref{prop:bound gammatau gammaK} implies that $\gamma_\tau(\F, C)\leq \gamma_{h^+}(\F, \F') <   10\times 8^{n} \varepsilon$, which proves the result.\qed

\begin{rem}\label{rem: interleaving Rouquier dimension}
The last proof works with $k = \Z$. If we work over a field, we can replace the use of Lemma~\ref{lem:iterated cones dim 1} by the fact that any constructible sheaf is a direct sum of sheaves of the type $k_{\{x_J\} \times [a, b)}[d]$ (by Gabriel's theorem) with $b$ a real number or $b=+\infty$. The case $b< +\infty$
is given as the cone of $k_{\{x_J\} \times [a, \infty)} \to k_{\{x_J\} \times [b, \infty)}$. Then the proof shows that it is enough to take $\dim(N) + 1$ cones to get an approximation of a sheaf by the sheaves $\W_{(x,a)}$ (if we don't count taking direct sums into the total count of cones). 

\end{rem}

\appendix
\section{Interleaving Rouquier dimension}\label{sec:InterleavingRouquierdimension}

The Remark~\ref{rem: interleaving Rouquier dimension} leads to a generalization of Rouquier dimension for stable $\infty$-categories with $\arbR$-actions.

For a stable $\infty$-category $\mathcal{ C}$ equipped with an $\arbR$-action $T$, we set its interleaving distance to be $\gamma_\mathcal{C}$. We say a subset of objects $G=\{G_i:i\in I\}$ that is closed under the $\arbR$-action of $\mathcal{ C}$ is a set of \textit{solo-approximators} if: For any $i,j\in I$, we have $\gamma_\mathcal{C}(G_i,G_j)< +\infty$, and moreover, for any object $X \in \mathcal{C}$ and any $\varepsilon >0$, there exists an iterated cone $C$ out of some $G_i$ such that $\gamma_\mathcal{C}(X,C)< \varepsilon$.

For $d\geq -1$, we denote $\langle G_i\mid i\in I\rangle_d$ the smallest full thick (that is, closed under taking direct summand) subcategory of $\mathcal{ C}$ that contains all step $d+1$ iterated cones out of direct sums $\bigoplus_{i \in I_k}G_i$, $k=0,\ldots,d$, with $I_k \subset I$.

\begin{defn}The interleaving Rouquier dimension of $(\mathcal{C},T)$, denoted by $\operatorname{IRdim}(\mathcal{C},T)$ (or simply $\operatorname{IRdim}(\mathcal{C})$ if $T$ is clear), is defined as the minimal $d$ such that there exists a set of solo-approximators $G=\{G_i\mid i\in I\}$ such that for any $X \in \mathcal{ C}$ and any $ \varepsilon >0$, there exists $C\in \langle G_i\mid i\in I\rangle_d$ such that $\gamma_\mathcal{C}(X,C)< \varepsilon$.    
\end{defn}

We now explain the relation between the interleaving Rouquier dimension and the usual Rouquier dimension $\operatorname{Rdim}$\footnote{The Rouquier dimension of $\mathcal{ C}$ is defined as the minimal $d$ such that for some $G$ we have $\mathcal{ C}=\langle G\rangle_d$. It is clear that  $\operatorname{Rdim}$ only depends on the homotopy category of $ho(\mathcal{ C})$ (similarly, $\operatorname{IRdim}$ only depends on $ho(\mathcal{ C})$ and $ho(T)$.)}. For any stable $\infty$-category $\mathcal{ C}$ equipped with an $\arbR$-action, one can define a stable $\infty$-category $\mathcal{C}_\infty$ as the Verdier quotient of $\mathcal{C}$ by Tamarkin torsion objects of $\mathcal{C}$ (see~\cite[Definition 6.1]{G-S}, \cite[Definition 5.19]{APT-Kuwagaki-Zhang}).

We have
\begin{prop}\label{prop: dimension comparision}If $\mathcal{C}$ has a set of solo-approximators $G=\{G_{i}\mid i\in I\}$, then  $\mathcal{C}_\infty$ is generated by a single $G_{i}$ (for any $i\in I$), and moreover, $\operatorname{Rdim}(\mathcal{C}_\infty)\leq \operatorname{IRdim}(\mathcal{C})$.
\end{prop}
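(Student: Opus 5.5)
The plan is to pass to the Verdier quotient $\pi\colon \mathcal{C} \to \mathcal{C}_\infty$ and to use the following basic fact. If $\gamma_{\mathcal{C}}(X,Y) < \infty$, then $\pi(X) \simeq \pi(Y)$. Indeed, take interleaving morphisms $u\colon X \to T_a Y$ and $v\colon Y \to T_b X$. Each canonical morphism $\tau_c\colon Z \to T_c Z$ has Tamarkin torsion cone: its cone $W$ satisfies $\tau_{c'}(W) = 0$ for some $c'$, which is the definition of torsion in \cite{G-S}, \cite{APT-Kuwagaki-Zhang}. So $\tau_c$ becomes an isomorphism in $\mathcal{C}_\infty$. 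The same argument shows $\pi(T_c Z) \simeq \pi(Z)$. The compositions $T_a(v)\circ u$ and $T_b(u)\circ v$ equal $\tau_{a+b}$, so they become invertible. Hence $\pi(u)$ has both a left and a right inverse, and is therefore an isomorphism. In particular all the $\pi(G_i)$ are isomorphic to one another, because $\gamma(G_i,G_j)<\infty$ and $G$ is closed under the $\arbR$-action.

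\textbf{Generation.} Fix $i$ and an object $\pi(X)$. Apply the solo-approximator property with, say, $\varepsilon = 1$: this gives an iterated cone $C$ on objects $G_{j_1},\dots,G_{j_m}$ with $\gamma(X,C) < 1$. By the fact above, $\pi(X) \simeq \pi(C)$. The functor $\pi$ is exact, so $\pi(C)$ is an iterated cone on the $\pi(G_{j_k}) \simeq \pi(G_i)$. Every object of $\mathcal{C}_\infty$ is of the form $\pi(X)$, so $\mathcal{C}_\infty = \langle \pi(G_i) \rangle$, and in fact no thick closure is needed for this step.

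\textbf{Dimension bound.} Let $d = \operatorname{IRdim}(\mathcal{C})$ and choose the solo-approximators $G$ realizing it. For any $X$, pick $C \in \langle G_i \mid i\in I\rangle_d$ with $\gamma(X,C) < \varepsilon$; then $\pi(X) \simeq \pi(C)$. It remains to show that $\pi$ maps $\langle G_i \mid i \in I\rangle_d$ into $\langle \pi(G_{i_0}) \rangle_d$. Here I would check that $\pi$ commutes with direct sums, or at least with those appearing in the definition, so that $\pi(\bigoplus_{i\in I_k} G_i) \simeq \bigoplus_{I_k} \pi(G_{i_0})$. I would also check that exactness and idempotent-splitting are preserved: $\pi$ preserves cones, and a direct summand of $\pi$ of a cone is a summand of an object in $\langle \pi(G_{i_0})\rangle_d$. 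Since $\langle\,\cdot\,\rangle_d$ is defined as a thick closure, this gives $\mathcal{C}_\infty = \langle \pi(G_{i_0})\rangle_d$, and hence $\operatorname{Rdim}(\mathcal{C}_\infty) \leq d$.

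\textbf{Main obstacle.} The delicate step is the fact above, specifically that the cone of $\tau_c$ is torsion in the precise sense of the cited definition, together with the compatibility of the quotient with infinite direct sums if $I_k$ is infinite. If infinite sums are a problem, I would use the solo-approximator property to restrict to finite index sets $I_k$ where possible, or else check that the torsion subcategory is closed under sums, so that $\pi$ preserves them.
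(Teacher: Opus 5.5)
Your proposal is correct and follows the same route as the paper. Finite interleaving distance forces isomorphism in $\mathcal{C}_\infty$ because the cones of the $\tau_c$ are torsion, so all the $G_i$ become isomorphic, and exactness of the quotient functor transports density (resp.\ the $\langle\cdot\rangle_d$-approximation) into generation (resp.\ the bound on $\operatorname{Rdim}$); you simply spell out details the paper leaves implicit. On your ``main obstacle'': the paper tacitly takes the sums $\bigoplus_{i\in I_k}G_i$ to be finite, as in the usual Rouquier dimension and in the application of Proposition~\ref{prop: two dimension}, and you should do the same rather than try to show that torsion objects are closed under arbitrary sums, which fails in general because the torsion constants can be unbounded.
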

\begin{proof} Any two objects $X,Y$ in $\mathcal{C}$ with $\gamma_\mathcal{C}(X,Y)<\infty$ become isomorphic in $\mathcal{C}_\infty$. Then the finite distance condition for solo-approximators tells that all $G_{i},G_{j}$ are isomorphic to each other. Moreover, the density condition shows that any $X\in \mathcal{C}$ is isomorphic in $\mathcal{C}_\infty$ to an iterated cone out of $G_{i}$. This proves the first claim. Moreover, if $\operatorname{IRdim}(\mathcal{C})= d$, then any $X\in \mathcal{C}$ are isomorphic in $\mathcal{C}_\infty$ to a iterated cone of $G_{i}$ of step $d+1$, which means $\operatorname{Rdim}(\mathcal{C}_\infty)\leq d$. 
\end{proof}

Now we relate Remark~\ref{rem: interleaving Rouquier dimension} and the interleaving Rouquier dimension. Let $\Tam_{DT^*N}(T^*N)_{lc}$ to be the full subcategory of $\Tam_{DT^*N}(T^*N)$ consisting of objects $\F$ satisfying that for any $x\in N$ the sheaf $\F \otimes k_{\{x\} \times\Real}$ is a $\gamma^s$-limit of constructible sheaves on $\Real$. We have that $\Tam_{DT^*N}(T^*N)_{lc}$ is a stable $\infty$-category that is invariant under the $\arbR$-action of $\Tam_{DT^*N}(T^*N)$: The $\arbR$-invariance is clear; and we only need to verify the stability in the case $N=\pt$ because $-\otimes k_{\{x\} \times\Real}$ is an exact functor. The $N=\pt$ case follows from Lemma~\ref{lem:distance cone}. Consider $\{ \W_{(x,a)}\mid (x,a)\in N\times \Real=I\}$, the density theorem almost tells that it forms a set of solo-approximator of $\Tam_{DT^*N}(T^*N)_{lc}$, except that it is not clear whether the $\W_{(x,a)}$'s are indeed objects of $\Tam_{DT^*N}(T^*N)_{lc}$ (which should be, but we will not verify it here). So, we take $\mathcal{C}(DT^*N)$ as the thick stable subcategory of $\Tam_{DT^*N}(T^*N)$ spanned by $\{ \W_{(x,a)}\mid (x,a)\in N\times \Real=I\}$ and $\Tam_{DT^*N}(T^*N)_{lc}$, which is clearly $\arbR$-invariant.

\begin{prop}\label{prop: two dimension}Let $\mathcal{C}(DT^*N)$ be the stable $\infty$-category defined as above equipped with the $\arbR$-action of $\Tam_{DT^*N}(T^*N)$. Then $\{ \W_{(x,a)}\mid (x,a)\in N\times \Real=I\}$ is a set of solo-approximators of $\mathcal{C}(DT^*N)$, and if $k$ is a field, then we have $\operatorname{IRdim}(\mathcal{C}(DT^*N))\leq \dim N$.
\end{prop}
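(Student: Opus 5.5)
We verify the two conditions in the definition of solo-approximators, and then the dimension bound, reusing the proof of Theorem~\ref{thm:density sheaves}.

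\emph{Closure under the action.} Since $P'_{DT^*N}$ commutes with the Tamarkin translations $T_c$ and $T_c(k_{\{x\}\times[a,\infty)}) \simeq k_{\{x\}\times[a+c,\infty)}$, we have $T_c(\W_{(x,a)}) \simeq \W_{(x,a+c)}$. So the family is closed under the $\arbR$-action.

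\emph{Finite mutual distances.} Fix $(x,a)$ and $(y,b)$, and join $x$ to $y$ by a chain $x=z_0,\ldots,z_m=y$ with $d(z_i,z_{i+1})<r_{inj}(N)/3$. It suffices to treat a single step with $d(x,y)<\delta<r_{inj}(N)/3$. By Lemma~\ref{lem:formula_Wxa}, $\W_{(x,a)}\simeq P'_{DT^*N}(k_{C(x,a,s)})$. By Proposition~\ref{prop:bound gammatau gammaK}, $\gamma_\tau(\W_{(x,a)},\W_{(y,b)})\le \gamma_{h^+}(k_{\{x\}\times[a,\infty)},k_{\{y\}\times[b,\infty)})$. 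We then bound the right-hand side by a product estimate, as in Lemma~\ref{lem:approx fibre boule}: write $k_{\{x\}\times[a,\infty)}=k_{\{x\}}\boxtimes k_{[a,\infty)}$ and apply Lemma~\ref{lem:bound product} together with Proposition~\ref{prop:compare_dist}, using $h^+\ge \max\{\|p\|,\tau\}$. Two ingredients enter:
\begin{itemize}
\item Lemma~\ref{lem: distance point ball}, applied twice through the ball $Z=\overline{B}_{\delta}(x)$, gives $\gamma_{\|p\|}(k_{\{x\}},k_{\{y\}})\le 8\delta$;
\item on $\Real$, $\gamma_\tau(k_{[a,\infty)},k_{[b,\infty)})\le|a-b|$.
\end{itemize}
This yields a finite bound.

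One caveat: Lemma~\ref{lem: distance point ball} concerns microsupports in all directions, not only $\tau\ge\|p\|$. This is harmless here because $h^+\ge\|p\|$ everywhere, so Proposition~\ref{prop:compare_dist} applies with $W$ the whole cotangent bundle. If it proves troublesome, an alternative is to argue directly with the cone sheaves $k_{C(x,a,s)}$: one has inclusions $C(x,a',s')\subset C(y,b,s)$ for suitable shifts, which give interleaving maps after applying $P'_{DT^*N}$.

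\emph{Density.} Objects of $\Tam_{DT^*N}(T^*N)_{lc}$ are approximated by Theorem~\ref{thm:density sheaves}. A general object of $\mathcal{C}(DT^*N)$ is obtained from these and from the $\W_{(x,a)}$ by finitely many cones, shifts and direct summands. Approximability of cones follows from Lemma~\ref{lem:approx iterated cone}. For direct summands we need an extra step, described below.

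\emph{Direct summands and the dimension bound.} This is the main obstacle. The cleanest route is to note that approximation by the set $\langle G\rangle_d$, which is already thick by definition, is stable under cones and summands up to enlarging $d$. To keep $d=\dim N$, instead prove the bound directly on the generating class and then pass to summands. Over a field, Remark~\ref{rem: interleaving Rouquier dimension} already gives, for $\F\in\Tam_{DT^*N}(T^*N)_{lc}$, an approximation by an $(n+1)$-step iterated cone of direct sums of shifted $\W_{(x_J,a)}$. Here the one-dimensional pieces are direct sums of interval sheaves $k_{[a,b)}$ by Gabriel's theorem, and each $P'_{DT^*N}(k_{\{x\}\times[a,b)})$ is absorbed by splitting it as a cone between neighbouring columns. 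This bookkeeping is the delicate point. The objects $\W_{(x,a)}$ themselves lie in $\langle G\rangle_0$. For an object that is a summand of $X$, with $X$ approximated by $C\in\langle G\rangle_d$, take the corresponding summand of $C$ using an approximate idempotent; the image of $\langle G\rangle_d$ under $P'_{DT^*N}$ is thick, and we argue via the idempotent completeness of the sheaf category. If this step fails, we restrict the dimension statement to the subcategory generated by the lc objects, where the Čech argument applies verbatim.
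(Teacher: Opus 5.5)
Your argument for finiteness of $\gamma^s(\W_{(x,a)},\W_{(y,b)})$, which is the only part the paper actually proves here, is the paper's argument. You pass to $\gamma_{h^+}$ of the unprojected sheaves $k_{\{x\}\times[a,\infty)}$ via Proposition~\ref{prop:bound gammatau gammaK}, control the base direction with Lemma~\ref{lem: distance point ball}, and subdivide a path. Your product step correctly fills in what the paper leaves implicit:
\begin{itemize}
\item Lemma~\ref{lem:bound product} bounds $\gamma_{\|p\|+\tau}$ by $\max\{\gamma_{\|p\|}(k_{\{x\}},k_{\{y\}}),|a-b|\}$.
\item Proposition~\ref{prop:compare_dist}, with $W=\{\tau\geq 0\}$, $f=\tau$ and $h_1=\frac12(\|p\|+\tau)\leq h^+$ on $W$, turns this into a bound for $\gamma_{h^+}$ at the cost of a factor $2$.
\end{itemize}
Two small points: Lemma~\ref{lem:formula_Wxa} is not needed, and with $Z=\overline B_\delta(x)$ you need $2\delta<r_{inj}(N)/3$ (taking $Z$ to be the geodesic segment from $x$ to $y$ avoids this). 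Closure under $T_c$, and density via Theorem~\ref{thm:density sheaves} and Lemma~\ref{lem:approx iterated cone}, are handled as the paper does.

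The gap is in the steps you add on direct summands and on the dimension count.

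\emph{Direct summands.} Transporting the projection $X\oplus Y\to X$ through an interleaving with $C$ only gives a map $e\colon C\to T_\varepsilon(C)$ with $T_\varepsilon(e)\circ e$ equal to $\tau_\varepsilon\circ e$ up to further shifts. This is not an idempotent of $C$, so idempotent completeness of $\Sh(N\times\Real)$ does not split it. The claimed thickness of $P'_{DT^*N}(\langle G\rangle_d)$ is unsupported. It would not help anyway, because the solo-approximator condition asks for iterated cones without summands. Your fallback of restricting the statement does not prove the proposition.

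\emph{The count $\dim N$.} ``Absorbing'' each $P'_{DT^*N}(k_{\{x\}\times[a,b)})\simeq\mathrm{Cone}(\W_{(x,a)}\to\W_{(x,b)})[-1]$ into neighbouring \v{C}ech columns is not mere bookkeeping. A priori an $n$-step iterated cone of objects of $\langle G\rangle_1$ lies only in $\langle G\rangle_{2n+1}$. Collapsing it along a total-complex filtration requires the connecting morphisms to lift to maps of the two-term presentations. Moreover, those lifts must have no component from the $\W_{(x,b)}$-term of one layer to the $\W_{(x',a')}$-term of the next. You do not establish this, and it is not automatic, since morphisms between the $\W$'s (filtered based-loop homology) are not concentrated in one degree.

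The paper argues neither point. Its proof is only the finite-distance check, and it imports density on $\mathcal{C}(DT^*N)$ and the count $\dim N$ from Theorem~\ref{thm:density sheaves} and Remark~\ref{rem: interleaving Rouquier dimension}.

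If you want to close the summand issue rather than inherit it, one route has two steps. First, prove that the fibrewise condition defining $\Tam_{DT^*N}(T^*N)_{lc}$ is stable under direct summands; in dimension one over a field, barcode and bottleneck arguments are the natural tool. Second, check that the $\W_{(x,a)}$ satisfy that condition, which the paper explicitly leaves unverified. Then $\mathcal{C}(DT^*N)=\Tam_{DT^*N}(T^*N)_{lc}$, and the density theorem applies directly.
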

\begin{proof}It remains to check that any two $\W_{(x,a)}$ have a finite interleaving distance. In fact, by Lemmata~\ref{lem:formula_Wxa} and~\ref{lem: distance point ball}~as well as Proposition~\ref{prop:bound gammatau gammaK}, we have that if $d(x,y)<\varepsilon$ and $|a-b|<\varepsilon$ then $\gamma^s (\W_{(x,a)},\W_{(y,b)})< 2\varepsilon$. Now, we pick a path between $(x,a) $ and $ (y,b)$ in $N\times \Real$, and the finite distance follows if we subdivide the path finely enough.
\end{proof}

\begin{rem}
Here, we consider the (unfiltered) wrapped Fukaya category $\WFuk (T^*N)$ (and its perfect modules category): It is proven in ~\cite[Section 4]{Bai_Cote_Rouquier_dimension} that $\operatorname{Rdim}(\operatorname{Perf}(\WFuk (T^*N))) \leq \dim N$. 

On the other hand, we have $\operatorname{Rdim}((\mathcal{C}(DT^*N))_\infty ) \leq \dim N$ by Propositions~\ref{prop: dimension comparision} and~\ref{prop: two dimension}. 

Those two results are related in the following way: we claim that 
\[\operatorname{Idem}((\mathcal{C}(DT^*N))_\infty) \simeq \operatorname{Perf}(\WFuk (T^*N)),\]where $\operatorname{Idem}$ means idempotent completion. Then two $\operatorname{Rdim}$ bounds are the same one (notice that idempotent completion does not increase Rouquier dimension). Here is the idea of the proof: By \cite{Abouzaid-fiber-generation}, the latter is generated by a single cotangent fiber $T^*_xN$ whose endomorphism algebra is isomorphic to the chain over the based loop space. On the other hand, we already know that $(\mathcal{C}(DT^*N))_\infty$ is generated by a single $\W_{(x,a)}$ (which also a wrapped version of a single cotangent fiber), so does $\operatorname{Idem}((\mathcal{C}(DT^*N))_\infty)$, it remains to check that the endomorphism algebra of $\W_{(x,a)}$ (in $(\mathcal{C}(DT^*N))_\infty$) is also the chain over the based loop space. This computation can be done by \cite[Proposition 5.21]{APT-Kuwagaki-Zhang} and a variant of ~\cite[Thm. 4.4]{Zhang-Cyclic}.

The proof of $\operatorname{Rdim}(\operatorname{Perf}(\WFuk (T^*N))) \leq \dim N$ we give here seemingly differs from~\cite{Bai_Cote_Rouquier_dimension}, where the authors use the sectorial descent of~\cite{G-P-S-3}. However, our proof for the density theory uses a \v Cech descent argument for sheaves on the base $N$, which should be regarded as a filtered analogy of the sectorial descent of~\cite{G-P-S-3} (that is not proven yet.) We then believe both proofs are essentially the same in a certain context.
It was pointed out to us by O. Cornea that a type of complexity also appears in \cite{A-B-C}. \end{rem}

\section{Non-compactness of \texorpdfstring{$\widehat{\mathfrak L}(M,\omega)$}{L(M,w)}}
We prove the following  which --- at least for the two-dimensional  case --- is implicitly contained in \cite[Section 5]{MCA-VH-CV}
   \begin{thm}\label{Thm-non-compactness} Let $(M,\omega)$ be any symplectic manifold. 
    Then the space $(\mathfrak L(M,\omega),\gamma)$ of embedded exact Lagrangians is either empty or not totally bounded.
    As a consequence $(\DHam(M,\omega),\gamma)  $ is not totally bounded either. 
\end{thm}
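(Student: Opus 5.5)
The plan is to construct, inside any nonempty $\mathfrak L(M,\omega)$, an infinite family of exact Lagrangians that are pairwise $\gamma$-separated by a fixed $\delta>0$. Total boundedness then fails, since no finite $\delta/3$-net can exist. Everything takes place in a Weinstein neighborhood. Fix $L\in\mathfrak L(M,\omega)$ and identify a neighborhood of $L$ with $D_rT^*L$. Since $\gamma$ is local in the sense we need (it is bounded below by invariants computed from Floer complexes, which for Lagrangians in $D_rT^*L$ Hamiltonian isotopic to $L$ reduce to the cotangent computation), it suffices to produce the family inside $D_rT^*L$, taking the form $\Gamma_{df}$ or small modifications thereof.

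The naive choice of graphs $\Gamma_{df}$ does not work. For graphs, $\gamma(\Gamma_{df},\Gamma_{dg})=\max(f-g)-\min(f-g)$, and $\|df\|\le r$ forces $\operatorname{osc}(f)\le r\,\mathrm{diam}(L)$, so the set of graphs is totally bounded in $C^0$ modulo constants. We therefore need non-graphical Lagrangians.

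Following \cite[Section 5]{MCA-VH-CV}, in a small Darboux chart $B\subset L$ with $D_rT^*B\cong B\times D_r^n$, I will use the local model $T^*\mathbb R\times T^*\mathbb R^{n-1}$ and modify $L$ near one point by a ``zig-zag'' of $k$ folds in the first factor, realized as the image of $L$ under a compactly supported Hamiltonian isotopy inside $D_rT^*B$. Each fold encloses regions of area approximately $c$ with $c$ fixed and independent of $k$, which is possible because width times height can be kept constant by making the folds thinner. In this way we obtain Lagrangians $L_k$ with, for $k\neq l$, Floer homology $HF(L_k,L_l)$ (or its barcode) containing bars of length at least $c$ that cannot be killed. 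The key estimate is the lower bound $\gamma(L_k,L_l)\ge c'$. I would derive it from the spectral/barcode description: $\gamma(L_k,L_l)$ dominates the spectral norm of the difference, and by comparing with the reference $L$ one computes $c(\alpha,L_k,L)-c(1,L_k,L)\approx c$ while for $L_l$ the relevant action values are shifted by the winding count $l$. The cleanest route is to make the $L_k$ have spectral invariants relative to $L$ at distinct levels differing by at least $c$, for instance by taking the $k$-th Lagrangian with $k$ nested folds whose accumulated action is $kc$. Actually, $\gamma(L,L_k)\le$ diam-type bounds fail in dimension $n\geq 2$, where $\gamma$ is unbounded on such families. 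To stay inside a fixed neighborhood while still separating, I prefer choosing $L_k$ as $\varphi_k(L)$, where $\varphi_k$ is a product of $k$ disjointly supported Hamiltonian bumps with alternating signs. Then the pairwise difference $L_k$ versus $L_l$ contains a bump of height $\pm c$ not cancelled, which gives $\gamma\ge c$ by the energy–capacity type lower bound applied in the region where exactly one of them moves.

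For the consequence, note that $\gamma(\varphi(L),\psi(L))\le\gamma(\varphi,\psi)$ for $\varphi,\psi\in\DHam$, so a $\delta$-separated family $\varphi_k(L)$ forces $\varphi_k$ to be $\delta$-separated in $\DHam$. If $\mathfrak L$ is empty, I use instead a displaceable ball, or the graph in $M\times\overline M$ of the identity, which is an exact Lagrangian in the product, and apply the first part there.

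I expect the main obstacle to be the lower bound $\gamma(L_k,L_l)\ge c$ for non-graphical local modifications in arbitrary $(M,\omega)$. My first attempt will be to reduce to $T^*L$ via sheaf quantization, where $\gamma$ is computed by the interleaving distance of $Q(L_k)$ and $Q(L_l)$, and then to read off a persistent bar of length at least $c$ in a stalk over a point $x$ in the modified chart, using Corollary~\ref{cor: FC for conormal} with $Z=\{x\}$. The difficulty is that for the family I have to keep the folds at distinct base points, so that the stalk barcodes differ by a bar of length $c$. That gives the separation, but I still need to justify that the interleaving distance on $M$ dominates the one computed in the Weinstein neighborhood.
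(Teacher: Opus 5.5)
There is a genuine gap. You never exhibit a family with a proven uniform lower bound on $\gamma$, and the construction you finally settle on fails. Your remark that graphs cannot work is correct.

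No energy--capacity principle bounds $\gamma(L,\psi(L))$ from below by the \emph{height} of the bump generating $\psi$. In fact $\gamma(L,\psi(L))\le\gamma(\psi)\le 2e(\supp\psi)$ whatever the height, where $e$ is the displacement energy. Energy--capacity inequalities give lower bounds only when $\psi$ displaces a set of capacity bounded below.

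Suppose the bumps are, as the word suggests, supported in small balls. Then infinitely many disjoint ones in a fixed compact region have capacities, hence displacement energies, tending to $0$ by a volume count. So $\gamma(L,\psi_j(L))\to 0$. In the nested version you get $\gamma(\varphi_k(L),\varphi_{k+1}(L))=\gamma(L,\psi_{k+1}(L))\to 0$, so the family is not separated.

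Your earlier attempts have the same defect:
\begin{itemize}
\item $k$ disjoint folds each enclosing area $c$ do not fit in $D_rT^*B$, since thinning them pushes their height beyond $r$.
\item An ``accumulated action $kc$'' is impossible, because $\gamma(L,\psi(L))$ stays bounded for $\psi$ supported in a fixed displaceable region.
\item In the stalk-barcode variant, nothing prevents folds at accumulating base points from producing converging $L_k$.
\end{itemize}

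The missing idea is to take the iterates $L_k=\Phi^k(L_0)$ of a \emph{single} Hamiltonian diffeomorphism. Invariance of $\gamma$ then gives $\gamma(L_k,L_l)=\gamma(L_0,\Phi^{l-k}(L_0))$. Pairwise separation thus reduces to one uniform bound $\gamma(L_0,\Phi^t(L_0))\ge\varepsilon_0$ for $t\ge 1$.

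The paper takes the twist map generated by $h(r^2/2)$ on the unit disc, with $h'>0$ on $[0,1/2)$ and $h'=0$ beyond, so $\Phi^t(L_0)$ is a centrally symmetric double spiral. \cite[Lemma 5.12]{MCA-VH-CV} gives $\gamma\ge\min(A,A',B,B')$. The symmetry forces $A=A'=B=B'$, this bound tends to $\pi/2$, and continuity in $t$ yields $\varepsilon_0>0$. Here the Lagrangians wind more and more inside the same disc, rather than being separated by ever smaller disjoint supports. That is what defeats the obstruction above.

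Higher dimensions follow from the Hamiltonian $\chi(r)H(x_1,p_1)$ on $DT^*[-1,1]\times DT^*\Torus^{n-1}$ together with the reduction inequality. The transfer to arbitrary $M$ uses Proposition~\ref{Prop-locality-of-gamma}. You only assert this (``$\gamma$ is local''); the paper proves it by a rescaling/pseudoconvexity argument combined with \cite[Prop.~D2]{Guillermou-Viterbo}.

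Your deduction for $\DHam$ from $\gamma(\varphi(L),\psi(L))\le\gamma(\varphi,\psi)$ is the paper's. The fallback for empty $\mathfrak L$ is unnecessary, since the paper derives this only from a nonempty $\mathfrak L$. As stated it is also problematic, because the diagonal of $M\times\overline{M}$ is not a closed exact Lagrangian in general.
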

\begin{rem}
\begin{enumerate}
\item As pointed out by O. Cornea, we prove slightly better: no ball in $(\mathfrak L(M,\omega),\gamma)$ is totally bounded. 
    \item 
    Theorem \ref{Thm-non-compactness} was proved in \cite{A-B-C} when $M=T^*N$ for $N$ a manifold of negative curvature.
    
\end{enumerate}
\end{rem} 

Notice that stating that $(\mathfrak L(M,\omega),\gamma)$ is not totally bounded is equivalent to stating that its completion  $(\widehat{\mathfrak L}(M,\omega),\gamma)$ is non-compact. We are going to exhibit a sequence $(L_k)_{k\geq 1}$ in $(M,\omega)$ such that $\gamma(L_k,L_l)> \varepsilon_0>0$. Such a sequence cannot have a converging subsequence, which implies our theorem. We first construct such a sequence locally, that is in a domain in $DT^*([-1,1])$. In fact such a sequence occurs naturally as we saw in \cite[Proposition 5.11]{MCA-VH-CV}, and we shall base our proof on \cite[Lemma 5.12]{MCA-VH-CV}. 

We consider an exact Lagrangian in $[-1,1]^2=DT^*([-1,1])\subset T^*([-1,1])$ coinciding with the zero section near $q=-1,1$. This is nothing but an embedded curve coinciding with $[-1,1]\times \{0\}$ near the endpoints, such that the area enclosed by the curve and $p=0$ (counted with signs) equals zero. 
\begin{figure}[ht]
 \begin{overpic}[width=7cm]{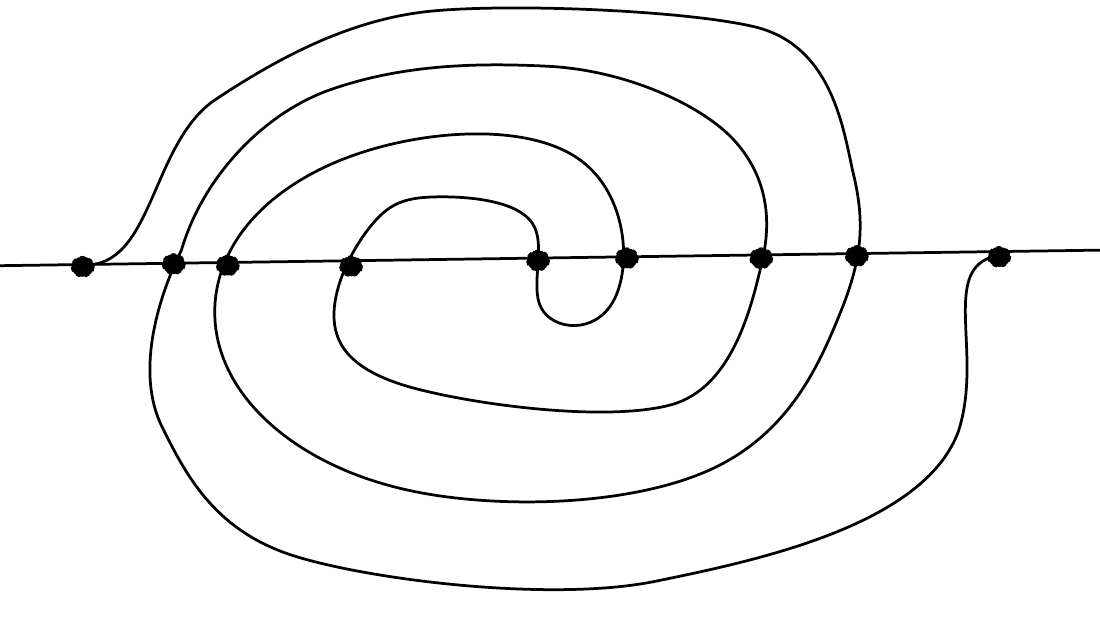}
 \put(100,32){$L_0$}
  \put(35,0){$L_1$}
   \put(92,30){$0$}
   \put(80,30){$1$}
    \put(65,30){$2$}
   \put(58,30){$3$}
   \put(50,30){$4$}
   \put(34,30){$3$}
   \put(22,29){$2$}
   \put(12,28){$1$}
   \put(8,28){$0$}  
 \end{overpic}
\caption{ $L_0,L_1$ and the Conley-Zehnder-Maslov index of the intersections}
\label{Fig-1}
\end{figure}

\begin{figure}[H]
\begin{subfigure}{0.4\textwidth}
\centering
 \begin{overpic}[width=7cm]{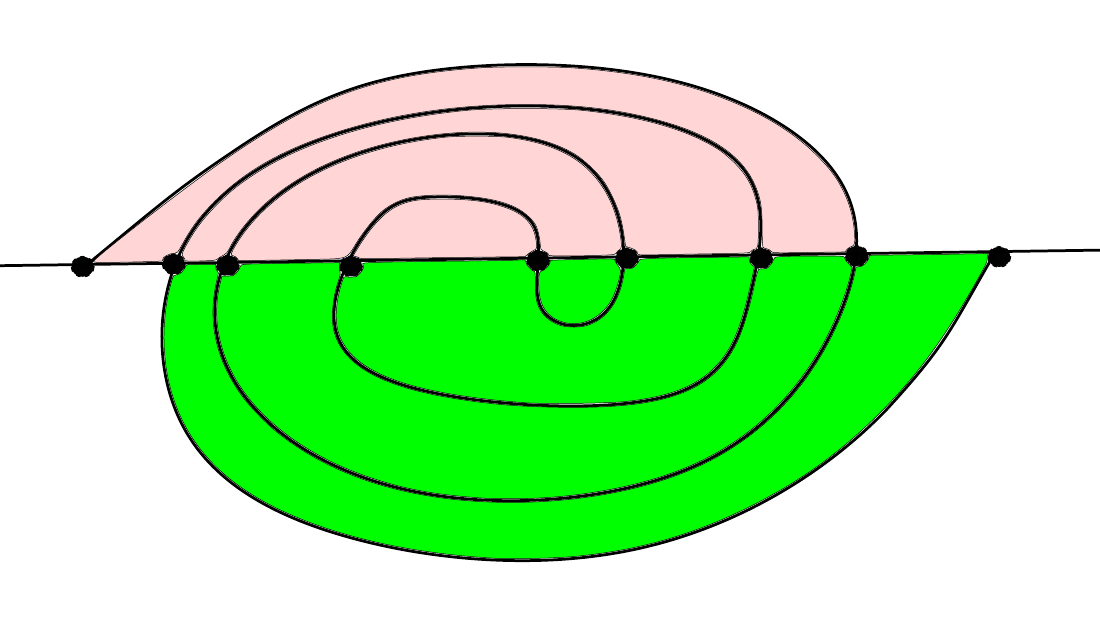}
 \put(100,35){$L_0$}
  \put(35,0){$L_1$}
   \put(35,10){$B$}
    \put(35,40){$A$}
 \end{overpic}
  \caption{The areas $A$ in pink and $B$ in green}\label{subfig-11a}
 \end{subfigure}\hskip 2cm
\begin{subfigure}{0.4\textwidth}
\centering
\begin{overpic}[width=7cm]{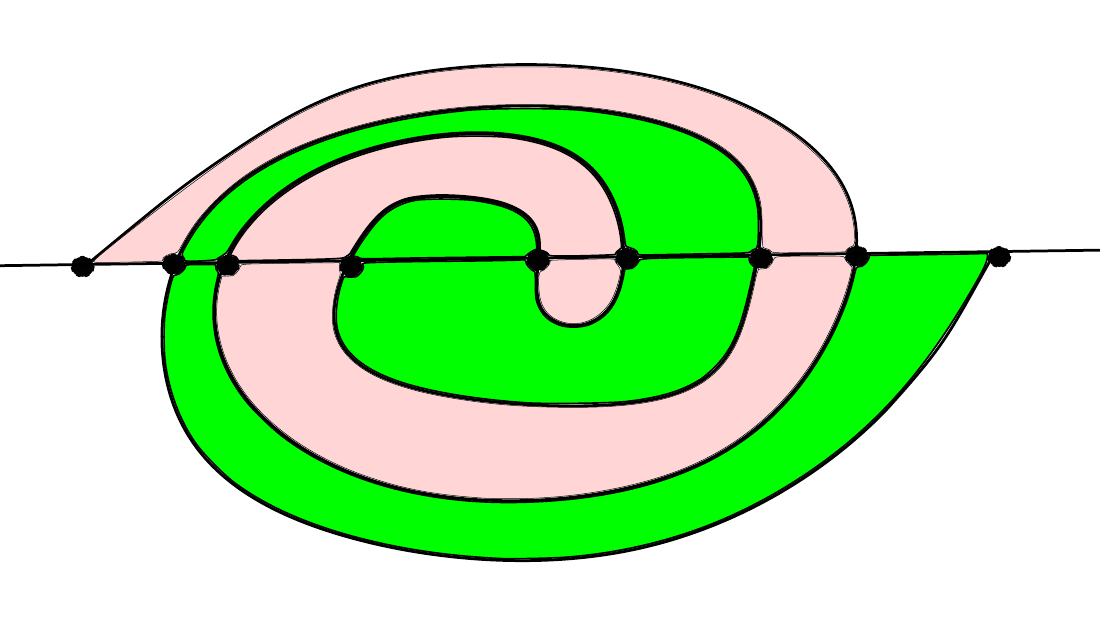}
 \put(35,25){$B'$}
    \put(35,40){$A'$}
 \end{overpic}
 \caption{ The  areas $A'$ in pink and $B'$ in green}\label{subfig-11b}
 \end{subfigure}
 
\caption{ $\gamma(L_0,L_1)$ is bounded below by the smallest of the  areas $A,A',B,B'$(Illustrations from \cite{MCA-VH-CV}).}
\label{Fig-2}
\end{figure}

We now remind the reader of
\begin{lem}[{\cite[Lemma 5.12, p.788]{MCA-VH-CV}}]\label{Lemma-1.2}
  Assume $L_0$ is the zero section and $L_1$ is a smooth spiral as described above. Then, 
\[\gamma(L_0,L_1)\geq \min(A,A',B,B').\]
\end{lem}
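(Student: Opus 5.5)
The plan is to bound $\gamma(L_0,L_1)$ from below by the spectral invariants of the Floer persistence module $FH^*(L_0,L_1;t)$. Since $L_0$ is the zero section and $L_1$ is exact, Hamiltonian isotopic to $L_0$ and coincides with it near $q=\pm1$, we have $HF^*(L_0,L_1)\simeq H^*([-1,1],\partial)$-type homology concentrated in a single degree. For compactly supported Hamiltonian isotopic pairs, $\gamma(L_0,L_1)=c_+(L_0,L_1)-c_-(L_0,L_1)$: the difference between the action of the "top" class and the "bottom" class. Equivalently, via the interleaving definition, an $\varepsilon$-interleaving between $L_0$ and $L_1$ forces the barcode of $FC^*(L_0,L_1)$ to have its essential bar starting within $\varepsilon$ of the corresponding bar for $FC^*(L_0,L_0)$. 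I will therefore compute the barcode combinatorially from the curve.

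First I will list the intersection points of $L_1$ with $p=0$ along the spiral, with their Maslov indices $0,1,2,3,4,3,2,1,0$ as in Figure~\ref{Fig-1}. Their actions are differences of primitives, that is, signed areas enclosed between $L_1$ and $L_0$ up to that point. In dimension two, Floer differentials count immersed bigons with convex corners. I will identify the bigons between consecutive intersection points and check that they cancel generators in pairs, leaving a single essential generator. The bars of finite length have lengths equal to bigon areas.

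Second, I will locate the essential class, and the class of the unit or its dual, on both sides of the spiral. The spectral numbers $c_\pm$ are then realized by specific intersection points. Their action differences are the areas $A$, $B$ (respectively $A'$, $B'$) of the regions in Figure~\ref{Fig-2}, depending on which of the two symmetric degree-$0$ generators survives. Using $\gamma=c_+-c_-$, which is bounded below by the area between the surviving generator and the one representing the other spectral invariant, I obtain $\gamma\geq\min(A,A',B,B')$.

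The main obstacle is this second step: determining which generators survive and carry $c_\pm$. Bigon counting in a spiral has several possible cancellation patterns. I would handle it by filtering by index and using the fact that non-trivial bigons only join consecutive indices, $k$ and $k+1$, along the spiral. That makes the complex a "zigzag" whose homology can be read off by hand. If this proves delicate, I would instead cite the computation of \cite[Lemma 5.12]{MCA-VH-CV} directly, since the statement is exactly that lemma.
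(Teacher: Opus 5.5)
The paper gives no proof of this lemma: it simply imports \cite[Lemma 5.12]{MCA-VH-CV}, so your fallback is exactly what the paper does. As an independent argument, however, your sketch has a genuine gap at the step you yourself flag, and the method you propose for that step rests on a false picture of the Floer complex.

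Call $\ell_1,\ell_2,\ell_3$ the transverse crossings met along the left branch going inward, $o$ the centre, and $\rho_3,\rho_2,\rho_1$ the crossings met along the right branch going outward (degrees $1,2,3,4,3,2,1$). That the differential joins degrees $k$ and $k+1$ is automatic. Your ``zigzag'' needs more, namely that only points consecutive \emph{along the spiral} are joined, and this fails. The action is strictly monotone along each branch (see below), so the half-turn bigons $\ell_1\to\ell_2$ and $\ell_2\to\ell_3$ are composable. With only these one would get $d^2\ell_1\ni\ell_3\neq0$. Since $\rho_2$ is the only other generator of degree $2$, $d^2=0$ forces immersed bigons joining $\ell_1$ to $\rho_2$ and $\rho_2$ to $\ell_3$, whose $L_1$-side runs through the whole core of the spiral. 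They form the other end of the one-parameter family obtained by gluing two consecutive half-turns. So the cancellation pattern you would read off is wrong.

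The set-up also needs repair. A rank-one $HF^*$ concentrated in one degree cannot carry both $c_+$ and $c_-$. You must use $H^*(U)$ for $c_-$ and $H^*(U,\partial U)$ for $c_+$, or extend $L_1$ by the zero section to a closed manifold, which is legitimate by Lemma~\ref{Lemma-from-GVit}. Also, the interleaving remark is only a one-sided comparison, not an equivalent description of $\gamma$.

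More importantly, the whole content is hidden in your sentence that $c_+-c_-$ is bounded below by the area between the generators carrying $c_\pm$. A priori $\gamma$ is just a difference of two critical values, and differences such as $a(\rho_2)-a(\ell_1)$ can be much smaller than the areas in the statement. You need to know that one of $c_\pm$ vanishes, and nothing in your plan gives this. What is missing is an a priori description of the action spectrum, and this makes the bigon count unnecessary:
\begin{itemize}
\item Normalize the primitive of $L_1$ to vanish near $q=-1$; the zero-area condition makes it vanish near $q=1$ too.
\item Take a spiral turning clockwise as one goes inward along the left branch (as for $\Phi^t(L_0)$ with $h'\geq0$). Each half-turn of that branch adds to $\int p\,dq$ the positive area between the arc and $L_0$. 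Each half-turn of the right branch, traversed outward, subtracts such an area.
\item Hence every transverse crossing has strictly positive action. The smallest are $a(\ell_1)$ and $a(\rho_1)$, the areas enclosed by $L_0$ and the outermost half-turn of each branch.
\item Since $L_1$ equals the zero section, with primitive $0$, on an open set, one has $c_-\leq0\leq c_+$. As $c_-$ is a critical value, $c_-=0$.
\item Non-degeneracy of $\gamma$ (as $L_1\neq L_0$) gives $c_+>0$, so $c_+$ is a positive critical value and $\gamma=c_+\geq\min(a(\ell_1),a(\rho_1))$.
\end{itemize}
It then remains only to compare these two outermost areas with the regions of Figure~\ref{Fig-2}. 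In the symmetric situation of Theorem~\ref{Thm-non-compactness} they coincide and tend to $\pi/2$, as used there. For the mirror-image spiral all signs flip and the same argument applies to $c_-$.
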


\begin{proof}[Proof of Theorem \ref{Thm-non-compactness} in the case $n=1$]
    
We consider a Hamiltonian of the form $H(r,\theta)=h(\frac{r^2}{2})$ the symplectic form being $rdr\wedge d\theta$, so the Hamiltonian vector field (in polar coordinates) being $X_r=0, X_\theta=-h'(\frac{r^2}{2})$ and the flow given by $\Phi^t(r,\theta)=(r,\theta-th'(\frac{r^2}{2}))$. 
So  if $h'\geq 0$, the set $\Phi^t(L_0)$ is  the union of two symmetric spirals as in Figure \ref{Fig-1}. The value of $\gamma(L_0,L_1)$ is bounded from below by the smallest of the areas $A,A',B,B'$. But $L_1$ is symmetric since both the Hamiltonian and $L_0$ are invariant by $(r,\theta)\mapsto (r,\theta+\pi)$ and this implies that $\Phi^t(-x,0)=-\Phi^t(x,0)$. As a result $A'=B'$ and $A=B$. Moreover since it is clear from the picture that $A+B=A'+B'$, we get $A=A'=B=B'$.
Now we choose $h$ such that $h'>0$ on $[0, 1/2)$ and $h'=0$ on $[1/2, \infty)$.
Then, as $t$  goes to $\infty$, the first return point of $L_0$ is closer and closer to $(1,0)$ or $(-1,0)$, so we easily see that 
$\gamma(L_0,\Phi^t(L_0))$ converges to half the area of the disc of radius $1$, that is $\frac{\pi}{2}$. 

In any case since for $t\geq 1$ $\gamma(L_0,\Phi^t(L_0))$ is positive, continuous and has a positive limit at $+\infty$ there is a positive constant $\varepsilon_0$ such that for $t\geq 1$, 
$$\gamma(L_0,\Phi^t(L_0))\geq \varepsilon_0 >0$$ 
As a result, setting $L_k=\Phi^k(L_0)$, we have  for $k< l$ that $$\gamma(L_k,L_l)=\gamma(\Phi^k(L_0), \Phi^l(L_0))=\gamma(L_0, \Phi^{l-k}(L_0))\geq \varepsilon_0>0 $$
\end{proof}
\begin{proof}[Proof of Theorem \ref{Thm-non-compactness} in the general case.]
Let us first prove this in $DT^*([-1,1])\times DT^*\Torus^{n-1}$. 
Consider coordinates $(x_1,p_1)$ in $DT^*([-1,1])$ and $(r,\theta)=(r_2,...,r_n, \theta_2,...,\theta_n)$ in $DT^*\Torus^{n-1}$. We consider the Hamiltonian flow of $\chi(r)H(x_1,p_1)$ where $H$ is the Hamiltonian defining $\Phi^t$ and $\chi$ is supported in $D^{n-1}(1)$, equal to $1$ near $0$:
$$\Psi^t(x_1,p_1,r,\theta)=\left ( \Phi^{t\chi(r)}(x_1,p_1),r, \Theta_t(x_1,p_1,r,\theta)\right )$$
The function $\Theta$ is given by 
$$\Theta_t(x_1,p_1,r,\theta)=\theta+ t\nabla \chi (r)H(x_1,p_1)$$
Now the reduction of $\Lambda_k=\Psi^k(0_\mathbb R\times 0_{\Torus^{n-1}})  \subset DT^*([-1,1])\times DT^*\Torus^{n-1}$ by $r=0$ is given by $\Psi^k(0_{\mathbb R})=L_k$. 
By the reduction inequality (see \cite[Proposition 5.1]{Viterbo-STAGGF}), we have for $k\neq l$
$$ \gamma(\Lambda_k,\Lambda_l) \geq \gamma(L_k,L_l) \geq \varepsilon_0>0 $$
and we see that the space $\widehat{\mathcal L}(DT^*[-1,1]\times DT^*\Torus^{n-1})$ is not totally bounded. 

Finally, let $M$ be any symplectic manifold, and $V_0$ an exact Lagrangian submanifold. We identify a neighbourhood of $V_0$ to $DT^*V_0$ (up to a rescaling of the symplectic form).  Consider a Darboux chart identifying a neighbourhood of a point to $DT^*D^n$. Since $[-1,1]\times \Torus^{n-1}$ embeds in $D^n$, this embedding will extend to a symplectic embedding of $DT^*([-1,1]\times \Torus^{n-1})$ in our Darboux chart. Then we can extend $\Psi^t$ to a Hamiltonian diffeomorphism of $DT^*D^n$ hence of $DT^*V_0$ (and thus of $M$ !).

We thus set $V_k=\Psi^k(V_0)$. We claim that $$\gamma(V_k,V_l)=\gamma(\Lambda_k,\Lambda_l)\geq \gamma(L_k,L_l)\geq \varepsilon_0$$ 
The two right hand side inequalities follow from the reduction inequality and the case $n=1$. The first equality follows by applying the following 
\begin{prop}\label{Prop-locality-of-gamma}
    Let $L$ be an exact Lagrangian in $(M,\omega)$ and $\varphi^t$ a Hamiltonian isotopy supported in a Darboux chart identified to $DT^*U$ where $L=0_U$. Then 
    $\gamma(L,\varphi^1(L))=\gamma(0_U,\varphi^1(0_U))$. 
\end{prop}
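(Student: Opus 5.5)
The statement is a locality property of the spectral distance. I plan to reduce both sides to the same Floer-theoretic quantity computed inside the chart, using the description of $\gamma$ by spectral invariants.

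Recall that for exact Lagrangians $\gamma(L,L') = c_+(L,L') - c_-(L,L')$, where $c_\pm$ are the spectral invariants of the top and bottom classes of $HF^*(L,L')\simeq H^*(L)$. Set $L'=\varphi^1(L)$. The structure of the argument will be:

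\begin{itemize}
\item show that the relevant filtered Floer complex $FC^*(L,L')$ in $M$ agrees with the one computed in $DT^*U$;
\item show that the fundamental and point classes correspond under this identification.
\end{itemize}

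\textbf{Step 1.} Since $\varphi^t$ is supported in the chart, $L'$ coincides with $L$ outside $DT^*U$. I first perturb $\varphi$ by a further Hamiltonian supported in the chart: this makes $L\cap L'$ transverse inside $DT^*U$. Outside the chart I use a small Morse perturbation $df$ on a Weinstein neighbourhood of $L$. The function $f$ is chosen so that all its critical points in $L\setminus U$ have action strictly between the actions of the generators in the chart. This is possible because $f$ can be taken $C^2$-small.

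\textbf{Step 2.} The generators of $FC^*(L,L')$ then split into those in $U$ and those in $L\setminus U$. The differential counts holomorphic strips with boundary on $L\cup L'$. By monotonicity and a neck-stretching or maximum principle argument, I will show that strips connecting generators inside $DT^*U$ stay in $DT^*U$. For this I choose the almost complex structure to be of contact type near the boundary of the chart, using the Liouville structure of $DT^*U$.

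\textbf{Step 3.} With these choices, $FC^*(L,L')$ is a filtered extension of the local complex $FC^*(0_U,\varphi^1(0_U))$, computed with Hamiltonians vanishing near $\partial U$, by the Morse complex of $f$ on $L\setminus U$. I then identify the top and bottom classes. The unit and point class can be represented by chains supported where their actions are extremal. Using the PSS isomorphism and Poincaré duality, I will check that the spectral invariants $c_\pm(L,L')$ equal the corresponding local invariants $c_\pm(0_U,\varphi^1(0_U))$. Here $U$ is viewed as a ball, and the local invariants are computed relative to the boundary, where the Lagrangians coincide.

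\textbf{Step 4.} Subtracting gives the equality of $\gamma$, and letting the perturbations tend to zero uses Lipschitz continuity of spectral invariants. A cleaner alternative is available in the cotangent setting: compute $\gamma$ via generating functions or sheaf quantization. Locality of the support of $\K_\varphi$ there immediately gives that $Q(\varphi^1(L))$ agrees with $Q(L)$ away from $U\times\Real$. Both spectral values are then read off from $\Gamma$ on $U$.

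The main obstacle is Step 3. One must make sure that the extremal classes are genuinely detected by generators in the chart, and that they are not "carried" by Morse critical points of $f$ outside. Roughly, the max and min actions of the local cycle must bracket $0$, since the Lagrangians agree on $\partial U$ with action $0$; this is what allows the outside critical points to be placed at action near $0$. I would first try to prove $c_-\leq 0\leq c_+$ locally and then use this sandwich.
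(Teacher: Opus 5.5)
Your Step~2 is where the argument breaks. The chart $DT^*U$ is a Liouville \emph{sector}, not a domain. Over $\partial U$ its boundary $T^*U|_{\partial U}$ is tangent to the Liouville field $p\,\partial_p$, and both $L$ and $L'$ cross it, since near $\partial U$ they coincide (up to your $df$) with the zero section. A $J$ of contact type gives a maximum principle only along the fibrewise boundary $\{\|p\|=1\}$. Nothing in that condition stops a strip joining two chart generators from running out along $L\setminus U$ and coming back. Consequently the chart generators need not span a subcomplex or a quotient complex, and the ``filtered extension'' of Step~3 is not justified.

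Even the weaker confinement to the whole Weinstein neighbourhood $DT^*L$ does not follow from monotonicity when $\omega$ is not exact away from $L$. (In an exact $M$ a Stokes argument would do it, but the proposition is for arbitrary $(M,\omega)$.) Monotonicity only says that an escaping strip has area at least some $\alpha>0$, which is useless unless all action differences are below $\alpha$. This is exactly what the paper arranges with the fibrewise rescaling $(q,p)\mapsto(q,cp)$. It fixes the zero section pointwise, so $L\cap c\varphi^1(L)=L\cap\varphi^1(L)$, and it multiplies actions by $c$. Continuity of the barcode in $c$, together with the finiteness of the possible endpoints $c\,f_1(L\cap\varphi^1(L))$, then forces every bar to scale exactly by $c$. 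For $c$ small no strip leaves $DT^*L$, so the filtered Floer homology, hence $\gamma$, is the same in $M$ and in $T^*L$. Nothing in your plan replaces this step.

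Step~3, which you correctly flag as the main obstacle, is the other half of the proposition and is only announced. You first have to say what $\gamma(0_U,\varphi^1(0_U))$ means. In the paper it is $\gamma(\Lambda,0_W)$ for the extension $\Lambda$ of $\varphi^1(0_U)$ by the zero section to any closed $W\supset U$. The real content is that this is independent of $W$, which is the Guillermou--Viterbo input (Prop.~D2, Cor.~D3), applied with $W=L$. Your ``local invariants relative to $\partial U$'' would have to be matched with this.

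The sandwich $c_-\le 0\le c_+$ is true (restrict to a fibre over a point of $L\setminus U$). But it only shows that extremal values coming from the region where $L'=L$ equal $0$ in every model. It does not show that the chart generators and the differentials among them produce the same $c_\pm$ for $L$ and for another $W$, and those differentials again involve strips crossing $\partial U$.

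Your sheaf alternative in Step~4 is close in spirit to the Guillermou--Viterbo argument. However, it only applies in a cotangent bundle, so it can at best replace the second half of the paper's proof, after the passage from $M$ to $T^*L$. Even there, you must still get from ``$Q(\varphi^1(L))$ agrees with $Q(L)$ off $U\times\Real$'' to equality of $c_\pm$. That requires showing that the images of the unit and of the fundamental class in the global sections over $L$ (resp.\ over $W$) are determined by the restriction to $U$, which you have not done.
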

We shall need the following Lemma, which is an obvious consequence of  \cite[Proposition D2 and Corollary D3]{Guillermou-Viterbo}:
\begin{lem}\label{Lemma-from-GVit}
    Let $U$ be  a codimension $0$ submanifold with boundary             and $L$ an exact Lagrangian Hamiltonianly isotopic to the zero section $0_U$ in $T^*U$ (so $L=0_U$ near its boundary). Let $W$ be a closed manifold such that $U$ is a codimension $0$ submanifold and let $\Lambda$ be the extension of $L$ by the zero section of $T^*W$ (i.e. $\Lambda=0_W$ outside $T^*U$). Then $\gamma(\Lambda,0_W)$ does not depend on the choice of $W$.  
\end{lem}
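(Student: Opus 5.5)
The statement to prove is Lemma~\ref{Lemma-from-GVit}: $\gamma(\Lambda,0_W)$ does not depend on the auxiliary closed manifold $W$ containing $U$. I will reduce it to the sheaf-theoretic description of $\gamma$ from \cite{Guillermou-Viterbo}. There, for an exact Lagrangian $\Lambda$ Hamiltonianly isotopic to $0_W$, one has $\gamma(\Lambda,0_W)=c_+(\Lambda)-c_-(\Lambda)$. Here $c_\pm$ are spectral invariants read off from the sheaf quantization $\F_\Lambda\in\Tam(W)$: they are the values of $t$ at which the global-section morphisms
\[
H^*(W;k)\simeq \Gamma(W\times(-\infty,t);\F_\Lambda)
\]
jump, for the fundamental class and for the unit class respectively. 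Equivalently, they are the jumps of the barcode of $\Gamma(W;\F_\Lambda)\in\Sh_{\tau\ge0}(\Real)$ in degrees $0$ and $n$.

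\textbf{Step 1: compare the quantizations.} Since $\Lambda=0_W$ outside $T^*U$, the quantization $\F_\Lambda$ coincides with $k_{W\times[0,\infty)}$ over $(W\setminus U)\times\Real$, with the normalisation of primitives fixed by $f_\Lambda=0$ there. Now take a second manifold $W'\supset U$ and the corresponding extension $\Lambda'$. Gluing along $\Op(U)$ gives a sheaf on $W'$ that agrees with $\F_\Lambda$ near $U$ and with $k_{W'\times[0,\infty)}$ elsewhere. Its microsupport is contained in the cone over $\Lambda'$, so by uniqueness of quantization it is $\F_{\Lambda'}$. This is where I expect \cite[Proposition D2]{Guillermou-Viterbo} to be exactly the locality statement needed.

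\textbf{Step 2: compare global sections.} Write $\mathcal E=\F_\Lambda\otimes k_{\overline U\times\Real}$. The excision triangle
\[
\F_\Lambda\otimes k_{(W\setminus\overline U)\times\Real}\to\F_\Lambda\to\mathcal E\to[+1],
\]
together with the same triangle on $W'$, shows that $\Gamma(W;\F_\Lambda)$ and $\Gamma(W';\F_{\Lambda'})$ are both extensions of $\Gamma(\overline U;\mathcal E)$ by constant pieces $H^*_c(W\setminus\overline U)\otimes k_{[0,\infty)}$, respectively the analogue for $W'$. These constant pieces have a single bar starting at $0$. The spectral values $c_\pm$ therefore come from the behaviour of $\Gamma(\overline U;\mathcal E)$ and from the unit and fundamental classes, which restrict compatibly. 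The fundamental class of $W$ is detected on $U$, near a point where the Lagrangian is moved, via the restriction $H^n(W)\to H^n_c$-type maps. This should be the content of \cite[Corollary D3]{Guillermou-Viterbo}.

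\textbf{Main obstacle.} The delicate point is precisely Step 2, identifying $c_+$ and $c_-$ locally. The degree-$n$ invariant involves the fundamental class, which is global. The plan is to use Poincaré--Verdier duality: $c_+(\Lambda)=-c_-(-\Lambda)$, where $-\Lambda$ is again an extension by the zero section, so it suffices to localize $c_-$ alone. For the unit class $1\in H^0$ the localization is straightforward: restricting to a neighbourhood $\overline U$ does not change the time at which $1$ lifts, because outside $U$ the sheaf is constant and $1$ lifts there at time $0$ automatically. Once $c_\pm$ are shown to be computed from $\mathcal E$ on $\overline U$ alone, $\gamma(\Lambda,0_W)$ is independent of $W$.
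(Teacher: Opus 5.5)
You give a genuinely different route from the paper, and it works once tightened. The paper offers no argument: it only asserts that the lemma follows at once from \cite[Prop.~D2, Cor.~D3]{Guillermou-Viterbo}. You instead build a self-contained sheaf-theoretic proof. First, locality of the quantization by gluing plus uniqueness. Then, localization of $c_-$ by excision. Finally, $c_+$ via the duality $c_+(\Lambda)=-c_-(-\Lambda)$, where $-\Lambda$ is the extension of $-L$ with primitive $-f_\Lambda$ vanishing outside $U$. This argument does not actually require knowing what D2 and D3 say. What it buys is transparency. The part of $W$ outside $U$ contributes only pieces $H^*_c(W\setminus\overline U)\otimes k_{[0,\infty)}$, born at time $0$. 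These are invisible to $c_-$ because of the a priori bound $c_-\le 0$ coming from the normalization $f_\Lambda=0$ outside $U$. The price is that you rely on $\gamma=c_+-c_-$ and on the duality formula, which the citation bundles away.

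Two places need tightening.

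\emph{The ``therefore'' in Step 2.} As written it is not justified, since the barcode of an extension is not determined by the barcodes of its pieces. The actual mechanism is the following.
\begin{itemize}
\item $H^*_c(W\setminus\overline U)\otimes k_{[0,\infty)}$ has zero sections over $(-\infty,t)$ for $t\le 0$. Hence $\Gamma(W;\F_\Lambda)\to\Gamma(\overline U;\mathcal E)$ is an isomorphism there, compatibly with the restriction maps in $t$.
\item The restriction of $\F_\Lambda$ to $\{x\}\times\Real$, for $x\notin U$, is $k_{[0,\infty)}$. So the image of $1$ is nonzero for every $t>0$, which gives $c_-(\Lambda)\le 0$.
\end{itemize}
Together these show that $c_-(\Lambda)$ equals the spectral value of $1\in H^0(\overline U)$ read off from $\Gamma(\overline U;\mathcal E)$. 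By Step 1 this is independent of $W$.

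\emph{The fundamental-class sentence.} Drop the claim that the fundamental class of $W$ is detected on $U$ by restriction. When $\partial U\neq\emptyset$ we have $H^n(\overline U)=0$. The fundamental class comes from $H^n_c(U)\to H^n(W)$, which goes in the opposite direction. Moreover $c_+\ge 0$ sits exactly in the range $t>0$ where the outside piece is nonzero. So no direct localization of $c_+$ by this excision is available, which is precisely why your reduction to $c_-(-\Lambda)$ is the right move.
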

\begin{proof}[Proof of Proposition \ref{Prop-locality-of-gamma}]
    We can identify $U$ to an open set with boundary in $L$, and $T^*U$ to a subset of $T^*L$. Now the Floer cohomology of $FH^*(L,\varphi^1(L))$ in $(M,\omega)$ is the same as the Floer cohomology in $T^*L$. Indeed, the boundary of $DT^*L$ in $(M,\omega)$ being of restricted contact type - by exactness of $L$ -, it can be made pseudoconvex by choosing a suitable almost complex structure. As a result and arguing as in the proof of \cite[theorem 4.1]{Viterbo-FCFH1}, we see that the (filtered) Floer cohomology of $(L,\varphi^1(L))$ in $T^*L$ and in $(M,\omega)$ are the same, since holomorphic strips with boundary in $L\cup \varphi^1(L)$ are either contained in $DT^*L$ or have area bounded from below by some constant $\alpha$. For $0<c<1$ we denote by $c\varphi^1(L)$ the image of $\varphi^1(L)$ by $(q,p)\mapsto (q,cp)$ (in $DT^*L$). We then claim that $FH^*(L,\varphi^1(L), a)=FH^*(L,c\varphi^1(L), ca)$. Indeed the barcode decomposition of $FH^*(L,c\varphi^1(L))$ in $(M,\omega)$, say $B_c = \bigoplus_{i\in I} k_{[a_i^c, b_i^c)}$, moves continuously with $c$ and, since $L$ is exact, the ends of the bars belong to the finite set $f_c(L \cap c\varphi^1(L))$, where $f_c$ is a primitive of the restriction to $c\varphi^1(L)$ of the Liouville form of $T^*L$.  We have $f_c = c f_1$ and it follows that the sets of bars are in bijection and $a_i^c = c a_i^1$, $b_i^c = c b_i^1$. This implies $FH^*(L,\varphi^1(L), a)=FH^*(L,c\varphi^1(L), ca)$, as claimed.      
    But for $c$ small enough there can be no holomorphic strip with boundary in $L\cup c\varphi^1(L)$ with area greater than $\alpha_0$ since all intersection points will have action in $[-cA, cA]$ for some $A$. Therefore all holomorphic strips computing the boundary map in Floer cohomology are contained in $DT^*L$, hence the filtered Floer cohomology of $(L,\varphi^1(L))$ in $DT^*L$ coincides with the filtered Floer cohomology in $(M,\omega)$. This implies that $\gamma(L,\varphi^1(L))$ is the same in $DT^*L$ and in $(M,\omega)$. 
   Now applying Lemma \ref{Lemma-from-GVit}, we conclude that $\gamma(L,\varphi^1(L))=\gamma(0_U,\varphi^1(0_U))$. 
\end{proof}
The proof of the first part of our theorem is thus concluded. For the case of $\DHam(M,\omega)$ it follows immediately from the property $\gamma(\varphi,\psi)\geq \gamma(\varphi(L),\psi(L))$.  
\end{proof}

\printbibliography

\end{document}